\setlist[enumerate]{leftmargin=1cm}
\numberwithin{equation}{section}
\theoremstyle{plain}
\newtheorem{theorem}[equation]{Theorem}
\newtheorem{proposition}[equation]{Proposition}
\newtheorem{lemma}[equation]{Lemma} 
\newtheorem{corollary}[equation]{Corollary}
\theoremstyle{definition}
\newtheorem{definition}[equation]{Definition}
\newtheorem{example}[equation]{Example}
\newtheorem{chunk}[equation]{}
\theoremstyle{remark}
\newtheorem{remark}[equation]{Remark} 
\newtheorem*{ack}{Acknowledgements}
\newtheorem*{funding}{Funding}
\newcommand{\bbN}{\mathbb N} 
\newcommand{\colonequals}{\coloneqq}
\newcommand{\Coker}{\operatorname{Coker}}
\newcommand{\dbcat}[1]{\mathbf{D}^{\mathrm{b}}(\mod #1)}
\newcommand{\dcat}[1]{\mathbf{D}(\Mod #1)}
\newcommand{\dsing}{\mathbf{D}_{\mathrm{sg}}}
\newcommand{\depth}{\operatorname{depth}}
\newcommand{\End}{\operatorname{End}}
\newcommand{\equalscolon}{\eqqcolon}
\newcommand{\Ext}{\operatorname{Ext}}
\newcommand{\fm}{\mathfrak{m}} 
\newcommand{\fp}{\mathfrak{p}}
\newcommand{\fibre}[2]{ {#1}_{k(#2)}}
\newcommand{\gldim}{\operatorname{gl\,dim}}
\newcommand{\Hom}{\operatorname{Hom}}
\renewcommand{\Im}{\operatorname{Im}}
\newcommand{\ind}[1]{\operatorname{ind}{#1}}
\newcommand{\inj}{\operatorname{inj}}
\newcommand{\injdim}{\operatorname{inj\,dim}}
\newcommand*{\intref}[2]{\def\tmp{#1}\ifx\tmp\empty\hyperref[#2]{\ref*{#2}}\else\hyperref[#2]{#1~\ref*{#2}}\fi}
\newcommand{\iso}{\xrightarrow{\raisebox{-.4ex}[0ex][0ex]{$\scriptstyle{\sim}$}}}
\newcommand{\longiso}{\xrightarrow{\ \raisebox{-.4ex}[0ex][0ex]{$\scriptstyle{\sim}$}\ }}
\newcommand{\Ker}{\operatorname{Ker}}
\newcommand{\lat}[1]{\operatorname{lat}(#1)}
\newcommand{\Loc}{\operatorname{Loc}}
\newcommand{\lotimes}{\otimes^{\mathrm L}}
\newcommand{\lra}{\longrightarrow}
\newcommand{\Mat}{\operatorname{Mat}}
\newcommand{\mcm}[1]{\operatorname{mCM}(#1)}
\renewcommand{\mod}{\operatorname{mod}}
\newcommand{\Mod}{\operatorname{Mod}}
\newcommand{\op}[1]{{#1}^{\mathrm{op}}}
\newcommand{\pdim}{\operatorname{proj\,dim}}
\newcommand{\proj}{\operatorname{proj}}
\newcommand{\rad}[1]{\operatorname{rad}(#1)}
\newcommand{\rank}{\operatorname{rank}}
\newcommand{\RHom}{\operatorname{RHom}}
\newcommand{\sHom}{\underline{\Hom}}
\newcommand{\smcm}[1]{\underline{\operatorname{mCM}}(#1)}
\newcommand{\Spec}{\operatorname{Spec}}
\newcommand{\supp}{\operatorname{supp}}
\newcommand{\syz}{\Omega}
\newcommand{\Thick}{\operatorname{Thick}}
\newcommand{\tor}{\operatorname{tor}}
\newcommand{\Tor}{\operatorname{Tor}}
\newcommand{\Tr}[1]{\operatorname{Tr}(#1)}
\newcommand{\wh}{\widehat}
\title[A class of Gorenstein algebras and their dualities]{A class of Gorenstein algebras \\ and their dualities}
\author[W. Gnedin, S. B. Iyengar and H. Krause]{Wassilij Gnedin, Srikanth  B. Iyengar and Henning Krause}
\address{Wassilij Gnedin\\
	Institut f\"ur Mathematik\\ 
	Universit\"at Paderborn\\
	33098 Paderborn\\
	Germany\\
	\href{mailto:gnedin@math.uni-paderborn.de}{gnedin@math.uni-paderborn.de}}
\address{Srikanth B. Iyengar\\
	Department of Mathematics\\
	University of Utah\\
	Salt Lake City, UT 84112\\
	U.S.A.\\
\href{mailto:Srikanth.B.Iyengar@utah.edu}{Srikanth.B.Iyengar@utah.edu}}
\address{Henning Krause\\ 
	Fakult\"at f\"ur Mathematik\\ 
	Universit\"at Bielefeld\\
	33501 Bielefeld\\ 
	Germany\\
	\href{mailto:hkrause@math.uni-bielefeld.de}{hkrause@math.uni-bielefeld.de}}
\begin{document}

\begin{abstract}
	In the recent paper ``The Nakayama functor and its completion for Gorenstein algebras'', a class of Gorenstein algebras over commutative noetherian rings was introduced, and duality theorems for various categories of representations were established. The manuscript on hand provides more context to the results presented in the aforementioned work, identifies new classes of Gorenstein algebras, and explores their behaviour under standard operations like taking tensor products and tilting.
\end{abstract}

\keywords{Gorenstein algebra,  local duality, maximal Cohen--Macaulay module, Serre duality, stable module category, tilting, gentle algebra}
\subjclass[2020]{Primary 16GXX; secondary 16E35, 16E65, 13H10}

\maketitle

\section{Introduction}

The concept of a Gorenstein commutative ring was introduced by Bass~\cite{Bass:1963} almost sixty years ago. It continues to play a central role in commutative algebra and algebraic geometry, in particular in investigations concerning duality. Following the work of Auslander and Reiten~\cite{Auslander:1978a, Auslander--Reiten:1991} and Buchweitz~\cite{Buchweitz:1986} it was recognised that such rings are also special from the perspective of representation theory, and that this is not limited to the commutative context. Indeed, there are various generalisations of the Gorenstein property that apply to general non-commutative rings. Especially in the world of finite dimensional  algebras, the representation theory of Gorenstein algebras remains an active area of research. 

Motivated by their work  with Benson and Pevtsova~\cite{Benson--Iyengar--Krause--Pevtsova:2019a,Benson--Iyengar--Krause--Pevtsova:2020a} on local duality for finite group schemes and more general finite dimensional Gorenstein algebras,  the second and third authors  introduced in  \cite{Iyengar--Krause:2021} a notion of Gorenstein algebras that encompasses most Gorenstein algebras that arise naturally, including all the commutative ones. The goal was to establish duality results,  akin to those established by Grothendieck for commutative rings, for suitable categories of representations over such Gorenstein algebras. These had been obtained by Buchweitz~\cite{Buchweitz:1986} in a more limited context; we refer interested readers to the introductory section of \cite{Iyengar--Krause:2021} for connections to earlier literature. 

Some of these dualities were discussed in the lecture series delivered by the second author at the ICRA workshop pertaining to this volume. In the present manuscript the focus is on other aspects of the theory, including a different perspective on Gorenstein algebras, as flat families of finite dimensional Gorenstein algebras; on natural constructions that preserve this property, notably tilting; and on describing various natural occurring examples of Gorenstein algebras, mostly notably gentle Gorenstein algebras.   While this entails recalling basic results from \cite{Iyengar--Krause:2021}, there are also new results presented here, and not just about Gorenstein algebras. Another aspect that distinguishes the two manuscripts is that the current one is focussed mainly on the representation theory of finitely generated modules, whereas \cite{Iyengar--Krause:2021} is not limited to these. Indeed one has to contend with infinitely generated modules  even to formulate the duality statements presented there.

This article is organised as follows.
Section~\ref{se:commutative-gorenstein} recalls 
basic notions related to Gorenstein rings.
Section~\ref{se:finite-projective} gives first examples of properties of complexes over a finite projective algebra which can be studied in terms of its fibres.
The last topic is continued in Sections~\ref{se:tilting} and~\ref{se:lifting-problems} in connection with tilting theory. Section~\ref{se:Gorenstein-algebras} introduces Gorenstein algebras and shows the invariance of this notion under certain operations.
Section~\ref{se:maximal-cohen-macaulay-modules} provides duality theorems for a special type of Gorenstein algebras, in particular, one 
 for the singularity category of such an algebra (Theorem~\ref{theorem:serre-duality}).
The ultimate goal of Sections~\ref{se:gentle-algebras},
\ref{se:serre-gentle} and~\ref{se:singularity-categories} 
is to make the notions of the last theorem concrete in the context of gentle algebras. Each section requires only a few statements from its predecessors.

\section{Commutative Gorenstein rings}\label{se:commutative-gorenstein}
Throughout this manuscript $R$ is a commutative noetherian ring.  The main examples to keep in mind are coordinate rings of algebraic varieties, and rings coming from number theory.  In this section we recall the salient features of the theory of injective dimension of modules over commutative rings, leading to the notion of a Gorenstein ring, and duality properties of modules over such rings. The interested reader can find much more in Bass' paper~\cite{Bass:1963}, which started the subject of Gorenstein rings, and  in the textbook by Bruns and Herzog~\cite[Chapter~3]{Bruns--Herzog:1998a}.

\begin{chunk}
	\label{ch:injdim-test}
	Let $(R,\fm,k)$ be a commutative local ring; the notation means that $\fm$ is the maximal ideal of $R$ and $k$ its residue field, $R/\fm$. Set $d\colonequals \dim R$, the Krull dimension on $R$; this is finite since $R$ is local
	and noetherian. For any finitely generated $R$-module $M$ there are equivalences
	\begin{align*}
		\injdim_RM <\infty 
		&\iff  \Ext_R^i(-,M)=0 \quad \text{for $i\gg 0$} \\
		& \iff  \Ext_R^i(k,M)=0 \quad \text{for $i\gg 0$} \\
		&\iff  \Ext_R^i(k,M)=0 \quad \text{for some $i> d$} \\
		&\iff  \Ext_R^{d+1}(k,M)=0\,. 
	\end{align*}
	The first equivalence is essentially by definition; the second is by a standard noetherian induction, and requires that $M$ is finitely generated; see \cite[Proposition~3.1.14]{Bruns--Herzog:1998a}. The next equivalence is a theorem of Fossum, Foxby, Griffith and Reiten~\cite[Theorem~1.1]{Fossum--Foxby--Griffith--Reiten:1975},  and depends on the existence of big Cohen--Macaulay modules. The last implication is a consequence of the following result of Ischebeck~\cite[Exercise~3.1.24]{Bruns--Herzog:1998a}: When $\injdim_RM$ is finite, any finitely generated $R$-module $N$ satisfies
	\[
	\sup\{i\in\bbN\mid \Ext^i_R(N,M)\neq 0\} = \depth R - \depth_RN\,.
	\]
	We recall that the \emph{depth} of an $R$-module $N$ is
	\[
	\depth_RN\colonequals \inf\{i\in \bbN \mid \Ext^i_R(k,N)\ne 0\}\,.
	\]
	
	In summary we get that
	\begin{align*}
		\injdim_RM <\infty \iff \injdim_RM\le \depth R \iff \injdim_R M = \depth R\,.
	\end{align*}
	Bass had conjectured that a local ring $R$ is Cohen--Macaulay when it has a nonzero finitely generated module of finite injective dimension. This conjecture was one of the ``homological conjectures" in local algebra, and was finally settled by P.~Roberts, as a consequence of his proof of the New Intersection Theorem; see \cite{Roberts:1980, Roberts:1987}. Given this, one can replace $\depth R$ by $\dim R$ in the equivalences stated above.
\end{chunk}

\begin{chunk}
	\label{ch:Gor-local}
	A commutative local ring $R$ is \emph{Gorenstein} if $\injdim R<\infty$. It follows from the discussion in \ref{ch:injdim-test} that $R$ is Gorenstein if and only if $\injdim R =\dim R$. Then
	\[
	\Ext_R^i(k,R) =
	\begin{cases}
		k & i = \dim R\,,\\ 
		0 & \text{otherwise}\,.
	\end{cases}
	\]
	That is to say $\RHom_R(k,R)\cong \Sigma^{-\dim R} k$.
\end{chunk}

\begin{chunk}
	\label{ch:support}
	Let $R$ be a commutative ring, not necessarily local. We write $\Spec R$ for the spectrum of $R$, consisting of the prime ideals in $R$ with the Zariski topology. The \emph{support} of a finitely generated $R$-module $M$ is the subset
	\[
	\supp_RM \colonequals \{\fp\in\Spec R\mid M_\fp \ne 0\}\,.
	\]
	It is the closed subset $V(I)$, where $I$ is the annihilator ideal of $M$, namely, the kernel of the homothety map $R\to \End_R(M)$.
\end{chunk}

\begin{chunk}
	\label{ch:Gor-global}
	A commutative  ring $R$ is \emph{Gorenstein} if it is noetherian and the local ring $R_\fp$ is Gorenstein, in the sense of \ref{ch:Gor-local}, for each $\fp$ in $\Spec R$; equivalently, for each $\fp$ in $\max(\Spec R)$. When this holds $\injdim R = \dim R$, and then $R$  has finite injective dimension precisely when $\dim R$ is finite. In particular $R$ is Artinian and Gorenstein if and only if $\injdim R =0$; equivalently, when $R$ is self-injective.
	
	Prominent examples include regular rings (which are, by definition, rings locally of finite global dimension),  certain determinantal rings, and, more generally, rings of invariants of certain finite groups; see \cite{Bruns--Herzog:1998a}.
\end{chunk}

\begin{chunk}
	\label{ch:Nagata}
	Nagata~\cite[Appendix~A1]{Nagata:1975} constructed a commutative noetherian ring $R$ such that $\gldim R_\fp$ is finite for each $\fp$ in $\Spec R$ (so $R$ is a regular ring), and $\dim R=\infty$.  This ring is, in particular, Gorenstein, which makes the point that  $R$ Gorenstein does not mean $\injdim R$ is finite.  In other words, even in the world of commutative rings, Gorenstein does not mean Iwanaga--Gorenstein; see Section~\ref{se:Gorenstein-algebras}.
\end{chunk}

The following theorem of Goto~\cite{Goto:1982} often allows one to extend results known for rings of finite injective dimension, to the more general class of Gorenstein rings. 

\begin{theorem}
	\label{th:goto}
	Let $R$ be a commutative Gorenstein ring. For any $M$ in $\dbcat R$ one has $\Ext_R^i(M,R)=0$ for $i\gg 0$.  \qed
\end{theorem}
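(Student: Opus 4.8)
\emph{Strategy and reductions.} I would reduce, in two formal steps, to a cyclic module $R/\fp$, and then argue by Noetherian induction on $\Spec R$, the Gorenstein hypothesis entering only through the localisations $R_\fp$. First, the full subcategory of $\dbcat R$ of objects $M$ with $\Ext^i_R(M,R)=0$ for $i\gg0$ is thick --- it is closed under suspensions, under mapping cones (long exact sequence of $\Ext_R(-,R)$), and under direct summands --- and every object of $\dbcat R$ lies in the thick subcategory generated by its finitely many cohomology modules (via the truncations $\tau^{\le n}$), so it suffices to treat a finitely generated module $M$; put $V\colonequals\supp_R M$. As $\Spec R$ is noetherian, I would induct on $V$, assuming the statement for all finitely generated modules supported on a proper closed subset of $V$. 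Filtering $M$ by subquotients $R/\fp$ with $\fp\in V$, those with $V(\fp)\subsetneq V$ fall under the induction hypothesis, and if $V$ is reducible this covers every subquotient (a reducible $V$ has no generic point); so one is reduced to $V=V(\fp)$ irreducible and $M=R/\fp$. If $\fp$ is maximal, $\Ext^i_R(R/\fp,R)$ is supported at $\fp$ alone, hence equals $\Ext^i_{R_\fp}(k(\fp),R_\fp)$, which vanishes for $i>\dim R_\fp<\infty$ by \ref{ch:Gor-local}.

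\emph{The inductive step, $\fp$ not maximal.} Put $h\colonequals\dim R_\fp$ and $E_i\colonequals\Ext^i_R(R/\fp,R)$, a finitely generated module over the domain $R/\fp$. Localising at $\fp$ gives $(E_i)_\fp=\Ext^i_{R_\fp}(k(\fp),R_\fp)=0$ for $i>h$ by \ref{ch:Gor-local}. At a prime $\fq\supsetneq\fp$ where $(R/\fp)_\fq$ is Cohen--Macaulay, the Ischebeck formula of \ref{ch:injdim-test}, applied over the Gorenstein local ring $R_\fq$, gives $(E_i)_\fq=0$ for $i>\depth R_\fq-\depth_{R_\fq}(R/\fp)_\fq=\dim R_\fq-\dim(R/\fp)_\fq=h$ (the last equality by catenarity of the Cohen--Macaulay ring $R_\fq$). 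So, for $i>h$, the closed set $\supp_R E_i$ lies in the non--Cohen--Macaulay locus of $R/\fp$. The plan is then to pick $x\in R\setminus\fp$ whose vanishing locus contains that locus, so that $\supp_R E_i\subseteq V(\fp+xR)$ for all $i>h$. Since $R/(\fp+xR)$ is supported on $V(\fp+xR)\subsetneq V(\fp)$, the induction hypothesis yields $N$ with $\Ext^j_R(R/(\fp+xR),R)=0$ for $j\ge N$, and feeding $0\to R/\fp\xrightarrow{x}R/\fp\to R/(\fp+xR)\to 0$ into $\Ext_R(-,R)$ then forces multiplication by $x$ to be bijective on $E_i$ for $i\ge N$. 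For $i>\max(h,N)$ one gets $E_i=0$ by checking localisations: over a prime not containing $x$ there is nothing, as $\supp_R E_i\subseteq V(xR)$, and over a prime $\fq\ni x$ one invokes Nakayama's lemma, $x$ lying in the Jacobson radical of $R_\fq$ and acting bijectively on the finitely generated module $(E_i)_\fq$. This closes the induction.

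\emph{Main obstacle.} Everything above is formal except one point --- and this is where the real work lies: producing the element $x\in R\setminus\fp$ whose zero locus contains the non--Cohen--Macaulay locus of $R/\fp$, equivalently showing that all the supports $\supp_R E_i$ with $i>h$ are confined to a single proper closed subvariety of $\Spec(R/\fp)$. When $R$ admits a dualizing complex --- in particular when $\dim R<\infty$, a case in which the theorem is anyway trivial since then $\injdim R<\infty$ --- the non--Cohen--Macaulay locus is closed and misses the generic point, so such an $x$ exists at once. The content of Goto's theorem is that the conclusion persists in general, where $R/\fp$ may be infinite-dimensional (cf.\ \ref{ch:Nagata}), no dualizing complex is available, and confining the higher cohomology of $\RHom_R(R/\fp,R)$ to a proper closed subvariety demands a genuine argument.
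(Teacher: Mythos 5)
Your formal reductions are all correct, and they are the natural ones: the objects $M$ of $\dbcat R$ with $\Ext^i_R(M,R)=0$ for $i\gg 0$ form a thick subcategory, so one may pass to a finitely generated module, then use noetherian induction on the support and prime filtrations to reduce to $M=R/\fp$; the case of a maximal $\fp$ is handled by \ref{ch:Gor-local}, and the long exact sequence attached to $0\to R/\fp\xrightarrow{\,x\,}R/\fp\to R/(\fp+xR)\to 0$, together with Nakayama, closes the induction \emph{once} a suitable $x\in R\setminus\fp$ has been produced. But, as you say yourself, the argument stops exactly where the theorem lives. What is still needed is a single element $x\notin\fp$ with $\supp_R\Ext^i_R(R/\fp,R)\subseteq V(\fp+xR)$ for \emph{all} $i>\dim R_\fp$ simultaneously. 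Ischebeck's formula from \ref{ch:injdim-test} only confines each of these supports to the non--Cohen--Macaulay locus of $R/\fp$, and for each individual $i$ some $x_i\notin\fp$ annihilates $\Ext^i_R(R/\fp,R)$; without uniformity the induction cannot be completed, because the bound $N$ you extract from the induction hypothesis depends on the chosen $x$, which you would in turn like to choose depending on all $i\ge N$ --- a circle. Nor is the uniformity a formality: for a general noetherian ring the Cohen--Macaulay locus need not be open (nor need it contain a nonempty open set), so the Gorenstein hypothesis has to be used a second time at precisely this point --- for instance through the fact that $R/\fp$ is a quotient of the Cohen--Macaulay ring $R$, which forces its Cohen--Macaulay locus to be open, or through Goto's own argument. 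Since you explicitly defer this step, what you have is a correct reduction of the theorem to its essential content, not a proof of it.

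As for the comparison with the paper: the paper offers no argument for Theorem~\ref{th:goto}; it states the result with a reference to Goto \cite{Goto:1982}, where exactly the missing uniform statement is established. So there is no in-paper proof against which your outline could be matched, and as it stands your text reproduces the routine half of the standard strategy (reduction to cyclic modules and the multiplication-by-$x$ trick) while leaving the part that makes the theorem true --- and nontrivial for Gorenstein rings of infinite Krull dimension, cf.\ \ref{ch:Nagata} --- unproved.
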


The remarkable aspect of this result is that $\Ext_R^*(M,R)$ is bounded although the injective dimension of $R$ need not be finite.  Goto's theorem easily leads to the following theorem, which also goes by the name ``local duality theorem", for Grothendieck's local duality can be deduced from it readily; see \cite{Roberts:1980}.

\begin{theorem}
	\label{th:goto+}
	When $R$ is a commutative Gorenstein ring  $\RHom_R(-,R)$ induces a triangle equivalence $\op{\dbcat R} \iso \dbcat R$. \qed
\end{theorem}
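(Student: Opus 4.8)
The plan is to show, first, that $D\colonequals\RHom_R(-,R)$ restricts to an exact functor $\op{\dbcat R}\to\dbcat R$, and then that for every $M\in\dbcat R$ the canonical biduality morphism $\eta_M\colon M\to D(D(M))$ is invertible; the second assertion is the substance of the statement, and for it one localises in order to reduce to the classical situation. For the first point, given $M\in\dbcat R$ I would take a resolution $P\iso M$ by a bounded-above complex of finitely generated projective $R$-modules, so that $D(M)\simeq\Hom_R(P,R)$. This is a bounded-below complex of finitely generated projectives; its cohomology is finitely generated because $R$ is noetherian, and it is bounded above by Goto's Theorem~\ref{th:goto}. Since $D$ is computed by $\RHom$ it sends triangles to triangles, so it is an exact functor $\op{\dbcat R}\to\dbcat R$, equipped with a natural transformation $\eta\colon\id\to D\circ D$. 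Once every $\eta_M$ is known to be invertible, $\eta$ and its counterpart on $\op{\dbcat R}$ exhibit $D$ as quasi-inverse to itself, and the theorem follows; so the task is to prove that $\eta_M$ is an isomorphism for all $M\in\dbcat R$.

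By the first paragraph $\cone(\eta_M)$ again lies in $\dbcat R$, and an object of $\dbcat R$ is zero as soon as all of its localisations at primes vanish; hence it suffices to check that $(\eta_M)_\fp$ is invertible for every $\fp\in\Spec R$. Localising the resolution $P$ identifies $(\eta_M)_\fp$ with the biduality morphism of $M_\fp$ over $R_\fp$, and $R_\fp$ is again Gorenstein, so one may assume $R$ local; then $\injdim_R R=\dim R<\infty$ by~\ref{ch:Gor-local}, and I put $d\colonequals\dim R$. This is exactly the point at which Goto's theorem is essential: globally $R$ may have infinite injective dimension, as in~\ref{ch:Nagata}, and Theorem~\ref{th:goto} is what keeps $D(D(M))$ — and therefore $\cone(\eta_M)$ — inside $\dbcat R$, so that the vanishing is a local question. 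With $R$ now local, the full subcategory of $M\in\dbcat R$ on which $\eta$ is invertible is thick, being cut out by a natural transformation of exact functors, and it contains $R$ because $\eta_R=\id_R$. Every object of $\dbcat R$ is built by finitely many triangles from shifts of its (finitely many nonzero) cohomology modules; and if $M\in\mod R$ has minimal free resolution $F_\bullet$, then the $d$-th syzygy $\syz^d M$ is maximal Cohen--Macaulay by the depth lemma, while the triangles $\syz^{i+1}M\to F_i\to\syz^iM\to$ place $M$ in the thick subcategory generated by $R$ and $\syz^d M$. So it remains to prove that $\eta_N$ is invertible for $N\in\mod R$ maximal Cohen--Macaulay.

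This last assertion is the classical one that $\Hom_R(-,R)$ is a perfect duality on the maximal Cohen--Macaulay modules over a Gorenstein local ring. I would prove it by induction on $d$. For $d=0$ the ring $R$ is self-injective, $D$ agrees with $\Hom_R(-,R)$, and the claim is Matlis duality. For $d>0$, choose $x\in\fm$ that is a nonzerodivisor on $R$ and on $N$; then $R/xR$ is Gorenstein local of dimension $d-1$ and $N/xN$ is maximal Cohen--Macaulay over it. Using the presentation $R/xR\simeq(R\xrightarrow{\,x\,}R)$ one gets $\RHom_R(N,R)\lotimes_R R/xR\simeq\RHom_R(N,R/xR)\simeq\RHom_{R/xR}(N/xN,R/xR)$, which by the inductive hypothesis is concentrated in degree zero and maximal Cohen--Macaulay over $R/xR$; Nakayama's lemma then gives $\Ext^{>0}_R(N,R)=0$, so $\Hom_R(N,R)$ is a module, and the same computation shows it is again maximal Cohen--Macaulay with $\Hom_R(N,R)/x\,\Hom_R(N,R)\cong\Hom_{R/xR}(N/xN,R/xR)$. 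Applying this once more to $\Hom_R(N,R)$ and using the naturality of biduality under $-\lotimes_R R/xR$ identifies $\eta_N\otimes_R R/xR$ with the biduality morphism of $N/xN$ over $R/xR$, which is invertible by induction; hence $\cone(\eta_N)\lotimes_R R/xR\simeq0$, and as $\cone(\eta_N)\in\dbcat R$ and $x\in\fm$, Nakayama's lemma forces $\cone(\eta_N)\simeq0$. The work is concentrated in this induction, which is the expected main obstacle; alternatively the duality on maximal Cohen--Macaulay modules over a Gorenstein local ring can be quoted, e.g.\ from~\cite[Chapter~3]{Bruns--Herzog:1998a} or~\cite{Auslander--Reiten:1991}. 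Everything else — that $D$ preserves $\dbcat R$, the reduction to the local ring, and the dévissage to maximal Cohen--Macaulay modules — is formal once Goto's Theorem~\ref{th:goto} is granted.
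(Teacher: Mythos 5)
Your argument is correct, and it fills in a deduction that the paper does not actually spell out: the text merely asserts that Goto's Theorem~\ref{th:goto} ``easily leads'' to Theorem~\ref{th:goto+}, with a pointer to the literature. Your use of Goto's theorem matches the role the paper intends for it --- it is exactly what guarantees that $\RHom_R(-,R)$ sends $\dbcat R$ into $\dbcat R$ even though $\injdim R$ may be infinite (cf.\ \ref{ch:Nagata}), after which invertibility of the biduality map is a local question because the cone has finitely generated cohomology. Where you go beyond the paper is in completing the local case from scratch: the reduction through truncations and $d$-th syzygies to maximal Cohen--Macaulay modules, and then the induction on $\dim R$ via a regular element, with the artinian self-injective case as base, is a correct self-contained proof of the classical statement that $\Hom_R(-,R)$ is a perfect duality on maximal Cohen--Macaulay modules over a Gorenstein local ring; as you note, that statement could simply be quoted (e.g.\ from \cite[Chapter~3]{Bruns--Herzog:1998a}), which is in effect what the paper does by citing \cite{Roberts:1980}. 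The only steps you compress are routine but worth acknowledging: the compatibility of the biduality morphism with $-\lotimes_R R/xR$, and the Nakayama-type argument that a complex in $\dbcat R$ with vanishing derived reduction modulo $x\in\fm$ is zero; both are standard and your sketches of them are sound. So the proposal is a valid, somewhat more elementary and self-contained route to the same result.
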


\section{Finite projective algebras}\label{sec:finite-projective}
\label{se:finite-projective}

Let $R$ be a commutative noetherian ring.  Following Gabriel~\cite{Gabriel:1962}, by a \emph{finite $R$-algebra} $A$ we mean that $A$ is an associative $R$-algebra that is finitely generated as an $R$-module. In this case, $A$ is noetherian on either side, its centre $Z(A)$ is a noetherian ring, and $A$ is a finite $Z(A)$-algebra.  Thus for any $A$-complexes $M,N$ in $\dbcat A$, the bounded derived category of finitely generated (left) $A$-modules, $\Ext_A^i(M,N)$ is finitely generated as an $R$-module, and hence also as a $Z(A)$-module, for each integer $i$.

We say $A$ is a  \emph{projective} $R$-algebra to mean that $A$ is an $R$-algebra that is projective as an $R$-module. Thus a finite projective $R$-algebra is an $R$-algebra that is both finite and projective as an $R$-module. Examples include the group algebra, $RG$, over $R$ of a finite group $G$ and $RQ$, the quiver algebra of a finite quiver $Q$ without loops and cycles. 

\subsection*{Support and faithfulness}
Let $A$ be a finite $R$-algebra. The support of $A$ as an $R$-module, in the sense of \ref{ch:support}, has another interpretation.

\begin{chunk}
	\label{ch:specA}
	Gabriel~\cite{Gabriel:1962} introduced the \emph{spectrum} of $A$, denoted $\Spec A$, to be the collection of two-sided prime ideals in $A$. 
	A proper two-sided ideal $\fp$ of $A$ is a \emph{prime ideal} if for any two-sided ideals $I,J$ of $A$ with $IJ \subseteq \fp$ it follows that $I \subseteq \fp$ or $J \subseteq \fp$.
	Given such a $\fp$ its contraction $\fp\cap R$ to $R$ is again a prime ideal, so we get a map $\Spec A\to \Spec R$. 
	It can be shown that its image is precisely $\supp_RA$, see \cite[Chapter V, Proof of Proposition 11]{Gabriel:1962}.
\end{chunk}

This leads to the question: When is $\supp_RA = \Spec R$? This is connected to the faithfulness of $A$ as an $R$-module, as is explained below.

\begin{chunk}
	\label{ch:faithful}
	An $R$-module $M$ is \emph{faithful} if the homothety map $R\to \End_R(M)$ is one-to-one. An $R$-algebra $A$ is faithful as an $R$-module precisely when the structure map $R\to A$ is one-to-one. 
	
	When a finitely generated $R$-module $M$ is faithful, $\supp_RM=\Spec R$. The converse need not hold; for instance, if $I$ is a nilpotent ideal in $R$, then $\supp_R(R/I)=\Spec R$, but the $R$-module $R/I$ is not faithful when $I\ne 0$. 
\end{chunk}

\begin{lemma}
	Let $M$ be a finitely generated projective $R$-module.  Then $M$ is faithful if and only if $\supp_RM=\Spec R$. 
\end{lemma}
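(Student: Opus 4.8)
The plan is to handle the two implications separately. The forward implication—that a faithful finitely generated $R$-module has support equal to $\Spec R$—needs no projectivity, and is precisely what is recorded in \ref{ch:faithful}, so I would just cite it. The substance is the converse: assuming $\supp_R M = \Spec R$, I must show that the annihilator ideal $\fa$ of $M$, that is, the kernel of the homothety map $R \to \End_R(M)$, is zero.

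First I would fix $a \in \fa$ and argue by contradiction, supposing $a \neq 0$. Then the ideal $(0 :_R a) = \{r \in R : ra = 0\}$ is proper, hence lies inside some maximal ideal $\fm$; the point of choosing $\fm$ this way is that the image of $a$ in $R_\fm$ is nonzero, since otherwise $sa = 0$ for some $s \notin \fm$, forcing $s \in (0 :_R a) \subseteq \fm$, a contradiction. Next I would localise $M$ at this same $\fm$. As $M$ is finitely generated and projective, $M_\fm$ is a finitely generated projective—hence free—module over the local ring $R_\fm$, and it is nonzero because $\fm \in \Spec R = \supp_R M$; thus $M_\fm \cong R_\fm^r$ with $r \geq 1$. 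But $aM = 0$ yields $aM_\fm = 0$, and acting on a basis element of $R_\fm^r$ shows $a = 0$ in $R_\fm$, contradicting the previous sentence. Hence $\fa = 0$, and $M$ is faithful.

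I do not expect a genuine obstacle here; the only delicate point is that $\fm$ must be chosen to contain $(0 :_R a)$—not merely to belong to $\supp_R M$—so that $a$ survives in the localisation. A more computational alternative would present $M$ as the image of an idempotent $e \in \Mat_n(R)$, note that $\fa = \{a \in R : ae = 0\}$, and observe that $\supp_R M = \Spec R$ forces the trace ideal of $M$, generated by the entries of $e$, to equal $R$; writing $1 = \sum_{i,j} r_{ij} e_{ij}$ then gives $a = \sum_{i,j} r_{ij}(a e_{ij}) = 0$ for every $a \in \fa$. I would nonetheless prefer the localisation argument for its transparency.
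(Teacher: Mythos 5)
Your argument is correct and is essentially the paper's own: both reduce to the converse, localise, and use that $M_\fp$ is a nonzero free $R_\fp$-module to kill the annihilator. The only cosmetic difference is that you fix a nonzero element $a$ of the annihilator $I$ and choose a maximal ideal containing $(0:_Ra)$ to derive a contradiction, while the paper shows directly that $I_\fp=0$ for every prime $\fp$ and concludes $I=0$; the content is the same.
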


\begin{proof}
	We only have to verify that when $\supp_RM=\Spec R$, the $R$-module $M$ is faithful.  Let $I$ be kernel of the homothety map $\lambda\colon R\to \End_R(M)$. Since $M$ is projective, the $R_\fp$-module $M_\fp$ is free for each $\fp$ in $\Spec R$; it is also nonzero because $\supp_RM=\Spec R$. Thus the induced map
	\[
	\lambda_\fp\colon R_\fp \lra \End_R(M)_\fp \cong \End_{R_\fp}(M_\fp)
	\]
	is injective; the isomorphism holds because $M$ is finitely generated. Thus $I_\fp=0$. Since this holds for each prime ideal $\fp$, it follows that $I=0$, as desired.
\end{proof}

\begin{chunk}
	\label{ch:subring}
	Let $M$ be a nonzero finitely generated projective $R$-module. Consider the rank function 
	\[
	\Spec R\lra \bbN \qquad\text{where $\fp \mapsto \rank_{R_\fp}(M_\fp)$\,.}
	\]
	This is continuous for the Zariski topology on $\Spec R$ and discrete topology on $\bbN$, and hence constant on  connected subsets of $\Spec R$; see \cite[Chapter~III, Theorem~7.1]{Bass:1968}. Therefore when $\Spec R$ is connected, the $R$-module $M$, being projective and finitely generated, is faithful. In general, $R$ decomposes as $R'\times R''$, where $\Spec R'=\supp_RM$, with $M$ finitely generated even when viewed as an $R'$-module, and $R'' \cdot  M= 0$. 
	
	The upshot of this discussion is that if $A$ is a finite projective $R$-algebra, then by dropping to a ring factor $R'$ of $R$ if needed, we can assume that $A$ is also faithful as an $R$-module; equivalently, that $R$ is a subring of the centre of $A$. 
\end{chunk}

\subsection*{Fibre-wise criteria}
Let $A$ be a finite projective $R$-algebra. For each $\fp$ in $\Spec R$ the ring $A \otimes_R k(\fp)$ is a finite dimensional $k(\fp)$-algebra. One can thus view $A$ as a family of finite dimensional algebras parameterised  by the (spectrum of the) ring $R$.
Moreover any $A$-complex  $M$ defines the family of complexes
\[
\fibre M\fp \colonequals   M \lotimes_R k(\fp)   \qquad\text{for $\fp$ in $\Spec R$\,.}
\]
We identify $\fibre A\fp$ with the $k(\fp)$-algebra $A\otimes_R k(\fp)$, and view $\fibre M\fp$ as a complex over it. The following results validate this perspective.  

An $A$-complex $M$ is \emph{perfect} if it is quasi-isomorphic to a bounded complex of finitely generated projective $A$-modules.
\begin{proposition}
	\label{pr:perfection}
	Let $A$ be a finite projective $R$-algebra. For any $M\in \dbcat A$, the following conditions are equivalent.
	\begin{enumerate}
		\item \label{pr:per1}
		The $A$-complex $M$ is perfect;
		\item \label{pr:per2}
		The $\fibre A\fp$-complex $\fibre M\fp$ is perfect for each $\fp\in \supp_RA$;
		\item \label{pr:per3}
		The $\fibre A\fm$-complex $\fibre M\fm$ is perfect for each $\fm\in \max(\supp_RA)$.
	\end{enumerate}
\end{proposition}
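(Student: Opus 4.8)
The implications $\eqref{pr:per1}\Rightarrow\eqref{pr:per2}\Rightarrow\eqref{pr:per3}$ are the easy direction. For the first, note that $-\lotimes_R k(\fp)$ is a triangulated functor from $\dbcat A$ to $\mathbf{D}(\Mod \fibre A\fp)$, and it carries $A$ to $\fibre A\fp$; since $A$ is $R$-projective, a finitely generated projective $A$-module $P$ has $P\lotimes_R k(\fp)=P\otimes_R k(\fp)$ concentrated in degree zero and projective over $\fibre A\fp$. Hence a perfect $A$-complex is sent to a perfect $\fibre A\fp$-complex. The implication $\eqref{pr:per2}\Rightarrow\eqref{pr:per3}$ is immediate since $\max(\supp_RA)\subseteq\supp_RA$.

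The substance is $\eqref{pr:per3}\Rightarrow\eqref{pr:per1}$, and the plan is to reduce to the local situation and then apply a Nakayama-type argument. First I would reduce to the case where $R$ is local with maximal ideal $\fm$: perfection of an $A$-complex can be checked after localising at each $\fp\in\Spec R$, and in fact it suffices to check at the maximal ideals; moreover, by \ref{ch:subring} we may assume $R$ is a subring of $Z(A)$, so $\supp_RA=\Spec R$ and condition \eqref{pr:per3} refers to the closed point $\fm$. Thus assume $(R,\fm,k)$ is local and $\fibre M\fm = M\lotimes_R k$ is perfect over $\fibre A\fm=A\otimes_R k$. Since $A$ is a finite $R$-algebra, $M\in\dbcat A$ has a bounded-above complex of finitely generated projective $A$-modules resolving it; I want to show this resolution can be taken bounded. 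The key homological measure is $\Tor^A_i(k_A\otimes\text{(something)})$ — more precisely, one uses that $M$ is perfect over $A$ if and only if $M\lotimes_A (A/\fm A)$ has bounded homology over $A/\fm A = \fibre A\fm$, because $\fm A$ lies in the Jacobson radical of the (semilocal, module-finite over the local ring $R$) ring $A$. Here the crucial point is that $M\lotimes_A(A/\fm A)$ computes the same thing as $M\lotimes_R k$ up to base change along $A\otimes_R k$, so boundedness of $\fibre M\fm$ transfers to the statement we need.

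Concretely, take a minimal free resolution of $M$ over $A$ (possible since $A$ is module-finite over the local ring $R$, so $A$ is semiperfect and minimal resolutions exist), say $F_\bullet \to M$ with each $F_i$ finitely generated free. Then $F_\bullet\otimes_A(A/\fm A) = F_\bullet\otimes_R k$ has zero differentials by minimality, so its homology in degree $i$ is $F_i\otimes_R k$. On the other hand $F_\bullet\otimes_R k$ is a complex of projective $\fibre A\fm$-modules quasi-isomorphic to $\fibre M\fm = M\lotimes_R k$, which is perfect over $\fibre A\fm$ by hypothesis; hence its homology vanishes in all but finitely many degrees. Therefore $F_i\otimes_R k = 0$, and so $F_i = 0$ by Nakayama, for $i\gg 0$. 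This shows $M$ has finite projective dimension over $A$, i.e.\ $M$ is perfect.

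The main obstacle I anticipate is the bookkeeping around minimal resolutions and semiperfectness: one must be careful that $A$, being module-finite over a commutative \emph{local} noetherian ring $R$, is semiperfect (its idempotents lift modulo the radical, which contains $\fm A$), so that a well-behaved minimal projective resolution of $M\in\dbcat A$ exists and behaves as expected under $-\otimes_R k$. An alternative, perhaps cleaner, route avoids minimal resolutions entirely: use the characterisation of perfection via $\RHom$, namely that $M$ is perfect over $A$ iff $\RHom_A(M,N)$ is bounded for all $N$ in $\dbcat A$, combine with the fact that over the semilocal ring $A$ it suffices to test against the finitely many simple modules, each of which is annihilated by $\fm A$ and hence a module over $\fibre A\fm$, and then identify $\RHom_A(M, S)$ with $\RHom_{\fibre A\fm}(\fibre M\fm, S)$ by adjunction. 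Either way, the heart of the argument is the same Nakayama-style descent from the special fibre, and the $R$-projectivity of $A$ is what makes the base-change $-\lotimes_R k$ as well-behaved as it is.
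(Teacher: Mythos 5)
Your easy implications and the reduction to the local case (via Theorem~\ref{th:bass-murthy}) are fine, but the main argument for \eqref{pr:per3}$\Rightarrow$\eqref{pr:per1} has a genuine gap at exactly the point you flagged. A module-finite algebra over a noetherian \emph{local} ring need not be semiperfect: idempotents lift modulo the radical when the base is complete (or henselian), not in general. For instance, with $R=k[t]_{(t)}$ the algebra $A=R[s]/(s^2-s-t)$ is finite free over $R$ and is a domain with two maximal ideals; a commutative semiperfect ring is a finite product of local rings, so this $A$ is not semiperfect, and projective covers (hence minimal resolutions) need not exist. (Note that Example~\ref{example:skew-group-algebras} in the paper invokes semiperfectness only for $\widehat A_\fm$ over the \emph{complete} ring $\widehat R_\fm$.) There is a second slip: even when a minimal resolution $F$ exists, minimality means $d(F)\subseteq \rad{A}\,F$, not $d(F)\subseteq \fm F$ (take $R=k$ a field to see the latter is impossible), so $F\otimes_Rk$ does not have zero differentials; it is a \emph{minimal} complex of projectives over $\fibre A\fm$, and one detects the vanishing of $F_i\otimes_Rk$ for $i\gg0$ by applying $\Hom_{\fibre A\fm}(-,S)$ for the simple $\fibre A\fm$-modules $S$, before invoking Nakayama over $R$. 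Your route can be repaired by first base changing to $\widehat R_\fm$, where $\widehat A_\fm$ is semiperfect and the fibre is unchanged, and then descending perfection along the faithfully flat map $A_\fm\to\widehat A_\fm$ --- but that descent is an additional step you would have to supply.

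Your alternative sketch in the last paragraph is, in substance, the paper's proof: one quotes \cite[Theorem~A.1.2]{Avramov--Iyengar:2021} (which has the Bass--Murthy theorem built in) to the effect that $M$ is perfect once $\Ext^i_{A_\fm}(M_\fm,L)=0$ for $i\gg0$ for every $\fm\in\max(\supp_RA)$ and every simple $A_\fm$-module $L$; since $\fm A_\fm$ lies in the Jacobson radical of $A_\fm$, each such $L$ is a $\fibre A\fm$-module, and adjunction gives $\Ext^i_{A_\fm}(M_\fm,L)\cong\Ext^i_{\fibre A\fm}(\fibre M\fm,L)$, which vanishes for $i\gg0$ because $\fibre M\fm$ is perfect. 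The point to internalise is that the criterion ``perfection can be tested by Ext-vanishing against the simples'' is itself the nontrivial input and must be cited or proved --- it is not free, precisely because of the failure of semiperfectness above; once it is granted, no minimal resolutions and no completion are needed.
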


\begin{proof}
	It is straightforward to verify that \eqref{pr:per1}$\Rightarrow$\eqref{pr:per2}$\Rightarrow$\eqref{pr:per3}.

	\eqref{pr:per3}$\Rightarrow$\eqref{pr:per1} Since $M$ is in $\dbcat A$, it suffices to verify that for each maximal ideal $\fm$ in $\supp_RA$ and simple $A_\fm$-module $L$ one has
	\[
	\Ext^i_{A_\fm}(M_\fm,L)=0 \qquad{i\gg 0}\,.
	\]
	This is by \cite[Theorem A.1.2]{Avramov--Iyengar:2021}. 
	Since $\fm A_\fm$ is contained in the Jacobson radical of $A_\fm$ according to \cite[Corollary~5.9]{Lam:1991}, one has $\fm L =0$ and the action of $A_\fm$ on $L$ factors through $\fibre A\fm$. Thus  standard adjunction yields the isomorphism below:
	\[
	\Ext^i_{A_\fm}(M_\fm,L) \cong \Ext_{\fibre A\fm}^i(\fibre M\fm,L) =0 \qquad {i\gg 0}\,.
	\]
	The equality holds because $\fibre M\fm$ is a perfect $\fibre A\fm$-complex. 
\end{proof}

A variation of the last proof yields a fibre-wise criterion for projectivity of certain $A$-modules. More precisely, the following statement holds, see \cite[Lemma~2.19]{Benson--Iyengar--Krause--Pevtsova:2022}.
\begin{lemma}\label{le:proj}
	Let $A$ be a finite projective $R$-algebra, and $M$ a finitely generated $A$-module.
	When $M$ is projective as an $R$-module, there is an equality
	\begin{align*}
		\pdim_A M = \sup\{ \pdim_{\fibre A\fp}{\fibre M\fp} \mid \fp \in \Spec R\}\,.
	\end{align*}  
	Moreover, it suffices to take the supremum over the maximal ideals in $R$.
	\qed
\end{lemma}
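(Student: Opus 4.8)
The plan is to establish two inequalities — that $\pdim_{\fibre A\fp}\fibre M\fp\le\pdim_A M$ for every $\fp\in\Spec R$, and that $\pdim_A M\le\sup\{\pdim_{\fibre A\fm}\fibre M\fm\mid\fm\in\max(\Spec R)\}$ — and then to observe that sandwiching the supremum over all primes between these two forces equality throughout and yields the ``moreover'' clause. Note first that $\fibre M\fp=M\otimes_R k(\fp)$, since $M$ is $R$-flat, so the fibres are genuine modules. For the first inequality there is nothing to prove unless $\pdim_A M<\infty$, so fix a resolution of $M$ by finitely generated projective $A$-modules of length $\pdim_A M$. Each term is projective over $R$ — a projective $A$-module is a summand of a free one, and $A$ is $R$-projective — and $M$ is $R$-projective by hypothesis, so an induction starting from $M$ shows that every syzygy in the resolution is $R$-projective; hence the augmented complex is a contractible complex of $R$-modules. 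Applying $-\otimes_R k(\fp)$ therefore produces a resolution of $\fibre M\fp$ by projective $\fibre A\fp$-modules of length $\le\pdim_A M$.

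For the reverse inequality, put $n:=\sup\{\pdim_{\fibre A\fm}\fibre M\fm\mid\fm\in\max(\Spec R)\}$, which may be assumed finite; the goal is $\pdim_A M\le n$, i.e.\ that $\syz^n_A M$ is projective over $A$. Since $A$ is a finite $R$-algebra, a finitely generated $A$-module $P$ is projective over $A$ as soon as $P_\fm$ is projective over $A_\fm$ for every maximal ideal $\fm$ of $R$: writing $0\to K\to F\to P\to0$ with $F$ finitely generated free over $A$, the $R$-module $\Ext^1_A(P,K)$ is finitely generated and localises at $\fm$ to $\Ext^1_{A_\fm}(P_\fm,K_\fm)=0$, hence vanishes. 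Applying this to $P=\syz^n_A M$, and using $(\syz^n_A M)_\fm=\syz^n_{A_\fm}M_\fm$, $A_\fm/\fm A_\fm=\fibre A\fm$ and $M_\fm/\fm M_\fm=\fibre M\fm$, reduces the statement to the case where $R$ is local. Assume then that $(R,\fm,k)$ is local, write $\bar A:=\fibre A\fm=A/\fm A$, put $n:=\pdim_{\bar A}\fibre M\fm<\infty$, and set $N:=\syz^n_A M$. As in the first paragraph $N$ is finitely generated over $A$ and projective over $R$, and reducing an $R$-contractible projective $A$-resolution of $M$ modulo $\fm$ identifies $N\otimes_R k$ with an $n$-th syzygy of $\fibre M\fm$ over $\bar A$; since $\bar A$ is a finite-dimensional, hence semiperfect, $k$-algebra and $\pdim_{\bar A}\fibre M\fm=n$, that syzygy is projective over $\bar A$. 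Everything thus reduces to the claim: \emph{if $R$ is local, $N$ is finitely generated over $A$ and projective over $R$, and $N\otimes_R k$ is projective over $A/\fm A$, then $N$ is projective over $A$.}

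To prove the claim I would pass to the $\fm$-adic completion $\wh R$, so that $\wh A:=A\otimes_R\wh R$ is a finite $\wh R$-algebra and $\wh N:=N\otimes_R\wh R$ is finitely generated over $\wh A$ and flat over $\wh R$. Since $\wh R$ is faithfully flat over $R$, projectivity of the finitely generated module $N$ over $A$ may be checked after this base change — again by the vanishing of a finitely generated $\Ext^1$ — so it suffices to show $\wh N$ is projective over $\wh A$. Now $\wh A$ is complete for the $\wh\fm\wh A$-adic topology, one has $\wh\fm\wh A\subseteq\rad{\wh A}$ (the kind of radical fact invoked in the proof of Proposition~\ref{pr:perfection}), and $\wh A/\wh\fm\wh A=A/\fm A$ is artinian; hence $\wh A$ is semiperfect and every finitely generated projective $A/\fm A$-module lifts to a finitely generated projective $\wh A$-module. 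Lift $N\otimes_R k=\wh N/\wh\fm\wh N$ to such a module $P$ together with an isomorphism $P/\wh\fm P\iso\wh N/\wh\fm\wh N$, and lift that isomorphism along the projection $\wh N\to\wh N/\wh\fm\wh N$ to a homomorphism $\phi\colon P\to\wh N$. Then $\phi$ is surjective by Nakayama; and since $\wh N$ is flat over $\wh R$, tensoring $0\to\ker\phi\to P\to\wh N\to0$ with $k$ over $\wh R$ stays exact, so $\ker\phi$ vanishes modulo $\wh\fm$ and hence $\ker\phi=0$, again by Nakayama. Thus $\wh N\cong P$ is projective over $\wh A$, which proves the claim; combining it with the first paragraph gives the asserted equalities, with the supremum attained over $\max(\Spec R)$.

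The main obstacle is precisely this local claim: a projective module over the special fibre $A/\fm A$ need not lift to a projective $A$-module in general — for instance when $R$ is a discrete valuation ring, $A/\fm A$ is a nontrivial product of fields, and every finitely generated projective $A$-module is free — so it is essential to use that $N$ is $R$-flat, not merely that its fibre is projective. Passing to the completion is exactly what turns ``$R$-flat with projective special fibre'' into ``projective'', since idempotents, and hence finitely generated projective modules, lift over the complete ring $\wh A$. The remaining ingredients — locality of projectivity over a finite $R$-algebra, the syzygy bookkeeping of the first paragraph, and faithfully flat descent of projectivity — are routine.
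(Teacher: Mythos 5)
Your argument is correct, but it takes a genuinely different route from the one the paper has in mind: the text does not prove the lemma itself, it quotes \cite[Lemma~2.19]{Benson--Iyengar--Krause--Pevtsova:2022} and says the proof is a ``variation'' of the proof of Proposition~\ref{pr:perfection}. That variation runs as follows: one tests projective dimension over $A_\fm$ against the simple $A_\fm$-modules $L$; since $\fm A_\fm\subseteq\rad{A_\fm}$ these are modules over the fibre $\fibre{A}{\fm}$, and because $M$ is $R$-projective the derived fibre is the honest fibre, so adjunction gives $\Ext^i_{A_\fm}(M_\fm,L)\cong\Ext^i_{\fibre{A}{\fm}}(\fibre{M}{\fm},L)$; combined with the detection of $\pdim_{A_\fm}$ of a finitely generated module by Ext against simples (the Avramov--Iyengar-type input used in Proposition~\ref{pr:perfection}), this yields $\pdim_{A_\fm}M_\fm=\pdim_{\fibre{A}{\fm}}\fibre{M}{\fm}$ prime by prime, and the lemma follows by globalising, exactly as in your first reduction. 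You replace this Ext-against-simples step by isolating the local claim ``finitely generated over $A$, flat over local $R$, projective fibre $\Rightarrow$ projective over $A$'' and proving it via the $\fm$-adic completion: faithfully flat descent of the vanishing of a finitely generated $\Ext^1$, lifting of finitely generated projectives along $\wh{A}_{\fm}\twoheadrightarrow\fibre{A}{\fm}$ (the semiperfect/idempotent-lifting fact the paper itself invokes in Example~\ref{example:skew-group-algebras}), and two applications of Nakayama, with the $R$-flatness of the syzygy used precisely where it must be, to keep the reduced sequence exact on the left. Your syzygy bookkeeping (the $R$-split resolution giving both inequalities and the sandwich over maximal ideals) is sound, and the local-global criterion for projectivity via finitely generated $\Ext$-modules over $R$ is the standard one, so the proof stands. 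What each approach buys: yours is self-contained modulo standard facts about complete semiperfect algebras and needs no external theorem on homological dimensions over Noether algebras, but it must pass through the completion; the paper's route is shorter given the cited result and delivers the fibrewise equality $\pdim_{A_\fp}M_\fp=\pdim_{\fibre{A}{\fp}}\fibre{M}{\fp}$ at every prime directly, which is the form most convenient for later use (for example in Lemma~\ref{le:dual-proj}).
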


The following result due to Bass and Murthy, see ~\cite[Theorem~7.1]{Bass:1968} or \cite[Theorem~4.1]{Avramov--Iyengar--Lipman:2010}, is a key ingredient in the proof of  \cite[Theorem A.1.2]{Avramov--Iyengar:2021} used above. It plays  a role in the discussion below, and also later on.

\begin{theorem}
	\label{th:bass-murthy}
	Let $R$ be a commutative noetherian ring,  $A$ a finite $R$-algebra, and $M$ an $A$-complex in $\dbcat{A}$. Then $M$ is perfect if and only if the $A_\fp$-complex $M_\fp$ is perfect for each $\fp$ in  
	$\supp_RA$.
	\qed
\end{theorem}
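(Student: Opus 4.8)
The plan is to establish the two implications separately; the forward one is immediate, while the converse carries all the content. If $M$ is perfect over $A$, say $M\simeq P$ with $P$ a bounded complex of finitely generated projective $A$-modules, then for every prime $\fp$ the localization $P_\fp$ is a bounded complex of finitely generated projective $A_\fp$-modules quasi-isomorphic to $M_\fp$, since localizing at $\fp$ is exact and sends finitely generated projectives to finitely generated projectives. For $\fp\notin\supp_RA$ this is vacuous, as $A_\fp=0$ forces $M_\fp=0$; in particular the hypothesis of the converse is equivalent to the statement that $M_\fp$ is perfect over $A_\fp$ for \emph{every} $\fp\in\Spec R$.

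For the converse, fix a quasi-isomorphism $\pi\colon P\to M$ in which $P$ is a complex of finitely generated projective $A$-modules with $P^i=0$ for $i\gg 0$; such a $P$ exists because $A$ is left noetherian and $M\in\dbcat A$. For $d\ge 0$ put $N_d\colonequals\ker(P^{-d}\to P^{-d+1})$, a finitely generated $A$-module, and observe that localizing $\pi$ at $\fp$ gives an analogous resolution of $M_\fp$ over $A_\fp$ with cocycle module $(N_d)_\fp$ in degree $-d$. A routine truncation argument supplies a bound $d_0$, depending only on the degrees in which $M$ has nonzero cohomology, such that for all $d\ge d_0$: if $N_d$ is projective over $A$ then $M$ is perfect over $A$ (truncate $P$ at $N_d$), and the same holds over each $A_\fp$ with the same $d_0$, since localization only shrinks the cohomological range. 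Conversely, because each $M_\fp$ is perfect over $A_\fp$, for every $\fp$ there is some $d\ge d_0$ with $(N_d)_\fp$ projective over $A_\fp$, and then $(N_{d'})_\fp$ is projective for every $d'\ge d$ as well, being a direct summand of $(P^{-d'})_\fp$.

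The crux is to upgrade this to a single $d$ for which $(N_d)_\fp$ is projective over $A_\fp$ for \emph{all} $\fp$ at once. Set
\[
W_d\colonequals\{\fp\in\Spec R\mid(N_d)_\fp\text{ is projective over }A_\fp\}\,.
\]
Since $N_d$ is finitely presented, $E_d\colonequals\Ext^1_A(N_d,N_{d+1})$ is a finitely generated $R$-module (as is every such $\Ext$ over the finite $R$-algebra $A$) and its formation commutes with localization; moreover $(N_d)_\fp$ is projective exactly when the localization of the short exact sequence $0\to N_{d+1}\to P^{-d}\to N_d\to 0$ splits, i.e.\ exactly when $(E_d)_\fp=0$. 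Hence $W_d=\Spec R\setminus\supp_R E_d$ is open. By the previous paragraph the family $(W_d)_{d\ge d_0}$ is increasing with $\bigcup_{d\ge d_0}W_d=\Spec R$, so quasi-compactness of $\Spec R$ yields $W_d=\Spec R$ for some $d$. Fix such a $d$, write $N\colonequals N_d$, and choose a presentation $0\to K\to A^{\oplus b}\to N\to 0$ with $K$ finitely generated (possible as $A$ is left noetherian). Then $\Ext^1_A(N,K)$ is a finitely generated $R$-module all of whose localizations $\Ext^1_{A_\fp}(N_\fp,K_\fp)$ vanish, because each $N_\fp$ is projective; hence $\Ext^1_A(N,K)=0$, the presentation splits, and $N$ is a direct summand of $A^{\oplus b}$, so $N$ is projective over $A$. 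As $d\ge d_0$, this gives that $M$ is perfect over $A$.

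The main obstacle is precisely this last passage from pointwise to uniform data: finiteness of $\pdim_{A_\fp}M_\fp$ at every $\fp$ does not on its own bound $\sup_\fp\pdim_{A_\fp}M_\fp$, and the argument sidesteps this only because each $W_d$ is Zariski-open — that is, because projectivity of a finitely generated module spreads out from a point — so that quasi-compactness of $\Spec R$ delivers a uniform $d$. The remaining ingredients, namely the existence of a resolution by finitely generated projectives, the truncation reduction to a syzygy module, and the compatibility of $\Ext$ with localization, are routine.
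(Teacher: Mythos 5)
Your argument is correct, and it is worth noting that the paper itself gives no proof of this statement: it is quoted with a reference to Bass \cite[Theorem~7.1]{Bass:1968} and to Avramov--Iyengar--Lipman \cite[Theorem~4.1]{Avramov--Iyengar--Lipman:2010}, so what you have written is a self-contained proof of a result the paper treats as known. Your route is essentially the standard one underlying those references: resolve $M$ by finitely generated projectives, reduce perfection to projectivity of a high syzygy $N_d$, observe that the locus where $(N_d)_\fp$ is projective is open because it is the complement of $\supp_R\Ext^1_A(N_d,N_{d+1})$ (a finitely generated $R$-module whose formation commutes with localisation, since $A$ is a finite $R$-algebra), and then use quasi-compactness of $\Spec R$ together with the increasing nature of these loci to extract a single $d$ that works everywhere; the final passage from ``locally projective'' to ``projective'' via vanishing of $\Ext^1_A(N,K)$ is exactly the right mechanism, and this openness-plus-compactness step is indeed the point that rules out the a priori possibility of unbounded local projective dimensions. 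Two small remarks: the short exact sequence defining the syzygies should read $0\to N_{d+1}\to P^{-d-1}\to N_d\to 0$ rather than with middle term $P^{-d}$ (a harmless index slip), and your observation that primes outside $\supp_RA$ contribute nothing, since $A_\fp=0$ forces $M_\fp\simeq 0$, is precisely what justifies replacing the hypothesis ``for each $\fp\in\supp_RA$'' by ``for each $\fp\in\Spec R$'' at the start.
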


It is worth noting that the analogous statement for injective dimensions does not hold in general; any Gorenstein ring that is not of finite injective dimension illustrates this; see \ref{ch:Nagata}.

For later use we record a few more  properties of $A$-complexes that can be detected on the fibres. Given a complex $N$ in $\dcat A$ we write $\Loc(N)$ and $\Thick(N)$ for the localising subcategory and the thick subcategory, respectively, generated by $N$.

\begin{theorem}
	\label{th:Neeman}
	Let $A$ be a finite projective $R$-algebra, and let $M,N$ be $A$-complexes. The following conditions are equivalent. 
	\begin{enumerate}
		\item \label{th:Neeman1}
		$M\in  \Loc(N)$ in $\dcat A$;
		\item \label{th:Neeman2}
		$\fibre M\fp\in \Loc(\fibre N{\fp})$ in $\dcat {\fibre A\fp}$ for each $\fp \in \supp_RA$.
	\end{enumerate}
\end{theorem}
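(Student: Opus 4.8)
The plan is to deduce both implications of the theorem from a single lemma: for every $P\in\dcat A$ one has
\[
\Loc(P)=\Loc\bigl(\{\,P\lotimes_R k(\fp)\mid\fp\in\supp_RA\,\}\bigr)\qquad\text{in }\dcat A .
\]
Throughout I would use that the structure map $R\to A$ has image in the centre of $A$ (automatic for an $R$-algebra), so that for $P\in\dcat A$ and $Z\in\dcat R$ the complex $P\lotimes_R Z$ carries a natural $A$-action and $P\lotimes_R(-)\colon\dcat R\to\dcat A$ is an exact functor preserving coproducts.

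To prove the lemma I would argue with localising subcategories of $\dcat R$. The inclusion ``$\supseteq$'' is immediate: $\{Z\in\dcat R\mid P\lotimes_R Z\in\Loc(P)\}$ is localising and contains $R$, hence is all of $\dcat R$ (recall $R$ generates $\dcat R$), so it contains each $k(\fp)$. For ``$\subseteq$'', set $\cat E=\{Z\in\dcat R\mid P\lotimes_R Z\in\Loc(\{P\lotimes_R k(\fq)\mid\fq\in\supp_RA\})\}$. Then $\cat E$ is again a localising subcategory of $\dcat R$, and it contains $k(\fp)$ for \emph{every} prime $\fp$: this holds trivially for $\fp\in\supp_RA$, and for $\fp\notin\supp_RA$ one has $\fibre A\fp=0$, whence $P\lotimes_R k(\fp)=0\in\cat E$. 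Thus $\cat E$ contains the localising subcategory of $\dcat R$ generated by all residue fields; since that subcategory is all of $\dcat R$ — this is where Neeman's classification of the localising subcategories of the derived category of a commutative noetherian ring enters — we get $R\in\cat E$, that is, $P=P\lotimes_R R$ lies in $\Loc(\{P\lotimes_R k(\fp)\mid\fp\in\supp_RA\})$. I expect isolating this lemma to be the conceptual heart of the proof; within its proof the appeal to Neeman's theorem is the only non-formal ingredient, and everything else is routine manipulation with localising subcategories.

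Granting the lemma, both implications are short. For $(1)\Rightarrow(2)$: the class $\{M\in\dcat A\mid\fibre M\fp\in\Loc(\fibre N\fp)\text{ for all }\fp\in\supp_RA\}$ is localising, since each $\fibre{(-)}\fp=-\lotimes_R k(\fp)$ is exact and coproduct-preserving and each $\Loc(\fibre N\fp)$ is localising, and it contains $N$, hence it contains $\Loc(N)$. For $(2)\Rightarrow(1)$: fix $\fp\in\supp_RA$ and apply restriction of scalars along the ring map $A\to\fibre A\fp$ — an exact, coproduct-preserving functor — to the hypothesis $\fibre M\fp\in\Loc(\fibre N\fp)$; using that $A$ is $R$-flat, the restriction to $A$ of $\fibre N\fp$ is the $A$-complex $N\lotimes_R k(\fp)$, so we obtain $M\lotimes_R k(\fp)\in\Loc(N\lotimes_R k(\fp))$ in $\dcat A$, which by the lemma (with $P=N$) lies inside $\Loc(N)$. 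Since also $M\lotimes_R k(\fp)=0$ for $\fp\notin\supp_RA$, the whole family $\{M\lotimes_R k(\fp)\mid\fp\in\supp_RA\}$ is contained in $\Loc(N)$, and now the lemma applied to $P=M$ gives $M\in\Loc(\{M\lotimes_R k(\fp)\mid\fp\in\supp_RA\})\subseteq\Loc(N)$, as required.
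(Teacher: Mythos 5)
Your argument is correct, and it reaches the theorem by a genuinely different route than the paper. The paper's proof of \eqref{th:Neeman2}$\Rightarrow$\eqref{th:Neeman1} runs through the support machinery: it introduces the local cohomology functors $\varGamma_\fp$ on the $R$-linear category $\dcat A$, uses Neeman's computation $\Loc(\varGamma_\fp R)=\Loc(k(\fp))\subseteq\Loc(R_\fp)$ in $\dcat R$ (transported along $M\lotimes_R-$, the same base-change trick you use), and then concludes with the abstract local-to-global principle --- Theorem~3.1 and Corollary~3.5 of Benson--Iyengar--Krause when $\dim R<\infty$, and Stevenson's Theorem~6.9 in general. You instead isolate the lemma $\Loc(P)=\Loc(\{\fibre P\fp\mid\fp\in\supp_RA\})$ and prove it by pulling the statement that the residue fields generate $\dcat R$ --- the hard half of Neeman's classification, valid for any commutative noetherian $R$ with no dimension hypothesis --- through the exact, coproduct-preserving functor $P\lotimes_R-\colon\dcat R\to\dcat A$; the two points that genuinely need checking, namely passing from $\Loc$ in $\dcat{\fibre A\fp}$ to $\Loc$ in $\dcat A$ by restriction of scalars, and the vanishing $\fibre P\fp=0$ for $\fp\notin\supp_RA$ (since $A_\fp=0$ annihilates every $A$-module), you handle correctly. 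In effect your lemma re-proves the local-to-global principle for $\dcat A$ in this special case by reduction to $\dcat R$, which is essentially the mechanism behind Stevenson's general theorem, so the deep input is the same (Neeman's theorem for $\dcat R$); what your version buys is a more self-contained argument that needs neither the $\varGamma_\fp$ formalism nor a case distinction on the Krull dimension of $R$, while the paper's route additionally records the inclusion $\Loc(\fibre M\fp)\subseteq\Loc(M_\fp)$ and stays within the support-theoretic framework used elsewhere (for instance in \ref{ch:classical-HN} and Corollary~\ref{co:Hopkins}).
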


\begin{proof}
	The  functor $\dcat A$ to $\dcat{\fibre A\fp}$ assigning $M\mapsto \fibre M\fp$  is exact and preserves coproducts. This yields that \eqref{th:Neeman1}$\Rightarrow$\eqref{th:Neeman2}.
	
	\eqref{th:Neeman2}$\Rightarrow$\eqref{th:Neeman1} The key input is a local to global principle for detecting membership in localising subcategories, from \cite[\S3]{Benson--Iyengar--Krause:2011a}. To that end we view $\dcat A$ as an $R$-linear triangulated category; see \cite[\S7]{Iyengar--Krause:2021}. 
	Associated to each $\fp$ in $\Spec R$ one has a local cohomology functor on $\dcat A$, denoted $\varGamma_{\fp}(-)$, extending the classical local cohomology functors over $R$, introduced by Grothendieck.
	This functor can be realised as 
	\[
	\varGamma_{\fp}M \cong  M \lotimes_R \varGamma_{\fp}R
	\]
	for $M$ in $\dcat A$, 
	where $\varGamma_{\fp}R$ is given by application of the functor
	\[
	E \longmapsto  \bigcup_{n > 0} \{ x \in E \mid \fp^n \cdot x   = 0\} 
	\]
	to an injective resolution of $R_{\fp}$.
	Observe that in $\dcat R$ one has 
	\[
	\Loc(\varGamma_\fp R) = \Loc(k(\fp)) \subseteq \Loc (R_\fp)\,.
	\]
	The equality can be deduced from arguments by Neeman \cite[Lemma~2.9 and proof of claim~(1) on page~528]{Neeman:1992a}, whilst the inclusion holds because $k(\fp)$ is an $R_\fp$-module. Applying the functor
	\[
	M\lotimes_R - \colon \dcat R\lra \dcat A
	\]
	yields the following:
	\[
	\Loc(\varGamma_\fp M) = \Loc(\fibre M\fp) \subseteq \Loc (M_\fp)\,.
	\]
	Our hypothesis is that $\fibre M\fp$ is in $\Loc(\fibre N{\fp})$ for each $\fp$. By the equalities above, this then yields that
	\[
	\Loc(\varGamma_\fp M) = \Loc (\fibre M\fp)   \subseteq  \Loc(\fibre N{\fp}) \subseteq  \Loc(N_\fp) 
	\]
	for each $\fp\in\Spec R$. Now we invoke the local to global principle to conclude that $M$ is in $\Loc(N)$ as desired.  When the Krull dimension of $R$ is finite the local to global principle is in Theorem~3.1 and Corollary~3.5 of~\cite{Benson--Iyengar--Krause:2011a}; the general case is due to Stevenson~\cite[Theorem~6.9]{Stevenson:2013}.
\end{proof}

\begin{corollary}
	\label{co:Hopkins}
	Let $A$ be a finite projective $R$-algebra, and let $M,N$ be $A$-complexes. When $M$ and $N$ are perfect, the following conditions are equivalent. 
	\begin{enumerate}
		\item \label{co:Hopkins1}
		$M\in  \Thick(N)$ in $\dcat A$;
		\item \label{co:Hopkins2}
		$\fibre M\fp\in \Thick(\fibre N{\fp})$ in $\dcat {\fibre A\fp}$ for each 
		$\fp \in \supp_RA$.
	\end{enumerate}
\end{corollary}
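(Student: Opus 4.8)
The plan is to reduce the statement to Theorem~\ref{th:Neeman} by exploiting the fact that, in a compactly generated triangulated category, a compact object which lies in the localising subcategory generated by another compact object already lies in the thick subcategory generated by it.

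For the implication \eqref{co:Hopkins1}$\Rightarrow$\eqref{co:Hopkins2}, note that for each $\fp$ the fibre functor $\dcat A\to\dcat{\fibre A\fp}$, $X\mapsto \fibre X\fp$, is exact and sends $N$ to $\fibre N\fp$; hence it sends $\Thick(N)$ into $\Thick(\fibre N\fp)$, which gives the claim. (For $\fp\notin\supp_RA$ the algebra $\fibre A\fp$ is zero, so there is nothing to prove.)

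For the converse, observe that $\Thick(\fibre N\fp)\subseteq\Loc(\fibre N\fp)$, so hypothesis~\eqref{co:Hopkins2} says that $\fibre M\fp\in\Loc(\fibre N\fp)$ for every $\fp\in\supp_RA$. Theorem~\ref{th:Neeman} then yields $M\in\Loc(N)$ in $\dcat A$. It remains to upgrade this to membership in $\Thick(N)$. Here one uses that $\dcat A$ is compactly generated with compact objects precisely the perfect complexes; in particular $M$ and $N$ are compact. Since $\Loc(N)$ is closed under the coproducts of $\dcat A$, the object $M$ remains compact in $\Loc(N)$, and $\Loc(N)$ is compactly generated by the single object $N$. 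By Neeman's theorem on compact objects in compactly generated localising subcategories~\cite{Neeman:1992a}, the compact objects of $\Loc(N)$ constitute exactly $\Thick(N)$. Therefore $M\in\Thick(N)$.

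The two appeals to exactness and to Theorem~\ref{th:Neeman} are formal; the one point that genuinely requires care is the last step, identifying the compact objects of $\Loc(N)$ with $\Thick(N)$, which relies on $N$ being compact in the ambient category together with Neeman's description. One should also keep in mind the trivial case of primes outside $\supp_RA$, where every fibre vanishes, so that the condition in~\eqref{co:Hopkins2} is vacuous there.
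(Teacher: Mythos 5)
Your argument is correct and follows essentially the same route as the paper: both directions reduce to Theorem~\ref{th:Neeman} via $\Thick\subseteq\Loc$, and the membership in $\Loc(N)$ is then upgraded to $\Thick(N)$ using that $M$ and $N$ are perfect, hence compact, so that the compact objects of $\Loc(N)$ are exactly $\Thick(N)$. The paper cites this last fact from \cite[Proposition~3.4.13]{Krause:2021} rather than from Neeman's work, but it is the same statement, so there is no substantive difference.
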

\begin{proof}
	As in the proof of Theorem~\ref{th:Neeman}, the implication \eqref{co:Hopkins1}$\Rightarrow$\eqref{co:Hopkins2} is clear. As to the converse, since $\Thick(\fibre N{\fp})\subseteq \Loc(\fibre N{\fp})$, the hypothesis and Theorem~\ref{th:Neeman} yield that $M$ is in $\Loc(N)$. Since $M$ and $N$ are perfect and hence compact in $\dcat A$, it follows that $M$ is in $\Thick(N)$, as claimed; see \cite[Proposition~3.4.13]{Krause:2021}.
\end{proof}

As is explained in the next paragraph, the results above extend results of Hopkins and Neeman dealing with commutative noetherian rings.

\begin{chunk}
	\label{ch:classical-HN}
	Set $A\colonequals R$. Then $\fibre R{\fp}= k(\fp)$, the residue field of $R$ at $\fp$, and one has $\fibre M{\fp}= M \lotimes_R k(\fp)$ for any $R$-complex $M$. The \emph{support} of $M$, denoted $\supp_RM$, is the collection $\fp\in\Spec R$ such that $H^*(\fibre M{\fp})\ne 0$. This coincides with the support of the $R$-module $H^*(M)$ when the latter is finitely generated; see \cite[\S2]{Foxby:1979}.
	
	Given $R$-complexes $M,N$, the condition that $\fibre M{\fp}$ is in $\Loc(\fibre N{\fp})$ 
	for each $\fp$ in $\supp_RA$
	is equivalent to: for each $\fp$ in $\supp_RA$ with $H^*(\fibre M{\fp})\ne 0$ it follows that $H^*(\fibre N{\fp})\ne 0$; equivalently, that $\supp_R M\subseteq \supp_RN$. So Theorem~\ref{th:Neeman} translates to 
	\[
	M\in \Loc(N) \text{ in $\dcat R$}\quad \iff\quad  \supp_RM\subseteq \supp_RN\,.
	\]
	This result is due to Neeman and is the key step in the classification
	of localising subcategories of $\dcat R$ via subsets of $\Spec
	R$; see ~\cite[Theorem~2.8]{Neeman:1992a}.
	
	In the same vein, Corollary~\ref{co:Hopkins} translates to: When $M,N$ are perfect complexes, $\supp_RM\subseteq \supp_RN$ if and only if $M$ is in $\Thick(N)$, which leads to a classification of the thick subcategories of the perfect $R$-complexes, due to Hopkins~\cite[\S4]{Hopkins:1987}.
\end{chunk}

Here is one simple application of the preceding results.

\begin{chunk}
	\label{ch:quivers}
	Let $Q$ be a finite quiver without oriented cycles. We consider the \emph{path algebra} $RQ$ which is, by the assumption on $Q$, a finite
	projective, even free, $R$-algebra. A basis of $RQ$ as an $R$-module is given by all paths in $Q$, and the multiplication is induced by the composition
	of paths. Modules over the path algebra $RQ$ identify with $R$-linear representations of the quiver $Q$.
	
	When $k$ is a field, $kQ$ has finite global dimension, and there is a well developed theory of quiver representations. In particular, thick subcategories of $\dbcat {kQ}$ have been classified in terms of non-crossing partitions \cite{Hubery--Krause:2016,Ingalls--Thomas:2009}. The quiver $Q$ determines a Coxeter group $W=W(Q)$ and a Coxeter element $c\in W$. We denote by $\mathrm{NC}(W,c)$ the corresponding poset of \emph{non-crossing  partitions}.  When the quiver $Q$ is of Dynkin type, there is a well defined assignment
	\[
	\dbcat {kQ}\supseteq \mathcal C\longmapsto \mathrm{cox}(\mathcal  C)\in\mathrm{NC}(W,c) 
	\]
	providing an isomorphism between the lattice of thick subcategories of $\dbcat {kQ}$ and the poset $\mathrm{NC}(W,c)$. 
	
	Let  $R$ be a commutative noetherian ring that is regular. For each $\fp$ in $\Spec R$ the ring $R_\fp Q$ has finite global dimension, because $R_\fp$ does.  Thus Theorem~\ref{th:bass-murthy} yields that each $M$ in $\dbcat{RQ}$ is perfect. Given this observation,  the fibre-wise criteria in Corollary~\ref{co:Hopkins} open a path to a classification of all thick subcategories of $\dbcat {RQ}$.
	Over a general commutative noetherian ring $R$, one gets an approach to classify the thick subcategories of perfect complexes in $\dbcat{RQ}$.
\end{chunk}

\section{Fibre-wise tilting complexes}
\label{se:tilting}
This section and the next are motivated by the question whether a derived equivalence of finite $R$-algebras amounts to a derived equivalence of their fibres over $\Spec R$. For this, we recall notions and results from derived Morita theory.

\subsection*{Derived equivalences}

Let $A$ be a finite $R$-algebra and $T$ a complex in $\dcat A$.
The ring 
\[
\End(T) \colonequals \op{\End_{\dcat A}(T)}
\]
admits a natural structure of a finite $R$-algebra.
The complex $T$ is \emph{tilting} if
\begin{enumerate}
	\item $\Ext^i_A(T,T)=0$ for $i\ne 0$, and
	\item
	$\Thick(T)=\Thick(A)$ in $\dcat A$.
\end{enumerate}
For such a complex  the functor 
\[
F 
\colonequals \RHom_A(T,-)
\colon \dbcat A \longiso \dbcat {\End(T)}
\]
is an equivalence of triangulated categories.
Vice versa, if $B$ is a noetherian ring which is derived equivalent to $A$, denoted by $A \sim B$ in the following, then there is an isomorphism of rings $B \cong \End(T)$ for a tilting complex $T$ of $A$. These statements follow from results of  Rickard~\cite{Rickard:1989, Rickard:1991b}; see also  Keller \cite{Keller:1994}.

The class of finite projective $R$-algebras is not closed under derived equivalences. This phenomenon was  studied by K\"onig and Zimmermann in \cite{Koenig--Zimmermann:1996}. It is related to tilting complexes that are not fibre-wise tilting.

\begin{example}\label{example:hereditary}
	Let $R$ be the polynomial ring $k[c]$ in one indeterminate over a field $k$ and $\fm$ the maximal ideal $(c)$ of $R$.
	The $k$-linear path algebra $A$ of the two-cycle quiver
	is a finite free $R$-algebra:
	\begin{align}
		\label{eq:two-cycle}
		{Q}\colon \begin{tikzcd}[ampersand replacement=\&] 1 \ar[yshift=3pt]{r}{{\alpha}} \& \ar[yshift=-3pt]{l}{{\beta}} 2 \end{tikzcd}
		\qquad
		A \colonequals k {Q} \cong 
		\begin{pmatrix} R & \fm \\ R & R \end{pmatrix}
		\subset \Mat_{2\times 2}(R)\,.
	\end{align}
	Replacing the projective summand $A e_2$ of $A$ with the simple module $A e_1 /A \alpha$ yields a tilting module of $A$ which has the projective resolution
	\[
	\begin{tikzcd}
		T \colonequals 
		(\,A e_2 \ar{r}{\cdot\alpha} & A\,)\,.
	\end{tikzcd}
	\]
	Since
	$A_{k(\fm)} \cong k {Q}/(\alpha\beta,\beta\alpha)$, it follows that
	$\Ext^{-1}_{A_{k(\fm)}}(T_{k(\fm)},T_{k(\fm)})\neq 0$, and thus the complex $T_{k(\fm)}$ is not tilting.
	On the other hand, the 
	quiver of the $R$-algebra $B \colonequals \End(T)$ has a sink:
	\[
	B \cong \begin{pmatrix} R & 0 \\ R/\fm& R/\fm \end{pmatrix} \cong k Q^*/I^* \quad \text{ for } \quad
	(Q^*,I^*)\colon
	\begin{tikzcd}
		1 \arrow[looseness=6,out=150,in=-150,swap,"x",
		""{name=X, inner sep=0, near end}] \arrow[r,"y",
		""{name=Y, inner sep=0, near start}
		]&2
	\end{tikzcd}
	\quad
	y x=0\,.
	\]
	Since $Z(B)\cong R$ is a Dedekind domain and $y \in B$ is a nonzero torsion element, 
	\ref{ch:subring} implies that $B$ cannot admit the structure of a finite projective $R$-algebra.
\end{example}
The last example motivates the following terminology.
\begin{definition}
	\label{de:fibre-wise-tilting}
	Let $A$ be a finite projective $R$-algebra. A complex $T$ in $\dcat A$ is called \emph{fibre-wise tilting} if the $\fibre A\fp$-complex $T_{k(\fp)}$ is tilting for each $\fp$ in $\Spec R$.
\end{definition}

In \cite[Theorem 2.1]{Rickard:1991b} Rickard proved a result on derived equivalences under base change  which implies the following.
\begin{theorem}
	\label{theorem:Rickard1}
	Let $A$ and $B$ be finite $R$-algebras  and $R \to S$ be a ring morphism of commutative noetherian rings such that $A \sim B$ and 
	\[ 
	\Tor_{n}^R(A, S) = 0 = \Tor_{n}^R(B,S)  \qquad\text{for  $n \geq 1$.}
	\]
	Then $ A \otimes_R S \sim B  \otimes_R S$. 
	More precisely, if $T$ is a tilting complex of $A$ such that $\End(T) \cong B$, then $T  \lotimes_R S$ is a tilting complex of $A \otimes_R S$ such that there is an isomorphism $\End(T \lotimes_R S) \cong B \otimes_R S$ of $S$-algebras.
	\qed
\end{theorem}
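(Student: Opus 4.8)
The plan is to check directly that $T\lotimes_R S$ satisfies the two defining conditions of a tilting complex over $A\otimes_R S$ and to compute its endomorphism algebra, using the $\Tor$-vanishing hypotheses to replace every derived base change by an ordinary one. (Alternatively one can simply quote Rickard's base-change theorem \cite[Theorem~2.1]{Rickard:1991b} and verify that its flatness hypotheses are implied by the stated $\Tor$ conditions; the argument below is in effect an unwinding of that route.)

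First I would fix a representative of $T$ by a bounded complex $P$ of finitely generated projective $A$-modules; this exists because a tilting complex is perfect. Each $P^i$ is a direct summand of a finite free $A$-module, so the hypothesis $\Tor^R_n(A,S)=0$ for $n\geq1$ forces $\Tor^R_n(P^i,S)=0$ for $n\geq1$; hence $P\otimes_R S$ represents $T\lotimes_R S$ and is a bounded complex of finitely generated projective $(A\otimes_R S)$-modules, so $T\lotimes_R S$ is perfect over $A\otimes_R S$. For the thick-subcategory condition I would apply the triangulated functor $-\lotimes_R S\colon\dbcat A\to\dbcat{A\otimes_R S}$: it carries $A$ to $A\otimes_R S$ and $T$ to $T\lotimes_R S$, so $\Thick(T)=\Thick(A)$ yields both $A\otimes_R S\in\Thick(T\lotimes_R S)$ and $T\lotimes_R S\in\Thick(A\otimes_R S)$, hence equality.

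The core of the proof is a base-change isomorphism for $\RHom$. Since $P$ is a bounded complex of finitely generated projectives, $\RHom_A(T,T)$ is computed by the Hom-complex $\Hom^\bullet_A(P,P)$, whose terms are summands of matrix algebras over $A$ and therefore again have vanishing higher $\Tor$ against $S$; similarly $\RHom_{A\otimes_R S}(T\lotimes_R S,T\lotimes_R S)$ is computed by $\Hom^\bullet_{A\otimes_R S}(P\otimes_R S,P\otimes_R S)$. The canonical comparison map $\Hom^\bullet_A(P,P)\otimes_R S\to\Hom^\bullet_{A\otimes_R S}(P\otimes_R S,P\otimes_R S)$ is a degreewise isomorphism — reduce to the case $P^i=A$, where $\Hom_A(A,N)\otimes_R S$ and $\Hom_{A\otimes_R S}(A\otimes_R S,N\otimes_R S)$ are both $N\otimes_R S$ — and it respects the Yoneda product. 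Combining this with the $\Tor$-vanishing of the terms of $\Hom^\bullet_A(P,P)$ yields
\[
\RHom_{A\otimes_R S}(T\lotimes_R S,T\lotimes_R S)\;\simeq\;\RHom_A(T,T)\lotimes_R S
\]
as differential graded $S$-algebras.

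Finally I would feed in the tilting hypothesis on $T$: the differential graded algebra $\RHom_A(T,T)$ has cohomology concentrated in degree zero, equal to $\End_{\dcat A}(T)=\op B$ under the given isomorphism $\End(T)\cong B$. Since $\Tor^R_n(B,S)=0$ for $n\geq1$, the right-hand side of the displayed isomorphism is then concentrated in degree zero with $H^0=\op B\otimes_R S=\op{(B\otimes_R S)}$. Reading off cohomology gives $\Ext^i_{A\otimes_R S}(T\lotimes_R S,T\lotimes_R S)=0$ for $i\neq0$ and $\End_{\dcat{A\otimes_R S}}(T\lotimes_R S)\cong\op{(B\otimes_R S)}$ as $S$-algebras, i.e.\ $\End(T\lotimes_R S)\cong B\otimes_R S$; together with the previous paragraph this shows $T\lotimes_R S$ is tilting over $A\otimes_R S$, whence $A\otimes_R S\sim B\otimes_R S$. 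The only real obstacle I anticipate is bookkeeping: one must track the comparison map at the level of differential graded algebras so that the multiplicative structure — and therefore the ring and $S$-algebra structure on the endomorphism algebra — is preserved, and one must keep the various $\mathrm{op}$'s relating $\End(-)$ to $\End_{\dcat{-}}(-)$ straight.
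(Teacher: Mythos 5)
Your proposal is correct and follows exactly the route the paper indicates: the paper states this result as a consequence of Rickard's base-change theorem \cite[Theorem~2.1]{Rickard:1991b} (hence the \qed with no written proof), and immediately afterwards points out that the idea is to compare the cohomology of $X\colonequals\RHom_A(T,T)$ with that of $X\lotimes_R S$, which is precisely the core of your argument. The remaining ``bookkeeping'' you flag (multiplicativity of the comparison map and the collapse identifying $H^0$ of the base-changed endomorphism complex with $\op{B}\otimes_R S$, using $\Tor^R_{\geq 1}(B,S)=0$) is handled by the standard cochain-level multiplicative edge map, so there is no genuine gap.
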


One idea to prove Theorem~\ref{theorem:Rickard1} is to compare the cohomology of the complex $X \colonequals \RHom(T,T)$ with that of $ X \lotimes_R \Gamma$.
This motivates the next considerations.
\begin{proposition}
	\label{pr:exact-lg}
	For $X\in \dbcat R$ the following conditions are equivalent.
	\begin{enumerate}
		\item \label{pr:exact-lg1}
		$H^i(X)=0$ for $i\ne 0$ and the $R$-module $H^0(X)$ is projective;
		\item \label{pr:exact-lg2}
		$H^i(\fibre X\fp)=0$ for $i\ne 0$ and for each $\fp \in \Spec R$;
		\item \label{pr:exact-lg3}
		$H^i(\fibre X\fm)=0$ for $i\ne 0$ and for each $\fm \in \max(\Spec R)$.
	\end{enumerate}
\end{proposition}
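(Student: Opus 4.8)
The plan is to prove the implications \eqref{pr:exact-lg1}$\Rightarrow$\eqref{pr:exact-lg2}$\Rightarrow$\eqref{pr:exact-lg3}$\Rightarrow$\eqref{pr:exact-lg1}, where only the last one carries content; it will be argued fibre-wise, using minimal free resolutions over the local rings of $R$. For \eqref{pr:exact-lg1}$\Rightarrow$\eqref{pr:exact-lg2}, note that under \eqref{pr:exact-lg1} the complex $X$ is quasi-isomorphic to the finitely generated projective $R$-module $P\colonequals H^0(X)$, placed in cohomological degree $0$. Since $R$ is noetherian, $P$ is flat, so $\fibre X\fp \simeq P \otimes_R k(\fp)$ has cohomology concentrated in degree $0$ for every $\fp\in\Spec R$, which is \eqref{pr:exact-lg2}. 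The implication \eqref{pr:exact-lg2}$\Rightarrow$\eqref{pr:exact-lg3} is immediate, as $\max(\Spec R)\subseteq\Spec R$.

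For \eqref{pr:exact-lg3}$\Rightarrow$\eqref{pr:exact-lg1} I would fix $\fm\in\max(\Spec R)$ and pass to $R_\fm$. Since $X_\fm$ lies in $\dbcat{R_\fm}$, it is quasi-isomorphic to a complex $F$ of finitely generated free $R_\fm$-modules with $F^i=0$ for $i\gg0$ and $d(F)\subseteq\fm F$, i.e.\ a minimal free resolution; this is standard over a noetherian local ring. Because the differential of $F\otimes_{R_\fm}k(\fm)$ vanishes, one gets
\[
H^i(\fibre X\fm)\;\cong\; H^i\bigl(F\otimes_{R_\fm}k(\fm)\bigr)\;\cong\; F^i\otimes_{R_\fm}k(\fm)\qquad\text{for every }i\,,
\]
and a finitely generated free $R_\fm$-module is zero precisely when its fibre is. Hence the hypothesis \eqref{pr:exact-lg3} forces $F^i=0$ for $i\ne0$, so $X_\fm\simeq F^0$, a finitely generated free $R_\fm$-module concentrated in degree $0$. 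In particular $H^i(X)_\fm=H^i(X_\fm)=0$ for $i\ne0$, and $H^0(X)_\fm=F^0$ is free.

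Letting $\fm$ range over $\max(\Spec R)$ then yields \eqref{pr:exact-lg1}: each $H^i(X)$ is finitely generated and vanishes after localisation at every maximal ideal, so $H^i(X)=0$ for $i\ne0$; and $M\colonequals H^0(X)$ is a finitely generated $R$-module with $M_\fm$ free for every maximal ideal, hence $M_\fp$ free for every $\fp\in\Spec R$, so $M$ is projective and $X\simeq M$ in $\dbcat R$. The one point that needs care is the existence of the minimal free resolution of $X_\fm$ over $R_\fm$ together with the fact that it computes $\fibre X\fm$ with zero differential; once that is in hand the rest is routine. (An alternative route would be to first apply Theorem~\ref{th:bass-murthy} to conclude that $X$ is perfect and then carry out an analogous fibre-wise rank count, but invoking the minimal model directly is cleaner.)
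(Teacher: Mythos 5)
Your proposal is correct and follows essentially the same route as the paper: reduce to the local case by localising at each maximal ideal, replace $X_\fm$ by a minimal complex of finitely generated free modules with $d(F)\subseteq\fm F$ (the paper cites \cite[Corollary~4.3.19]{Krause:2021} for this), and read off $F^i\otimes k(\fm)\cong H^i(\fibre X\fm)$ to force $F^i=0$ for $i\ne0$. The easy direction via flatness of the projective module $H^0(X)$ also matches the paper's argument.
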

\begin{proof}
	\eqref{pr:exact-lg1}$\Rightarrow$\eqref{pr:exact-lg2} Since $H^i(X)=0$ for $i\ne 0$, one has that $X\cong H^0(X)$ in $\dcat R$. This gives, in $\dcat R$, the 
	first
	isomorphism below 	
	\[
	\fibre X\fp \cong \fibre {H^0(X)}\fp\cong H^0(X) \otimes_R k(\fp) \,.
	\]
	The second one holds because $H^0(X)$ is projective.
	Thus $H^i(\fibre X{\fp})=0$ for $i\ne 0$.
	
	\eqref{pr:exact-lg3}$\Rightarrow$\eqref{pr:exact-lg1} Since the $R$-modules $H^i(X)$ are finitely generated, it suffices to verify that for each maximal ideal $\fm$ the $R_\fm$-module $H^i(X)_\fm$ is zero for $i\ne 0$ and free when $i=0$. Replacing $R$ and $X$ by their localisation at $\fm$, we can thus assume $R$ is a local ring, with maximal ideal $\fm$. The hypothesis is that $H^i( X \lotimes_R k)=0$ for $i\ne 0$, where $k\colonequals R/\fm$. 
	
	Since $X$ is in $\dbcat R$ there is a quasi-isomorphism $F\iso X$ with each $F^i$ a finitely generated free $R$-module, zero for $i\gg 0$, and differential satisfying $d(F)\subseteq \fm F$; see \cite[Corollary~4.3.19]{Krause:2021}. In particular 
	\[
	H^i( X \lotimes_R k) \cong H^i(F \otimes_R k) = F^i \otimes_R k\,.
	\]
	Thus the hypothesis is equivalent to $F^i=0$ for $i\ne 0$, so that $X\cong F^0$ in $\dcat R$. This is the desired conclusion. 
\end{proof}

\subsection*{Intermediate fibres}
Let $\fp$ be a prime ideal of $R$.  In the next proposition, we call a commutative noetherian ring $S(\fp)$ an \emph{intermediate ring} if the natural ring morphism $R \to k(\fp)$ factors through $S(\fp)$. Classical examples are given by the local ring $R_{\fp}$ or its completion $\wh{R}_{\fp}$ at its maximal ideal.  For a finite projective $R$-algebra $A$ and a complex $T$ in $\dcat A$ we set
\[
A_{S(\fp)} \colonequals  A \otimes_R S(\fp)\quad\text{and}\quad T_{S(\fp)} \colonequals T   \lotimes_R S(\fp) \,.
\]
The algebra  $A_{S(\fp)}$ might be thought of as a thickened or intermediate fibre. Certain intermediate fibres will play a role in the lifting problems of the next section.  The following statement is the main result of the present one.

\begin{proposition}
	\label{pr:morita-v2}
	Let $A$ be a finite projective $R$-algebra and fix $T$ in $\dbcat A$. The following conditions are equivalent.
	\begin{enumerate}
		\item 
		\label{pr:morita1}
		The $A$-complex
		$T$ is  tilting and $\End(T)$ is projective as $R$-module;
		\item \label{pr:morita5}
		For each $\fp$ in $\Spec R$ and for any intermediate ring $S(\fp)$ the $A_{S(\fp)}$-complex 
		$T_{S(\fp)}$ is tilting and 
		$\End(T_{S(\fp)})$ is projective as $S(\fp)$-module;
		\item \label{pr:morita4}
		For each $\fp$ in $\Spec R$ there exists an intermediate ring $S(\fp)$ such that the $A_{S(\fp)}$-complex $T_{S(\fp)}$
		is tilting and 
		$\End(T_{S(\fp)})$ is projective as $S(\fp)$-module;
		\item \label{pr:morita2}
		For each $\fp$ in $\Spec R$ the $\fibre A{\fp}$-complex 
		$\fibre T{\fp}$ is tilting.
	\end{enumerate}
\end{proposition}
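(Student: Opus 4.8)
The plan is to establish the cycle of implications \eqref{pr:morita1}$\Rightarrow$\eqref{pr:morita5}$\Rightarrow$\eqref{pr:morita4}$\Rightarrow$\eqref{pr:morita2}$\Rightarrow$\eqref{pr:morita1}. The three implications that descend along ring morphisms will all come from Rickard's base-change theorem, Theorem~\ref{theorem:Rickard1}, whereas the implication that ascends from the fibres is the substantial one and will rely on the fibre-wise criteria proved earlier, namely Propositions~\ref{pr:perfection} and~\ref{pr:exact-lg} together with Corollary~\ref{co:Hopkins}.

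For \eqref{pr:morita1}$\Rightarrow$\eqref{pr:morita5}, set $B\colonequals\End(T)$. Both $A$ and $B$ are projective, hence flat, over $R$ --- the former by the standing hypothesis on $A$, the latter by~\eqref{pr:morita1} --- so for every ring morphism $R\to S(\fp)$ the Tor-vanishing hypothesis of Theorem~\ref{theorem:Rickard1} is satisfied. That theorem then yields that $T_{S(\fp)}=T\lotimes_R S(\fp)$ is tilting over $A_{S(\fp)}$ and that $\End(T_{S(\fp)})\cong B\otimes_R S(\fp)$, which is projective over $S(\fp)$ since $B$ is projective over $R$. The implication \eqref{pr:morita5}$\Rightarrow$\eqref{pr:morita4} is trivial, as $R_\fp$ (or $k(\fp)$ itself) is an intermediate ring. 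For \eqref{pr:morita4}$\Rightarrow$\eqref{pr:morita2}, fix $\fp$ together with an intermediate ring $S(\fp)$ as in~\eqref{pr:morita4} and apply Theorem~\ref{theorem:Rickard1} with base ring $S(\fp)$, the algebras $A_{S(\fp)}$ and $\End(T_{S(\fp)})$, and the ring morphism $S(\fp)\to k(\fp)$; here the required Tor vanishing holds because $A_{S(\fp)}$ is projective over $S(\fp)$ (as $A$ is projective over $R$) and $\End(T_{S(\fp)})$ is projective over $S(\fp)$ by hypothesis. Using the identifications $A_{S(\fp)}\otimes_{S(\fp)}k(\fp)=\fibre A\fp$ and $T_{S(\fp)}\lotimes_{S(\fp)}k(\fp)\simeq\fibre T\fp$, which come from associativity of the derived tensor product, the conclusion says precisely that $\fibre T\fp$ is tilting over $\fibre A\fp$.

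The heart of the argument is \eqref{pr:morita2}$\Rightarrow$\eqref{pr:morita1}. Since a tilting complex lies in the thick subcategory generated by its base algebra and is therefore perfect, hypothesis~\eqref{pr:morita2} implies that $\fibre T\fp$ is perfect over $\fibre A\fp$ for every $\fp\in\supp_R A$; by Proposition~\ref{pr:perfection} the complex $T$ is then perfect over $A$. Choose a bounded complex $P$ of finitely generated projective $A$-modules with $P\simeq T$. As $A$ is projective over $R$, each term of $P$, and also each $\Hom_A(P^i,P^j)$, is finitely generated projective over $R$; hence $X\colonequals\RHom_A(T,T)\simeq\Hom^\bullet_A(P,P)$ is a bounded complex of finitely generated projective $R$-modules, and a termwise computation yields an isomorphism $X\lotimes_R k(\fp)\cong\RHom_{\fibre A\fp}(\fibre T\fp,\fibre T\fp)$ for every $\fp\in\Spec R$. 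By~\eqref{pr:morita2} the cohomology of the right-hand side is concentrated in degree $0$ for all $\fp$, so Proposition~\ref{pr:exact-lg} applies to $X$ and gives that $H^i(X)=\Ext_A^i(T,T)$ vanishes for $i\neq 0$ and that $\End(T)=H^0(X)$ is projective over $R$. It remains to check $\Thick(T)=\Thick(A)$ in $\dcat A$: the inclusion $\Thick(T)\subseteq\Thick(A)$ is the perfectness of $T$, while the reverse inclusion amounts to $A\in\Thick(T)$, which follows from Corollary~\ref{co:Hopkins} applied to the perfect complexes $A$ and $T$, since for each $\fp\in\supp_R A$ one has $\fibre A\fp\in\Thick(\fibre A\fp)=\Thick(\fibre T\fp)$ by~\eqref{pr:morita2}.

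The step I expect to be the main obstacle is precisely the base-change isomorphism $X\lotimes_R k(\fp)\cong\RHom_{\fibre A\fp}(\fibre T\fp,\fibre T\fp)$: for $T$ merely in $\dbcat A$ the complex $\RHom_A(T,T)$ need not even be bounded, and $\RHom$ does not commute with derived base change in general. Both difficulties evaporate once one knows, via Proposition~\ref{pr:perfection}, that $T$ is perfect over $A$; only then is $\RHom_A(T,T)$ represented by a bounded complex of finitely generated projective $R$-modules and the base change reduces to the evident termwise identity. A minor point to keep track of throughout is the bookkeeping for primes $\fp\notin\supp_R A$, where the algebras $\fibre A\fp$ and the complexes $\fibre T\fp$ vanish and the hypotheses hold vacuously.
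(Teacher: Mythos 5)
Your proposal is correct and follows essentially the same route as the paper: the descending implications via Rickard's base-change theorem (Theorem~\ref{theorem:Rickard1}), and the ascent from the fibres by first getting perfectness of $T$ from Proposition~\ref{pr:perfection}, then using the base-change isomorphism \eqref{eq:RHom} together with Proposition~\ref{pr:exact-lg} and Corollary~\ref{co:Hopkins}. The only difference is that you spell out the details (the termwise justification of \eqref{eq:RHom} and the $\Thick(T)=\Thick(A)$ step) that the paper compresses into ``readily deduced''.
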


\begin{proof}
	The implications \eqref{pr:morita1}$\Rightarrow$\eqref{pr:morita5} and \eqref{pr:morita4}$\Rightarrow$\eqref{pr:morita2}
	follow from Theorem~\ref{theorem:Rickard1}.
	
	\eqref{pr:morita2}$\Rightarrow$\eqref{pr:morita1}  Assume that $T_{k(\fp)}$ is tilting for each $\fp$ in $\Spec R$. Then the $A$-complex $T$ is perfect
	by Proposition~\ref{pr:perfection}.  Set $X\colonequals \RHom_A(T,T)$, viewed as an $R$-complex. 
	Since $T$ is perfect, there is a natural isomorphism
	\begin{align}
		\label{eq:RHom}
		\fibre X{\fp} = \RHom_A(T,T) \lotimes_R k(\fp)   \cong \RHom_{\fibre A\fp}(\fibre T{\fp},\fibre T{\fp})\quad
	\end{align}
	in $\dcat R$ for each $\fp$ in $\Spec R$.  
	Given this, \eqref{pr:morita1} is readily deduced from Corollary~\ref{co:Hopkins} and Proposition~\ref{pr:exact-lg}.
\end{proof}

This proposition shows that the tilting property of a certain complex $T$ of $A$ with $R$-projective endomorphism ring is local with respect to any choice of intermediate fibres. In particular, fibre-wise tilting complexes  are precisely the tilting complexes with $R$-projective endomorphism rings.
The equivalence of \eqref{pr:morita1} and \eqref{pr:morita2}  explains the simultaneous absence of  these properties  in Example~\ref{example:hereditary}.
We will give another characterisation of fibre-wise tilting complexes  in \ref{ch:endo-proj}.

\begin{remark}
	The tilting property is local in a classical sense.  More precisely, for a finite $R$-algebra $A$ and complex $T$ in $\dbcat A$ the following are equivalent.
	\begin{enumerate}
		\item \label{pr:local-tilt1} The $A$-complex $T$ is tilting;
		\item \label{pr:local-tilt3} The $A_{\fp}$-complex  $T_{\fp}$ is tilting for each  $\fp$ in $\Spec R$.
	\end{enumerate}
	If $T$ is perfect, the above conditions are also equivalent to the following.
	\begin{enumerate}
		\setcounter{enumi}{2}
		\item \label{pr:local-tilt4} The $A_{\fm}$-complex  $T_{\fm}$ is tilting for each $\fm$ in $\max(\Spec R)$.
	\end{enumerate} 
	The proof of these equivalences uses a result due to Iyama and Kimura
	\cite[Theorem~2.8~(a)]{Iyama--Kimura:2021} and a characterisation of
	tilting complexes by Rickard \cite[Lemma~1.1]{Rickard:1991}.  The
	equivalence
	\eqref{pr:local-tilt1}$\Leftrightarrow$\eqref{pr:local-tilt3} holds
	for silting complexes as well
	\cite[Theorem~2.8~(b)]{Iyama--Kimura:2021}.
\end{remark}

\section{Lifting problems}
\label{se:lifting-problems}

Let $A$ be a finite $R$-algebra.
Since the residue field $k(\fp)$ of each local ring ${R}_{\fp}$ is isomorphic to that of its completion $\widehat{R}_{\fp}$, there are natural morphisms of rings
\[
\begin{tikzcd}
	A \ar[hookrightarrow]{r} & A_{\fp} = 	A 
	\otimes_R {R}_{\fp}
	\ar[hookrightarrow]{r} & \widehat{A}_{\fp} \colonequals
	A
	\otimes_R \widehat{R}_{\fp}
	\ar[twoheadrightarrow]{r}    
	& A_{k(\fp)} = A \otimes_R k(\fp) \,.
\end{tikzcd}
\]
Rickard's theorem~\ref{theorem:Rickard1}
yields a global-to-local picture on derived equivalences.
\begin{corollary}\label{corollary:global-to-local}
	
	Let $A$ and $B$ be finite projective $R$-algebras such that
	$A \sim B$.
	For each  
	$\fp$ in $\Spec R$ and   
	any intermediate ring $S(\fp)$ it follows that
	$A_{S(\fp)} \sim B_{S(\fp)}$.
	
	In particular,
	the following implications hold.
	\[
	\begin{array}{ccccccc}
		\text{{\footnotesize global}} && \text{{\footnotesize local}} &&  \text{{\footnotesize complete local}} && \text{{\footnotesize finite dimensional}} \\
		A \sim B & {\Longrightarrow} & 
		A_{\fp} \sim B_{\fp}  \ \forall \fp & {\Longrightarrow} &
		\widehat{A}_{\fp} \sim \widehat{B}_{\fp} \ \forall \fp & \overset{(\star)}{\Longrightarrow}
		& {A}_{k(\fp)}  \sim {B}_{k(\fp)} \ \forall \fp \,,
	\end{array}
	\] 
	where ``$\ \forall \fp$" abbreviates ``for any prime ideal $\fp$ of the ring $R$".\qed
\end{corollary}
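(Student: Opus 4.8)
The plan is to derive everything from a single application of Rickard's base change theorem, Theorem~\ref{theorem:Rickard1}. The key observation is that since $A$ and $B$ are \emph{projective} as $R$-modules, they are in particular flat, so the hypothesis $\Tor_n^R(A,S)=0=\Tor_n^R(B,S)$ for $n\geq 1$ is satisfied for \emph{every} commutative noetherian $R$-algebra $S$ — in particular for $S=S(\fp)$, any intermediate ring. Thus Theorem~\ref{theorem:Rickard1} applies verbatim and yields $A_{S(\fp)}\sim B_{S(\fp)}$, which is the first assertion. (If one wants the sharper statement, Theorem~\ref{theorem:Rickard1} also records that a tilting complex $T$ with $\End(T)\cong B$ transports to the tilting complex $T\lotimes_R S(\fp)$ with endomorphism ring $B\otimes_R S(\fp)$.)

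For the chain of implications I would first check that $R_\fp$, $\widehat{R}_\fp$ and $k(\fp)$ all qualify as intermediate rings: indeed $R\to k(\fp)$ factors as $R\to R_\fp\to \widehat{R}_\fp\to k(\fp)$, the last arrow being the quotient of the complete local ring $\widehat{R}_\fp$ by its maximal ideal $\fp\widehat{R}_\fp$. So the first assertion, applied with each of these as $S(\fp)$, already gives each of the three conclusions $A_\fp\sim B_\fp$, $\widehat{A}_\fp\sim\widehat{B}_\fp$, $A_{k(\fp)}\sim B_{k(\fp)}$. One can also read the diagram arrow by arrow: base change preserves the property of being a finite projective algebra, so $A_\fp$ is finite projective over $R_\fp$ and $\widehat{A}_\fp$ is finite projective over $\widehat{R}_\fp$, and one reapplies Theorem~\ref{theorem:Rickard1} at each stage — for the middle arrow using that $\widehat{R}_\fp$ is flat over $R_\fp$ (completion of a noetherian local ring), and for $(\star)$ using the surjection $\widehat{R}_\fp\to k(\fp)$, whose Tor-vanishing hypothesis is automatic since $\widehat{A}_\fp$ and $\widehat{B}_\fp$ are flat over $\widehat{R}_\fp$, together with the identification $\widehat{A}_\fp\otimes_{\widehat{R}_\fp}k(\fp)\cong A_{k(\fp)}$.

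The only real work is bookkeeping: one must verify at each step that the hypotheses of Theorem~\ref{theorem:Rickard1} are inherited — that projectivity/flatness survives base change, and that $\widehat{R}_\fp$ is $R_\fp$-flat — none of which is difficult. There is no genuine obstacle here; all the substance is packaged in Theorem~\ref{theorem:Rickard1}, which is cited from Rickard.
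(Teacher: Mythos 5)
Your proposal is correct and is exactly the paper's (unwritten) argument: the corollary is stated as an immediate consequence of Theorem~\ref{theorem:Rickard1}, the Tor-vanishing hypothesis being automatic because $A$ and $B$ are projective, hence flat, over $R$ (and, for the arrow-by-arrow reading, because finite projectivity is preserved under base change and $\widehat{R}_\fp$ is flat over $R_\fp$). Nothing is missing.
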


In the present section, we focus mainly on the converse of the third implication,
which might be viewed as the problem to lift derived equivalences of algebras.
This problem can be solved in a certain setup where $R$ is a Dedekind domain and the fibres $A_{k(\fp)}$ and $B_{k(\fp)}$
are replaced by thickened versions (Theorem~\ref{theorem:lift-equivalences}).

\subsection*{Lifting tilting complexes and isomorphisms of algebras}
In \cite[§3]{Rickard:1991} Rickard proved lifting results for tilting complexes.
\begin{theorem}\label{theorem:Rickard2}
	Let $\Lambda$ be a finite free $S$-algebra over a complete local ring $S$ with residue field $k$ and let $T$ be a tilting complex of $\Lambda  \otimes_{S} k$.
	Then there is precisely one complex $L$ in $\dcat \Lambda$ up to isomorphism such that $$
	L \lotimes_{S} k
	\cong T \quad 
	\text{in}\quad\dcat {(\Lambda  \otimes_S k)}\,.
	$$
	Moreover, the complex $L$ is tilting, $\End(L)$ is a finite free $S$-algebra and there is an isomorphism of $k$-algebras $\End(L) \otimes_S k  \cong \End(T)$.\qed
\end{theorem}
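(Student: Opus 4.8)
The plan is to build the lift $L$ by deforming a projective resolution of the tilting complex $T$ over the surjection $\Lambda \twoheadrightarrow \Lambda \otimes_S k$, exploit completeness of $S$ to pass to the limit, and then verify the tilting properties fibre-wise using the results already assembled in the excerpt.

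First I would replace $T$ by a bounded complex $P$ of finitely generated projective $(\Lambda \otimes_S k)$-modules. Writing $\fm$ for the maximal ideal of $S$ and $S_n \colonequals S/\fm^{n+1}$, $\Lambda_n \colonequals \Lambda \otimes_S S_n$, I would lift $P$ step by step to a bounded complex $L_n$ of finitely generated projective $\Lambda_n$-modules with $L_n \otimes_{S_n} S_{n-1} \cong L_{n-1}$; the only obstruction to lifting a single differential lies in a group that vanishes because each $L_n^i$ is $\Lambda_n$-projective and the kernel $\fm^n/\fm^{n+1}$ of $S_n \to S_{n-1}$ is a square-zero (in fact $S$-module) deformation, so the obstruction cocycle is automatically a coboundary. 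This is standard deformation theory of complexes of projectives over a surjection with nilpotent (here pro-nilpotent) kernel. Then I would set $L \colonequals \varprojlim_n L_n$, a bounded complex of finitely generated projective $\Lambda$-modules — here one uses that $S$ is complete noetherian local and $\Lambda$ is module-finite free over $S$, so that $\Lambda$ is $\fm$-adically complete and a projective $\Lambda_n$-module of bounded rank lifts to a projective $\Lambda$-module (Nakayama plus completeness). By construction $L \lotimes_S k \cong P \cong T$ in $\dcat{(\Lambda \otimes_S k)}$.

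For \emph{uniqueness}, suppose $L'$ is another complex in $\dcat\Lambda$ with $L' \lotimes_S k \cong T$. Since $T$ is perfect over $\Lambda \otimes_S k$ and $k$ is the fibre at the closed point, Proposition~\ref{pr:perfection} (applied with $R = S$, noting $\max(\Spec S) = \{\fm\}$) gives that $L$ and $L'$ are perfect over $\Lambda$, so I may assume both are bounded complexes of finitely generated projectives. Comparing $L$ and $L'$ modulo $\fm^{n+1}$ and using the vanishing of the same obstruction/ambiguity groups as above, one inductively constructs compatible isomorphisms $L_n \iso L'_n$ lifting the given isomorphism mod $\fm$; passing to the limit yields $L \cong L'$ in $\dcat\Lambda$. (Equivalently one can phrase this via $\RHom_\Lambda(L, L')$ and the Mittag--Leffler / completeness argument, but the step-by-step version is cleanest.)

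It remains to check that $L$ is tilting, that $\End(L)$ is a finite free $S$-algebra, and that $\End(L) \otimes_S k \cong \End(T)$. Set $X \colonequals \RHom_\Lambda(L,L)$, an object of $\dbcat S$ since $L$ is perfect. Because $L$ is perfect, base change gives $X \lotimes_S k \cong \RHom_{\Lambda \otimes_S k}(T,T)$, whose cohomology is concentrated in degree $0$ and equals $\End(T)$; as $\fm$ is the only maximal ideal of $S$, Proposition~\ref{pr:exact-lg} applies to show $H^i(X) = 0$ for $i \neq 0$ and $H^0(X) = \End(L)$ is $S$-projective, hence $S$-free ($S$ local). This simultaneously gives condition (1) in the definition of tilting (namely $\Ext^i_\Lambda(L,L) = 0$ for $i \neq 0$) and the isomorphism $\End(L) \otimes_S k \cong \End(T)$ by right-exactness. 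For condition (2), that $\Thick(L) = \Thick(\Lambda)$ in $\dcat\Lambda$: both $L$ and $\Lambda$ are perfect, their fibres at $\fm$ are $T$ and $\Lambda \otimes_S k$, and $\Thick(T) = \Thick(\Lambda \otimes_S k)$ because $T$ is tilting over $\Lambda \otimes_S k$; Corollary~\ref{co:Hopkins} (again with $\supp_S \Lambda = \Spec S$ having unique maximal ideal $\fm$) then upgrades this to $\Thick(L) = \Thick(\Lambda)$ over $\Lambda$.

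The main obstacle is the existence part: setting up the inductive lifting of the complex and its differentials across $S_{n-1} \leftarrow S_n$ and verifying the obstruction classes vanish, together with the limit argument identifying $\varprojlim L_n$ as a genuine bounded complex of finitely generated projective $\Lambda$-modules. Once existence is in hand, uniqueness is the same bookkeeping one degree of freedom down, and the tilting properties fall out of Propositions~\ref{pr:perfection}, \ref{pr:exact-lg} and Corollary~\ref{co:Hopkins} essentially formally.
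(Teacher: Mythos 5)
The paper itself gives no proof of this statement: it is quoted from Rickard's lifting paper \cite[\S3]{Rickard:1991}, so your attempt has to stand on its own, and it has a genuine gap at its core. Your existence argument asserts that when lifting the differentials of a bounded complex of projectives across the square-zero surjection $\Lambda_n\to\Lambda_{n-1}$ ``the obstruction cocycle is automatically a coboundary'' because the terms are projective and the kernel is square-zero. That is false, and the claim proves too much: if it were true, \emph{every} perfect complex over $\Lambda\otimes_Sk$ would lift, and lift uniquely, with no hypothesis on $T$ at all, which would make the word ``tilting'' in the theorem superfluous and contradict the fact that lifting of complexes along nilpotent extensions is genuinely obstructed. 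The correct statement is that the projective terms and the chosen liftings of the differentials produce an obstruction class in
\[
\Ext^2_{\Lambda\otimes_Sk}\bigl(T,\;T\otimes_k \fm^{n}/\fm^{n+1}\bigr)\;\cong\;\Ext^2_{\Lambda\otimes_Sk}(T,T)\otimes_k \fm^{n}/\fm^{n+1}\,,
\]
and the ambiguity in the choice of lift (hence the inductive uniqueness step, and the compatibility of the tower needed before taking $\varprojlim$) is controlled by the corresponding $\Ext^1$ group. Both vanish precisely because $T$ is a tilting complex, so $\Ext^i_{\Lambda\otimes_Sk}(T,T)=0$ for $i\neq 0$; this is exactly where the hypothesis enters Rickard's proof, and your outline never invokes it in the existence/uniqueness part (you only use tilting later, for the endomorphism ring and the thick-subcategory condition). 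Without this input the induction does not get off the ground.

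Two smaller points. First, your uniqueness argument applies Proposition~\ref{pr:perfection} to an arbitrary lift $L'\in\dcat\Lambda$, but that proposition requires $L'\in\dbcat\Lambda$; uniqueness should be argued (as in Rickard) among lifts that are bounded complexes of finitely generated projectives, or one must first reduce to that case by a separate argument. Second, the final part of your proposal --- deducing $\Ext^{i}_{\Lambda}(L,L)=0$ for $i\neq0$, the $S$-freeness of $\End(L)$, the isomorphism $\End(L)\otimes_Sk\cong\End(T)$, and $\Thick(L)=\Thick(\Lambda)$ from Propositions~\ref{pr:perfection}, \ref{pr:exact-lg} and Corollary~\ref{co:Hopkins} at the unique closed point --- is sound and is a nice way to package the verification; it is the existence/uniqueness induction that needs to be repaired by inserting the $\Ext^1$- and $\Ext^2$-vanishing coming from the tilting hypothesis.
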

This theorem yields a partial converse of $(\star)$ in Corollary~\ref{corollary:global-to-local}:
if $A_{k(\fp)}\sim B_{k(\fp)}$, then there is a finite free $\widehat{R}_{\fp}$-algebra $\Gamma$ with $\widehat{A}_{\fp} \sim \Gamma$ and $\Gamma \otimes_{\widehat{R}_{\fp}} k(\fp) \cong {B_{k(\fp)}}$.
So the problem to lift derived equivalences is reduced to a lifting problem for isomorphisms of algebras.
However, there are non-isomorphic commutative local noetherian rings with isomorphic residue fields.
The converse of $(\star)$ fails also for certain blocks of group algebras \cite[p.192]{Rickard:1998}.
Here is another quiver-theoretic example.

\begin{example}\label{example:skew-group-algebras}
	Let $n \geq 1$, and
	$A$ and $B$ denote the following matrix rings:
	\[
	A  \colonequals \begin{pmatrix} k[c] & (c) \\ k[c] & k[c] \end{pmatrix} 
	\supset
	B \colonequals \begin{pmatrix} k[c] & (c^n) \\ (c^n) & k[c] \end{pmatrix} \,.
	\]
	Similar to $A$, the subring $B$
	is also a finite free $R$-algebra whose centre is isomorphic to $R\colonequals k[c]$.
	In contrast to the quiver of $A$, the quiver of $B$ has relations:
	\[
	B \cong kQ/I \quad
	\text{for}\quad
	(Q,I)\colon 
	\begin{tikzcd} 
		\ar[looseness=6, out=-150, in=-210, "x"] 1 \ar[yshift=3pt]{r}{\alpha} & \ar[yshift=-3pt]{l}{\beta} 2  \ar[looseness=6, out=30, in=-30, "y"]
	\end{tikzcd} 
	\quad
	\begin{array}{cc}
		x^{2n} = \beta \alpha\,, & \alpha x = y \alpha\,, \\  
		y^{2n} = \alpha \beta\,, & \beta y = x \beta\,.	\end{array}
	\]
	The fibres
	$A_{k(\fp)}$ and $B_{k(\fp)}$ at 
	a prime ideal $\fp$ of $R$ are related as follows.
	\begin{enumerate}
		\item 
		If $c \notin \fp$, it holds that $(c)_{\fp} = (c^n)_{\fp} = R_{\fp}$ and there are isomorphisms 
		\[
		A_{\fp} \cong \Mat_{2 \times 2}(R_{\fp}) \cong B_{\fp} 
		\quad\text{and}\quad
		A_{k(\fp)} \cong \Mat_{2 \times 2}(R_{k(\fp)})\cong B_{k(\fp)}
		\] 
		of $R_{\fp}$-algebras and $k(\fp)$-algebras, respectively.
		\item If $c \in \fp$, it holds that $\fp = (c) \equalscolon \fm$. 
		Since $R/\fm \cong k(\fm)$, there are isomorphisms of $k(\fm)$-algebras
		\[
		A_{k(\fm)} \cong 
		\begin{pmatrix}
			k[c]/(c) & (c)/(c^2)\\
			k[c]/(c) & k[c]/(c)
		\end{pmatrix}
		\cong
		\begin{pmatrix}
			k[c]/(c) & (c^n)/(c^{n+1}) \\
			(c^n)/(c^{n+1}) & k[c]/(c)
		\end{pmatrix}
		\cong 
		B_{k(\fm)}\,.
		\]
		In other terms, both algebras $A_{k(\fm)}$ and $B_{k(\fm)}$ are isomorphic to the path algebras of the two-cycle quiver from \eqref{eq:two-cycle} with relations $\alpha \beta  = \beta \alpha= 0$.
		
		Since $\Lambda \colonequals \widehat{A}_{\fm}$ is a finite $\widehat{R}_{\fm}$-algebra over the complete local ring $\widehat{R}_{\fm}$, it is semiperfect,
		and thus any simple $\Lambda$-module is a direct summand of $\Lambda/ \rad {\Lambda}$,
		see \cite[Example~23.3 and Theorem~25.3]{Lam:1991}. 
		It follows that 
		\[
		\gldim \Lambda = \pdim_{\Lambda} \Lambda/{\rad \Lambda} = 1
		\]
		where the first equality holds according to  \cite[Proof of Proposition~2.2]{Iyama--Reiten:2008} and the second follows by computation.
		On the other hand, the projective dimension of any simple $\widehat{B}_{\fm}$-module, and thus the global dimension of $\widehat{B}_{\fm}$ are both infinite. 
		So $\widehat{A}_{\fm}$ and $\widehat{B}_{\fm}$ are not derived equivalent.
	\end{enumerate}	
	Together with Corollary~\ref{corollary:global-to-local}, this shows that
	despite the fact that 
	the fibres $A_{k(\fp)}$ and $B_{k(\fp)}$ are isomorphic at each ideal $\fp \in \mathrm{Spec} \,R$, the algebras $A$ and $B$ are not derived equivalent.
	Varying the parameter $n \geq 1$ of the algebra $B$ yields a family 
	of algebras, each of which is fibre-wise but not globally derived equivalent to $A$.
\end{example}

\begin{remark}
	In \cite[Lemma 8.1]{Eisele:2021} Eisele describes sufficient conditions
	($\Lambda$1)--($\Lambda$5) to lift the isomorphism of 
	$k(\fm)$-algebras $A_{k(\fm)} \cong B_{k(\fm)}$ to an isomorphism $\widehat{A}_{\fm} \cong \widehat{B}_{\fm}$
	of $\widehat{R}_{\fm}$-algebras.
	In Example~\ref{example:skew-group-algebras}, property ($\Lambda$3) fails. 
	Eisele has also studied the uniqueness of lifts in connection with derived equivalences \cite{Eisele:2012}.
	
\end{remark}
In \cite{Higman:1959}, 
Higman proved that it is possible to lift isomorphisms
of suitably large quotients in the following setup.
\begin{theorem}\label{th:Higman}
	Let $\Lambda$ and $\Gamma$ be finite free $S$-algebras over a complete discrete valuation ring $S$
	such that the $Q(S)$-algebra $\Lambda \otimes_S Q(S)$ is separable.
	Let $\fm$ denote the maximal ideal of $S$ and set
	$S_n \colonequals S/\fm^{n}$ for any $n \geq 1$.
	Then there is an integer $ \ell \colonequals \ell({\Lambda,\fm}) 
	\geq 1$ such that any isomorphism
	$\Lambda \otimes_S S_{\ell} \iso \Gamma \otimes_S S_{\ell}$ of $S_{\ell}$-algebras lifts to an isomorphism $\Lambda \iso {\Gamma}$ of $S$-algebras.  	\qed
\end{theorem}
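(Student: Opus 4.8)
The plan is to follow Higman's original strategy, recast in terms of Hochschild cohomology. Throughout write $S$ for the complete discrete valuation ring, $\pi$ for a uniformizer (so $\fm=(\pi)$ and $S_n=S/\pi^n$), and set $\Lambda_n\colonequals\Lambda/\pi^n\Lambda$, $\Gamma_n\colonequals\Gamma/\pi^n\Gamma$. The first and crucial step is to extract a uniform torsion bound on Hochschild cohomology. Since $\Lambda_K\colonequals\Lambda\otimes_SQ(S)$ is separable it carries a separability idempotent $e$ in its enveloping algebra $\Lambda_K\otimes_{Q(S)}\op{\Lambda_K}$, and because $\Lambda^e\colonequals\Lambda\otimes_S\op\Lambda$ is an $S$-order spanning that enveloping algebra over $Q(S)$, there is a least integer $c=c(\Lambda,\fm)\geq 0$ with $\pi^c e\in\Lambda^e$ (equivalently, $\pi^c$ lies in the Higman ideal of $\Lambda$). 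The standard ``contracting homotopy up to $\pi^c$'' then gives $\pi^c\cdot H^i(\Lambda,X)=0$ for every $\Lambda$-bimodule $X$ and every $i\geq 1$, where $H^\ast$ denotes Hochschild cohomology over $S$; and reducing $\pi^c e$ modulo $\pi^n$ produces an element of $\Lambda_n\otimes_{S_n}\op{\Lambda_n}$ with the same formal properties, so the identical bound $\pi^c\cdot H^i_{S_n}(\Lambda_n,X)=0$ holds over each $S_n$ for $i\geq 1$. I claim that $\ell\colonequals 2c+1$ has the asserted property.

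Given an isomorphism $\bar\phi\colon\Lambda_\ell\iso\Gamma_\ell$ of $S_\ell$-algebras, I would first lift it, using that $\Lambda$ is $S$-projective, to an $S$-linear map $f\colon\Lambda\to\Gamma$ with $f\equiv\bar\phi\pmod{\pi^\ell}$; reducing modulo $\pi$ shows $f$ is an isomorphism of $S$-modules. Transporting the multiplication of $\Gamma$ along $f$ equips the free module $M\colonequals\Lambda$ with a second associative multiplication $\mu_1(a,b)\colonequals f^{-1}(f(a)f(b))$, for which $f$ is an algebra isomorphism $(M,\mu_1)\iso\Gamma$; writing $\mu_0$ for the multiplication of $\Lambda$, the fact that $\bar\phi$ is a ring map forces $\mu_1\equiv\mu_0\pmod{\pi^\ell}$. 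It then suffices to produce an $S$-module automorphism $G$ of $M$ that is a ring isomorphism $(M,\mu_0)\iso(M,\mu_1)$, for then $f\circ G\colon\Lambda\iso\Gamma$ is a lift (a bijective ring map preserves units automatically).

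To build $G$, I would run a Newton-type iteration up the $\pi$-adic filtration with a shift by $c$: construct automorphisms $G_n$ of $M$ for $n\geq\ell$ with $G_\ell=\id$, $G_{n+1}=G_n\circ(1+\pi^{n-c}h_n)$ for suitable $h_n\in\End_S(M)$, such that $\mu^{(n)}\colonequals G_n^{-1}\circ\mu_1\circ(G_n\times G_n)$ satisfies $\mu^{(n)}\equiv\mu_0\pmod{\pi^n}$ --- which holds at $n=\ell$ by the previous paragraph. Given $G_n$, torsion-freeness lets me write $\mu^{(n)}=\mu_0+\pi^n\psi_n$ with $\psi_n\in\Hom_S(M\otimes_SM,M)$, and expanding the associativity of $\mu^{(n)}$ shows that $\psi_n$ defines a Hochschild $2$-cocycle over $S_n$ for the bimodule $\Lambda_n$; the torsion bound then furnishes $h_n\in\End_S(M)$ whose Hochschild coboundary satisfies $dh_n\equiv-\pi^c\psi_n\pmod{\pi^n}$. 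A direct computation, in which the terms quadratic in $\pi^{n-c}$ vanish modulo $\pi^{n+1}$ because $2(n-c)\geq n+1$ (here $n\geq\ell=2c+1$ is used) and $\mu^{(n)}$ may be replaced by $\mu_0$ in the linear terms because $n\geq c+1$, then gives
\[
\mu^{(n+1)}\equiv\mu_0+\pi^n\psi_n+\pi^{n-c}\,dh_n\equiv\mu_0\pmod{\pi^{n+1}}.
\]
Since $G_{n+1}-G_n\in\pi^{n-c}\End_S(M)$ and $\End_S(M)$ is $\pi$-adically complete, the $G_n$ converge to some $G$ with $G\equiv\id\pmod{\pi^{c+1}}$, hence an automorphism; passing to the limit in $\mu^{(n)}\equiv\mu_0\pmod{\pi^n}$ gives $G^{-1}\circ\mu_1\circ(G\times G)=\mu_0$, so $G$ is the desired ring isomorphism.

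I expect the main obstacle to be the uniform torsion bound together with its compatibility with reduction modulo $\pi^n$ --- that is, squeezing out of separability of $\Lambda\otimes_SQ(S)$ a single exponent $c$ annihilating $H^{\geq 1}$ of $\Lambda$ and of every Artinian quotient $\Lambda_n$ simultaneously --- as well as the careful tracking of $\pi$-powers in the iteration, which is what pins down $\ell=2c+1$. The hypothesis that $S$ is a \emph{discrete} valuation ring is essential here, since it is precisely what allows obstructions to be divided by $\pi^n$ (and keeps the relevant $\Hom$-modules torsion-free); completeness of $S$ enters only through the final passage to the limit.
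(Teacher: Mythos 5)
Your proposal is correct, and it is essentially the classical argument behind the cited result: the paper itself gives no proof of Theorem~\ref{th:Higman} beyond the reference to Higman, and your route --- extracting from the separability idempotent a single exponent $c$ with $\pi^c e\in\Lambda\otimes_S\op{\Lambda}$, deducing that $\pi^c$ annihilates positive-degree Hochschild cohomology of $\Lambda$ and of every quotient $\Lambda_n$, and then running the successive-approximation argument that pins down $\ell=2c+1$ --- is exactly Higman's strategy, and the $\pi$-power bookkeeping in your iteration checks out ($2(n-c)\ge n+1$ and $2n-c\ge n+1$ for $n\ge 2c+1$). The one caveat concerns the word ``lifts'': your construction produces an isomorphism $\Lambda\cong\Gamma$ agreeing with the given $\bar\phi$ only modulo $\pi^{c+1}$ (since $G\equiv\id$ modulo $\pi^{\ell-c}$), not modulo $\pi^{\ell}$; this matches Higman's actual existence statement and is all that is used in Theorem~\ref{theorem:lift-equivalences}, but a strictly literal reading of ``lifts'' is not what the argument (or the cited result) delivers.
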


\subsection*{Lifting derived equivalences}
We return to our initial setup of finite projective $R$-algebras $A$ and $B$. 
Instead of the fibre $A_{k(\fp)}$ at a prime ideal $\fp$ of $R$ we
consider possibly bigger intermediate quotients
\[
\begin{tikzcd}
	\widehat{A}_{\fp} =  A \otimes_R \widehat{R}_{\fp} \ar[twoheadrightarrow]{r} & 
	A_{\fp, n}
	\colonequals 
	\widehat{A}_{\fp} \otimes_{\widehat{R}_{\fp}} 
	\widehat{R}_{\fp}/ \fp^n  \widehat{R}_{\fp}  \ar[twoheadrightarrow]{r} & 
	A_{\fp,1} \cong A_{k(\fp)}
\end{tikzcd}
\]
for certain integers $n \geq 1$.
Rickard's theorems can be combined with Higman's result in order to lift derived equivalences of certain intermediate fibres.

\begin{theorem}\label{theorem:lift-equivalences}
	Let $\fp$ be a nonzero prime ideal of $R$, and $A$ and $B$ be finite projective $R$-algebras over a Dedekind domain $R$ such that the $Q(S)$-algebra $B \otimes_S Q(S)$
	is separable, where $S \colonequals \widehat{R}_{\fp}$.
	Let $\ell$ denote the number $\ell(\widehat{B}_{\fp},\fp S)$ 
	provided by Theorem~\ref{th:Higman}.
	Then $\widehat{A}_{\fp} \sim \widehat{B}_{\fp}$ if and only if there is an integer $n \geq \ell$ such that $A_{\fp,n} \sim B_{\fp, n}$.
\end{theorem}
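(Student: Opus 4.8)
The plan is to handle the two implications separately, using Rickard's base-change theorem (Theorem~\ref{theorem:Rickard1}) for one direction and combining Rickard's lifting theorem (Theorem~\ref{theorem:Rickard2}) with Higman's theorem (Theorem~\ref{th:Higman}) for the other. Throughout set $\fm\colonequals\fp S$, the maximal ideal of $S=\widehat R_\fp$; since $R$ is Dedekind and $\fp\ne 0$, the local ring $R_\fp$ is a discrete valuation ring, so $S$ is a complete discrete valuation ring, in particular a principal ideal domain. Write $S_n\colonequals S/\fm^n$, so that each $S_n$ is an Artinian — hence complete — local ring whose residue field is $k\colonequals k(\fp)$. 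Because $A$ and $B$ are finite projective over $R$, the algebras $\widehat A_\fp=A\otimes_RS$ and $\widehat B_\fp=B\otimes_RS$ are finite free over $S$, and $A_{\fp,n}=\widehat A_\fp\otimes_SS_n$, $B_{\fp,n}=\widehat B_\fp\otimes_SS_n$ are finite free over $S_n$. Hence all $\Tor$-groups occurring under the base changes $S\to S_n$ and $S_n\to S_m$ ($m\le n$) vanish in positive degrees, so Theorem~\ref{theorem:Rickard1} is available for all of them. With this in place the forward implication is immediate: if $\widehat A_\fp\sim\widehat B_\fp$, then base change along $S\to S_n$ gives $A_{\fp,n}\sim B_{\fp,n}$ for every $n$, in particular for $n=\ell$ (this is also a special case of Corollary~\ref{corollary:global-to-local}).

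For the converse, fix $n\ge\ell$ with $A_{\fp,n}\sim B_{\fp,n}$, and choose a tilting complex $T$ of $A_{\fp,n}$ with $\End(T)\cong B_{\fp,n}$. Reducing modulo $\fm$ through Theorem~\ref{theorem:Rickard1} (base change $S_n\to k$) produces a tilting complex $T'\colonequals T\lotimes_{S_n}k$ of $A_{k(\fp)}=A_{\fp,1}$ with $\End(T')\cong B_{k(\fp)}$. Now invoke Theorem~\ref{theorem:Rickard2} with the complete local ring $S$ and the finite free $S$-algebra $\widehat A_\fp$: it yields a tilting complex $L$ of $\widehat A_\fp$ with $L\lotimes_Sk\cong T'$ such that $\Gamma\colonequals\End(L)$ is a finite free $S$-algebra and $\Gamma\otimes_Sk\cong\End(T')\cong B_{k(\fp)}$; since $L$ is tilting, $\widehat A_\fp\sim\Gamma$. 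It remains to show that $\Gamma\cong\widehat B_\fp$ as $S$-algebras.

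The crux is as follows. By Theorem~\ref{theorem:Rickard1}, the complex $L\lotimes_SS_n$ is a tilting complex of $\widehat A_\fp\otimes_SS_n=A_{\fp,n}$ with $\End(L\lotimes_SS_n)\cong\Gamma\otimes_SS_n$, and it reduces modulo $\fm$ to $L\lotimes_Sk\cong T'$. Applying Theorem~\ref{theorem:Rickard2} a second time — now with the complete \emph{Artinian} local ring $S_n$ and the finite free $S_n$-algebra $A_{\fp,n}$, whose residue field is again $k$ — shows that the lift of $T'$ to a complex over $A_{\fp,n}$ is unique up to isomorphism; since $T$ is another such lift, $T\cong L\lotimes_SS_n$ in $\dcat{A_{\fp,n}}$. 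Comparing endomorphism algebras gives isomorphisms of $S_n$-algebras
\[
\Gamma\otimes_SS_n\;\cong\;\End(L\lotimes_SS_n)\;\cong\;\End(T)\;\cong\;B_{\fp,n}=\widehat B_\fp\otimes_SS_n\,,
\]
and reducing modulo $\fm^\ell$ (legitimate because $n\ge\ell$) yields an isomorphism $\Gamma\otimes_SS_\ell\cong\widehat B_\fp\otimes_SS_\ell$ of $S_\ell$-algebras. Finally, $\widehat B_\fp$ and $\Gamma$ are finite free algebras over the complete discrete valuation ring $S$ and $\widehat B_\fp\otimes_SQ(S)$ is separable by hypothesis, so Theorem~\ref{th:Higman} applied with $\Lambda=\widehat B_\fp$ and the integer $\ell=\ell(\widehat B_\fp,\fm)$ lifts this isomorphism to an isomorphism $\widehat B_\fp\cong\Gamma$ of $S$-algebras. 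Therefore $\widehat A_\fp\sim\Gamma\cong\widehat B_\fp$, completing the converse.

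The step I expect to require the most care is the uniqueness input identifying $T$ with $L\lotimes_SS_n$: one must be certain that Theorem~\ref{theorem:Rickard2} may be applied over the non-reduced Artinian base $S_n$ — so that lifts of a tilting complex from $A_{k(\fp)}$ to $A_{\fp,n}$ are unique up to isomorphism — and not merely over the discrete valuation ring $S$. Everything else amounts to keeping track of the various base changes and checking the freeness and $\Tor$-vanishing hypotheses needed to apply Theorems~\ref{theorem:Rickard1}, \ref{theorem:Rickard2} and~\ref{th:Higman}.
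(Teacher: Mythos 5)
Your proposal is correct and follows essentially the same route as the paper: Rickard's base-change theorem for the forward direction, then lifting the reduction $T\lotimes_{S_n}k$ via Theorem~\ref{theorem:Rickard2}, reducing the lift back to $S_n$, invoking the uniqueness statement of Theorem~\ref{theorem:Rickard2} over the (complete, Artinian) local ring $S_n$ to identify it with $T$, and finishing with Higman's theorem applied to $\widehat B_{\fp}$ and $\Gamma=\End(L)$. The step you flagged — uniqueness of lifts over the Artinian base $S_n$ — is exactly how the paper argues as well, and your explicit reduction from $S_n$ to $S_\ell$ before applying Higman is a point the paper leaves implicit.
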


\begin{proof}
	Let $n \geq 1$. The `only if'-implication follows from Theorem~\ref{theorem:Rickard1}.
	
	To show the converse, set 
	$\Lambda \colonequals \widehat{A}_{\fp}$,  $S_n \colonequals S/\fp^n S$ and $\Lambda_n \colonequals \Lambda \otimes_S S_n$.
	Let $T$ be a tilting complex of $\Lambda_n = A_{\fp,n}$
	with $\End(T) \cong B_{\fp, n}$.
	
	Theorem~\ref{theorem:Rickard1} implies that $T_1 \colonequals T \lotimes_{S_n} S_1$ is a tilting complex of $\Lambda_1$ such that $\End(T_{1})\cong B_{\fp,1}$.
	Because of Theorem~\ref{theorem:Rickard2}, $T_1$ lifts to a tilting complex $L$ of $\Lambda$ 
	with $S$-free endomorphism ring $\Gamma$.
	Theorem~\ref{theorem:Rickard1} yields that $L_n \colonequals  L \lotimes_S S_n $ is a tilting complex of $\Lambda_n$ with $\End(L_n) \cong \Gamma  \otimes_S S_n \equalscolon \Gamma_n$.
	Since $L_n$ and $T$ are both lifts of $T_1$ to $\dcat {\Lambda_n}$,
	it follows that $L_n \cong T$
	by the uniqueness statement in Theorem~\ref{theorem:Rickard2}. Thus, there are isomorphisms of $S_n$-algebras 
	$\Gamma_n \cong \End(L_n) \cong \End(T) \cong B_{\fp,n}$.
	
	Applying the argument above to an integer  $n \geq \ell$, Theorem~\ref{th:Higman} yields an isomorphism $\Gamma \cong \widehat{B}_{\fp}$ of $S$-algebras. 
	This shows that $\widehat{A}_{\fp} = \Lambda \sim\Gamma \cong \widehat{B}_{\fp}$.
\end{proof}
The last statement can be viewed as a first step to reduce
derived equivalence problems of Gorenstein algebras
to finite dimensional algebras.
\begin{remark}
	Rickard's theorems and the last proof yield a bijection between tilting complexes of $\widehat{A}_{\fp}$ with $\widehat{R}_{\fp}$-free endomorphism rings and those of any intermediate quotient $A_{\fp,n}$.
	This bijection extends to certain settings without the $R$-projectivity assumption on $A$ as well as to silting complexes \cite{Gnedin:2022}, see also \cite{Eisele:2021}. 
\end{remark}

\section{Gorenstein algebras}
\label{se:Gorenstein-algebras}
An associative ring $A$ is \emph{Iwanaga--Gorenstein} if it is noetherian on both sides, and both $\injdim A$ and $\injdim \op A$ are finite. Zaks~\cite{Zaks:1969} proved that in this case $\injdim A=\injdim \op A$; see also \cite[Lemma~6.2.1]{Krause:2021}.
\begin{definition}
	\label{de:gor-algebra}
	Following \cite{Iyengar--Krause:2021} an associative ring $A$ is a \emph{Gorenstein $R$-algebra} if it satisfies the following conditions.
	\begin{enumerate}
		\item \label{Gorenstein-fp}
		$A$ is a finite projective $R$-algebra;
		\item \label{Gorenstein-ig}
		$A_\fp$ is Iwanaga--Gorenstein for each $\fp$ in $\supp_RA$.
	\end{enumerate}
\end{definition}
Here $\supp_RA$ denotes the support of $A$ as an $R$-module; see \ref{ch:support}.  It is immediate from the definition that $A$ is a Gorenstein $R$-algebra if and only if $\op A$ is a Gorenstein $R$-algebra.

\begin{example}
	Any commutative Gorenstein ring $R$ is Gorenstein as an $R$-algebra.  Any algebra that is finite dimensional over a field $k$ and Iwanaga--Gorenstein is clearly a Gorenstein $k$-algebra.
	
	Let $R$ be commutative Gorenstein ring. For any finite group $G$, the group algebra $RG$ is a Gorenstein $R$-algebra. This can be checked directly, or, better still, it follows from Theorem~\ref{th:gor-family}.  In the same vein, any finitely generated exterior algebra over $R$, or any matrix ring over $R$, is a Gorenstein $R$-algebra; see also Theorem~\ref{th:morita}.
\end{example}

In all the examples above we started with $R$ being Gorenstein. This is a necessity, because of the result below from \cite[Lemma~4.1]{Iyengar--Krause:2021}. Its converse need not hold: consider the case of an algebra finite dimensional over a field.

\begin{lemma}
	\label{le:descent}
	When $A$ is a Gorenstein $R$-algebra, the ring $R_\fp$ is Gorenstein for each $\fp$ in $\supp_RA$.
\end{lemma}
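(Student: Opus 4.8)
The plan is to reduce the statement to a known fact about commutative rings, namely that a finite algebra over a local ring which is Iwanaga--Gorenstein forces the base to be Gorenstein. Concretely, fix $\fp$ in $\supp_RA$. By Definition~\ref{de:gor-algebra} the ring $A_\fp$ is Iwanaga--Gorenstein, so in particular $\injdim_{A_\fp}A_\fp<\infty$, say equal to $n$. I want to conclude $\injdim_{R_\fp}R_\fp<\infty$, which is precisely the assertion that $R_\fp$ is Gorenstein (see \ref{ch:Gor-local}). Replacing $R$ by $R_\fp$ and $A$ by $A_\fp$, I may assume $R=(R,\fm,k)$ is local and $A$ is a finite projective $R$-algebra with $\injdim_AA=n<\infty$; note $A$ is nonzero since $\fp\in\supp_RA$, and after passing to a ring factor as in \ref{ch:subring} I may also assume $R\hookrightarrow Z(A)$, so $A$ is faithful over $R$.

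The key step is a change-of-rings comparison for Ext. Since $A$ is projective as an $R$-module, for any $R$-module $M$ and any $A$-module $N$ there is an adjunction isomorphism
\[
\Ext^i_A(A\otimes_R M,\,N)\cong \Ext^i_R(M,\,N)\,,
\]
where on the right $N$ is viewed as an $R$-module via the structure map. Taking $M=k$ and $N=A$ gives $\Ext^i_R(k,A)\cong \Ext^i_A(A\otimes_R k,A)$. The right-hand side vanishes for $i>n=\injdim_AA$. Hence $\Ext^i_R(k,A)=0$ for $i>n$. Now $A$, as an $R$-module, is a nonzero finitely generated module, and since $A$ is projective over the local ring $R$ it is in fact free, so $A\cong R^{\,r}$ for some $r\ge 1$ as $R$-modules. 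Therefore $\Ext^i_R(k,R)^{\,r}\cong\Ext^i_R(k,A)=0$ for $i>n$, which forces $\Ext^i_R(k,R)=0$ for $i>n$. By the equivalences recalled in \ref{ch:injdim-test} (specifically the implication from vanishing of $\Ext^i_R(k,R)$ for some $i>\dim R$ — and $n$ will exceed $\dim R$ once we know the latter is finite, but in fact the cleaner route is that $\Ext^i_R(k,R)=0$ for $i\gg0$ already implies $\injdim_RR<\infty$ by the Fossum--Foxby--Griffith--Reiten theorem there), this gives $\injdim_RR<\infty$, i.e. $R$ is Gorenstein.

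The main obstacle to watch is making sure the adjunction $\Ext^i_A(A\otimes_RM,N)\cong\Ext^i_R(M,N)$ is applied correctly: it requires $A$ flat (here projective) over $R$ so that $A\otimes_R(-)$ is exact and sends a projective $R$-resolution of $M$ to a projective $A$-resolution of $A\otimes_RM$, together with the standard Hom-tensor adjunction $\Hom_A(A\otimes_RP,N)\cong\Hom_R(P,N)$ for $P$ projective over $R$. Once that is in place everything else is bookkeeping: the reduction to the local faithful case via \ref{ch:subring}, and the observation that a finitely generated projective module over a local ring is free, so that the vanishing of $\Ext$ against $A$ transfers to $\Ext$ against $R$. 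I should also double-check that the reduction step is legitimate, i.e. that Iwanaga--Gorensteinness and the hypothesis $\fp\in\supp_RA$ localise well; both are immediate since $\supp_RA$ is stable under specialisation-localisation and $(\op{A})_\fp\cong\op{(A_\fp)}$. Alternatively, one can simply cite \cite[Lemma~4.1]{Iyengar--Krause:2021} directly, but the argument above is short enough to include.
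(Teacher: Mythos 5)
Your proposal is correct and follows essentially the same route as the paper: localise at $\fp$, use the adjunction $\Ext^i_{R_\fp}(k(\fp),A_\fp)\cong\Ext^i_{A_\fp}(A_\fp\otimes_{R_\fp}k(\fp),A_\fp)$ together with the Iwanaga--Gorenstein hypothesis on $A_\fp$, and then exploit that $A_\fp$ is a nonzero finite free $R_\fp$-module to transfer the vanishing to $\Ext^i_{R_\fp}(k(\fp),R_\fp)$, concluding via \ref{ch:injdim-test}/\ref{ch:Gor-local}. The detour through \ref{ch:subring} and the worry about $n$ versus $\dim R$ are unnecessary but harmless.
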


\begin{proof}
	Fix a $\fp$ in $\supp_RA$ and let $k(\fp)$ denote the residue field at $\fp$. One has 
	\[
	\Ext^i_{R_\fp}(k(\fp), A_\fp) \cong \Ext^i_{A_\fp}(A_\fp \otimes_{R_{\fp}} k(\fp), A_\fp) =0 \qquad \text{for $i\gg 0$\,,}
	\]
	where the first isomorphism holds by adjunction, and the second one holds because $A_\fp$ is Iwanaga--Gorenstein. Since the $R_\fp$-module $A_\fp$ is finite free and nonzero---this is where we use the fact $\fp$ is in $\supp_RA$---it follows that
	\[
	\Ext^i_{R_\fp}(k(\fp), R_\fp) =0 \qquad\text{for $i\gg 0$\,.}
	\]
	Thus the local ring $R_\fp$ is Gorenstein; see \ref{ch:Gor-local}. 
\end{proof}

The argument above only uses the hypothesis that $A_\fp$ has finite injective dimension on one side. This observation will be used in the proof of Theorem~\ref{th:cnrings}.  

\begin{chunk}\label{ch:subring2}
	As explained in \ref{ch:subring}, when $A$ is a finite projective $R$-algebra one can drop down to a ring factor of $R$ and ensure  $A$ is also faithful as an $R$-module, equivalently, that $\supp_RA=\Spec R$. In this case when $A$ is a Gorenstein $R$-algebra, the ring $R$ is Gorenstein, by Lemma~\ref{le:descent}. 
\end{chunk}

\subsection*{Dependence on base}
Next we discuss the dependence of the Gorenstein property on the ring $R$.  The following result, essentially contained in \cite[Theorem~4.6]{Iyengar--Krause:2021}, is the key. 

\begin{theorem}
	\label{th:goto-plus}
	Let $R$ be a commutative noetherian ring and $A$ a finite projective $R$-algebra. Then $A$ is Gorenstein as an $R$-algebra if and only if 
	\[
	\Ext^i_A(M,A)=0 = \Ext^i_{\op A}(N,A) \qquad\text{for $i\gg 0$}
	\]
	for each $M\in \mod A$ and $N\in \mod\op A$. 
\end{theorem}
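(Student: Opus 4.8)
The plan is to prove both directions through a fibre-wise reduction, using Goto's theorem (Theorem~\ref{th:goto}) on the base together with the fibre-wise criteria established in Section~\ref{sec:finite-projective}. First I would observe that, by the discussion in \ref{ch:subring}, one may pass to a ring factor of $R$ and thereby assume $\supp_R A = \Spec R$; the relevant $\Ext$-vanishing and the Gorenstein property are both unaffected by this reduction, since modules over the complementary factor are killed. So throughout we may assume $A$ is faithful as an $R$-module, hence $R$ is a subring of the centre of $A$.

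For the forward direction, assume $A$ is a Gorenstein $R$-algebra. By Lemma~\ref{le:descent} the ring $R$ is Gorenstein. Fix $M \in \mod A$. Since $A$ is finite projective over $R$, one has $\Ext^i_A(M,A) \cong \Ext^i_A(M,A)$ computable by a projective resolution of $M$ over $A$, and I would exploit the fibre-wise perfection criterion (Proposition~\ref{pr:perfection}): it suffices to show $M$ is eventually ``small enough'' against $A$ after tensoring to each fibre, combined with Goto's theorem to handle the unbounded-dimension phenomenon over $R$. Concretely, the cleanest route is: it is enough to check $\Ext^i_{A_\fp}(M_\fp, A_\fp)=0$ for $i\gg 0$ for each $\fp$, and then, since $A_\fp$ is Iwanaga--Gorenstein, every module over it has finite Gorenstein injective dimension and finite Gorenstein projective dimension, which gives the eventual vanishing of $\Ext^i_{A_\fp}(-,A_\fp)$ on finitely generated modules. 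A subtlety is that Iwanaga--Gorenstein only asserts \emph{finite} self-injective dimension, so this vanishing is uniform in $i$ for a fixed module; to get it uniformly enough to conclude over $A$ itself rather than $A_\fp$, one uses Goto's theorem applied to the (commutative Gorenstein) centre or base, in the spirit of \cite[Theorem~4.6]{Iyengar--Krause:2021}. The symmetric statement for $\op A$ follows by the same argument, since $\op A$ is a Gorenstein $R$-algebra whenever $A$ is.

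For the converse, assume the $\Ext$-vanishing holds for all $M \in \mod A$ and $N \in \mod \op A$. I would first deduce that $R$ is Gorenstein: take $M = A$, viewed over itself, so $\Ext^i_A(A,A) = 0$ for $i \ne 0$ trivially, which is no information; instead take $M$ to be any simple $A$-module, or better, use the argument of Lemma~\ref{le:descent} in reverse — applying the hypothesis with $M = A/\fm A$ for a maximal ideal $\fm$ and using adjunction $\Ext^i_{R_\fm}(k(\fm), R_\fm) \hookleftarrow \Ext^i_{A_\fm}(A_\fm \otimes k(\fm), A_\fm)$ together with faithful flatness of $A_\fm$ over $R_\fm$ to see $R_\fm$ is Gorenstein. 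Then, for each $\fp \in \Spec R$, I would show $A_\fp$ is Iwanaga--Gorenstein by checking $\injdim_{A_\fp} A_\fp < \infty$ and $\injdim_{\op{A_\fp}} A_\fp < \infty$; the criterion is that $\Ext^i_{A_\fp}(L, A_\fp) = 0$ for $i \gg 0$ for every finitely generated $A_\fp$-module $L$, and every such $L$ is a localisation of a finitely generated $A$-module, so the hypothesis localises down. One caveat: passing from ``$\Ext$-vanishing for all finitely generated modules'' to ``finite injective dimension'' over a noncommutative noetherian ring requires a noetherian-induction/test-module argument analogous to \ref{ch:injdim-test}; I expect this to be routine given \cite[Lemma~6.2.1]{Krause:2021} and the fact that $A_\fp$ is module-finite over the Gorenstein local ring $R_\fp$.

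The main obstacle, in both directions, is bridging the gap between \emph{uniform} $\Ext$-vanishing (a fixed bound on $i$ depending on the module) and \emph{finite injective dimension} (a bound independent of the module) in the non-Iwanaga setting where $\injdim R$ itself may be infinite — exactly the phenomenon highlighted in \ref{ch:Nagata}. This is precisely what Goto's theorem (Theorem~\ref{th:goto}) is designed to circumvent on the commutative side, and the substance of the proof is transporting that device across the finite projective extension $R \to A$; this is where I would lean most heavily on the techniques of \cite[Theorem~4.6]{Iyengar--Krause:2021}, possibly via the identity $\RHom_A(M,A) \simeq \RHom_R(M, \RHom_A(A, R))$-type adjunctions combined with the fibre-wise criteria of Theorem~\ref{th:bass-murthy} and Proposition~\ref{pr:perfection} to localise the problem to the fibres, where everything is genuinely finite dimensional.
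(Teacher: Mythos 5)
Your overall architecture agrees with the paper's proof at the points where the paper actually does work: reduce, via \ref{ch:subring2}, to the case that $A$ is faithful over $R$; observe that the substantive equivalence is exactly \cite[Theorem~4.6]{Iyengar--Krause:2021}; and, for the converse, note that all that remains is to show the Ext-vanishing forces $R$ itself to be Gorenstein, which you do essentially as the paper does, by the adjunction $\Ext^i_{R_\fm}(k(\fm),A_\fm)\cong \Ext^i_{A_\fm}(A_\fm\otimes_{R_\fm}k(\fm),A_\fm)$ together with the fact that $A_\fm$ is a nonzero finite free $R_\fm$-module (the argument of Lemma~\ref{le:descent} run backwards). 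Read this way, the proposal is correct and follows the same route as the paper, which for the forward implication simply cites \cite[Theorem~4.6]{Iyengar--Krause:2021}.

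Where you try to go beyond the citation, the sketches do not hold up, although they are also not needed. In the forward direction, checking $\Ext^i_{A_\fp}(M_\fp,A_\fp)=0$ for $i\gg 0$ prime by prime is not sufficient, since the bound depends on $\fp$ and there is no uniform one (the Nagata phenomenon of \ref{ch:Nagata}); you flag this yourself and fall back on Goto's theorem and \cite[Theorem~4.6]{Iyengar--Krause:2021}. The clean mechanism, consistent with the ingredients in the paper, runs through the dualising bimodule: adjunction gives $\Ext^i_A(M,\omega_{A/R})\cong\Ext^i_R(M,R)$, Goto's theorem~\ref{th:goto} kills the right-hand side for $i\gg 0$, and perfection of $\omega_{A/R}$ on both sides (Theorem~\ref{th:dualising}) lets one replace $\omega_{A/R}$ by $A$, since applying $\RHom_R(-,R)$ to $\omega_{A/R}\in\Thick(A)$ in $\dcat{\op A}$ yields $A\in\Thick(\omega_{A/R})$ in $\dcat A$. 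In the converse, the claim that one can show each $A_\fp$ is Iwanaga--Gorenstein by localising the hypothesis and a ``routine noetherian-induction/test-module argument'' is a genuine gap if taken literally: knowing $\Ext^i_{A_\fp}(L,A_\fp)=0$ for $i\gg 0$ for every finitely generated $L$, with a bound depending on $L$, does not routinely give $\injdim_{A_\fp}A_\fp<\infty$; removing exactly this non-uniformity is the content of the hard direction of \cite[Theorem~4.6]{Iyengar--Krause:2021}, and the paper makes no attempt to redo it. Since you ultimately invoke that theorem anyway, your proof stands, but the ``routine'' step should be deleted or replaced by the citation.
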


\begin{proof}
	In view of the discussion in \ref{ch:subring2}, one can assume $A$ is also faithful as an $R$-module. Then the forward implication is part of \cite[Theorem~4.6]{Iyengar--Krause:2021}. By the same token, for the converse statement it suffices to prove that the stated vanishing of Ext modules implies that the ring $R$ is Gorenstein. The argument for this is basically in the proof of Lemma~\ref{le:descent}.
	
	Let $\fp$ be a prime ideal of $R$. Then for $i\gg 0$ one has
	\[
	\Ext^i_{R_\fp}(k(\fp),A_\fp)\cong \Ext^i_{R}(R/\fp,A)_\fp \cong \Ext^i_{A}(A \otimes_{R} R/\fp, A)_\fp =0 
	\]
	where the first isomorphism holds since localisation is flat and the second one is by adjunction. The equality is by hypothesis, which applies as $A\otimes_R R/\fp$ is in $\mod A$. Since $A$ is a finite projective $R$-module, $A_\fp$ is a finite free $R_\fp$-module; it is non-zero because $A$ is faithful over $R$. Thus $\Ext^i_{R_\fp}(k(\fp),R_\fp)=0$ for $i\gg 0$, so that $R_\fp$ is Gorenstein. Since this holds for any $\fp$ in $\Spec R$, the ring $R$ is Gorenstein.
\end{proof}

Since the vanishing of Ext modules in the statement above has nothing to do with the ring $R$, the result above has the following consequence.

\begin{corollary}
	\label{co:independence}
	Let $R$ and $S$ be commutative noetherian rings, and $A$ an associative ring that is finite projective over $R$ and over $S$. Then $A$ is Gorenstein as an $R$-algebra if and only if it is Gorenstein as an $S$-algebra. \qed
\end{corollary}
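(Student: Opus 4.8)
The plan is to deduce this immediately from Theorem~\ref{th:goto-plus}. That theorem gives, for any commutative noetherian ring over which $A$ is finite projective, a characterisation of the Gorenstein property of $A$ purely in terms of the vanishing of $\Ext_A^i(M,A)$ and $\Ext_{\op A}^i(N,A)$ for $M \in \mod A$, $N \in \mod\op A$ and $i \gg 0$. The crucial observation is that this vanishing condition refers only to the ring structure of $A$ and to finitely generated $A$-modules (on either side); the base ring $R$ appears nowhere in it. Since $A$ being finite projective over $R$ forces $A$ to be noetherian on both sides (see the discussion at the start of Section~\ref{se:finite-projective}), the categories $\mod A$ and $\mod\op A$ are well-behaved, and the condition ``$\Ext_A^i(M,A) = 0 = \Ext_{\op A}^i(N,A)$ for $i \gg 0$, for all $M, N$'' is an intrinsic property of the ring $A$ alone.

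So the argument is: suppose $A$ is Gorenstein as an $R$-algebra. By Theorem~\ref{th:goto-plus} applied to the pair $(R, A)$, the Ext-vanishing condition holds. This condition does not mention $R$, so it holds verbatim; now apply Theorem~\ref{th:goto-plus} in the other direction to the pair $(S, A)$ — which is legitimate precisely because $A$ is assumed to be finite projective over $S$ as well — to conclude that $A$ is Gorenstein as an $S$-algebra. The reverse implication is symmetric, swapping the roles of $R$ and $S$. This is the whole proof; it is genuinely short.

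There is no real obstacle here: the content has already been extracted into Theorem~\ref{th:goto-plus}, and the corollary is just the formal remark that the characterising condition is base-free. If one wanted to be slightly more careful, the only point to check is that the hypotheses of Theorem~\ref{th:goto-plus} are met for each of the two base rings, namely that $A$ is finite projective over each of $R$ and $S$ — but this is exactly what is assumed in the statement. One could also note in passing that the Gorenstein property of $A$, being equivalent to this intrinsic Ext-condition, depends only on the isomorphism class of $A$ as a ring, not on any chosen presentation as an algebra over a commutative base — but that is really just a restatement of the corollary itself. Thus I would present the proof in two or three sentences, citing Theorem~\ref{th:goto-plus} twice.
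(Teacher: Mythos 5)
Your proof is correct and is exactly the argument the paper has in mind: the sentence preceding the corollary notes that the Ext-vanishing criterion of Theorem~\ref{th:goto-plus} makes no reference to the base ring, and the corollary follows by applying that theorem over $R$ and then over $S$. Nothing further is needed.
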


\subsection*{Dualising bimodule}
Let $A$ be a finite projective $R$-algebra. The $A$-bimodule 
\[
\omega_{A/R}\colonequals \Hom_R(A,R)
\]
is the \emph{dualising bimodule} of the $R$-algebra $A$.

The following characterisation of the Gorenstein property is contained in  \cite[Theorem~4.6]{Iyengar--Krause:2021}. Its proof uses the result of Bass and Murthy recalled in \ref{th:bass-murthy}. 

\begin{theorem}
	\label{th:dualising}
	Let $R$ be a commutative Gorenstein ring and  $A$ a finite projective $R$-algebra. Then $A$ is a Gorenstein $R$-algebra if and only if the $A$-bimodule $\omega_{A/R}$ is perfect on both sides. \qed
\end{theorem}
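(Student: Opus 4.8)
The plan is to reduce the statement to the fibre-wise criterion of Theorem~\ref{th:goto-plus}, exploiting that over a Gorenstein base ring $R$ the functor $\RHom_R(-,R)$ behaves well. First I would observe that, after passing to a ring factor of $R$ as in~\ref{ch:subring2}, one may assume $A$ is faithful over $R$, so $\supp_RA=\Spec R$. Then the key point is to relate the bimodule $\omega_{A/R}=\Hom_R(A,R)$ to the dual-complex $\RHom_A(M,A)$ for $M\in\mod A$. Concretely, for a finitely generated $A$-module $M$ that is finite projective over $R$ one has a natural isomorphism
\[
\RHom_A(M,\omega_{A/R}) \cong \RHom_R(M,R)\,,
\]
by Hom–tensor adjunction together with $\omega_{A/R}=\Hom_R(A,R)$; this is the usual ``change of rings'' identity for the dualising bimodule. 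Since $R$ is Gorenstein, Goto's theorem (Theorem~\ref{th:goto}) gives that $\RHom_R(M,R)$ has bounded cohomology, so $\RHom_A(M,\omega_{A/R})$ is bounded for every such $M$. Using that an arbitrary $M\in\mod A$ has a finite syzygy presentation whose syzygies are finite projective over $R$ (as $A$ is finite projective over $R$, hence every finitely generated $A$-module has bounded-below projective-over-$R$ resolution), this boundedness propagates to all of $\mod A$, and symmetrically for $\op A$.

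Next I would translate ``$\omega_{A/R}$ perfect on both sides'' into the vanishing condition of Theorem~\ref{th:goto-plus}. If $\omega_{A/R}$ is perfect as a left $A$-module, then $\RHom_A(M,\omega_{A/R})$ is bounded for all $M\in\dbcat A$; combined with the displayed isomorphism above one deduces $\Ext_R^i(M,R)_\fp$-type vanishing and ultimately, after a short argument passing between $\omega_{A/R}$ and $A$, the bound $\Ext_A^i(M,A)=0$ for $i\gg0$. The bridge between $\RHom_A(-,\omega_{A/R})$ and $\RHom_A(-,A)$ is where I expect to invoke that over a Gorenstein local ring $R_\fp$ the module $\omega_{R_\fp}=R_\fp$ is its own dualising module, i.e. $\RHom_{R_\fp}(R_\fp,R_\fp)\simeq R_\fp$, so that fibrewise $\omega_{A/R}\otimes_R k(\fp)\cong \omega_{\fibre A\fp/k(\fp)}$ is again the dualising bimodule of the finite-dimensional fibre algebra $\fibre A\fp$. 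For a finite-dimensional algebra $\Lambda$ over a field, $\Lambda$ is Iwanaga--Gorenstein (i.e.\ self-injective-dimension finite on both sides) precisely when $\omega_\Lambda=\Hom_k(\Lambda,k)$ is perfect on both sides — indeed $\omega_\Lambda$ is always an injective cogenerator, so it is perfect on a side iff $\injdim$ on that side is finite. So fibrewise the condition ``$\omega_{A/R}$ perfect on both sides'' is exactly ``$\fibre A\fp$ Iwanaga--Gorenstein''.

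With these observations the proof assembles as follows. The $A$-bimodule $\omega_{A/R}$ is perfect as a left $A$-module if and only if, by Theorem~\ref{th:bass-murthy}, $(\omega_{A/R})_\fp$ is perfect over $A_\fp$ for each $\fp\in\Spec R$, and — since over the Gorenstein local ring $R_\fp$ the complex $\RHom_{R_\fp}(-,R_\fp)$ is a duality on $\dbcat{R_\fp}$ (Theorem~\ref{th:goto+}) and sends $k(\fp)$ to a shift of $k(\fp)$ — this perfectness can be tested on the fibre, i.e.\ amounts to $\omega_{\fibre A\fp/k(\fp)}$ being perfect over $\fibre A\fp$. By the finite-dimensional remark this says $\injdim_{\fibre A\fp}\fibre A\fp<\infty$; doing the same on the right gives $\injdim_{\op{(\fibre A\fp)}}\fibre A\fp<\infty$, i.e.\ $\fibre A\fp$ is Iwanaga--Gorenstein for every $\fp$. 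Finally, $A$ is Gorenstein as an $R$-algebra iff $A_\fp$ is Iwanaga--Gorenstein for each $\fp\in\supp_RA$, and using Lemma~\ref{le:proj}-style fibrewise control of injective dimension (the analogue for injective dimension over $A_\fp$, valid because $R_\fp$ is Gorenstein and $\omega_{A/R}$ intervenes as the dualising object) this is equivalent to Iwanaga--Gorensteinness of all the fibres $\fibre A\fp$. Chaining these equivalences yields the theorem.

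The main obstacle I anticipate is the step asserting that perfectness of $\omega_{A/R}$ over $A_\fp$ descends to, and is detected by, perfectness of the fibre $\omega_{\fibre A\fp/k(\fp)}$ over $\fibre A\fp$ — in general perfectness is \emph{not} a fibrewise condition in the injective-dimension direction (the excerpt explicitly warns about this after Theorem~\ref{th:bass-murthy}). What saves it here is that $R$ is Gorenstein, so $\RHom_R(-,R)$ is a duality and interchanges the (finite) projective dimension of the $R$-module $A$ with the injective dimension of $\omega_{A/R}$, converting the injective-dimension question into a projective-dimension one, which \emph{is} fibrewise by Lemma~\ref{le:proj}. Making this duality-interchange precise at the level of bimodules, keeping track of left versus right module structures throughout, is the technical heart of the argument; everything else is adjunction bookkeeping and an appeal to Goto's theorem and Theorem~\ref{th:goto-plus}.
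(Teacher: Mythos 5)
Two preliminary remarks: the paper does not reprove this theorem (it quotes \cite[Theorem~4.6]{Iyengar--Krause:2021}, noting only that the Bass--Murthy criterion of Theorem~\ref{th:bass-murthy} enters), so your proposal has to stand on its own; and much of it does. Your chain runs through the fibres: $\omega_{A/R}$ perfect on both sides $\iff$ $\omega_{\fibre A\fp/k(\fp)}$ perfect on both sides for all $\fp$ $\iff$ $\fibre A\fp$ Iwanaga--Gorenstein for all $\fp$ $\iff$ $A_\fp$ Iwanaga--Gorenstein for all $\fp\in\supp_RA$. The first link is fine, but is most cleanly Proposition~\ref{pr:perfection} together with the identification \eqref{eq:dual/fibre} (your detour through Theorem~\ref{th:bass-murthy} and duality over $R_\fp$ is unnecessary), and the second is the standard fact about $\Hom_k(\Lambda,k)$ for finite dimensional $\Lambda$ (modulo the harmless side swap: $\pdim$ of $\omega_\Lambda$ on one side equals $\injdim$ of $\Lambda$ on the other). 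The genuine gap is the last link. That equivalence is precisely Theorem~\ref{th:gor-family}, which the paper \emph{deduces from} the theorem you are proving, so invoking anything of that shape is circular; and your stated justification --- ``Lemma~\ref{le:proj}-style fibrewise control of injective dimension'' plus ``a short argument passing between $\omega_{A/R}$ and $A$'' --- is exactly what does not exist a priori: the remark after Theorem~\ref{th:bass-murthy} warns that injective dimension is not detected pointwise the way projective dimension is, and passing from $\Ext^{\gg 0}_A(M,\omega_{A/R})=0$ (which you correctly obtain from adjunction and Goto) to $\Ext^{\gg 0}_A(M,A)=0$ requires $A\in\Thick(\omega_{A/R})$, which is not free. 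You yourself flag this as the ``technical heart'' and then do not supply it, so neither implication of the theorem is actually established.

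The missing mechanism is the side-swapping duality, and with it the fibres can be bypassed altogether. Since $A$ is finite projective over $R$ one has $\Hom_R(\omega_{A/R},R)\cong A$ as bimodules, and over the Gorenstein ring $R$ the triangulated functor $\RHom_R(-,R)$ preserves bounded complexes (Theorem~\ref{th:goto}); hence it carries $\Thick(A)$ computed in $\dbcat{\op A}$ into $\Thick(\omega_{A/R})$ in $\dbcat A$. So perfection of $\omega_{A/R}$ as a \emph{right} module yields $A\in\Thick({}_A\omega_{A/R})$ on the \emph{left}. Localising at $\fp$, a finite injective resolution of $R_\fp$ (which exists since $R_\fp$ is Gorenstein local) transported through the exact functor $\Hom_{R_\fp}(A_\fp,-)$ shows that $(\omega_{A/R})_\fp$ has finite injective dimension on both sides over $A_\fp$; combined with $A_\fp\in\Thick((\omega_{A/R})_\fp)$ this gives $\injdim_{A_\fp}A_\fp<\infty$, and symmetrically on the other side, so $A_\fp$ is Iwanaga--Gorenstein and $A$ is a Gorenstein $R$-algebra. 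Conversely, if $A_\fp$ is Iwanaga--Gorenstein then $(\omega_{A/R})_\fp$, being finitely generated of finite injective dimension on both sides, has finite projective dimension on both sides, i.e.\ is perfect over $A_\fp$ and $\op{(A_\fp)}$, and Theorem~\ref{th:bass-murthy} globalises this to perfection of $\omega_{A/R}$ on both sides. This is the duality interchange you anticipated but did not carry out, and it --- not a fibrewise criterion for injective dimension --- is what makes the argument close.
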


When $A$ is commutative, the forward direction of the preceding statement can be strengthened to: the $A$-module $\omega_{A/R}$ is projective. This is one of the special features of commutative algebras.

As explained in Section~\ref{se:finite-projective}, a finite projective $R$-algebra $A$ can be viewed as a family of finite dimensional algebras, parameterised by $\Spec R$. The result below lends further credence to this perspective.

\begin{theorem}
	\label{th:gor-family}
	Let $A$ be a finite projective $R$-algebra.   Then $A$ is a Gorenstein $R$-algebra if and only if  $R_\fp$ is Gorenstein and $\fibre A\fp $ is Iwanaga--Gorenstein for each $\fp\in\supp_RA$; equivalently, for each $\fm\in\max(\supp_RA)$. 
\end{theorem}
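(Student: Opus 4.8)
The plan is to reduce the characterisation of the Gorenstein property to fibre-wise conditions by combining Definition~\ref{de:gor-algebra} with Lemma~\ref{le:descent}, the fibre-wise perfection criterion of Proposition~\ref{pr:perfection}, and the dualising-bimodule criterion of Theorem~\ref{th:dualising}. By the discussion in \ref{ch:subring2} we may assume throughout that $A$ is faithful as an $R$-module, so that $\supp_RA=\Spec R$; the general statement then follows by passing to the appropriate ring factor $R'$ of $R$.

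First I would handle the forward implication. If $A$ is a Gorenstein $R$-algebra, then $R_\fp$ is Gorenstein for each $\fp$ by Lemma~\ref{le:descent}, so $R$ is Gorenstein by \ref{ch:subring2}. It remains to see that $\fibre A\fp$ is Iwanaga--Gorenstein for each $\fp$. Here I would invoke Theorem~\ref{th:dualising}: since $R$ is Gorenstein, the $A$-bimodule $\omega_{A/R}=\Hom_R(A,R)$ is perfect on both sides. Base-changing along $R\to k(\fp)$ and using that $A$ is $R$-projective, one has $\omega_{A/R}\lotimes_R k(\fp)\cong \Hom_{k(\fp)}(\fibre A\fp,k(\fp))$, the dualising bimodule $\omega_{\fibre A\fp/k(\fp)}$ of the finite dimensional $k(\fp)$-algebra $\fibre A\fp$. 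By Proposition~\ref{pr:perfection} (applied to $\omega_{A/R}$ as a left, and as a right, $A$-complex), perfection of $\omega_{A/R}$ on both sides forces $\omega_{\fibre A\fp/k(\fp)}$ to be perfect on both sides over $\fibre A\fp$ for each $\fp$. For a finite dimensional algebra over a field, having a dualising module of finite projective dimension on both sides is equivalent to being Iwanaga--Gorenstein (this is the finite-dimensional case of Theorem~\ref{th:dualising}, or a direct argument: $\injdim_{\fibre A\fp}\fibre A\fp=\pdim_{\fibre A\fp}\omega_{\fibre A\fp/k(\fp)}<\infty$ since $\omega$ is an injective cogenerator on the other side). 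Hence $\fibre A\fp$ is Iwanaga--Gorenstein.

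For the converse, suppose $R_\fp$ is Gorenstein and $\fibre A\fp$ is Iwanaga--Gorenstein for each $\fp\in\Spec R$; I want to verify the vanishing condition in Theorem~\ref{th:goto-plus}, which then gives that $A$ is a Gorenstein $R$-algebra. Fix $M\in\mod A$. The goal is $\Ext^i_A(M,A)=0$ for $i\gg 0$, equivalently that $M$ is perfect as an $A$-complex after one knows $\injdim$ is controlled — more directly, I would argue that $\omega_{A/R}$ is perfect on both sides and then quote Theorem~\ref{th:dualising} to conclude. By Proposition~\ref{pr:perfection}, it suffices to check that $\omega_{A/R}\lotimes_R k(\fm)\cong\omega_{\fibre A\fm/k(\fm)}$ is perfect over $\fibre A\fm$ on both sides for each maximal ideal $\fm$; but $\fibre A\fm$ is Iwanaga--Gorenstein, so its dualising bimodule has finite projective dimension on both sides (again the field case). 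One also needs $R$ to be Gorenstein for Theorem~\ref{th:dualising} to apply: this is exactly the hypothesis $R_\fp$ Gorenstein for all $\fp$, combined with $\supp_RA=\Spec R$ and \ref{ch:Gor-global}. Once $\omega_{A/R}$ is perfect on both sides, Theorem~\ref{th:dualising} yields that $A$ is a Gorenstein $R$-algebra. Finally, the reduction from "for all $\fp\in\supp_RA$" to "for all $\fm\in\max(\supp_RA)$" is immediate in both directions from the corresponding reductions already built into Proposition~\ref{pr:perfection} and Lemma~\ref{le:descent} (localisation commutes with the relevant constructions, and maximal ideals suffice to test perfection).

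The main obstacle I anticipate is the clean identification $\omega_{A/R}\lotimes_R k(\fp)\simeq\omega_{\fibre A\fp/k(\fp)}$ together with the bookkeeping that perfection of a bimodule "on both sides" is detected fibre-wise on both sides simultaneously — one must apply Proposition~\ref{pr:perfection} once for the left module structure and once for the right, and check the base-change isomorphism respects both. Everything else is assembling Lemmas~\ref{le:descent}, Theorem~\ref{th:dualising}, Theorem~\ref{th:goto-plus} and Proposition~\ref{pr:perfection} in the right order; the genuinely new content is small, which is why I expect the proof to be short.
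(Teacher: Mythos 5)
Your proposal is correct and follows essentially the same route as the paper: establish the bimodule base-change isomorphism $\omega_{A/R}\lotimes_R k(\fp)\cong\omega_{\fibre A\fp/k(\fp)}$ (using $R$-projectivity of $A$) and then combine the dualising-bimodule characterisation of Theorem~\ref{th:dualising} with the fibre-wise perfection criterion of Proposition~\ref{pr:perfection}, applied to both module structures. The extra details you supply (the reduction to the faithful case via \ref{ch:subring2}, and the observation that the field case of Theorem~\ref{th:dualising} identifies Iwanaga--Gorenstein fibres with fibres whose dualising bimodule is perfect on both sides) are exactly what the paper leaves implicit.
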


\begin{proof}
	Fix $\fp$ in $\supp_RA$. The hypothesis that $A$ is finite projective over $R$ yields
	\begin{align}\label{eq:dual/fibre}
		\quad 
		\fibre {(\omega_{A/R})} \fp =  \Hom_R(A,R)\lotimes_R k(\fp)  \iso \Hom_{k(\fp)}(\fibre A\fp, k(\fp)) = \omega_{\fibre A\fp/k(\fp)}
	\end{align}
	as $\fibre A\fp$-bimodules. The stated result is immediate from the characterisation of the Gorenstein property in terms of the dualising bimodule; see Theorem~\ref{th:dualising} and Proposition~\ref{pr:perfection}.
\end{proof}

\begin{corollary}
	If $A$ and $B$ are Gorenstein $R$-algebras, then so is $A\otimes_RB$. 
\end{corollary}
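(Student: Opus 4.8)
The plan is to verify the two conditions of Definition~\ref{de:gor-algebra} for $A\otimes_R B$ by passing to fibres, via Theorem~\ref{th:gor-family}. Condition~\eqref{Gorenstein-fp} is immediate: $A\otimes_R B$ carries the obvious $R$-algebra structure, and a tensor product over $R$ of two finitely generated projective $R$-modules is again finitely generated projective. For the rest, Theorem~\ref{th:gor-family} tells us it suffices to show that $R_\fp$ is Gorenstein and that the fibre $\fibre{(A\otimes_R B)}\fp$ is Iwanaga--Gorenstein for every $\fp\in\supp_R(A\otimes_R B)$.

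First I would pin down this support. For $\fp\in\Spec R$ one has $(A\otimes_R B)_\fp\cong A_\fp\otimes_{R_\fp}B_\fp$, a free $R_\fp$-module of rank $\rank_{R_\fp}(A_\fp)\cdot\rank_{R_\fp}(B_\fp)$, hence nonzero exactly when both $A_\fp$ and $B_\fp$ are nonzero; thus $\supp_R(A\otimes_R B)=\supp_R A\cap\supp_R B$. Now fix $\fp$ in this intersection. Since $\fp\in\supp_R A$ and $A$ is a Gorenstein $R$-algebra, $R_\fp$ is Gorenstein by Lemma~\ref{le:descent}. Base change along $R\to k(\fp)$ gives an isomorphism of $k(\fp)$-algebras
\[
\fibre{(A\otimes_R B)}\fp\;\cong\;\fibre A\fp\otimes_{k(\fp)}\fibre B\fp\,,
\]
and by Theorem~\ref{th:gor-family} applied separately to $A$ and to $B$, both $\fibre A\fp$ and $\fibre B\fp$ are finite dimensional Iwanaga--Gorenstein $k(\fp)$-algebras. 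So the statement reduces to the following: over a field $k$, a tensor product $\Lambda\otimes_k\Gamma$ of two finite dimensional Iwanaga--Gorenstein $k$-algebras is again Iwanaga--Gorenstein.

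For this I would apply Theorem~\ref{th:dualising} with the field $k$ in the role of the commutative Gorenstein ring: a finite dimensional $k$-algebra is Iwanaga--Gorenstein precisely when its dualising bimodule is perfect on both sides. Since $\Lambda$ and $\Gamma$ are finite dimensional there is a natural isomorphism of $(\Lambda\otimes_k\Gamma)$-bimodules
\[
\omega_{(\Lambda\otimes_k\Gamma)/k}=\Hom_k(\Lambda\otimes_k\Gamma,k)\;\cong\;\omega_{\Lambda/k}\otimes_k\omega_{\Gamma/k}\,.
\]
Choosing a bounded resolution $P\iso\omega_{\Lambda/k}$ by finitely generated projective left $\Lambda$-modules and, similarly, $Q\iso\omega_{\Gamma/k}$ over $\Gamma$, the complex $P\otimes_k Q$ is bounded, each term $P^i\otimes_k Q^j$ is a finitely generated projective left $\Lambda\otimes_k\Gamma$-module, and $P\otimes_k Q\to\omega_{\Lambda/k}\otimes_k\omega_{\Gamma/k}$ is still a quasi-isomorphism since $k$ is a field (Künneth). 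Hence $\omega_{(\Lambda\otimes_k\Gamma)/k}$ is perfect as a left module; the symmetric argument over $\op{(\Lambda\otimes_k\Gamma)}\cong\op\Lambda\otimes_k\op\Gamma$ gives perfection on the right. Theorem~\ref{th:dualising} then yields that $\Lambda\otimes_k\Gamma$ is Iwanaga--Gorenstein, completing the argument.

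I do not expect a genuine obstacle here: the proof is a matter of chaining the right reductions. The two facts carrying the weight are that the fibre of $A\otimes_R B$ at $\fp$ is the $k(\fp)$-tensor product of the fibres of $A$ and $B$, and that $\supp_R(A\otimes_R B)\subseteq\supp_R A$, so Lemma~\ref{le:descent} supplies Gorensteinness of the rings $R_\fp$ for free. The only mildly delicate point is the claim over a field, and even there the dualising-bimodule criterion of Theorem~\ref{th:dualising} trivialises it, since perfectness of complexes is visibly stable under $\otimes_k$; the sole thing to notice is that $P^i\otimes_k Q^j$ is projective over $\Lambda\otimes_k\Gamma$ whenever $P^i$ is projective over $\Lambda$ and $Q^j$ over $\Gamma$.
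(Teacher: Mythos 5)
Your proposal is correct and follows essentially the same route as the paper: reduce via Theorem~\ref{th:gor-family} to the statement that a tensor product over a field of two finite dimensional Iwanaga--Gorenstein algebras is Iwanaga--Gorenstein. The only difference is that where the paper cites this field-level fact as well known, you prove it directly using the dualising-bimodule criterion of Theorem~\ref{th:dualising} together with the isomorphism $\omega_{(\Lambda\otimes_k\Gamma)/k}\cong\omega_{\Lambda/k}\otimes_k\omega_{\Gamma/k}$ and a K\"unneth argument, which is a perfectly valid (and self-contained) substitute.
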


\begin{proof}
	Given Theorem~\ref{th:gor-family}, it suffices to verify that when $k$ is a field and $A$ and $B$ are finite dimensional $k$-algebra that are Iwanaga--Gorenstein, then so is $A\otimes_kB$. This is well-known; see, for instance, \cite[Proposition~6.2.6]{Krause:2021}.
\end{proof}

It is known that the dualising bimodule $\omega_{A/k}$ 
of a finite dimensional $k$-algebra $A$ is already projective on both sides if it is projective on
one side, see, for example,
\cite[Propositions~I.8.16~(i) and~II.7.6]{Skowronski--Yamagata:2011}.
This fact extends to finite projective $R$-algebras.
\begin{lemma} \label{le:dual-proj}
	Let $A$ be a finite projective $R$-algebra.
	Then $\omega_{A/R}$ is projective as an $A$-module if and only if $\omega_{A/R}$ is projective as an $\op A$-module.
\end{lemma}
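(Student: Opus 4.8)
The plan is to detect one-sided projectivity of $\omega_{A/R}$ on the fibres of $A$ over $\Spec R$ and then to appeal to the corresponding statement for finite dimensional algebras, which is recalled just above.

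First I would observe that $\omega_{A/R}=\Hom_R(A,R)$ is projective, and finitely generated, as an $R$-module, since $A$ is a finite projective $R$-algebra; consequently $\omega_{A/R}$ is finitely generated both as a left $A$-module and as a right $A$-module. As $\op A$ is again a finite projective $R$-algebra, Lemma~\ref{le:proj} is available for $\omega_{A/R}$ over $A$ and over $\op A$. Moreover $\omega_{A/R}$ is $R$-flat, so its fibre at a prime $\fp$ is the ordinary tensor product $\omega_{A/R}\otimes_R k(\fp)$, and \eqref{eq:dual/fibre} identifies it, as a $\fibre A\fp$-bimodule, with the dualising bimodule $\omega_{\fibre A\fp/k(\fp)}$ of the finite dimensional $k(\fp)$-algebra $\fibre A\fp$.

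Next, by Lemma~\ref{le:proj} the module $\omega_{A/R}$ is projective over $A$, that is $\pdim_A\omega_{A/R}\le 0$, if and only if $\pdim_{\fibre A\fp}\omega_{\fibre A\fp/k(\fp)}\le 0$ for every $\fp\in\Spec R$; in other words, if and only if $\omega_{\fibre A\fp/k(\fp)}$ is a projective left $\fibre A\fp$-module for each $\fp$. Applying the same reasoning to $\op A$ shows that $\omega_{A/R}$ is projective over $\op A$ if and only if $\omega_{\fibre A\fp/k(\fp)}$ is a projective right $\fibre A\fp$-module for each $\fp$. For $\fp$ outside $\supp_RA$ one has $\fibre A\fp=0$ and there is nothing to check, so both conditions concern only primes in $\supp_RA$.

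Finally I would invoke the fact recalled before the statement: for a finite dimensional algebra over a field, the dualising bimodule is projective on one side exactly when it is projective on the other. Applied to each $\fibre A\fp$ with $\fp\in\supp_RA$, this shows the two fibre-wise conditions obtained above are equivalent, hence so are the two global conditions, which is the assertion. The only slightly delicate points are checking that Lemma~\ref{le:proj} genuinely applies to both the left and the right module structure on $\omega_{A/R}$ and that the identification \eqref{eq:dual/fibre} is compatible with these structures; beyond that I do not expect any real obstacle.
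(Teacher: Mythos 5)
Your proposal is correct and follows essentially the same route as the paper: detect projectivity fibre-wise via Lemma~\ref{le:proj}, use that the identification \eqref{eq:dual/fibre} is compatible with both module structures, and invoke the one-sided-implies-two-sided statement for the dualising bimodule of each finite dimensional fibre $\fibre A\fp$. The only cosmetic difference is that you state both directions as equivalences at once, whereas the paper reduces to one implication by the symmetry $\op{(\op A)}\cong A$; the substance is identical.
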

\begin{proof}
	Because of the $R$-algebra isomorphism $\op{(\op A)} \cong A$, it suffices to show the `only if'-implication. Assume that $M \colonequals \omega_{A/R}$ is projective as an $A$-module.
	
	Let $\fp$ be a prime ideal of $R$.
	As $A$ is finite projective as $R$-module, so is $M$.
	Using Lemma~\ref{le:proj} and that the isomorphism in \eqref{eq:dual/fibre} is $\fibre A \fp$-linear, it follows that the $\fibre A \fp$-module $N \colonequals \omega_{\fibre A \fp/ k(\fp)}$ is projective.
	Since $\fibre A \fp$ is a finite dimensional $k(\fp)$-algebra, $N$ is also projective as 
	$\op {(A_{k(\fp)})}$-module.
	Using that the isomorphism in \eqref{eq:dual/fibre} is also $\op {(\fibre A \fp)}$-linear together with the isomorphism $\op {( \fibre A \fp )} \cong {\fibre {(\op A)} \fp}$ of $k(\fp)$-algebras,
	it holds that $\fibre {M} \fp$ is projective as $\fibre {(\op A)} \fp$-module for any $\fp \in \Spec R$.
	
	As the $R$-algebra $\op A$ and the $\op A$-module $M$ are both finite projective over $R$, another application of  Lemma~\ref{le:proj} implies that $M$ is projective over $\op A$.
\end{proof}

\subsection*{Derived Morita invariance}
Here is our main result on the Morita invariance of the Gorenstein property. Fibre-wise tilting objects are introduced in Definition~\ref{de:fibre-wise-tilting}.

\begin{theorem}
	\label{th:morita}
	Let $A$ and $B$ be finite projective $R$-algebras which are derived equivalent.
	Then $A$ is a Gorenstein $R$-algebra if and only if $B$ is a Gorenstein $R$-algebra.
\end{theorem}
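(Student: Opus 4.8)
The plan is to reduce the statement to a fibre-wise computation using Theorem~\ref{th:gor-family}, and to control the fibres by means of the derived equivalence. Since $A$ and $B$ are finite projective $R$-algebras and $A\sim B$, there is a tilting complex $T$ in $\dbcat A$ with $\End(T)\cong B$. The first observation is that, up to replacing $R$ by a ring factor (see \ref{ch:subring2}), one may assume $A$ is faithful, so $\supp_RA=\Spec R$; and since $B$ is derived equivalent to $A$, the $R$-algebra $B$ has the same support, because $\Loc(B)=\Loc(A)$ in $\dcat R$ after restriction of scalars. Now suppose $A$ is Gorenstein over $R$. By Lemma~\ref{le:descent}, $R_\fp$ is Gorenstein for each $\fp\in\Spec R$, so in particular $R$ is Gorenstein; this part of the hypothesis is symmetric in $A$ and $B$ once we know $B$ has the same support. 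It therefore remains to show that $\fibre B\fp$ is Iwanaga--Gorenstein for each $\fp\in\Spec R$, given that $\fibre A\fp$ is.

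The key step is to pass the tilting complex to the fibres. Because $A$ is Gorenstein, Theorem~\ref{th:goto-plus} gives $\Ext^i_A(M,A)=0=\Ext^i_{\op A}(N,A)$ for $i\gg 0$ and all finitely generated $M,N$; in particular every object of $\dbcat A$ is perfect after a syzygy, but more to the point $T$ itself is perfect over $A$ (it is a tilting complex, hence in $\Thick(A)$, so it is a perfect complex). By Proposition~\ref{pr:perfection}, $\fibre T\fp$ is a perfect $\fibre A\fp$-complex for each $\fp$. Moreover, since $T$ is perfect there is, exactly as in \eqref{eq:RHom} in the proof of Proposition~\ref{pr:morita-v2}, a natural isomorphism
\[
\RHom_A(T,T)\lotimes_R k(\fp)\;\cong\;\RHom_{\fibre A\fp}(\fibre T\fp,\fibre T\fp)
\]
in $\dcat R$. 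The left-hand side has cohomology concentrated in degree $0$, equal to $\op B$, which is a finite projective $R$-module; applying Proposition~\ref{pr:exact-lg} fibre-wise shows that $\Ext^i_{\fibre A\fp}(\fibre T\fp,\fibre T\fp)=0$ for $i\ne 0$ and that $\End_{\dcat{\fibre A\fp}}(\fibre T\fp)\cong \fibre B\fp$ as $k(\fp)$-algebras. Together with the fact that the base-change functor sends $\Thick(A)$ to $\Thick(\fibre A\fp)$, so $\Thick(\fibre T\fp)=\Thick(\fibre A\fp)$, this proves that $\fibre T\fp$ is a tilting complex of $\fibre A\fp$ with endomorphism algebra $\fibre B\fp$. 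Hence $\fibre A\fp$ and $\fibre B\fp$ are derived equivalent finite dimensional $k(\fp)$-algebras.

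It then suffices to invoke the classical fact that the Iwanaga--Gorenstein property of a finite dimensional algebra over a field is a derived invariant: if $\Lambda$ and $\Lambda'$ are derived equivalent finite dimensional $k(\fp)$-algebras and $\Lambda$ is Iwanaga--Gorenstein, then so is $\Lambda'$. This is standard and can be quoted from \cite{Krause:2021}; conceptually, a derived equivalence identifies the bounded homotopy categories of projectives and of injectives with the same thick subcategory of the derived category, from which finiteness of injective dimension on both sides transfers. Applying this with $\Lambda=\fibre A\fp$ and $\Lambda'=\fibre B\fp$ gives that $\fibre B\fp$ is Iwanaga--Gorenstein for every $\fp\in\Spec R$, and Theorem~\ref{th:gor-family} then yields that $B$ is a Gorenstein $R$-algebra. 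The reverse implication follows by symmetry, since $B\sim A$ as well. The main obstacle is the bookkeeping around supports and the faithfulness reduction, together with making precise that base change along $R\to k(\fp)$ is exact and monoidal so that it carries tilting complexes to tilting complexes; the derived invariance of the Iwanaga--Gorenstein property over a field is the one genuinely external input, and it is well documented.
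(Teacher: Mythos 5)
Your route is genuinely different from the paper's: you reduce to the fibres via Theorem~\ref{th:gor-family}, base-change the tilting complex (in effect re-proving the implication \eqref{pr:morita1}$\Rightarrow$\eqref{pr:morita2} of Proposition~\ref{pr:morita-v2}, which one can also just quote from Theorem~\ref{theorem:Rickard1} with $S=k(\fp)$, cf.\ Corollary~\ref{corollary:global-to-local}), and then invoke the derived invariance of the Iwanaga--Gorenstein property for finite dimensional algebras (Happel's theorem; I would not bet that it is stated in the cited book, but it is classical, and it also follows from Theorem~\ref{th:goto-plus} applied over the field $k(\fp)$). The paper instead transfers the base-free criterion of Theorem~\ref{th:goto-plus} directly: boundedness of $\Ext^*_A(M,A)$ passes through the equivalence because $\Thick(T)=\Thick(A)$ and $F(T)\cong B$, and likewise on the opposite side using the dual tilting complex; this is shorter and needs no fibres, no supports, and no finite dimensional input. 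Indeed, your fibre-wise detour ends by running exactly that transfer argument over each $k(\fp)$, so it buys something only if one wants to take the finite dimensional case as a known black box.

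There is, however, one genuine soft spot. The paper's notion $A\sim B$ is a ring-level one: Rickard gives a tilting complex $T$ and an isomorphism of \emph{rings} $B\cong\End(T)$, with no compatibility promised between the given structure map $R\to Z(B)$ and the natural $R$-action on $\End_{\dcat A}(T)$ (this is precisely why the paper records Corollary~\ref{co:independence}, and why its proof of Theorem~\ref{th:morita} only uses data independent of $R$). Your argument uses this compatibility twice: to know that $H^0(\RHom_A(T,T))$, with its $R$-action through $A$, is a finitely generated projective $R$-module (the input to Proposition~\ref{pr:exact-lg}), and to identify $\op{\End_{\dcat{\fibre A\fp}}(\fibre T\fp)}$ with $\fibre B\fp$, the fibre of $B$ taken with respect to its \emph{given} $R$-structure. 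If the equivalence is not $R$-linear, neither step is automatic, and the conclusion you need from Theorem~\ref{th:gor-family} concerns the given fibres of $B$. So as written your proof establishes the theorem under the additional hypothesis that $B\cong\End(T)$ as $R$-algebras (equivalently, that the derived equivalence is $R$-linear); to cover the statement in the paper's generality you must either add that hypothesis or switch to the base-independent criterion, which is the paper's proof. A smaller point: the assertion that $\supp_RB=\supp_RA$ ``because $\Loc(B)=\Loc(A)$ in $\dcat R$ after restriction of scalars'' is not justified as stated, and the reduction to $A$ faithful in \ref{ch:subring2} requires knowing $\supp_RB\subseteq\supp_RA$ beforehand; granting $R$-linearity this is easy (if $\fibre A\fp=0$ then $\fibre T\fp=0$, hence $\fibre B\fp\cong\op{\End_{\dcat{\fibre A\fp}}(\fibre T\fp)}=0$), but it should be said in that order.
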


\begin{proof}
	Assume that the $R$-algebra $A$ is Gorenstein.  
	Let $T$ be a tilting complex with $\End(T) \cong B$.
	Consider the equivalence $$F\colonequals \RHom_A(T,-) \colon \dbcat A \iso \dbcat B.$$ Fix $M$ in $\dbcat A$. Since $T$ and $A$ generate the same thick subcategory in $\dcat A$ one gets the first equivalence below:
	\begin{align*}
		\Ext^i_A(M,A) =0  \quad\text{for $i\gg 0$} 
		&\iff \Ext^i_A(M,T) \quad\text{for $i\gg 0$} \\
		&\iff \Ext^i_B(F(M),B) \quad\text{for $i\gg 0$} 
	\end{align*}
	The second one holds because $F(T)\cong B$ and $F$ is an equivalence of categories. Since $F$ is an equivalence, we conclude that 
	$\Ext^*_A(M,A)$ is bounded for each $M$ in $\dbcat A$ if and only if $\Ext^*_B(N,B)$ is bounded for each $N$ in $\dbcat B$.
	
	Since $T$ is a tilting complex in $\dcat A$, its dual $\RHom_A(T,A)$ is tilting in $\dcat{\op A}$, with endomorphism algebra $\op B$; see \cite[Remark~9.2.7]{Krause:2021}.  Thus the conclusion above applies also to $\dbcat{\op A}$ and $\dbcat{\op B}$.
	
	Theorem~\ref{th:goto-plus} now yields that 
	$B$ is Gorenstein. Interchanging $A$ and $B$, the converse follows by the same arguments.
\end{proof}

A (classical) Morita equivalence is fibre-wise tilting, so  Proposition~\ref{pr:morita-v2} and the preceding result yield:
\begin{corollary}
	\label{co:Gor-Morita}
	Let $A$ be a Gorenstein $R$-algebra. If $B$ is Morita equivalent to $A$, then $B$ is a Gorenstein $R$-algebra as well. \qed
\end{corollary}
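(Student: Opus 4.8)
The plan is to exhibit a classical Morita equivalence as an instance of a fibre-wise tilting complex and then apply Theorem~\ref{th:morita}. Recall that $B$ being Morita equivalent to $A$ provides a progenerator $P$ of $\mod A$ --- that is, a finitely generated projective $A$-module which is also a generator --- together with an isomorphism $\End(P)\cong B$. First I would observe that $P$, viewed as a complex concentrated in degree zero, is a tilting complex of $A$: one has $\Ext^i_A(P,P)=0$ for $i\neq 0$ because $P$ is projective, and $\Thick(P)=\Thick(A)$ in $\dcat A$ since $P$ is a direct summand of $A^{\oplus m}$ for some $m$ and, conversely, $A$ is a direct summand of $P^{\oplus n}$ for some $n$, the latter being a standard characterisation of progenerators.

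Next I would check that $P$ is in fact fibre-wise tilting. For each $\fp$ in $\Spec R$ the $\fibre A\fp$-module $\fibre P\fp=P\otimes_R k(\fp)$ is again finitely generated projective, and applying $-\otimes_R k(\fp)$ to a splitting exhibiting $A$ as a summand of $P^{\oplus n}$ shows that $\fibre P\fp$ remains a generator of $\mod\fibre A\fp$; hence $\fibre P\fp$ is a progenerator, and in particular a tilting complex over $\fibre A\fp$. Thus condition \eqref{pr:morita2} of Proposition~\ref{pr:morita-v2} holds with $T=P$.

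Proposition~\ref{pr:morita-v2} then yields condition \eqref{pr:morita1}, so $\End(P)\cong B$ is projective as an $R$-module; since $\End(P)$ carries a natural structure of a finite $R$-algebra, as recalled at the start of Section~\ref{se:tilting}, the ring $B$ is a finite projective $R$-algebra. Finally, a Morita equivalence is in particular a derived equivalence, so $A\sim B$, and Theorem~\ref{th:morita} gives that $B$ is a Gorenstein $R$-algebra.

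The only step that is not purely formal is the verification that base change along $R\to k(\fp)$ carries the progenerator $P$ to a progenerator of $\mod\fibre A\fp$; this reduces to the standard facts that finite generation, projectivity, and the generator property are all preserved by the right-exact functor $-\otimes_R k(\fp)$, using that $A$ is a summand of $P^{\oplus n}$.
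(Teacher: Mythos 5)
Your proof is correct and follows essentially the same route as the paper, which simply notes that a classical Morita equivalence is fibre-wise tilting and then invokes Proposition~\ref{pr:morita-v2} together with Theorem~\ref{th:morita}; you have merely spelled out the (correct) verification that the progenerator $P$ and its fibres $\fibre P\fp$ are progenerators, hence tilting. Nothing further is needed.
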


\begin{chunk}\label{ch:endo-proj}
	Let $A$ be a finite $R$-projective algebra and $T$ a tilting complex of $A$.
	By \cite[Corollary 2.3, Lemma 4.1 (3)]{Abe--Hoshino:2007}
	the $R$-algebra $\End(T)$ is projective as $R$-module if and only if 
	$\Ext^i_A(T, \omega_{A/R} \lotimes_A T) = 0$ for any $i > 0$.
	In particular, $T$ is fibre-wise tilting if
	$T \cong \omega_{A/R} \lotimes_A T$ in $\dcat A$.  
\end{chunk}

\begin{chunk}
	A finite $R$-algebra $A$ is \emph{symmetric}
	if there is an isomorphism $\omega_{A/R} \cong A$ of $A$-bimodules.
	If $R$ is Gorenstein, a finite projective symmetric $R$-algebra is Gorenstein by Theorem~\ref{th:dualising}.  	
	
	By a Theorem of Abe and Hoshino \cite[Theorem 4.3]{Abe--Hoshino:2007}, symmetric finite projective $R$-algebras, and thus symmetric Gorenstein $R$-algebras because of \ref{th:morita}, are closed under derived equivalences.
	The same is true for symmetric finite $R$-algebras over a Gorenstein ring $R$ satisfying certain depth conditions \cite[Theorems 3.3, 3.1(1)]{Iyama--Reiten:2008}.
\end{chunk}

\subsection*{Cohomologically noetherian rings}
Let $A$ be a noetherian ring. As in \cite[\S4]{Benson--Iyengar--Krause:2008a}, the graded-centre of $\dbcat A$ is denoted $Z^*(\dbcat A)$. Following Avramov and Iyengar, we say the ring $A$ is \emph{cohomologically noetherian} if there exists a graded-commutative ring $S$ and a homomorphism of graded rings $S\to Z^*(\dbcat A)$ such that $\Ext^*_A(M,N)$ is noetherian as an $S$-module for all $M,N$ in $\dbcat A$.  In particular $A$ is noetherian as $S^0$-module, and hence $Z(A)$ is a noetherian ring, and $A$ is finitely generated as a $Z(A)$-module; this follows, for example, from work of Formanek \cite{Formanek:1973}.

\begin{theorem}
	\label{th:cnrings}
	Let $A$ be a finite projective $R$-algebra. If $A$ is cohomologically noetherian (with respect to  $S$), then so is $\op A$ (also with respect to  $S$), and the $R$-algebra $A$ is Gorenstein.
\end{theorem}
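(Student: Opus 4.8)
The plan is to reduce the Gorenstein assertion to the criterion in Theorem~\ref{th:goto-plus}, namely that $\Ext^i_A(M,A)=0=\Ext^i_{\op A}(N,A)$ for $i\gg 0$ for all finitely generated $M$ over $A$ and $N$ over $\op A$. The hypothesis is that there is a graded-commutative ring $S$ and a ring map $S\to Z^*(\dbcat A)$ making $\Ext^*_A(M',N')$ a noetherian $S$-module for all $M',N'$ in $\dbcat A$. First I would record the immediate consequence noted just before the theorem: $A$ is finitely generated over its noetherian centre $Z(A)$, and (by Corollary~\ref{co:independence}) being Gorenstein as an $R$-algebra is the same as being Gorenstein as a $Z(A)$-algebra, since $A$ is finite projective over both; so without loss of generality I may replace $R$ by a central subring over which $A$ is finite projective. (Alternatively one keeps $R$ but it plays no essential role, precisely as in Theorem~\ref{th:goto-plus}.)

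The key step is to bound $\Ext^*_A(M,A)$ using the module structure over $S$. Since $S\to Z^*(\dbcat A)$ factors through the centre, $\Ext^*_A(M,A)$ is a graded $S$-module, and it is noetherian by hypothesis. I want to conclude it is bounded, i.e.\ concentrated in finitely many degrees. This is \emph{not} automatic for an arbitrary noetherian graded module, so the argument must exploit that the action is via the graded centre of the whole category. The idea is this: the element $\operatorname{id}_A\in\Ext^0_A(A,A)$ generates a copy of (the image of) $S$, and for any $M$ the composition/multiplication pairing makes $\Ext^*_A(M,A)$ a module over $\Ext^*_A(A,A)$, hence over $S$; moreover $\Ext^*_A(M,A)$ is finitely generated over $\Ext^*_A(A,A)$ because $M\in\Thick(A)$ in $\dbcat A$ and finite generation propagates along triangles and summands starting from $M=A$, where $\Ext^*_A(A,A)=\End^*_{\dbcat A}(A)$ is concentrated in degree $0$. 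Thus $\Ext^*_A(M,A)$ is a finitely generated module over the degree-$0$ ring $\End_A(A)=\op A$, so it is \emph{each} $\Ext^i_A(M,A)$ is finitely generated over $\op A$ and they vanish for $i\gg 0$ simply because $M$, being a bounded complex, has $\Ext^i_A(M,A)=0$ for $i$ outside a finite range depending on the amplitude of $M$ and of $A$ as a complex — wait, that would need $A$ to have finite injective dimension, which is exactly what we are trying to prove. So the genuine content is: boundedness of $\Ext^*_A(M,A)$ as a graded $S$-module. Here one uses that $S$ maps to the graded centre, so multiplication by a homogeneous $\chi\in S^{d}$ of positive degree $d$ induces, for \emph{every} $M$, a chain map realising $\chi$ on $\Ext^*_A(M,N)$ for all $N$ compatibly; applying this with $N=k(\fp)$ a simple module and combining with Theorem~\ref{th:bass-murthy}/Lemma~\ref{le:proj}-type fibrewise reductions, noetherianity of $\Ext^*_A(M,k(\fp))$ over $S$ forces $M_\fp$ to be perfect over $A_\fp$ on the Gorenstein locus, and then Goto-style vanishing (Theorem~\ref{th:goto}, applied fibrewise or via Theorem~\ref{th:goto-plus} itself in the finite-dimensional fibres) gives the boundedness of $\Ext^*_{A_\fp}(M_\fp,A_\fp)$.

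Concretely, here is the route I would follow. (1) Show $\op A$ is cohomologically noetherian with respect to the same $S$: the map $S\to Z^*(\dbcat A)$ induces one to $Z^*(\dbcat{\op A})$ via the duality $\Hom_{Z(A)}(-,Z(A))$ or via $M\mapsto\RHom_A(M,A)$ on suitable subcategories, and $\Ext^*_{\op A}(N_1,N_2)\cong\Ext^*_A(\text{--},\text{--})$ for the images, which are noetherian over $S$. (2) Fix a maximal ideal $\fm$ of $S$ (equivalently work degreewise over $S^0$ after inverting nothing) — more simply, fix $\fp\in\supp_R A$ and pass to the fibre $\fibre A\fp$, which is a finite dimensional $k(\fp)$-algebra; the $S$-module structure descends, $\fibre A\fp$ is cohomologically noetherian over (the image of) $S\otimes_R k(\fp)$, and for a \emph{finite dimensional} algebra ``cohomologically noetherian'' plus the Avramov–Iyengar machinery (or a direct argument: a finitely generated graded module over a noetherian graded ring that is also finitely generated over its degree-zero part must be bounded) yields $\injdim\fibre A\fp<\infty$, i.e.\ $\fibre A\fp$ is Iwanaga–Gorenstein. (3) Invoke Lemma~\ref{le:descent}/Theorem~\ref{th:goto-plus}-style descent: $A_\fp$ inherits finite injective dimension from its fibre by Proposition~\ref{pr:perfection} and the dualising-bimodule criterion, so $A$ is Gorenstein over $R$ by Theorem~\ref{th:gor-family}.

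The main obstacle is step (2): extracting \emph{boundedness} of Ext from \emph{noetherianity} over $S$. For a general graded ring this fails, so the proof has to use that $S$ acts through the graded centre of the entire triangulated category $\dbcat A$ — this is what lets one say that if some positive-degree $\chi\in S$ acts invertibly (after localisation) on $\Ext^*_A(M,M)$ then $M$ becomes a small object in a suitable $\chi$-local category, hence has bounded Ext into everything, while noetherianity forces the Ext to be annihilated by a power of the irrelevant ideal, i.e.\ bounded, on the ``non-invertible'' part. Making this dichotomy precise is exactly the Avramov–Iyengar circle of ideas on cohomological noetherianity and support varieties, and I would expect to cite \cite{Avramov--Iyengar:2021} and \cite{Benson--Iyengar--Krause:2008a} for the key technical input rather than reproving it; the novelty here is only the reduction to the finite-dimensional fibres via the finite-projective structure, which is routine given Sections~\ref{se:finite-projective} and~\ref{se:Gorenstein-algebras}.
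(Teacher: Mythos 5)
You correctly identify the target criterion (Theorem~\ref{th:goto-plus}) and the crux of the matter (passing from noetherianity of $\Ext^*_A(M,A)$ over $S$ to its boundedness), but at exactly that point the argument breaks down, and the substitutes you offer do not work. The missing observation, which is the heart of the paper's proof, is that for $\chi\in S^d$ with $d>0$ the action of $\chi$ on $\Ext^*_A(M,A)$ can be computed by composition with $\chi_A\in\Ext^d_A(A,A)$ in the second variable, and $\Ext^*_A(A,A)=A$ is concentrated in degree $0$; hence every positive-degree element of $S$ acts as zero, so the noetherian graded $S$-module $\Ext^*_A(M,A)$ is finitely generated over a ring concentrated in degree zero and is therefore bounded. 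Your attempted replacements fail: a general $M\in\dbcat A$ is \emph{not} in $\Thick(A)$, so finite generation of $\Ext^*_A(M,A)$ over $\Ext^*_A(A,A)$ does not propagate along triangles from $A$ (you notice the resulting circularity yourself); the later claim that noetherianity of $\Ext^*_A(M,k(\fp))$ over $S$ forces $M_\fp$ to be perfect is false (for a group algebra $kG$, which is cohomologically noetherian, the trivial module is rarely perfect); and the appeal to a support-variety dichotomy is left vague and is in any case far heavier than what is needed.

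There is a second gap: you never prove that $R$ (equivalently $R_\fp$ on $\supp_RA$) is Gorenstein, yet you invoke Theorem~\ref{th:gor-family} and the dualising-bimodule criterion of Theorem~\ref{th:dualising}, both of which require it; and your step (1) for $\op A$ presupposes a duality such as $\RHom_A(-,A)$ or a duality over $Z(A)$, which is only available once the Gorenstein property of $A$, respectively of the base, is already known, so it is circular. The paper's order of events is: first deduce $\Ext^i_A(M,A)=0$ for $i\gg 0$ as above; apply this with $M=A\otimes_Rk$ to conclude, as in Lemma~\ref{le:descent}, that each $R_\fm$ is Gorenstein; then use Theorem~\ref{th:goto+} (the duality $\RHom_R(-,R)$ on $\dbcat A$) to transfer cohomological noetherianity to $\op A$ and obtain the vanishing of $\Ext^i_{\op A}(N,A)$ for $i\gg 0$; and only then conclude via Theorem~\ref{th:goto-plus}. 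Your fibrewise route is additionally unjustified at the point where you assert, without argument, that cohomological noetherianity descends to the fibres $\fibre A\fp$.
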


\begin{proof}
	We can assume that $R$ is a subring of the centre of $A$; see the discussion in \ref{ch:subring}. For any $M$ in $\dbcat A$ the action of  $S$ on $\Ext^*_A(M,A)$ factors through $\Ext^*_A(A,A)=A$, so the  cohomologically noetherian condition implies 
	\[
	\Ext_A^i(M,A)=0 \qquad \text{for $i\gg 0$\,.}
	\]
	Now an argument akin to the one in proof of Lemma~\ref{le:descent} yields that the ring $R$ is Gorenstein. Here are details: We wish to verify that $R_\fm$ is Gorenstein for each maximal ideal $\fm$ of $R$. With $k$ denoting the residue field at $\fm$, from the preceding discussion, for $i\gg 0$ one gets the equality below
	\[
	\Ext^i_{R_\fm}(k,A_\fm) \cong \Ext^i_R(k,A) \cong  \Ext^i_A(A \otimes_R k,A) = 0\,.
	\]
	The first isomorphism holds because $\Ext^i_R(k,A)$ is annihilated by $\fm$ and so it is already $\fm$-local. Since $A_\fm$ is a non-zero free $R_\fm$-module of finite rank, it follows that $\Ext^i_{R_\fm}(k,R_\fm)=0$ for $i\gg 0$, and hence $R_\fm$ is Gorenstein; see \ref{ch:injdim-test}.
	
	Given that $R$ is Gorenstein, Theorem~\ref{th:goto+} yields that the functor 
	\[
	\RHom_R(-,R)\colon \dbcat A\lra \dbcat{\op A}
	\]
	is a (contravariant) equivalence of categories; it is its own quasi-inverse. It follows from this that $\op A$ is cohomologically noetherian with respect to  $S$, and hence as before that $\Ext^i_{\op A}(N,A)=0$ for each $N$ in $\dbcat{\op A}$ and $i\gg 0$.  
	
	Now Theorem~\ref{th:goto-plus} yields that the $R$-algebra $A$ is Gorenstein.
\end{proof}

\begin{example}
	Examples of rings that are cohomologically noetherian include group algebras $kG$, where $k$ is a field and $G$ is a finite group or even 
	a finite (super) group scheme. In these cases the ring $S$ that witnesses this property is the cohomology ring of $G$. 
	
	More generally, any finite dimensional algebra $A$ over a field $k$ satisfying the Fg property, in the sense of Erdmann, Holloway, Snashall, Solberg, Taillefer~\cite{Erdmann--Holloway--Snashall--Solberg--Taillefer:2004}, is cohomologically noetherian, with $S$ the Hochschild cohomology ring of $A$ over $k$.
\end{example}

\subsection*{The work of Buchweitz}  Let $R$ be a commutative noetherian ring. In Chapter 7 of ~\cite{Buchweitz:1986} Buchweitz studies maps of rings $\varphi\colon R\to A$ satisfying the following properties.
\begin{enumerate}
	\item \label{eq:fp}
	$A$ is a finite $R$-algebra, of finite projective dimension as an $R$-module;
	\item \label{eq:IG}
	$A$ is Iwanaga--Gorenstein ring.
\end{enumerate}
Condition \eqref{eq:fp} is weaker than the corresponding condition defining a Gorenstein algebra~\ref{de:gor-algebra}, whereas \eqref{eq:IG} is stronger. It seems plausible to develop a theory of Gorenstein algebras along the lines presented above and in \cite{Iyengar--Krause:2021},  where, instead of requiring that $A$ is a projective $R$-algebra, it is allowed to be of finite projective dimension over $R$, as in Buchweitz's definition.  In that context, the fibres $\fibre A{\fp}$ would no longer be rings, but rather differential graded algebras,  finite dimensional over the field $k(\fp)$.  One would have to contend with not-necessarily-commutative Gorenstein dg algebras, which have been studied by Frankild and J{\o}rgensen~\cite{Frankild--Jorgensen:2003}, and also by Dwyer, Greenlees, and Iyengar~\cite{Dwyer--Greenlees--Iyengar:2005}. One motivation for pursuing such a project is that the proposed definition captures some natural families of examples that ought to be considered Gorenstein but are not covered by our present framework. Gentle algebras, discussed in the next section, are one such, as are algebras which are derived equivalent to Gorenstein algebras via complexes that are tilting but not necessarily fibre-wise tilting.

\section{Maximal Cohen--Macaulay modules}
\label{se:maximal-cohen-macaulay-modules}

In this section we discuss maximal Cohen--Macaulay modules over Gorenstein algebras. Once again, we begin with commutative rings.

\begin{chunk}
	Let $(R,\fm,k)$ be a commutative local ring. A finitely generated $R$-module $M$ is \emph{maximal Cohen--Macaulay} if $\depth_RM\ge \dim R$; equivalently, if
	\[
	\Ext^i_R(k,M)=0 \qquad\text{for $i<\dim R$\,.}
	\]
	When $M\ne 0$ one always has $\Ext^{\dim R}_R(k,M)\ne 0$, see \cite[Theorem~(1.1)]{Fossum--Foxby--Griffith--Reiten:1975}, so such a module is maximal Cohen--Macaulay if and only if $\depth_RM=\dim R$. 
	
	Let now $R$ be a Gorenstein local ring. It is immediate from the Grothendieck's local duality theorem~\cite[Theorem~3.5.8]{Bruns--Herzog:1998a} that $M$ is maximal Cohen--Macaulay if and only if 
	\[
	\Ext_R^i(M,R) = 0 \qquad \text{for $i\ge 1$\,.}
	\]
	Observe that since $R$ is Gorenstein one has $\injdim R=\dim R$, as explained in \ref{ch:injdim-test}, so for any $M\in \Mod R$ one has
	\[
	\Ext^i_R(M,R)=0\qquad \text{for $i\ge \dim R+1$\,.}
	\]
	Thus a simple dimension shifting argument shows that for any $M$ in
	$\mod R$ the $R$-modules $\syz^i_R(M)$ are maximal Cohen--Macaulay for $i\ge \dim R$.
\end{chunk}

This suggests the following definition.

\begin{chunk} 
	\label{chunk:mcmR} 
	Let $R$ be a commutative Gorenstein ring. An $R$-module $M$ in $\mod R$ is \emph{maximal Cohen--Macaulay} if $\Ext^i_R(M,R)=0$ for $i\ge 1$.
	
	Since a finitely generated $R$-module is zero if and only if its localisation at any prime (equivalently, maximal) ideal is zero, a finitely generated $R$-module $M$ is maximal Cohen--Macaulay if and only if the $R_\fp$-module $M_\fp$ is maximal Cohen--Macaulay of all $\fp$ in $\Spec R$; equivalently, for all $\fp$ in $\max(\Spec R)$.
	
	As in the local case one has the following result.

	\begin{corollary}
		\label{co:goto}
		Let $R$ be a commutative Gorenstein ring and $M$ a finitely generated
		$R$-module and set $s\colonequals \sup\{i\in\bbN\mid \Ext^i_R(M,R)\neq 0\}$. The $R$-module
		$\syz^i_R(M)$ is maximal Cohen--Macaulay for $i\ge s$. \qed
	\end{corollary}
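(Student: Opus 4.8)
The plan is to reduce to the local case, where the statement follows from the dimension-shifting argument already sketched in the preceding discussion. First I would recall that by Theorem~\ref{th:goto} the integer $s$ is finite: Goto's theorem guarantees $\Ext^i_R(M,R)=0$ for $i\gg 0$, so the supremum defining $s$ is taken over a bounded set (it is $-\infty$, interpreted as $0$, when $M$ is perfect, but this causes no trouble). Next I would pass to an arbitrary prime (equivalently maximal) ideal $\fp$ of $R$ and observe that $\syz^i_R(M)_\fp \cong \syz^i_{R_\fp}(M_\fp)$ up to free summands, since localisation is exact and carries a projective resolution of $M$ over $R$ to one of $M_\fp$ over $R_\fp$. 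By the second paragraph of \ref{chunk:mcmR}, it therefore suffices to show each $\syz^i_{R_\fp}(M_\fp)$ is maximal Cohen--Macaulay over $R_\fp$ for $i\ge s$.

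So I may assume $(R,\fm,k)$ is a Gorenstein local ring of dimension $d$; write $s_\fp\colonequals \sup\{i\mid \Ext^i_R(M,R)_\fp\ne 0\}$, which satisfies $s_\fp\le s$. The key computation is the dimension-shifting isomorphism
\[
\Ext^{j}_{R}\bigl(\syz^i_R(M),R\bigr) \cong \Ext^{i+j}_R(M,R)\qquad\text{for }j\ge 1,
\]
valid because the omitted free summands contribute nothing to positive Ext. For $i\ge s_\fp$ and any $j\ge 1$ we have $i+j> s_\fp$, hence the right-hand side vanishes, so $\Ext^j_R(\syz^i_R(M),R)=0$ for all $j\ge 1$; by the criterion in \ref{chunk:mcmR} (which over a Gorenstein local ring is Grothendieck's local duality characterisation recalled just above), $\syz^i_R(M)$ is maximal Cohen--Macaulay. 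Since $s_\fp\le s$, the same holds for all $i\ge s$, which is what we wanted locally.

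I expect no serious obstacle here: the only points requiring a little care are (i) handling the free summands correctly so that the dimension-shifting isomorphism in positive degrees is clean — this is the standard observation that $\Ext^{\ge 1}_R(P,R)=0$ for $P$ projective; and (ii) the bookkeeping with the local supremum $s_\fp$ versus the global $s$, together with the edge case $s=0$ (i.e.\ $M$ of finite projective dimension, hence perfect by Theorem~\ref{th:bass-murthy}), where the statement asserts that all syzygies $\syz^i_R(M)$ with $i\ge 0$ — in particular $M$ itself, if $M$ has no free summands — are maximal Cohen--Macaulay, again immediate from the Ext-vanishing criterion. Everything else is a direct consequence of Goto's theorem (Theorem~\ref{th:goto}) for finiteness of $s$ and of the material in \ref{chunk:mcmR}.
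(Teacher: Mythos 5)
Your core argument is correct and is essentially the paper's (implicit) one: the corollary follows immediately from the dimension-shifting isomorphism $\Ext^j_R(\syz^i_R(M),R)\cong\Ext^{i+j}_R(M,R)$ for $j\ge 1$ together with the Ext-vanishing definition of maximal Cohen--Macaulay in \ref{chunk:mcmR}, with Theorem~\ref{th:goto} guaranteeing $s<\infty$. Your reduction to the local case is harmless but superfluous: \ref{chunk:mcmR} defines maximal Cohen--Macaulay over a (not necessarily local) Gorenstein ring precisely by the condition $\Ext^{\ge 1}_R(-,R)=0$, so the global dimension-shifting argument already gives the conclusion without passing to primes.

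Two of your parenthetical remarks are mathematically wrong, though neither affects the proof. First, $s=-\infty$ does not occur ``when $M$ is perfect''; since $\RHom_R(-,R)$ is an equivalence on $\dbcat R$ (Theorem~\ref{th:goto+}), $\Ext^i_R(M,R)=0$ for all $i$ forces $M=0$, so $s=-\infty$ only in the trivial case. Second, $s=0$ does not mean $M$ has finite projective dimension (and Theorem~\ref{th:bass-murthy} is not relevant here): $s=0$ says exactly that $M$ is already maximal Cohen--Macaulay. For instance, over an Artinian Gorenstein local ring that is not a field, every finitely generated module has $s=0$, yet most have infinite projective dimension. The conclusions you draw in those edge cases are still correct, because the uniform dimension-shifting argument covers them, but the characterisations you attach to them should be deleted.
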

	
	Goto's theorem~\ref{th:goto} ensures that the $s$ is finite, so for any $M$ all high syzygy modules will be maximal Cohen--Macaulay. 
	The noteworthy aspect is that there is no uniform bound on $s$, since $\injdim R$ need not be finite; see \ref{ch:Nagata}.
\end{chunk}

\subsection*{Gorenstein algebras}
Let $A$ be a Gorenstein $R$-algebra. A finitely generated $A$-module $M$ is \emph{maximal Cohen--Macaulay} if $\Ext^i_A(M,A)=0$ for $i\ge 1$. Observe that this property does not depend on $R$; see also Corollary~\ref{co:independence}.

\begin{lemma}
	\label{le:mcm-base-independence}
	Let $A$ be a Gorenstein $R$-algebra and $M$ a finitely generated $A$-module. If $M$ is maximal Cohen--Macaulay as an $A$-module, then it is maximal Cohen--Macaulay also as an $R$-module. The converse holds when $\omega_{A/R}$ is projective over $A$ or over $\op A$, and, in particular, when $A$ is commutative. 
\end{lemma}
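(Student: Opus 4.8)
The plan is to reduce everything to the fibres $\fibre A\fp$ over the primes in $\supp_RA$, where we can invoke the corresponding (well-known) statement for finite dimensional algebras over a field, and then patch the fibre-wise information together using the projectivity of $A$ and the localisation results from Section~\ref{se:finite-projective}. First I would record the key input: by definition $M$ is maximal Cohen--Macaulay over $A$ exactly when $\Ext^i_A(M,A)=0$ for $i\ge 1$. Since $A$ is a finite projective (hence faithfully flat after the standard reduction of \ref{ch:subring2}) $R$-module, and $M$ is finitely generated over $A$, the $R$-module $\Ext^i_A(M,A)$ is finitely generated; moreover $\RHom_A(M,A)$ is built from $M$ by a finite process so one has, for each $\fp$, a natural isomorphism $\RHom_A(M,A)\lotimes_R k(\fp)\cong \RHom_{\fibre A\fp}(\fibre M\fp,\fibre A\fp)$ in the derived category, exactly as in \eqref{eq:RHom}. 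Hence vanishing of $\Ext^i_A(M,A)$ can be tested fibre-wise: by Nakayama over the local rings $R_\fm$ (using that the $\Ext$ modules are finitely generated), $\Ext^i_A(M,A)=0$ for $i\ge 1$ iff $\Ext^i_{\fibre A\fp}(\fibre M\fp,\fibre A\fp)=0$ for $i\ge 1$ and all $\fp\in\supp_RA$; that is, $M$ is mCM over $A$ iff $\fibre M\fp$ is mCM over $\fibre A\fp$ for every such $\fp$.

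For the first (unconditional) implication, suppose $M$ is mCM over $A$. Then each $\fibre M\fp$ is mCM over the Iwanaga--Gorenstein finite dimensional $k(\fp)$-algebra $\fibre A\fp$. I would now use the commutative analogue: by \ref{chunk:mcmR}, $M$ is mCM over $R$ iff $M_\fp$ is mCM over $R_\fp$ for all $\fp\in\supp_RM\subseteq\supp_RA$, and the latter can in turn be detected after $-\otimes_{R_\fp}k(\fp)$ — more precisely, since $A_\fp$ is a nonzero finite \emph{free} $R_\fp$-module, the $R_\fp$-module $M_\fp$ (a direct summand, up to the $A$-action, of a finite free $A_\fp$-module whenever $M$ is mCM over $A$) inherits maximal Cohen--Macaulayness over $R_\fp$. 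Concretely: an mCM $A$-module is an infinite syzygy over $A$, hence (as $A$ is $R$-free) an infinite syzygy over $R$, and high syzygies over the Gorenstein ring $R_\fp$ are mCM by the discussion preceding \ref{chunk:mcmR}; so $M$ is mCM over $R$. This direction needs no hypothesis on $\omega_{A/R}$.

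For the converse, assume $M$ is mCM over $R$ and that $\omega_{A/R}$ is projective over $A$ (the case of $\op A$ follows by Lemma~\ref{le:dual-proj}, and the commutative case is the special case $\omega_{A/R}\cong A$ up to shift). The point is that over a finite dimensional algebra $\Lambda=\fibre A\fp$ with $\omega_{\Lambda/k(\fp)}$ projective, an mCM module is the same as a Gorenstein-projective module, and being Gorenstein-projective can be characterised via the dualising module: $N$ is mCM over $\Lambda$ iff $N$ is a direct summand of a module of the form $\omega_{\Lambda/k(\fp)}\otimes_\Lambda P$ with $P$ projective (equivalently, $N$ is $\omega$-reflexive with no higher $\Ext$ into $\Lambda$). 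Since by \eqref{eq:dual/fibre} we have $\fibre{(\omega_{A/R})}\fp\cong\omega_{\fibre A\fp/k(\fp)}$ and $\omega_{A/R}$ is $A$-projective, Lemma~\ref{le:proj} gives that $\omega_{\fibre A\fp/k(\fp)}$ is $\fibre A\fp$-projective for every $\fp$. So each $\fibre A\fp$ is (fibre-wise) self-injective up to a projective twist; in that situation one checks directly that an $\fibre A\fp$-module $\fibre M\fp$ is mCM as soon as it is mCM — i.e. of maximal depth — as a $k(\fp)$-vector space, which it automatically is since $k(\fp)$ is a field; more precisely, over such $\Lambda$ one has $\Ext^{\ge 1}_\Lambda(N,\Lambda)\cong \Ext^{\ge 1}_\Lambda(N,\omega_{\Lambda/k}\otimes_\Lambda P)=0$ whenever $N$ is mCM over the base field, using the isomorphism $\Ext^i_\Lambda(N,\omega_{\Lambda/k}\otimes_\Lambda P)\cong \Hom_k(\operatorname{Tor}^\Lambda_i(?,N),k)$ and that $\operatorname{Tor}_{\ge 1}$ against a projective vanishes. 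Feeding this back through the fibre-wise test from the first paragraph, $\Ext^i_A(M,A)=0$ for $i\ge 1$, i.e. $M$ is mCM over $A$.

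\medskip
I expect the main obstacle to be the converse direction, specifically making precise the claim that over a finite dimensional algebra $\Lambda$ with $\omega_{\Lambda/k}$ projective, \emph{every} finitely generated module is maximal Cohen--Macaulay — this is where the projectivity of the dualising bimodule is essential and where one must be careful that ``mCM over $R$'' really does become vacuous after passing to the field-valued fibres while ``mCM over $A$'' does not, unless $\omega$ is projective. The cleanest route is probably to avoid an explicit $\Tor$--$\Ext$ computation and instead argue that when $\omega_{\fibre A\fp/k(\fp)}$ is projective, $\fibre A\fp$ is self-injective (Iwanaga--Gorenstein of injective dimension $0$), hence $\Ext^{\ge1}_{\fibre A\fp}(-,\fibre A\fp)=0$ identically; combined with the fibre-wise detection of $\Ext^{\ge1}_A(M,A)$ from paragraph one, this immediately forces $M$ to be mCM over $A$, with no hypothesis on $M$ at all beyond finite generation. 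I would double-check whether ``$\omega$ projective on one side'' genuinely gives self-injectivity of all fibres (it gives self-injectivity of the fibres via Lemma~\ref{le:proj} and $\omega_{\fibre A\fp}\cong\fibre{(\omega_A)}\fp$), and if so the converse becomes almost immediate and the role of the hypothesis ``$M$ mCM over $R$'' in the statement is simply to make the biconditional read symmetrically.
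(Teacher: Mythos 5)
Your unconditional direction can be pushed through, but not quite as written: the assertions that maximal Cohen--Macaulayness over $R_\fp$ is ``detected after $-\otimes_{R_\fp}k(\fp)$'' and that an mCM module is ``a direct summand of a finite free $A_\fp$-module'' are false; what saves you is the infinite-syzygy argument, and that rests on the identification of mCM modules with syzygies of acyclic complexes of projectives (Buchweitz, quoted from \cite{Iyengar--Krause:2021}), which is far heavier machinery than the lemma needs. The paper's proof is two lines: since $\omega_{A/R}$ is perfect over $A$ by Theorem~\ref{th:dualising}, dimension shifting gives $\Ext^i_A(M,\omega_{A/R})=0$ for $i\ge 1$, and adjunction identifies $\Ext^i_A(M,\omega_{A/R})$ with $\Ext^i_R(M,R)$. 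Note also that \eqref{eq:RHom} is stated for perfect complexes, whereas your $M$ need not be perfect; the base-change isomorphism does extend to all finitely generated $M$ (because projective $A$-modules are $R$-projective), but that requires an argument you do not give.

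The converse is where the proposal genuinely breaks. Your ``cleanest route'' conclusion --- that when $\omega_{A/R}$ is $A$-projective every finitely generated $A$-module is maximal Cohen--Macaulay --- is false: take $A=R=k\llbracket x\rrbracket$ and $M=k$; the fibre $\fibre A\fm\cong k$ is self-injective, yet $\Ext^1_R(k,R)\neq 0$. The source of the error is that $\fibre M\fp=M\lotimes_R k(\fp)$ is a complex, not a module: under $\RHom_A(M,A)\lotimes_R k(\fp)\simeq \RHom_{\fibre A\fp}(\fibre M\fp,\fibre A\fp)$ the modules $\Tor^R_j(M,k(\fp))$ sitting in degree $-j$ contribute, already through $\Hom$ over the self-injective fibre, to cohomological degree $j\ge 1$, i.e.\ exactly to the groups $\Ext^{\ge 1}_A(M,A)$ you are trying to kill. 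The hypothesis that $M$ is mCM over $R$ does not remove these contributions, since mCM $R$-modules are in general far from $R$-flat (for instance $R/(x)$ over $R=k\llbracket x,y\rrbracket/(xy)$), so no fibre-wise argument of this shape can make use of it. The paper uses the hypothesis quite differently: adjunction gives $\Ext^i_A(M,\omega_{A/R})\cong\Ext^i_R(M,R)=0$ for $i\ge 1$, and when $\omega_{A/R}$ is projective over $\op A$, applying $\Hom_R(-,R)$ to a splitting $\omega_{A/R}\oplus X\cong A^n$ of right modules exhibits $A\cong\Hom_R(\omega_{A/R},R)$ as a direct summand of $\omega_{A/R}^{\,n}$ as a left $A$-module, whence $\Ext^i_A(M,A)=0$ for $i\ge 1$; the two one-sided projectivity hypotheses are interchangeable by Lemma~\ref{le:dual-proj}.
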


\begin{proof}
	When $M$ is maximal Cohen--Macaulay it follows by a standard dimension shifting argument that $\Ext^i_A(M,W)=0$ for $i\ge 1$ and any $A$-module $W$ of finite projective dimension. Applying this observation to $W\colonequals \omega_{A/R}$, and this is legitimate by Theorem~\ref{th:dualising}, justifies the equality below:
	\[
	\Ext_R^i(M,R) \cong \Ext_A^i(M,\omega_{A/R}) =0 \quad \text{for $i\ge 1$\,.}
	\]
	The isomorphism is by adjunction. Thus $M$ is maximal Cohen--Macaulay also as an $R$-module. The isomorphism also yields that when $M$ is maximal Cohen--Macaulay as an $R$-module, one has $\Ext_A^i(M,\omega_{A/R})=0$ for $i\ge 1$. 
	
	The two projectivity assumptions on $\omega_{A/R}$ are equivalent by Lemma~\ref{le:dual-proj}.
	When the $\op A$-module $\omega_{A/R}$ is projective,
	then $A \cong \Hom_R(\omega_{A/R}, R)$ is a direct summand of $\omega_{A/R}^n$ for a certain integer $n \geq 1$, which implies $\Ext_A^i(M,A)=0$ for $i\geq 1$, yielding the converse.
\end{proof}

In the statement above, the converse does not hold for general non-commutative rings: consider, for example, a finite dimensional algebra $A$ over a field $k$ such that $A$ is  Gorenstein but not self-injective. In this case not every finite dimensional $A$-module is maximal Cohen--Macaulay, but every $k$-module is maximal Cohen--Macaulay. This highlights an important feature of the commutative context.

Nevertheless many aspects of the theory of maximal Cohen--Macaulay over commutative rings carry over, including Goto's theorem~\ref{th:goto} and its consequence~\ref{th:goto+}. This is recorded in the statement below, which can be proved by a simple reduction to the commutative case; see \cite[Theorem~4.6]{Iyengar--Krause:2021} for details.

\begin{theorem}
	When $A$ is a Gorenstein algebra, the functor $\RHom_A(-,A)$ induces an equivalence of categories
	\[
	\RHom_A(-,A) \colon\op{ \dbcat A} \longiso \dbcat{\op A}\,.
	\]
	In particular, for any finitely generated $A$-module $M$ one has $\Ext_A^i(M,A)=0$ for $i\gg 0$, so the syzygy modules $\syz^i_A(M)$ are maximal Cohen--Macaulay for $i\gg 0$. \qed
\end{theorem}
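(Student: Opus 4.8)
The plan is to reduce the statement to the commutative Gorenstein case, where the corresponding equivalence is Theorem~\ref{th:goto+}, by transporting the problem to the centre of $A$, or more precisely to the subring $R$ over which $A$ is finite and projective. First I would invoke \ref{ch:subring2} to arrange that $A$ is faithful as an $R$-module, so that $\supp_R A = \Spec R$; the conclusion of the theorem is insensitive to this reduction. By Lemma~\ref{le:descent} (or \ref{ch:subring2}) the ring $R$ is then itself a commutative Gorenstein ring, so Theorem~\ref{th:goto+} applies to $R$: the functor $\RHom_R(-,R)$ is a contravariant self-equivalence of $\dbcat R$.

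Next I would exhibit the duality functor $\RHom_A(-,A)$ and its would-be quasi-inverse $\RHom_{\op A}(-,A)$, and show each lands in the correct bounded derived category. The key point here is that for $M\in\dbcat A$ the complex $\RHom_A(M,A)$ has bounded cohomology: this is exactly the Ext-vanishing supplied by Theorem~\ref{th:goto-plus} (equivalently, by the hypothesis that $A$ is a Gorenstein $R$-algebra), applied on both sides. Given boundedness, one checks that $\RHom_A(M,A)$ lies in $\dbcat{\op A}$ using that $A$ is noetherian and the cohomology modules are finitely generated. The composite $\RHom_{\op A}(\RHom_A(-,A),A)$ then carries the biduality map $M\to\RHom_{\op A}(\RHom_A(M,A),A)$; to see it is an isomorphism, I would forget down to $R$ and use the adjunction isomorphism $\RHom_A(-,A)\cong\RHom_R(-,\omega_{A/R})$ together with the fact, from Theorem~\ref{th:dualising}, that $\omega_{A/R}$ is perfect over $A$ on both sides. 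This identifies the $A$-biduality, after restriction to $R$, with a twist of the commutative biduality for $\RHom_R(-,R)$, which is an isomorphism by Theorem~\ref{th:goto+}; since a morphism in $\dbcat A$ is an isomorphism if and only if it is one in $\dbcat R$, biduality over $A$ is an isomorphism too.

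Finally, the "in particular" clause is immediate once the equivalence is established: for $M\in\mod A$, applying a self-equivalence to the object $\RHom_A(M,A)\in\dbcat{\op A}$ forces it to have bounded cohomology, hence $\Ext_A^i(M,A)=0$ for $i\gg 0$; then dimension shifting through a projective resolution of $M$ shows the syzygies $\syz^i_A(M)$ satisfy $\Ext^j_A(\syz^i_A(M),A)=0$ for all $j\ge 1$ once $i\gg 0$, i.e.\ they are maximal Cohen--Macaulay. The main obstacle I anticipate is the compatibility bookkeeping in the biduality step: one must check that the natural $A$-biduality map, when restricted along $R\to A$, genuinely agrees (up to the isomorphism $\RHom_A(-,A)\cong\RHom_R(-,\omega_{A/R})$ and the perfection of $\omega_{A/R}$) with the commutative $R$-biduality map, so that invertibility of the latter can be imported. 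This is the kind of naturality argument that is routine but must be done with some care; everything else is a formal consequence of Theorems~\ref{th:goto+}, \ref{th:goto-plus}, and~\ref{th:dualising}. As the excerpt notes, the full details are in \cite[Theorem~4.6]{Iyengar--Krause:2021}.
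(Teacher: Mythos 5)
Your strategy---transport the problem to $R$ and import Theorem~\ref{th:goto+} for the commutative Gorenstein ring $R$---is precisely the ``simple reduction to the commutative case'' the paper alludes to (it gives no details, deferring to \cite[Theorem~4.6]{Iyengar--Krause:2021}), and most of your scaffolding is fine: the reduction to $A$ faithful over $R$, the boundedness of $\RHom_A(M,A)$ from Theorem~\ref{th:goto-plus} (extended from modules to complexes by truncation), the conservativity of restriction along $R\to A$, and the ``in particular'' clause by dimension shifting.

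The gap is in the pivotal identification. The adjunction you invoke, $\RHom_A(-,A)\cong\RHom_R(-,\omega_{A/R})$, is false: the adjunction furnished by $\omega_{A/R}=\Hom_R(A,R)$ reads $\RHom_A(-,\omega_{A/R})\cong\RHom_R(-,R)$ (this is exactly the isomorphism used in the proof of Lemma~\ref{le:mcm-base-independence}). Already for $M=A$ your formula compares $A$ with $\Hom_R(A\otimes_RA,R)$, whose $R$-rank is $(\rank_R A)^2$. So the functor you end up matching against the commutative biduality is not $\RHom_A(-,A)$, and the biduality step collapses as written. The repair is to twist the argument of the duality rather than its target: since $A$ is finite projective over $R$, reflexivity gives $A\cong\Hom_R(\omega_{A/R},R)$ as bimodules, whence
\[
\RHom_A(M,A)\;\cong\;\RHom_A\bigl(M,\RHom_R(\omega_{A/R},R)\bigr)\;\cong\;\RHom_R\bigl(\omega_{A/R}\lotimes_AM,\,R\bigr)
\]
naturally in $M$, as complexes of $\op A$-modules. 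Here the Gorenstein hypothesis enters exactly where you expected: by Theorem~\ref{th:dualising} the bimodule $\omega_{A/R}$ is perfect on both sides, so $\omega_{A/R}\lotimes_A-$ preserves $\dbcat A$ and is an equivalence (as recorded at the start of the Duality subsection, citing \cite[Theorem~4.5]{Iyengar--Krause:2021}), while Theorems~\ref{th:goto} and~\ref{th:goto+} for the Gorenstein ring $R$ make $\RHom_R(-,R)\colon\op{\dbcat A}\to\dbcat{\op A}$ a duality: its values are bounded with finitely generated cohomology by Goto's theorem, and its biduality transformation restricts to the $R$-biduality, which is invertible, so it is invertible since restriction to $\dbcat R$ is conservative. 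The composite is the desired equivalence, and the rest of your argument then goes through unchanged.
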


The collection of maximal Cohen--Macaulay modules over a Gorenstein $R$-algebra $A$ form a Frobenius exact category, with exact structure inherited from $\mod A$, with projective/injective objects the finitely generated projective $A$-modules. We denote by $\smcm A$  the corresponding stable category.
For any $A$-modules $M$, $N$, we write $\sHom_A(M,N)$ for the quotient of $\Hom_A(M,N)$ modulo the ideal of morphisms which factor through a projective $A$-module.

Let $\dsing(A)$ be the \emph{singularity category} of $A$ introduced
by Buchweitz~\cite{Buchweitz:1986} and independently by
Orlov~\cite{Orlov:2004a}. This is the quotient of $\dbcat A$ by the
thick subcategory of perfect complexes; see
\cite[\S6.2]{Krause:2021}. The following result was proved by
Buchweitz~\cite[Theorem~4.4.1]{Buchweitz:1986} for Iwanaga--Gorenstein
rings. The version stated below is from
\cite[Theorem~6.5]{Iyengar--Krause:2021}.

\begin{theorem}
	\label{th:buchweitz}
	When $A$ is a Gorenstein $R$-algebra the embedding of the category of maximal Cohen--Macaulay modules into $\dbcat A$ induces an equivalence of triangulated categories $\smcm A\iso \dsing(A)$. \qed
\end{theorem}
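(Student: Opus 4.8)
The strategy is to carry out Buchweitz's proof of the Iwanaga--Gorenstein case~\cite[Theorem~4.4.1]{Buchweitz:1986} in this wider setting. Nothing in that argument genuinely uses the finiteness of $\injdim A$; what it uses are three facts, all available here. (i) The maximal Cohen--Macaulay $A$-modules form a Frobenius exact category whose projective-injective objects are the finitely generated projective $A$-modules (recalled above). (ii) If $M$ is maximal Cohen--Macaulay then $\Ext^i_A(M,A)=0$ for $i\ge 1$, by definition; hence, by dimension shifting, $\Ext^i_A(M,W)=0$ for $i\ge 1$ and every $A$-module $W$ of finite projective dimension, in particular for every perfect $A$-complex $W$. (iii) For every $X\in\dbcat A$ the syzygy $\syz^n_A X$ is maximal Cohen--Macaulay for $n\gg 0$; this is the theorem immediately preceding, and it is what compensates for $\injdim A$ being possibly infinite.

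First I would produce the functor. Finitely generated projective $A$-modules are perfect, hence isomorphic to $0$ in $\dsing(A)$, so the composite $\mcm A\hookrightarrow\dbcat A\to\dsing(A)$ annihilates every morphism factoring through a projective and therefore descends to an additive functor $\Phi\colon\smcm A\to\dsing(A)$. To equip $\Phi$ with a triangle structure one argues as usual for stable categories of Frobenius categories: for maximal Cohen--Macaulay $M$ pick a short exact sequence $0\to M\to P\to\syz^{-1}M\to 0$ with $P$ projective; in $\dbcat A$ this yields a triangle $M\to P\to\syz^{-1}M\xrightarrow{\partial_M}\Sigma M$, and as $P$ is zero in $\dsing(A)$ the morphism $\partial_M$ becomes an isomorphism $\Phi(\syz^{-1}M)\iso\Sigma\Phi(M)$, natural in $M$; moreover the distinguished triangles of $\smcm A$, coming from conflations of maximal Cohen--Macaulay modules, are sent to triangles of $\dsing(A)$. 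Density is then quick: given $X\in\dbcat A$ and a projective resolution $P^\bullet\iso X$, a brutal truncation of $P^\bullet$ places $X$ in a triangle $\Sigma^n\syz^n_A X\to X\to Q\to\Sigma^{n+1}\syz^n_A X$ with $Q$ perfect, so $X\cong\Sigma^n\syz^n_A X$ in $\dsing(A)$; enlarging $n$ so that $\syz^n_A X$ is maximal Cohen--Macaulay, by~(iii), one has $\Sigma^n\syz^n_A X\cong\Phi(\syz^{-n}_A\syz^n_A X)$, and $\Phi$ is dense.

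The heart of the argument is full faithfulness: for maximal Cohen--Macaulay $M,N$ the comparison map $\Phi\colon\sHom_A(M,N)\to\Hom_{\dsing(A)}(M,N)$ must be bijective. Taking the connecting morphisms of the short exact sequences $0\to\syz^{n+1}_A N\to P_n\to\syz^n_A N\to 0$ and composing them produces morphisms $t_n\colon N\to\Sigma^n\syz^n_A N$ in $\dbcat A$ with perfect cone, which form a cofinal system of denominators (see the last paragraph); so the calculus of left fractions in the Verdier quotient $\dsing(A)$ identifies
\[
\Hom_{\dsing(A)}(M,N)\;\cong\;\colim_n\,\Ext^n_A(M,\syz^n_A N)\,.
\]
Now~(ii) makes this colimit collapse. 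For $n\ge 1$ the long exact sequence of $\Ext_A(M,-)$ attached to the displayed short exact sequence has vanishing flanking terms $\Ext^{\ge 1}_A(M,P_n)=0$, whence $\Ext^n_A(M,\syz^n_A N)\cong\Ext^{n+1}_A(M,\syz^{n+1}_A N)$; and for $n=1$ that sequence exhibits $\Ext^1_A(M,\syz^1_A N)$ as the cokernel of $\Hom_A(M,P_0)\to\Hom_A(M,N)$, whose image is precisely the morphisms $M\to N$ factoring through a projective $A$-module, i.e.\ as $\sHom_A(M,N)$. Hence the colimit is $\sHom_A(M,N)$, and following a morphism through the fractions shows that this isomorphism is exactly $\Phi$. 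So $\Phi$ is fully faithful, and being dense as well it is an equivalence.

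The step I expect to cost the most care is the cofinality of the $t_n$ among denominators with perfect cone. Here the boundedness of perfect complexes enters: if $s\colon N\to N'$ has perfect cone $C$, then $C$ is concentrated in a bounded range of degrees, say in degrees $\ge -b$, whereas $\Sigma^{n-1}\syz^n_A N$ is a module sitting in the single degree $-(n-1)$; once $n-1>b$ one therefore has $\Hom_{\dbcat A}(C,\Sigma^{n-1}\syz^n_A N)=0$, and the defining triangle of $s$ then forces $t_n$ to factor as $N\xrightarrow{s}N'\to\Sigma^n\syz^n_A N$, i.e.\ yields a morphism from $s$ to $t_n$ in the category of denominators. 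This is precisely the point that Buchweitz's argument turns on; one could instead appeal to the treatment of singularity categories in~\cite[\S6.2]{Krause:2021}. With~(i)--(iii) in place the proof goes through verbatim, recovering~\cite[Theorem~6.5]{Iyengar--Krause:2021}.
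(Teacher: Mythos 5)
Your argument is correct and is essentially the proof the paper points to: the statement is given there with a citation to Buchweitz's Theorem~4.4.1 and to \cite[Theorem~6.5]{Iyengar--Krause:2021}, and your reconstruction (Frobenius structure on $\mcm A$, density via the preceding theorem that $\syz^n_AX$ is maximal Cohen--Macaulay for $n\gg 0$, full faithfulness via the left-fraction/colimit computation $\Hom_{\dsing(A)}(M,N)\cong\colim_n\Ext^n_A(M,\syz^n_AN)\cong\sHom_A(M,N)$) is exactly that argument adapted verbatim, with only an immaterial shift-index bookkeeping issue in the cofinality step.
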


As in Buchweitz's work~\cite{Buchweitz:1986} one can also identify $\smcm A$  with the homotopy category of acyclic complexes of
projective $A$-modules; see \cite[\S6]{Iyengar--Krause:2021}.

\subsection*{Duality}
We are interested in duality properties of $\smcm A$. To that end, we record that when $A$ is a Gorenstein $R$-algebra the exact functor
\[
\omega_{A/R}\lotimes_A-\colon \dcat A \longiso \dcat A
\]
is an equivalence, and induces an equivalence on $\dbcat A$; see \cite[Theorem~4.5]{Iyengar--Krause:2021}. Since the $A$-module $\omega_{A/R}$ is perfect on either side, the functor takes perfect complexes to perfect complexes and hence induces an equivalence of categories
\[
\omega_{A/R}\lotimes_A- \colon \dsing(A) \longiso \dsing(A)\,.
\]
See \cite[Proposition~6.7]{Iyengar--Krause:2021}.

\begin{chunk}\label{ch:isolated}
	Let $R$ be a commutative noetherian local ring and $E$ an injective hull of its residue field. We consider the Matlis duality functor
	\[
	(-)^\vee \colonequals \Hom_R(-,E)
	\]
	on the category of $R$-modules. A  finite $R$-algebra $A$ has an \emph{isolated singularity}
	if the ring $A_\fp$ has finite global dimension for each non-maximal prime $\fp$ in $\Spec R$. 
	This terminology is motivated by work of Buchweitz \cite[Definition~7.7.3]{Buchweitz:1986} and differs from Auslander's notion of isolated singularities.
	
	When a Gorenstein $R$-algebra $A$ has an isolated singularity, for any objects
	$M,N$ in $\smcm A$, 
	it holds that
	$\sHom_A(M,N)_{\fp} \cong \sHom_{A_{\fp}}(M_{\fp},N_{\fp}) = 0$
	for each non-maximal prime ideal $\fp$
	by \cite[Lemma~7.6~c)]{Auslander:1978a}, and together with 
	Theorem~\ref{th:buchweitz} it follows that
	each morphism space in $\dsing(A)$ is an $R$-module of finite length; see also \cite[Subsection~(C.4.2)]{Buchweitz:1986}.
	\begin{theorem}\label{theorem:serre-duality}
		Let $R$ be a local ring and $A$ a Gorenstein $R$-algebra with an isolated singularity. For $M,N$ in $\dsing(A)$ there is a natural isomorphism
		\[
		\Hom_{\dsing(A)}(M,N)^{\vee} \cong \Hom_{\dsing(A)}(N, \Sigma^{d-1}(\omega_{A/R}\lotimes_AM) )\,,
		\]
		where $d$ is the Krull dimension of $R$. Said otherwise,  the singularity category $\dsing(A)$ has Serre duality, with Serre functor $\Sigma^{d-1}(\omega_{A/R}\lotimes_A-)$. \qed
	\end{theorem}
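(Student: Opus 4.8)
The plan is to deduce the global statement from the complete local case by passing to completion, and then prove the complete local case by combining the Auslander--Reiten formula for Frobenius categories with classical local duality over $R$. First I would reduce to the case where $R$ is complete local with maximal ideal $\fm$ and residue field $k$: since $\dsing(A)\simeq\smcm A$ by Theorem~\ref{th:buchweitz}, and since each Hom in $\dsing(A)$ already has finite length over $R$ by the discussion in \ref{ch:isolated}, completion is harmless---finite length modules and their Matlis duals are unchanged, and $\widehat A$ is again a Gorenstein $\widehat R$-algebra with an isolated singularity (this last point can be checked on fibres via Theorem~\ref{th:gor-family} together with the fact that completion does not change the local rings $A_\fp$ at non-maximal primes up to the relevant invariants, or more directly since $\dsing$ is insensitive to completion here).

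Next, over complete local $R$, I would invoke the Auslander--Reiten (AR) duality for the Frobenius category $\mcm A$, in the form: the stable category $\smcm A$ has AR triangles, and there is a natural isomorphism
\[
\sHom_A(M,N)^\vee \cong \sHom_A(N,\tau M)
\]
where $\tau$ is the AR translation, which for a Gorenstein $R$-algebra with isolated singularity is well known to be $\tau = \syz^{-1}\circ \Hom_R(-,\omega_R)\circ(-)^{\operatorname{tr}}$ or, after one unwinds Matlis duality and uses that $\smcm A\simeq\dsing(A)$ is triangulated with suspension $\syz^{-1}$, can be rewritten intrinsically. The cleanest route is: use that the exact functor $\omega_{A/R}\lotimes_A-$ on $\dsing(A)$ implements the ``twist by the dualising bimodule'', and that over the complete local Gorenstein ring $R$ one has the local duality isomorphism $\RHom_R(C,R)^\vee\simeq \Sigma^{-d}C$ for $C\in\dbcat R$ with finite length cohomology (where $d=\dim R$). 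Applying this with $C=\RHom_A(M,N)$ (which lies in $\dbcat R$ with finite length cohomology by the isolated singularity hypothesis and has the right ``compact'' behaviour because $\dsing(A)$-morphisms are $\RHom_A$ in the stable sense), together with the Gorenstein adjunction $\RHom_A(M,N)\simeq\RHom_A(N,\omega_{A/R}\lotimes_A M)$ relating $\RHom_R(-,R)$ to the $\omega$-twist---this is exactly the isomorphism in the proof of Lemma~\ref{le:mcm-base-independence} extended to complexes---yields
\[
\Hom_{\dsing(A)}(M,N)^\vee \cong \Hom_{\dsing(A)}(N,\Sigma^{d-1}(\omega_{A/R}\lotimes_A M))\,.
\]
The shift by $d-1$ rather than $d$ comes from the suspension in $\dsing(A)$ being $\syz^{-1}$, so that one power of $\Sigma$ is absorbed when translating between $\RHom_A$-cohomology in degree $0$ and morphisms in the singularity category; concretely, $\Hom_{\dsing(A)}(M,N)=\Hom_{\dbcat A}(M,N)$ stable $=H^0\RHom_A(M,N)$ only after a syzygy shift, and tracking this shift against the $\Sigma^{-d}$ from local duality produces $\Sigma^{d-1}$.

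Finally I would check naturality and that the resulting functor $\Sigma^{d-1}(\omega_{A/R}\lotimes_A-)$ is indeed a Serre functor, i.e.\ an autoequivalence of $\dsing(A)$: the autoequivalence property is already recorded in the excerpt (the $\omega$-twist is an equivalence on $\dsing(A)$, and $\Sigma$ is the suspension), and naturality in both variables follows from the naturality of local duality over $R$ and of the Gorenstein adjunction, both of which are functorial. The main obstacle I expect is the careful bookkeeping in the reduction step and in pinning down the exact shift: one must verify that $\RHom_A(M,N)$ has \emph{bounded} and \emph{finite length} cohomology over $R$ (bounded by Goto's theorem~\ref{th:goto} applied suitably, finite length by the isolated singularity hypothesis as in \ref{ch:isolated}), so that $R$-local duality in the form $(-)^\vee\simeq \RHom_R(-,\Sigma^{-d}R)$ on the relevant subcategory of $\dbcat R$ genuinely applies, and that passing to stable Hom (killing the contribution of perfect complexes / projective summands) is compatible with all these identifications---this compatibility is where the ``$-1$'' is genuinely earned and must not be fudged.
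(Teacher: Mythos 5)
The paper does not actually prove Theorem~\ref{theorem:serre-duality}: it is quoted from Buchweitz's Theorem~7.7.5 and from \cite[Corollary~9.3]{Iyengar--Krause:2021}, where it is deduced from a more general duality involving the local cohomology functor $\varGamma_{\fm}$ on the stable category of Gorenstein projectives. Your sketch aims at reconstructing the Buchweitz-style argument (finite length of stable Homs at an isolated singularity, plus duality over the Gorenstein base $R$ turning Matlis duality into an $\omega$-twist with a shift), which is the right circle of ideas, but two of its load-bearing steps are not correct as stated.

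First, the complex $C=\RHom_A(M,N)$ does not have finite length cohomology, so $R$-local duality cannot be applied to it in the form you propose: after replacing $M,N$ by maximal Cohen--Macaulay modules via Theorem~\ref{th:buchweitz}, $H^0(C)=\Hom_A(M,N)$ is an $R$-lattice, nonzero at every prime in its support; only the modules $\Ext^i_A(M,N)$ for $i>0$, equivalently the stable Homs of \ref{ch:isolated}, have finite length. The object to which duality must be applied is the stable (complete resolution, i.e.\ Tate) Hom complex, or $\varGamma_{\fm}\RHom_A(M,N)$, and that complex is unbounded, so the statement ``$(-)^{\vee}\simeq\RHom_R(-,\Sigma^{-d}R)$ on finite length objects of $\dbcat R$'' does not literally apply and the argument has to be restructured. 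Second, the isomorphism you call the Gorenstein adjunction, $\RHom_A(M,N)\simeq\RHom_A(N,\omega_{A/R}\lotimes_AM)$, is false as written (without a dual on one side it is essentially the assertion being proved); the correct statement
\[
\RHom_R\bigl(\RHom_A(M,N),R\bigr)\simeq\RHom_A(N,\omega_{A/R}\lotimes_AM)
\]
holds when $M$ is a perfect $A$-complex, but objects of $\dsing(A)$ are precisely the non-perfect ones, and extending this duality to the stable level --- which is where the $\Sigma^{d-1}$ rather than $\Sigma^{d}$ is produced --- is the actual content of Buchweitz's Theorem~7.7.5 and of the duality theorem in \cite{Iyengar--Krause:2021}; it does not follow from ``extending Lemma~\ref{le:mcm-base-independence} to complexes'', and you explicitly leave this shift bookkeeping unverified. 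A lesser point: the reduction to the complete local case is unnecessary (Matlis and local duality for finite length modules need no completeness), and its justification --- that $\widehat{A}$ is again Gorenstein with an isolated singularity and that $\dsing$ is insensitive to completion --- is itself nontrivial and only gestured at. As it stands the proposal identifies the ingredients but leaves the central duality, and hence the theorem, unproved.
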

	
	The preceding result is due to Buchweitz~\cite[Theorem~7.7.5]{Buchweitz:1986}, extending results of Auslander~\cite{Auslander:1978a} and Auslander and Reiten~\cite{Auslander--Reiten:1991}; see also  \cite[Corollary~9.3]{Iyengar--Krause:2021}.  One noteworthy consequence  is the existence of almost-split sequences in $\dsing(A)$; this is by the work of Reiten and Van den Bergh~\cite{Reiten--Van-den-Bergh:2002}.
\end{chunk}

\begin{chunk}\label{ch:relative-Serre}
	Several independent works 
	show that
	the 
	equivalence 
	\[
	\Sigma^d(\omega_{A/R} \lotimes_A -) \colon 
	{\dbcat A}
	\longiso  
	{\dbcat A}
	\]
	restricts to a Serre functor on a certain full subcategory of ${\dbcat A}$  whose morphism spaces are finite length $R$-modules,  see
	\cite[Theorem 7.2.14]{Ginzburg:2006}, \cite[Theorem 3.7]{Iyama--Reiten:2008}, \cite[Lemma 4.1]{Keller:2008} and  \cite[Lemma 6.4.1]{Van-den-Bergh:2004}.
\end{chunk}

The work in \cite{Iyengar--Krause:2021} started with the question
whether the Serre duality as in Theorem~\ref{theorem:serre-duality} is
a shadow of a more general duality statement, applicable to all
Gorenstein $R$-algebras and not just those with an isolated
singularity. There is indeed such a duality theorem, but to see it one
has to widen ones perspective to include also infinitely generated
analogues of maximal Cohen--Macaulay modules, namely the Gorenstein
projective modules, and develop the results presented above in that
generality. All this is explained in \cite{Iyengar--Krause:2021}.  This
endeavour was guided by Grothendieck duality theory in commutative
algebra and algebraic geometry, that subsumes and generalises
classical Serre duality in those contexts. Another inspiration was
the work of the second and third authors with Benson and
Pevtsova~\cite{Benson--Iyengar--Krause--Pevtsova:2019a,
	Benson--Iyengar--Krause--Pevtsova:2020a} on duality phenomena in the
stable module category of finite groups and group schemes.

\section{Gentle algebras} \label{se:gentle-algebras}
In this section we discuss gentle algebras. These arise frequently in
representation theory and were introduced by Assem and Skowro\'{n}ski
in their study of iterated tilted algebras of Dynkin type
$\mathbb{A}$ \cite{Assem--Skowronski:1987}.

Let $k$ be a field. A $k$-algebra is called
\emph{gentle} if it is Morita equivalent to an
algebra of the form $kQ/I$ where $Q$ is a finite quiver and $I$
is an ideal generated by paths of length two such that
the local shape at any vertex is one of the following:
\begin{align}
	\label{eq:vertices} 
	\quad 
	\begin{tikzcd}[row sep=scriptsize, column sep=scriptsize, cells={outer sep=1pt, inner sep=1pt}, ampersand replacement=\&] 
		\phantom{\circ} \arrow[rd, ""{name=A, inner sep=0, near end}] \& \& \phantom{\circ} \\
		\& \bullet \arrow[ld, ""{name=B, inner sep=0, near start}]
		\arrow[ru, ""{name=D, inner sep=0, near start}] \& \\
		\phantom{\circ} \& \& \phantom{\circ} \arrow[lu, ""{name=C, inner sep=0, near end}]
		\arrow[densely dotted, dash, bend right=60, from=A, to=B] \arrow[densely dotted, dash, bend right=60, from=C, to=D]
	\end{tikzcd}
	\begin{tikzcd}[row sep=scriptsize, column sep=scriptsize, cells={outer sep=1pt, inner sep=1pt}, ampersand replacement=\&] 
		\& \& \phantom{\circ} \\
		\phantom{\circ} \arrow[r, ""{name=A, inner sep=0, midway}] \& \bullet \arrow[ru, ""{name=B, inner sep=0, pos=0.45}]
		\arrow[rd, ""{name=D, inner sep=0, midway}] \& \\
		\& \& \phantom{\circ}
		\arrow[densely dotted, dash, bend left=60, from=A, to=B] \end{tikzcd}
	\begin{tikzcd}[row sep=scriptsize, column sep=scriptsize, cells={outer sep=1pt, inner sep=1pt}, ampersand replacement=\&]  
		\phantom{\circ} \arrow[rd, ""{name=A, inner sep=0, midway}] \& \&  \\
		\& \bullet \arrow[r, ""{name=B, inner sep=0, midway}]  \& \phantom{\circ} \\
		\phantom{\circ} \arrow[ru, ""{name=D, inner sep=0, midway}] \& 
		\arrow[densely dotted, dash, bend left=60, from=A, to=B] \& \end{tikzcd}
	\begin{tikzcd}[row sep=scriptsize, column sep=scriptsize, cells={outer sep=1pt, inner sep=1pt}, ampersand replacement=\&]
		\phantom{\circ} \& \\
		\phantom{\circ} \arrow[r, ""{name=E, inner sep=0.5, pos=0.45}] \& \bullet \arrow[r, ""{name=F, inner sep=0.5, pos=0.55}] \& \phantom{\circ} 
		\\
		\phantom{\circ} \& 
		\arrow[densely dotted, dash, bend left=60, from=E, to=F] \end{tikzcd}
	\begin{tikzcd}[row sep=scriptsize, column sep=scriptsize, cells={outer sep=1pt, inner sep=1pt}, ampersand replacement=\&] 
		\phantom{\circ} \arrow[rd, ""{name=A, inner sep=0, near start}] \&   \\
		\& \bullet \\
		\phantom{\circ} \arrow[ru, ""{name=D, inner sep=0, near start}] \& \end{tikzcd}
	\quad 
	\begin{tikzcd}[row sep=scriptsize, column sep=scriptsize, cells={outer sep=1pt, inner sep=1pt}, ampersand replacement=\&]  
		\&   \phantom{\circ} \\
		\bullet \arrow[ru, ""{name=A, inner sep=0, near start}] \arrow[rd, ""{name=D, inner sep=0, near start}] \& \\
		\& \phantom{\circ}   \end{tikzcd}
	\\
	\nonumber
	\begin{tikzcd}[row sep=0.2cm, column sep=scriptsize, cells={outer sep=1pt, inner sep=1pt}, baseline=0.05cm, ampersand replacement=\&]  
		\phantom{\circ} \arrow[r, ""{inner sep=0, near end}] \& \circ \arrow[r, ""{ inner sep=0, near start}] \& \phantom{\circ} 
	\end{tikzcd}
	\begin{tikzcd}[row sep=0.2cm, column sep=scriptsize, cells={outer sep=1pt, inner sep=1pt}, baseline=0.05cm, ampersand replacement=\&]  
		\phantom{\circ} \arrow[r, ""{inner sep=0, near end}] \& \circ 
	\end{tikzcd}
	\quad
	\begin{tikzcd}[row sep=0.2cm, column sep=scriptsize, cells={outer sep=1pt, inner sep=1pt}, baseline=0.05cm, ampersand replacement=\&]  
		{\circ} \arrow[r, ""{inner sep=0, near end}]  \& \phantom{\circ}
	\end{tikzcd}
\end{align}
Zero relations are indicated by dotted lines.
For later considerations
the vertices are divided into two types, depicted by full and hollow circles.

Rigorously formulated, the quiver $Q \colonequals (Q_0,Q_1,s,t)$
and the ideal $I$ are subject to the following conditions.
\begin{enumerate}[label=(Ge\arabic*),leftmargin=1.05cm]
	\item \label{eq:Ge1} For each  $i \in Q_0$, there are at most two arrows starting
	at $i$.
	\item \label{eq:Ge2} For each  $i\in Q_0$, there are at most two arrows ending
	at $i$.
	\item \label{eq:Ge3} For each $\alpha\in Q_1$, there is at most one arrow $\beta$ such
	that $s(\beta)=t(\alpha)$ and $\beta\alpha\in I$, and there is at most one arrow $\beta'$ such
	that $s(\beta')=t(\alpha)$ and $\beta'\alpha\not\in I$.
	\item \label{eq:Ge4} For each $\beta\in Q_1$, there is at most one arrow $\alpha$ such
	that $t(\alpha)=s(\beta)$ and $\beta\alpha\in I$, and there is at most one arrow $\alpha'$ with
	$t(\alpha')=s(\beta)$ and $\beta\alpha'\not\in I$.
\end{enumerate}
Note that in $Q$ we compose paths from right to left.  

\subsection*{Gentle algebras are Koszul}
\label{se:koszul}
Let $k$ be a field and $Q \colonequals (Q_0,Q_1,s,t)$ a finite quiver. Let $Q^{\mathrm{op}}$ be the \emph{opposite quiver} that
is obtained by reversing the arrows, so
\[
Q^{\mathrm{op}}_0\colonequals Q_0\qquad\textrm{and}\qquad Q^{\mathrm{op}}_1\colonequals \{\alpha^-\mid\alpha\in Q_1\}
\]
with $s(\alpha^-)\colonequals t(\alpha)$ and $t(\alpha^-)\colonequals s(\alpha)$.  Let $Q_2$ denote the set of paths of length $2$ and fix a partition $Q_2= P_+\sqcup P_-$.  It is
not difficult to check that the algebra $kQ/\langle P_- \rangle$ (not necessarily finite dimensional) is gentle if and only if $kQ^{\mathrm{op}}/\langle P^{\mathrm{op}}_- \rangle$ is gentle, where $P^{\mathrm{op}}_-\colonequals \{\alpha^-\mid\alpha\in P_+\}$.

Now  suppose that the algebra $A\colonequals kQ/\langle P_-\rangle$ is gentle. Then  $A$ is graded by path length and we have a decomposition
$A_0=\bigoplus_{i\in Q_0}S_i$ into simple $A$-modules. Each arrow $\alpha\colon i\to j$ corresponds to an extension
$\xi_\alpha\colon 0\to S_i\to E_\alpha\to S_j\to 0$. A theorem of Green and Zacharia \cite{Green--Zacharia:1994} shows that the algebra $A$ is Koszul, and the algebra $kQ^{\mathrm{op}}/\langle P^{\mathrm{op}}_-\rangle$ identifies with the Koszul dual $A^!\cong\Ext^*_{A}(A_0,A_0)$ via the assignment $\alpha^-\mapsto\xi_\alpha$. 

For example, consider the two-loop quiver with partitions
\[
\begin{tikzcd}[cells={outer sep=1.5pt, inner sep=1.5pt}, column sep =scriptsize]
	\bullet \arrow[looseness =6, out=30,in=-30,loop,"y",
	""{name=Y1, inner sep=0, near start},""{name=Y2, inner sep=0, near end}
	]
	\arrow[looseness =6, <-,out=150,in=-150,loop,swap,"x",""{name=X1, inner sep=0, near end},""{name=X2, inner sep=0, near start}
	]
	\arrow[densely dotted, dash, bend right=45, from=X1, to=X2] 
	\arrow[densely dotted, dash, bend right=45, from=Y1, to=Y2] 
\end{tikzcd}
\quad
P_+ = \{x^2, y^2\}\,, \qquad
\begin{tikzcd}[cells={outer sep=1.5pt, inner sep=1.5pt}, column sep =scriptsize]
	\bullet \arrow[looseness =6, out=30,in=-30,loop,"y",
	""{name=Y1, inner sep=0, very near start},""{name=Y2, inner sep=0, very near end}
	]
	\arrow[looseness =6, <-,out=150,in=-150,loop,swap,"x",""{name=X1, inner sep=0, very near end},""{name=X2, inner sep=0, very near start}
	]
	\arrow[densely dotted, dash, bend right=30, from=X1, to=Y2] 
	\arrow[densely dotted, dash, bend right=30, from=Y1, to=X2] 
\end{tikzcd}\quad
P_- = \{
xy,yx \}\,.
\]
Then the algebra $A\colonequals k\langle x,y\rangle/(x^2,y^2)$ is gentle with Koszul dual $A^!\cong 
k[x,y]/(xy)$.

\subsection*{Gluing gentle algebras}
For the remainder of this section, we fix an algebra 
$A \colonequals kQ/I$ such that $(Q,I)$ satisfies the above conditions
\ref{eq:Ge1}--\ref{eq:Ge4}.
Furthermore, we set $\fm \colonequals (c) \subset R \colonequals k[c]$.
The gentle algebra $A$ shares some properties 
with the path algebra $B \colonequals k Q'$ of a quiver defined as follows.

To obtain $Q'$, we split each full vertex from diagram \eqref{eq:vertices} into a pair of vertices:
\begin{align*}
	\begin{tikzcd}[row sep=scriptsize, column sep=scriptsize, cells={outer sep=1pt, inner sep=1pt}, ampersand replacement = \&] 
		\phantom{\circ} \arrow[rd, ""{name=A, inner sep=0, near end}] \& \& \phantom{\circ} \\
		\& \circ 
		\arrow[ru, ""{name=B, inner sep=0, near start}] \& \\[-0.25cm]
		\& \circ 
		\arrow[<-, rd, ""{name=D, inner sep=0, near start}] \& \\
		\phantom{\circ}\arrow[<-,ru, ""{name=C, inner sep=0, near end}] \& \& \phantom{\circ}
	\end{tikzcd}
	\begin{tikzcd}[row sep=scriptsize, column sep=scriptsize, cells={outer sep=1pt, inner sep=1pt}, ampersand replacement = \&] 
		\& \& \phantom{\circ} \\
		\phantom{\circ}  \& \circ \arrow[ru, ""{name=B, inner sep=0, near start}]  
		\& \\[-0.25cm]
		\phantom{\circ} \arrow[r, ""{name=A, inner sep=0, midway}] \&  \circ 
		\arrow[rd, ""{name=D, inner sep=0, midway}] \& \\
		\& \& \phantom{\circ}
	\end{tikzcd}
	\begin{tikzcd}[row sep=scriptsize, column sep=scriptsize, cells={outer sep=1pt, inner sep=1pt}, ampersand replacement = \&] 
		\phantom{\circ} \arrow[rd, ""{name=A, inner sep=0, midway}] \& \&  \\
		\& \circ   
		\& \phantom{\circ} \\[-0.25cm]
		\& \circ \arrow[r, ""{name=B, inner sep=0, midway}] \&  \phantom{\circ} \\
		\phantom{\circ} \arrow[ru, ""{name=B, inner sep=0, midway}]   \& \phantom{\circ}  \& \phantom{\circ} 
	\end{tikzcd}
	\begin{tikzcd}[row sep=scriptsize, column sep=scriptsize, cells={outer sep=1pt, inner sep=1pt}, ampersand replacement = \&] 
		\phantom{\circ} \arrow[r, ""{name=E, inner sep=0, midway}] \& \circ 
		\& 
		\\[-0.25cm]
		\& \circ \arrow[r, ""{name=F, inner sep=0, midway}] \& \phantom{\circ} 
	\end{tikzcd}
	\begin{tikzcd}[row sep=scriptsize, column sep=scriptsize, cells={outer sep=1pt, inner sep=1pt}, ampersand replacement = \&] 
		\phantom{\circ} \arrow[rd, ""{name=A, inner sep=0, near start}] \&   \\
		\& \circ  
		\\[-0.3cm]
		\& \circ \\
		\phantom{\circ} \arrow[ru, ""{name=D, inner sep=0, near start}] \&  \end{tikzcd}
	\qquad
	\begin{tikzcd}[row sep=scriptsize, column sep=scriptsize, cells={outer sep=1pt, inner sep=1pt}, ampersand replacement = \&] 
		\& \phantom{\circ} \\
		\circ 
		\arrow[ru, ""{name=A, inner sep=0, near start}] \&   \phantom{\circ} \\[-0.3cm]
		\circ \arrow[rd, ""{name=D, inner sep=0, near start}] \& \\
		\& \phantom{\circ}  \end{tikzcd}
\end{align*}
Formally, the quiver
$Q' \colonequals  (Q'_0, Q'_1, s',t')$ is defined as follows.
Let $Q^*_0$ denote the subset of all vertices $i \in Q_0$ such that 
no arrow of $Q$ starts or ends in $i$.
Set
$Q'_0 \colonequals Q^*_0 \cup \{ s_{\alpha}, t_{\alpha} \mid \alpha \in Q_1\}/{\sim} $,
where $\sim$ is the smallest equivalence relation 
such that 
$s_{\beta} \sim t_{\alpha}$ 
for any 
$\alpha,\beta \in Q_1$ with $s(\beta) = t(\alpha)$ and $\beta \alpha \notin I$. Set $Q'_1 \colonequals Q_1$, $s'(\alpha) \colonequals  [s_\alpha]$, and $t'(\alpha)\colonequals  [t_{\alpha}]$ for any $\alpha \in Q_1'$.

The ring $B$ is a central tool in the work by Burban and Drozd \cite{Burban--Drozd:2004, Burban--Drozd:2017, Burban--Drozd:2019} to study the representation theory of $A$.
The statements of the next lemma are known in similar settings; see \cite[Lemma 2.12]{Burban--Drozd:2017} and \cite[Theorem 3.11]{Burban--Drozd:2019}.
\begin{lemma}\label{le:ovr}
	With the notation above, the following statements hold.
	\begin{enumerate}
		\item \label{le:ovr1}
		The quiver $Q'$ is a finite union of quivers of equioriented type $\mathbb{A}$ or $\widetilde{\mathbb{A}}$.
		In different terms, 
		there are 
		numbers $0 \leq m \leq n$, $\ell_1, \ldots, \ell_n \geq 1$ such that there
		is an isomorphism of rings
		$B \cong \prod_{v=1}^m T_{\ell_v}(k) \times \prod_{v=m+1}^{n} T_{\ell_v}(R)$, where
		\begin{align*}
			T_{\ell_v}(k)& \colonequals  \{ (a_{ij}) \in \Mat_{\ell_v \times \ell_v}(k) \mid a_{ij} = 0 \text{ for any }1 \leq i < j \leq \ell_v \},\\
			T_{\ell_v}(R) &\colonequals  \{ (a_{ij}) \in \Mat_{\ell_v \times \ell_v}(R) \mid a_{ij} \in \fm \text{ for any }1 \leq i < j \leq \ell_v \}\,.
		\end{align*}
		\item \label{le:ovr2}
		Let $J_A$ denote the two-sided ideal of $A$ generated by the arrows of its quiver.
		Similarly, let $J_B$ denote the  arrow ideal of $B$.
		There is a canonical monomorphism $\iota\colon A \hookrightarrow B$  of $k$-algebras such that $\iota(J_A) = J_B$. 
	\end{enumerate}
\end{lemma}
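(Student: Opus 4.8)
The plan is to extract both statements from a careful analysis of the equivalence relation $\sim$ defining the vertex set $Q'_0$, after which the rest is bookkeeping with path algebras. First I would establish that $\sim$ is generated by a \emph{partial matching} between the symbols $s_\beta$ and the symbols $t_\alpha$: by \ref{eq:Ge3}, for a fixed $\alpha\in Q_1$ there is at most one arrow $\beta$ with $s(\beta)=t(\alpha)$ and $\beta\alpha\notin I$, so $t_\alpha$ is directly identified with at most one $s_\beta$; dually \ref{eq:Ge4} shows each $s_\beta$ is directly identified with at most one $t_\alpha$. Since $\sim$ never relates two symbols of the same type, a short transitivity argument using \ref{eq:Ge3}--\ref{eq:Ge4} once more shows that every $\sim$-class has at most two elements, a two-element class being of the form $\{t_\alpha,s_\beta\}$ with $s(\beta)=t(\alpha)$ and $\beta\alpha\notin I$. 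I would deduce that every vertex of $Q'$ receives at most one arrow and emits at most one arrow, and that $Q'$ has no multiple arrows; a finite quiver with these properties is a finite disjoint union of connected components, each of which is an equioriented quiver of type $\mathbb{A}$ (a finite directed path; single vertices, such as those of $Q^*_0$, are included) or of type $\widetilde{\mathbb{A}}$ (an oriented cycle; a single loop is included). This is the content of \eqref{le:ovr1}; for the matrix reformulation one uses that the path algebra of an equioriented $\mathbb{A}_\ell$ is $T_\ell(k)$ and that of an equioriented $\widetilde{\mathbb{A}}_\ell$ is $T_\ell(R)$ for $R=k[c]$ --- the case $\ell=2$ being the algebra of Example~\ref{example:hereditary}, and the general case following from an elementary computation that sends the arrows to the obvious matrix units and the product of all arrows of a component to the central variable $c$ --- and then takes the product over the connected components.

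For \eqref{le:ovr2} I would first observe that each $\sim$-class is attached to a well-defined vertex of $Q$: to $s(\alpha)$ if the class contains a symbol $s_\alpha$, and to $t(\alpha)$ if it contains $t_\alpha$. The description of the classes above makes this unambiguous, so $Q'_0$, together with $Q^*_0$, is partitioned along $Q_0$. I would then define $\iota$ on the generators of $kQ$ by sending $e_i$ to the sum of the idempotents $e_{[v]}$ over the vertices $[v]$ of $Q'$ attached to $i$, and each arrow $\alpha$ to the arrow $\alpha\in Q'_1=Q_1$. The images of the $e_i$ form a complete set of orthogonal idempotents of $B$, and $\alpha$ lands in the correct corner because $s'(\alpha)=[s_\alpha]$ is attached to $s(\alpha)$ and $t'(\alpha)=[t_\alpha]$ to $t(\alpha)$, so $\iota$ extends to a $k$-algebra homomorphism $kQ\to B$. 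For a length-two path $\beta\alpha$, the element $\iota(\beta\alpha)$ is the product $\beta\alpha$ computed in the path algebra $B$, which is nonzero precisely when $[t_\alpha]=[s_\beta]$, i.e.\ precisely when $\beta\alpha\notin I$; thus $\iota$ annihilates the degree-two generators of $I$ and descends to a homomorphism $\iota\colon A\to B$.

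To finish \eqref{le:ovr2} I would use that $B=kQ'$ carries no relations, so that its paths form a $k$-basis of $B$. Using the identity $[t_{\alpha_j}]=[s_{\alpha_{j+1}}]$ if and only if $s(\alpha_{j+1})=t(\alpha_j)$ and $\alpha_{j+1}\alpha_j\notin I$, I would check that the positive-length paths of $Q'$ correspond under $\iota$ bijectively to the positive-length paths of $Q$ that contain none of the degree-two generators of $I$. As the latter, together with the $e_i$, form a $k$-basis of $A$ while their images form a linearly independent family in $B$ (the idempotent sums having pairwise disjoint support and the path images being distinct basis elements), this would show $\iota$ is a monomorphism; and the same bijection would identify $J_A$, the span of the positive-length admissible paths of $Q$, with $J_B$, the span of the positive-length paths of $Q'$, giving $\iota(J_A)=J_B$.

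I expect the main obstacle to be the combinatorial step in the first paragraph, since that is exactly where the gentle conditions \ref{eq:Ge3}--\ref{eq:Ge4} enter: establishing cleanly that every $\sim$-class has at most two elements --- hence that $Q'$ decomposes into equioriented quivers of types $\mathbb{A}$ and $\widetilde{\mathbb{A}}$ --- is the crux of the whole lemma. Once that structure is available the remaining arguments are routine, the only genuine computation being the identification of the path algebra of an equioriented $\widetilde{\mathbb{A}}_\ell$ with $T_\ell(R)$.
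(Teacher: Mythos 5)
Your proposal is correct and follows essentially the same route as the paper, whose proof is far terser: it simply notes that \ref{eq:Ge3} and \ref{eq:Ge4} force each vertex of $Q'$ to have in- and out-degree at most one, and writes down the same map $\iota$ (with $e_i\mapsto\sum_{j\in Q'_0(i)}e_j$) without spelling out injectivity or $\iota(J_A)=J_B$. Your analysis of the equivalence classes of $\sim$, the basis comparison between $I$-avoiding paths of $Q$ and paths of $Q'$, and the identification of the cyclic components' path algebras with $T_\ell(R)$ are exactly the details the paper leaves implicit.
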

\begin{proof}
	\eqref{le:ovr1}
	Properties 
\ref{eq:Ge3} and \ref{eq:Ge4}
	 of $(Q,I)$ ensure that 
	there is at most one arrow starting and at most one arrow ending at each vertex in $Q'$. 
	
	\eqref{le:ovr2}
	The morphism $\iota$ in question is given by the unique $k$-algebra morphism with  
	$\alpha \mapsto {\alpha}$  for $\alpha \in Q_1$,
	$e_i \mapsto e_i$ for $i \in Q_0^*$, and	
	$e_i \mapsto \sum_{j \in Q'_0(i)} e_{j}$ 
	for $i \in Q_0 \backslash Q_0^*$,
	where
	\begin{align*}
		Q'_0(i) \colonequals \{[s_{\beta}] \mid \beta \in Q_1\colon s(\beta) = i \} \cup \{[t_{\alpha}] \mid \alpha \in Q_1\colon t(\alpha) = i \} 
	\end{align*}
	is a subset of $Q'_0$ with only one or two elements.
\end{proof}
The above construction can be reversed: given a union 
$Q'$ of equioriented quivers of type $\mathbb{A}$ or $\widetilde{\mathbb{A}}$, one may 
identify certain pairs of vertices adopting the relations of the arrows in $Q'$ to glue a gentle quiver $(Q,I)$.
In fact, any gentle quiver can be obtained in this way.

\subsection*{Completions of gentle algebras}
The dimension of the gentle algebra $A$ is
related to the following notion.
For any arrow  $\alpha \in Q_1$,
there is a unique maximal repetition-free path $p_{\alpha} = \alpha_n \ldots \alpha_1 \notin I$ beginning with $\alpha_1 \colonequals \alpha$ by 
\ref{eq:Ge3}.
In other words, all $n$ arrows of $p_{\alpha}$ are different, $p_{\alpha} \notin I$ and for any arrow $\beta \in Q_1$ with $s(\beta) = t(\alpha_n)$ it follows that $\beta = \alpha_1$ 
or $\beta \alpha_n \in I$.
We call $p_{\alpha}$ an \emph{admissible cycle} if it is a cyclic path with $\alpha_1 \alpha_n \notin I$.

Next, we consider certain completions of an infinite dimensional gentle algebra.
\begin{lemma}\label{le:global-gentle}
	Assume that the quiver $(Q,I)$ has an admissible cycle.
	The following statements hold.
	\begin{enumerate}
		\item \label{le:gg1} There is a canonical monomorphism $\iota\colon A \hookrightarrow B$ 
		of finite $R$-algebras.
		\item \label{le:gg2} For any $\fp \in \Spec R\backslash\{{\fm}\}$ 
		there is an isomorphism $\widehat{A}_{\fp} \cong 
		\prod_{v=1}^n
		\Mat_{\ell_v \times \ell_v}(\widehat{R}_{\fp})
		$
		of finite $\widehat{R}_{\fp}$-algebras, where $n, \ell_1, \ldots, \ell_n \geq 1$.
		\item \label{le:gg3} The ring $\widehat{A}_{\fm}$ is a finite $\widehat{R}_{\fm}$-algebra
		which is isomorphic to the completion of the path algebra $A$ with respect to its arrow ideal $J_A$ and the $Q(\widehat{R}_{\fm})$-algebra $\widehat{A}_{\fm} \otimes_{\widehat{R}_{\fm}}Q(\widehat{R}_{\fm})$ is semisimple.
	\end{enumerate}
\end{lemma}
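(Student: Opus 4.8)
The plan is to equip $A$ with an $R$-algebra structure turning the monomorphism $\iota\colon A\hookrightarrow B$ of Lemma~\ref{le:ovr}\eqref{le:ovr2} into a morphism of finite $R$-algebras, and then to obtain \eqref{le:gg2} and \eqref{le:gg3} by localising, respectively completing, at the two types of prime ideal of $R=k[c]$. For \eqref{le:gg1}, set $c_A\colonequals\sum_{\alpha}p_{\alpha}\in A$, the sum over all arrows $\alpha$ of $Q$ for which $p_{\alpha}$ is an admissible cycle; the hypothesis guarantees $c_A\neq 0$. The crucial, and only genuinely combinatorial, step is to verify that $\iota(c_A)$ equals the image of the indeterminate $c$ under the $R$-algebra structure of $B\cong\prod_{v=1}^{m}T_{\ell_v}(k)\times\prod_{v=m+1}^{n}T_{\ell_v}(R)$ from Lemma~\ref{le:ovr}\eqref{le:ovr1}: each such $p_{\alpha}=\alpha_n\cdots\alpha_1$ maps under $\iota$ to a genuine path of $Q'$, since no consecutive pair of its arrows lies in $I$, and this path closes up to the full cycle at the vertex $[s_{\alpha_1}]$ of $Q'$ precisely because $\alpha_1\alpha_n\notin I$; as $\alpha$ ranges over the relevant arrows, $[s_{\alpha_1}]$ ranges bijectively over the vertices of the cyclic components of $Q'$, so $\iota(c_A)$ is the sum of the full cycles over all cyclic components, which is exactly the element of $B$ acting as $0$ on the factors $T_{\ell_v}(k)$ and as $c\cdot\id$ on each factor $T_{\ell_v}(R)$. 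Granting this: that element is central in $B$, hence $c_A$ is central in $A$ by injectivity of $\iota$; the powers $\iota(c_A^j)=c^j$ are linearly independent in $B$, so $k[c_A]\subseteq Z(A)$ is a polynomial ring, which we identify with $R$, making $\iota$ an $R$-linear homomorphism. Since $B$ is a finite $R$-module by Lemma~\ref{le:ovr}\eqref{le:ovr1} and $\iota$ identifies $A$ with an $R$-submodule of $B$ over the noetherian ring $R$, the algebra $A$ is finite over $R$, proving \eqref{le:gg1}.

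For \eqref{le:gg2}: both $c_A\in J_A$ and $\iota(c_A)=c\in J_B$ lie in the respective arrow ideals, so $J_A$ and $J_B$ become unit ideals after inverting $c$; as $\iota$ restricts, by Lemma~\ref{le:ovr}\eqref{le:ovr2} together with the $R$-linearity just established, to an $R$-linear isomorphism $J_A\iso J_B$, it follows that $\iota$ induces an isomorphism $A[c^{-1}]\iso B[c^{-1}]$. In $B[c^{-1}]$ the factors $T_{\ell_v}(k)$ vanish, being annihilated by $c$, while $T_{\ell_v}(R)[c^{-1}]=\Mat_{\ell_v\times\ell_v}(R[c^{-1}])$ because inverting $c$ removes the restriction on the entries above the diagonal; hence
\[
A[c^{-1}]\;\cong\;B[c^{-1}]\;\cong\;\prod_{v=m+1}^{n}\Mat_{\ell_v\times\ell_v}\!\bigl(R[c^{-1}]\bigr),
\]
a product with at least one factor since $(Q,I)$ has an admissible cycle. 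For a prime $\fp\ne\fm$ one has $c\notin\fp$, so $R\to\widehat{R}_{\fp}$ factors through $R[c^{-1}]$; tensoring the display over $R[c^{-1}]$ with $\widehat{R}_{\fp}$ gives $\widehat{A}_{\fp}\cong\prod_{v}\Mat_{\ell_v\times\ell_v}(\widehat{R}_{\fp})$, which is \eqref{le:gg2}.

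For \eqref{le:gg3}: by \eqref{le:gg1} the ring $\widehat{A}_{\fm}=A\otimes_R\widehat{R}_{\fm}$ is a finite $\widehat{R}_{\fm}$-algebra, and since $A$ is a finite $R$-module it equals the $(c_A)$-adic completion $\varprojlim_{n} A/c_A^{n}A$ of $A$. Now $A/c_AA\cong A\otimes_Rk$ is a finite-dimensional quotient of the path algebra $kQ$, so the image of the arrow ideal $J_A$ in it is nilpotent, whence $J_A^N\subseteq c_AA$ for some $N$; together with $c_A\in J_A$ this shows that the $(c_A)$-adic and $J_A$-adic filtrations of $A$ are mutually cofinal, so $\widehat{A}_{\fm}$ coincides with the completion of the path algebra $A$ at its arrow ideal. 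Finally, using the isomorphism $A[c^{-1}]\cong\prod_v\Mat_{\ell_v\times\ell_v}(R[c^{-1}])$ from the previous paragraph,
\[
\widehat{A}_{\fm}\otimes_{\widehat{R}_{\fm}}Q(\widehat{R}_{\fm})\;=\;A\otimes_{k[c]}k((c))\;\cong\;\prod_v\Mat_{\ell_v\times\ell_v}\!\bigl(k((c))\bigr),
\]
a finite product of matrix algebras over a field, hence semisimple; this proves \eqref{le:gg3}. The only step requiring real care is the identification $\iota(c_A)=c$ in the first paragraph, which bridges the combinatorics of admissible cycles and the ring structure of $B$; everything else follows formally from Lemma~\ref{le:ovr} and standard facts.
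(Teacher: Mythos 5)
Your proposal follows the paper's own proof very closely: the $R$-algebra structure defined by sending $c$ to the sum of all admissible cycles, its transport along $\iota$ to $B$ using Lemma~\ref{le:ovr} (the paper expresses your inversion argument via the containment $cB\subseteq\iota(A)$), and the semisimplicity of $\widehat{A}_{\fm}\otimes_{\widehat{R}_{\fm}}Q(\widehat{R}_{\fm})$ obtained by comparing $A$ and $B$ after inverting $c$ are all exactly the paper's steps. Parts (1) and (2), and the semisimplicity claim in (3), are complete and correct as you wrote them.

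There is, however, one genuine gap, in your identification of $\widehat{A}_{\fm}$ with the $J_A$-adic completion: you deduce $J_A^{N}\subseteq c_AA$ from the principle that in a finite-dimensional quotient of a path algebra the image of the arrow ideal is nilpotent. That principle is false. For the one-loop quiver, $k[x]/(x^2-x^3)$ is a three-dimensional quotient of the path algebra $k[x]$, the quotient ideal even lies in the square of the arrow ideal, yet the image of $x$ satisfies $x^{n}=x^{2}\neq 0$ for all $n\geq 2$, so it is not nilpotent; the image of the arrow ideal is the radical (hence nilpotent) only when the quotient ideal is admissible, and admissibility of $I+(c_A)$ is precisely the containment you are trying to prove. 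The containment $J_A^{N}\subseteq c_AA$ is true, but it needs the gentle combinatorics (the paper also merely asserts ``$J_A^n\subseteq A\fm\subseteq J_A$ for certain $n\geq 1$''): by (Ge3), a path $\beta_m\cdots\beta_1\notin I$ is determined by $\beta_1$, since $\beta_{i+1}$ is the unique arrow composing with $\beta_i$ outside $I$; this sequence of arrows is therefore eventually periodic, its period being an admissible cycle, and once $m$ exceeds $|Q_1|$ plus the maximal length of an admissible cycle the leftmost $\ell$ arrows form a full admissible cycle $c_{\beta_{m-\ell+1}}$, whence $\beta_m\cdots\beta_1=c_A\cdot(\beta_{m-\ell}\cdots\beta_1)$ by the uniqueness in (Ge3). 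Thus every nonzero path of sufficiently large length lies in $c_AA$, which gives $J_A^{N}\subseteq c_AA$; with this substituted for the nilpotency claim, your proof of (3), and hence the whole argument, is correct and coincides with the paper's.
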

\begin{proof}
	\eqref{le:gg1}
	Identifying $c$ in $R = k[c]$ with the sum of all admissible cycles in $(Q,I)$ yields an $R$-algebra map $\phi\colon R \to A$.
	The composition $\iota \phi$ with the embedding $\iota$ from Lemma~\ref{le:ovr}~\eqref{le:ovr2} 
	maps $c$ to the sum of all repetition-free cyclic paths in $Q'$ and  
	endows the overring $B$ with the structure of a finite $R$-algebra.
	Viewing $\iota\colon A \hookrightarrow B$ as monomorphism of $R$-modules, it follows that $A$ is a finite $R$-algebra as well.
	
	\eqref{le:gg2}
	Let $\fp \in {\Spec R}\backslash\{\fm\}$.
	It holds that $c B \subseteq \iota(A)$ and $c \, T_{\ell_v}(k) = 0$ for any ring factor $T_{\ell_v}(k)$ of $B$ as well as $c \notin \fp$. 
	These observations imply that $\iota_{\fp}(A_{\fp}) = B_{\fp} \cong \prod_{v=1}^n (T_{\ell_v}(R))_{\fp}
	\cong \prod_{v=1}^n \Mat_{\ell_v \times \ell_v}(R_{\fp})$
	with the notation of Lemma~\ref{le:ovr}~\eqref{le:ovr1}. Taking completions
	yields statement \eqref{le:gg2}.
	
	\eqref{le:gg3} The map $\iota$ gives rise to a monomorphism of algebras $\widehat{\iota}_{\fm}\colon \widehat{A}_{\fm} \hookrightarrow \widehat{B}_{\fm}$ 
	over the ring $\widehat{R}_{\fm} \cong k \llbracket c \rrbracket$ of formal power series.
	Since $\widehat{\iota}_{\fm}$ has finite dimensional cokernel, 
	$\widehat{A}_{\fm} \otimes_{k\llbracket c \rrbracket} k (\!(c)\!)  \cong \widehat{B}_{\fm} \otimes_{k \llbracket c \rrbracket} k (\!(c)\!)$ is a semisimple $k (\!(c)\!)$-algebra. 
	Since $J_A^n \subseteq A\fm  \subseteq J_A$ for certain $n \geq 1$, it follows that $\widehat{A}_{\fm}$ is $J_A$-adically complete.
\end{proof}
The last lemma shows that an infinite dimensional gentle algebra $A$ has only one interesting completion at a prime ideal of its central subring $R$.
A simpler analogue of the last lemma holds for finite dimensional gentle algebras, in which the ring $R$ has to be replaced by the field $k$ and $\widehat{A}_{\fm}$ can be identified with the algebra $A$ itself.

\subsection*{Finite projective gentle algebras}

At this point, we can clarify when the gentle algebra $A$ 
satisfies property~\ref{de:gor-algebra}~\eqref{Gorenstein-fp}
of a Gorenstein $R$-algebra over the ring $R = k[c]$.
\begin{proposition} \label{pr:fp} With the notation above, the following are equivalent.
	\begin{enumerate}
		\item \label{pr:fp1} $A$ admits the structure of a finite projective $R$-algebra;
		\item \label{pr:fp2} $B$ admits the structure of a finite projective $R$-algebra;
		\item \label{pr:fp3} Each arrow of $Q'$ lies on a cyclic path;
		\item \label{pr:fp4} Each arrow of $Q$ lies on an admissible cycle.
	\end{enumerate}
\end{proposition}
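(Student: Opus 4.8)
The plan is to prove the cycle of implications $\eqref{pr:fp1}\Rightarrow\eqref{pr:fp4}\Rightarrow\eqref{pr:fp3}\Rightarrow\eqref{pr:fp2}\Rightarrow\eqref{pr:fp1}$, the main inputs being the structure theory of $B=kQ'$ from Lemma~\ref{le:ovr} and the monomorphism $\iota\colon A\hookrightarrow B$ of finite $R$-algebras from Lemma~\ref{le:global-gentle}\eqref{le:gg1}. Throughout one equips $A$ and $B$ with their canonical $R$-algebra structures, in which $c$ acts as the sum of all admissible cycles of $(Q,I)$, respectively of $Q'$; conditions~\eqref{pr:fp1} and~\eqref{pr:fp2} are understood with respect to these structures. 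First I would make two harmless reductions: if $Q$ has no arrow then $A=B=0$ and everything is trivial, while if $Q$ has an arrow but $(Q,I)$ admits no admissible cycle then $c=0$ and the nonzero rings $A,B$ are annihilated by $c$, so none of the four conditions holds; hence I may assume an admissible cycle exists. Since $Q$, and therefore $Q'$ (as $Q_1'=Q_1$), has no isolated vertex --- this is part of the list of admissible local shapes~\eqref{eq:vertices} --- Lemma~\ref{le:ovr}\eqref{le:ovr1} shows that every connected component of $Q'$ is an equioriented quiver either of type $\mathbb A$ on at least two vertices or of type $\widetilde{\mathbb A}$. The single elementary fact used throughout is that over the principal ideal domain $R=k[c]$ a finite module is projective if and only if it is torsion-free.

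For $\eqref{pr:fp4}\Leftrightarrow\eqref{pr:fp3}$ one unwinds the construction of $Q'$: since $s'(\beta)=t'(\alpha)$ holds exactly when $s(\beta)=t(\alpha)$ and $\beta\alpha\notin I$, a once-around traversal of a cyclic component of $Q'$ is the same combinatorial datum as an admissible cycle of $(Q,I)$ through any of its arrows, so an arrow lies on a cyclic path of $Q'$ precisely when it lies on an admissible cycle of $Q$. For $\eqref{pr:fp3}\Rightarrow\eqref{pr:fp2}$: condition~\eqref{pr:fp3} forces every component of $Q'$ to be of type $\widetilde{\mathbb A}$, so no factor $T_{\ell_v}(k)$ occurs in the presentation of Lemma~\ref{le:ovr}\eqref{le:ovr1} and $B\cong\prod_v T_{\ell_v}(R)$ is a finite free $R$-module. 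For $\eqref{pr:fp2}\Rightarrow\eqref{pr:fp1}$: if $B$ is finite projective over $R$ it is torsion-free over the domain $R$, and then the $R$-submodule $\iota(A)\cong A$ of the finite $R$-module $B$ is torsion-free and finite over the noetherian ring $R$, hence free over $R$.

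The substantial implication is $\eqref{pr:fp1}\Rightarrow\eqref{pr:fp4}$, which I would prove contrapositively. Suppose some arrow $\alpha\in Q_1$ lies on no admissible cycle; by the equivalence just proved, the component of $Q'$ containing $\alpha$ is of type $\mathbb A$, say $w_1\to w_2\to\dots\to w_\ell$ with $\ell\ge 2$, and $w_\ell$ has no outgoing arrow in $Q'$. Let $p$ be the path of $Q$ obtained by reading off the arrows of this component from $w_1$ to $w_\ell$; by construction no length-two subpath of $p$ lies in $I$, so $p\neq 0$ in $A$. Because $w_\ell$ has no outgoing arrow in $Q'$, there is no arrow $\gamma\in Q_1$ with $s(\gamma)=t(p)$ and $\gamma\cdot(\text{last arrow of }p)\notin I$; consequently, for every admissible cycle $q$ the product $qp$ vanishes in $A$ --- it is zero already when $s(q)\neq t(p)$, and otherwise it contains a length-two subpath lying in $I$ at the junction of $q$ with $p$. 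Hence $c\cdot p=0$ with $p\neq 0$, so $A$ has nonzero $R$-torsion and is not projective over $R$, contradicting~\eqref{pr:fp1}.

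The main obstacle is exactly this last step: extracting, from the purely combinatorial failure of~\eqref{pr:fp4}, an explicit nonzero element of $A$ annihilated by the central parameter $c$ --- this is where the gentle conditions~\ref{eq:Ge3} and~\ref{eq:Ge4}, via the shape of $Q'$, are genuinely used. Everything else is bookkeeping with Lemmas~\ref{le:ovr} and~\ref{le:global-gentle} and the module theory of $k[c]$.
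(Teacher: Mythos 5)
Your cycle of implications and the combinatorial core (reading off from the type-$\mathbb{A}$ component of $Q'$ a nonzero path $p$ that is killed by every admissible cycle) are sound, but there is a genuine gap created at the very start: you decree that conditions \eqref{pr:fp1} and \eqref{pr:fp2} ``are understood with respect to'' the canonical structure maps sending $c$ to the sum of admissible cycles. The statement, however, is existential: $A$ (resp.\ $B$) \emph{admits} the structure of a finite projective $R$-algebra, i.e.\ there exists \emph{some} homomorphism $R\to Z(A)$ making $A$ finite projective; this is also how the phrase is used in Example~\ref{example:hereditary} (``$B$ cannot admit the structure of a finite projective $R$-algebra''). With your reading, the only implication that is genuinely at stake, \eqref{pr:fp1}$\Rightarrow$\eqref{pr:fp4}, is proved in a strictly weaker form: exhibiting $p\neq 0$ with $c\cdot p=0$ for the \emph{canonical} action only rules out that particular structure, not a hypothetical exotic one. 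The paper's proof is arranged precisely to avoid this: from an \emph{arbitrary} finite projective structure on $A$ it deduces that $Q_0^*$ is empty, that $J_B=\iota(J_A)$ is torsion-free for that structure, hence that $B$ is torsion-free and has no ring factor $T_{\ell_v}(k)$, which forces all components of $Q'$ to be cyclic; the remaining directions (\eqref{pr:fp3}$\Leftrightarrow$\eqref{pr:fp4} by the definition of $Q'$, and \eqref{pr:fp3}$\Rightarrow$\eqref{pr:fp1},\eqref{pr:fp2} by revisiting Lemma~\ref{le:global-gentle}\eqref{le:gg1}) agree in substance with yours.

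The gap is repairable using your own construction, with one extra observation. Since the terminal vertex $w_\ell$ has no outgoing arrow in $Q'$, you in fact showed $\gamma p=0$ for \emph{every} arrow $\gamma$ of $Q$, so $Ap=kp$ is a nonzero finite-dimensional left ideal of $A$. For any structure map $\phi\colon k[c]\to Z(A)$, a left ideal is a $k[c]$-submodule, and a finitely generated projective (hence free) $k[c]$-module contains no nonzero finite-dimensional submodule; hence no structure whatsoever makes $A$ finite projective, which is \eqref{pr:fp1}$\Rightarrow$\eqref{pr:fp4} in the intended sense. The same remark is needed in your degenerate case ``$Q$ has an arrow but no admissible cycle'': there $A$ is finite dimensional over $k$, so it admits no finite projective $k[c]$-structure at all, not merely none with $c$ acting by $0$. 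Finally, note that your exclusion of isolated vertices rests on the pictorial list \eqref{eq:vertices}; the paper instead derives $Q_0^*=\emptyset$ from \eqref{pr:fp1}, which is the safer route given that \ref{eq:Ge1}--\ref{eq:Ge4} alone do not forbid isolated vertices (and with an isolated vertex present, \eqref{pr:fp3} and \eqref{pr:fp4} would hold vacuously while \eqref{pr:fp1} fails).
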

\begin{proof} 
	\eqref{pr:fp1}$\Rightarrow$\eqref{pr:fp2} 
	Because of~\eqref{pr:fp1}, $Q_0^*$ is empty and $A$ has no ring factor isomorphic to $k$.
	Together with Lemma~\ref{le:ovr}~\eqref{le:ovr2}
	assumption~\eqref{pr:fp1} implies that $B$ can be viewed as a finite $R$-module such that its arrow ideal $J \colonequals J_B$ is torsion-free as an $R$-module.
	Since $J \, {\tor}_R (B) + {\tor}_R(B)  \, J \subseteq {\tor}_R(J) = 0$ and $B$ has no simple ring factor, 
	$B$ must be also torsion-free, and thus projective over $R$.
	Note that a triangular matrix ring of the form $T_{\ell_v}(k)$ cannot be endowed with the structure of a projective $R$-module.
	Therefore, each indecomposable ring factor of $B$ is isomorphic to a matrix ring
	of the form $T_{\ell_v}(R)$, which is equivalent to property~\eqref{pr:fp2}.
	
	\eqref{pr:fp2}$\Rightarrow$\eqref{pr:fp3}
	follows from the just mentioned characterisation of \eqref{pr:fp2}.
	
	\eqref{pr:fp3}$\Leftrightarrow$\eqref{pr:fp4} holds by
	the definition of the quiver $Q'$. 
	
	\eqref{pr:fp3}$\Rightarrow$\eqref{pr:fp1}
	Because of \eqref{pr:fp3}, every ring factor of $B$ is given by a matrix ring of the form $T_{\ell_v}(R)$.
	We may repeat the proof of 
	Lemma~\ref{le:global-gentle}~\eqref{le:gg1}
	adding that $\iota \phi$ endows the ring $B$ with the structure of a finite projective $R$-algebra.
	Thus its $R$-subalgebra $A$ is finite and projective as $R$-module as well.
\end{proof}

\subsection*{Gentle algebras are Iwanaga--Gorenstein} This was proved by Gei{\ss} and Reiten~\cite{Geiss--Reiten:2005} when the algebra is also finite dimensional; the general case is handled in \cite[Proposition 6.2.9]{Krause:2021}. In fact, the injective dimension can be computed explicitly.

\begin{chunk}\label{ch:differential}
	Koszul duality suggests a concept analogous to a maximal repetition-free path without relations.
	Namely, for any arrow $\alpha \in Q_1$ \ref{eq:Ge3} implies that there is a unique path
	$d_{\alpha} =  \alpha_n \ldots \alpha_2\, \alpha_1$ of pairwise distinct arrows
	beginning with  $\alpha_1 \colonequals \alpha$
	such that
	$s(\alpha_{i+1}) = t(\alpha_i)$ and
	$\alpha_{i+1} \alpha_i \in I$ for any $1 \leq i < n$, and for any arrow $\beta \in Q_1$ with $s(\beta) = t(\alpha_n)$ it follows that $\beta = \alpha_1$ or $\beta \alpha_n \notin I$. 
	We call $d_{\alpha}$
	a \emph{differential cycle} if it is a cyclic path with $\alpha_1 \alpha_n \in I$,  
	and a \emph{differential walk}
	otherwise.
\end{chunk}

We define an integer $w(A)$ for the gentle algebra $A = kQ/I$ as follows.
Set $w(A)$ to be equal to the maximum of all lengths of all differential walks in $(Q,I)$ if there is at least one differential walk, 
$w(A) \colonequals 0$ if each indecomposable ring factor of $A$ is isomorphic to $k$
or to the path algebra of a finite quiver of equioriented type $\widetilde{\mathbb{A}}$ modulo the square of its arrow ideal,
and $w(A) \colonequals 1$ in the remaining cases.
\begin{proposition}\label{pr:gentle-gorenstein}
	The gentle algebra $A$ is Iwanaga--Gorenstein. 
	Its injective dimension is given by $w(A)$.\qed
\end{proposition}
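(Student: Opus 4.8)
The plan is to combine the Iwanaga--Gorenstein property recalled above — the source of finiteness, and the ingredient that tames an a priori periodic phenomenon — with the Koszulity of $A$, translating the combinatorics of $(Q,I)$ into that of the Koszul dual $A^!\cong kQ^{\mathrm{op}}/\langle P^{\mathrm{op}}_-\rangle$. First I would reduce to the case that $Q$ is connected: a product decomposition $A\cong\prod_v A_v$ gives $\injdim A=\max_v\injdim A_v$, and $w(A)=\max_v w(A_v)$ by the very definition of $w$. For connected $A$ one disposes at once of the case $w(A)=0$: it occurs exactly when $A\cong k$ or $A$ is the path algebra of an equioriented quiver of type $\widetilde{\mathbb{A}}$ modulo the square of its arrow ideal, and these are precisely the connected self-injective gentle algebras, for which $\injdim A=0=w(A)$. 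In all remaining connected cases $A$ is not self-injective, so $\injdim A\ge 1$ and $w(A)\ge 1$, and it remains to prove equality.

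Since $A$ is Iwanaga--Gorenstein, $g\colonequals\injdim A=\injdim\op A$ is finite. Grade $A$ by path length and let $A_0$ be its semisimple degree-zero part; the injective dimension $g$ agrees with the graded injective dimension, hence $g=\sup\{\,n\ge 0\mid\Ext^{n}_A(A_0,A)\ne 0\,\}$. As $A$ is Koszul (Green--Zacharia), the minimal graded projective resolution of $A_0$ is linear with $n$-th term $A\otimes_{A_0}(A^!_n)^{*}$, so applying $\Hom_A(-,A)$ identifies $\Ext^{*}_A(A_0,A)$ with the cohomology of the Koszul complex $(A^!\otimes_{A_0}A,\partial)$, and $g$ is its top nonzero cohomological degree. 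The combinatorial heart of the proof is to read this degree off from $(Q,I)$. Under the bijection $\gamma\leftrightarrow\gamma^-$ a composite $\beta\alpha$ lies in $I$ exactly when $\alpha^-\beta^-$ avoids the defining relations of $A^!$; hence the differential paths $d_\gamma$ of \ref{ch:differential} index a basis of $A^!$, and a differential \emph{walk} of $A$ corresponds to such a non-relation path of $A^!$ that is not a cycle. One then shows that a differential walk of length $\ell$ contributes a nonvanishing class to $H^{\ell}$ and nothing in higher degrees, whereas a differential \emph{cycle} produces an a priori eventually $2$-periodic tail of the Koszul complex; the finiteness of $g$ forces this tail to be acyclic beyond degree $1$, which is exactly why the cases without a differential walk yield $g\in\{0,1\}$. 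Taking the maximum over all $d_\gamma$ gives $g=w(A)$.

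For an infinite-dimensional gentle algebra the preceding argument applies verbatim, but it is reassuring to cross-check it against the structural results of Section~\ref{se:gentle-algebras} when $A$ is moreover a finite projective $R$-algebra over $R=k[c]$: by Lemma~\ref{le:global-gentle} the fibres $\fibre A\fp$ for $\fp\ne\fm$ are products of matrix rings over a field, contributing only the bound $\injdim R_\fp=1$ that is already subsumed in $w(A)\ge 1$, while the completion $\widehat{A}_{\fm}$ is the arrow-adic completion of the path algebra, whose string combinatorics — and hence the relevant part of the Koszul complex — is the same as in the finite-dimensional case; the embedding $\iota\colon A\hookrightarrow B$ of Lemma~\ref{le:ovr} into a product of hereditary path algebras of types $\mathbb{A}$ and $\widetilde{\mathbb{A}}$ offers an alternative handle on the syzygies involved.

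The main obstacle is the combinatorial bookkeeping underlying the second paragraph: one must track how the local conditions \ref{eq:Ge1}--\ref{eq:Ge4} make the Koszul differential consume each maximal relation leg exactly one arrow at a time, and — most delicately — one must isolate the contribution of differential cycles so that it accounts only for the exceptional values $g\in\{0,1\}$ rather than for unbounded cohomology. It is precisely here that knowing a priori that $A$ is Iwanaga--Gorenstein is indispensable: it guarantees that the otherwise merely eventually-periodic Koszul complex has bounded cohomology, and hence that the two degenerate families in the definition of $w$ exhaust the possibilities without a differential walk.
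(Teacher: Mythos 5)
The paper does not actually prove this proposition: the Iwanaga--Gorenstein property is quoted from Gei{\ss}--Reiten and from \cite[Proposition 6.2.9]{Krause:2021}, and the formula $\injdim A=w(A)$ is stated with no argument, so there is no proof of record to compare yours with. Judged on its own merits, your Koszul-dual strategy is a reasonable and genuinely different route (Gei{\ss}--Reiten argue via the string combinatorics of injective coresolutions of the projectives), and your test cases come out right; but as written it has real gaps. The central one is the step ``the finiteness of $g$ forces this tail to be acyclic beyond degree $1$''. Finiteness of $g$ only gives vanishing of $\Ext^{n}_A(A_0,A)$ for $n\gg 0$; an \emph{eventually} $2$-periodic complex with bounded cohomology is exact only from wherever the periodicity starts, not from degree $2$. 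To conclude that differential cycles contribute nothing in degrees $\ge 2$ (which you need both for the no-walk case $w(A)=1$ and to ensure cycles never push $g$ above the maximal walk length), you must actually carry out the computation of the cycle part of the complex and show it is exactly periodic/exact there --- and once that computation is done, the appeal to the cited Iwanaga--Gorenstein property is superfluous. In other words, the ``combinatorial bookkeeping'' you defer is precisely the content of the proposition; the same applies to the unproved assertion that a differential walk of length $\ell$ gives a nonzero class in degree $\ell$ and nothing higher, and to the claim that the $w(A)=0$ list is exactly the connected self-injective gentle algebras (its ``only if'' direction is again the statement $\Ext^1_A(A_0,A)\neq 0$ for every other no-walk algebra, so it cannot be settled independently of the main computation).

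A second gap concerns the reduction $\injdim A=\sup\{n\mid \Ext^n_A(A_0,A)\neq 0\}$. This is standard for finite dimensional (or complete semiperfect) algebras, but the proposition also covers infinite dimensional gentle algebras such as $k\langle x,y\rangle/(x^2,y^2)$ or $k[c]$, which have many simple modules besides the graded ones; testing injective dimension against $A_0$ alone, and identifying graded with ungraded injective dimension, requires an argument you do not supply (this difficulty is one reason the paper later passes to arrow-ideal completions). Finally, note that the first assertion of the proposition --- that $A$ is Iwanaga--Gorenstein --- is not proved in your sketch but imported from the literature; that is consistent with what the paper itself does, but it should be stated as such rather than as an ingredient your argument ``indispensably'' needs, since relying on it to bound the cycle contribution is exactly the invalid inference above.
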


\begin{remark}
	The only obstacle of the gentle algebra $A$ being a Gorenstein  $R$-algebra is that $A$ may not be projective as an $R$-module.
	Crawley-Boevey pointed out that this happens for the
	path algebra of the quiver $(Q^*,I^*)$ in Example~\ref{example:hereditary}.
	This again suggests that one should relax condition  \eqref{eq:fp} in \ref{de:gor-algebra} to allow $A$ to have finite projective  dimension over $R$; see the discussion on the work of Buchweitz at the end of Section~\ref{se:Gorenstein-algebras}.
\end{remark}
In the remainder of this paper we will focus on arrow ideal completions of infinite dimensional gentle Gorenstein algebras, which will be called \emph{gentle Gorenstein algebras} for brevity.

\section{Serre duality for gentle Gorenstein algebras}
\label{se:serre-gentle}

From now on, let $(Q,I)$ be a gentle quiver such that each arrow lies on an admissible cycle.
Equivalently, $Q$ is a finite quiver, 
the ideal $I$ is generated by paths of length two and each vertex is 
either
\emph{2-regular} with gentle relations
or a \emph{transition vertex}:
\[\
\begin{tikzcd}[row sep=scriptsize, column sep=scriptsize, cells={outer sep=1pt, inner sep=1pt}, ampersand replacement=\&] 
	\phantom{\circ} \arrow[rd, ""{name=A, inner sep=0, near end}] \& \& \phantom{\circ} \\
	\& \bullet \arrow[ld, ""{name=B, inner sep=0, near start}]
	\arrow[ru, ""{name=D, inner sep=0, near start}] \& \\
	\phantom{\circ} \& \& \phantom{\circ} \arrow[lu, ""{name=C, inner sep=0, near end}]
	\arrow[densely dotted, dash, bend right=60, from=A, to=B] \arrow[densely dotted, dash, bend right=60, from=C, to=D]
\end{tikzcd}
\qquad \qquad
\begin{tikzcd}[row sep=scriptsize, column sep=scriptsize, cells={outer sep=1pt, inner sep=1pt}] 
	\phantom{\circ} 
	& & \phantom{\circ} \\
	\phantom{\circ} \arrow[r, ""{inner sep=0, near end}] & \circ \arrow[r, ""{ inner sep=0, near start}] & \phantom{\circ}  \\
	\phantom{\circ}
	& & \phantom{\circ} 
\end{tikzcd}
\]
In contrast to the previous section,
we denote  by $R$ the ring $k \llbracket c \rrbracket$ of formal power series
and by $A$ the completion of the
infinite dimensional path algebra $k Q/I$ with respect to its arrow ideal.
According to Lemma~\ref{le:global-gentle} \eqref{le:gg3} and
Proposition~\ref{pr:fp}, the ring $A$ is a gentle Gorenstein $R$-algebra.

The purpose of this section is to give an explicit description of the dualising bimodule $\omega_{A/R} \colonequals \Hom_R(A,R)$. 
Its computation
is similar in spirit to work of Ladkani \cite{Ladkani:2012}
and has been carried out in \cite{Gnedin:2019} for the case that the quiver $Q$ has $2$-regular vertices only.
The description of the bimodule $\omega_{A/R}$ requires some notation, which will not be relevant in the next section.
The main 
conclusion of this description is summarised in Corollary~\ref{corollary:Nakayama-functor}.

\begin{chunk}
	For any arrow $\alpha \in Q_1$ there is a unique arrow $\sigma(\alpha) \in Q_1$ such that $\sigma(a) a \notin I$.
	Moreover, $\alpha$ is the beginning of a unique admissible cycle $c_\alpha$ of a certain length $\ell_{\alpha} \geq 1$. 
	We may view the ring $A$ as a Gorenstein $R$-algebra via the structure map
	\begin{align}
		\label{eq:structure-map}
		R = k \llbracket c \rrbracket \longrightarrow A, \quad c\longmapsto \sum_{\alpha \in Q_1} c_\alpha
	\end{align}
	For any integer $1 \leq n < \ell_{\alpha}$ we denote by $\alpha_n$ 
	the unique path of length $n$ which begins with the arrow $\alpha$ and is not contained in the ideal $I$, and by 
	$c_\alpha \partial^n$ the unique path such that
	$(c_\alpha\partial^n)  \alpha_n = c_\alpha$. The path $c_\alpha \partial^n$ can also be obtained by deleting the first $n$ arrows in the path $c_\alpha$. When using these notation, we assume implicitly that $\alpha \neq c_{\alpha}$.
	Let $Q^r_0$ and $Q^t_0$ denote the set of $2$-regular vertices and
	the set of transition vertices, respectively.
	Let $Q^t_1$ be the set of all arrows starting in transition vertices.
	
	To fix an $R$-linear basis of $A$, we choose a map $\varepsilon\colon Q_1 \to \{-1,1\}$, $\alpha \mapsto \varepsilon_{\alpha} \colonequals \varepsilon(\alpha)$, such that $\varepsilon_{\alpha} \neq \varepsilon_{\beta}$ for any distinct arrows $\alpha, \beta$ with $s(\alpha) = s(\beta)$, and $\varepsilon_{\gamma} =  -1$ for any arrow $\gamma \in Q^t_1$.
	Finally, let $Q_1^+$ denote the set of all arrows $\alpha$ with $\varepsilon_{\alpha} = 1$.
\end{chunk}
\begin{example}\label{example:prototype}
	The following quiver is glued from cycles of lengths two and nine:
	\[
	\begin{tikzcd}[row sep=0.6cm, column sep=1.25cm, cells={outer sep=2pt, inner sep=1pt}] 
		&& 
		\underset{3}{\circ} \ar[r, "\alpha(3)", ""{name=A3, inner sep=0, pos=0.6}] & \underset{4}{\bullet} \ar[xshift=-3pt,	"\alpha(11)"{swap}, 
		""{name=B1s, inner sep=0, near start}, ""{name=B1t, inner sep=0, near end}]{dd} \ar[rd, "\alpha(4)", ""{name=A4s, inner sep=0, near start}, ""{name=A4t, inner sep=0, near end}] & 
		\\
		\underset{1}{\circ} \ar[r, yshift=3pt, "\alpha(1)",""{name=A1, inner sep=0, near end}] & 
		\underset{2}{\bullet} \ar[ru, "\alpha(2)",""{name=A2, inner sep=0, near start}] 
		\ar[l, "\alpha(9)", yshift=-3pt, ""{name=A9, inner sep=0, near start}] 
		&&& 
		\underset{5}{\bullet} 
		\ar[ld, "\alpha(6)",""{name=A6s, inner sep=0, near start}, ""{name=A6t, inner sep=0, near end}] 
		\ar[looseness=6, out=30, in=-30, "\alpha(5)", ""{name=A5s, inner sep=0, near start}, ""{name=A5t, inner sep=0, near end}]
		\\
		&& 
		\underset{7}{\circ} \ar[lu, "\alpha(8)",""{name=A8, inner sep=0, near end}] & 
		\underset{6}{\bullet} \ar[l, "\alpha(7)"{yshift=-2pt},""{name=A7, inner sep=0, pos=0.4}] \ar[xshift=3pt, "\alpha(10)"{swap},""{name=B2s, inner sep=0, near start}, ""{name=B2t, inner sep=0, near end}]{uu} & 
		\arrow[densely dotted, dash, bend right=45, from=A1, to=A9]
		\arrow[densely dotted, dash, bend left=45, from=A2, to=A8]
		\arrow[densely dotted, dash, bend right=45, from=A3, to=B1s]
		\arrow[densely dotted, dash, bend right=45, from=B1t, to=A7]
		\arrow[densely dotted, dash, bend right=45, from=B2t, to=A4s]
		\arrow[densely dotted, dash, bend right=45, from=A4t, to=A6s]
		\arrow[densely dotted, dash, bend right=45, from=A6t, to=B2s]
		\arrow[densely dotted, dash, bend right=45, from=A5s, to=A5t]
	\end{tikzcd}
	\]
	For this quiver, we may choose $\varepsilon_{\alpha(i)} \colonequals  (-1)^i$ for $i \neq 8$ and $\varepsilon_{\alpha(8)} \colonequals -1$.
\end{example}

It is not hard to verify the following.
\begin{lemma}
	As an $R$-module $A$ has an $R$-linear basis 
	\[
	\qquad \quad
	\mathcal{B} \colonequals  \{ e_i \mid i \in Q_0 \} \cup \{ \alpha_n \mid \alpha \in Q_1, \ 1 \leq n < \ell_{\alpha} \} \cup \{ c_{\beta} \mid \beta \in Q_1^+ \}\,. 
	\qquad \quad \qed
	\]
\end{lemma}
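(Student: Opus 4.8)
\emph{Proof strategy.} The plan is to reduce the statement to a computation in the fibre of $A$ at the maximal ideal $\fm = (c)$ of $R = k\llbracket c\rrbracket$. Recall first that $A$ is the completion of the path algebra $kQ/I$ at its arrow ideal, so as a $k$-vector space it is the set of all (possibly infinite) $k$-linear combinations of the paths of $Q$ that do not lie in $I$. Conditions \ref{eq:Ge3}--\ref{eq:Ge4} guarantee that for each arrow $\alpha$ and each $d \ge 1$ there is a unique path of length $d$ that begins with $\alpha$ and avoids $I$; write $q_{\alpha,d}$ for it, so that $q_{\alpha,n} = \alpha_n$ for $1 \le n < \ell_\alpha$ and $q_{\alpha,\ell_\alpha} = c_\alpha$. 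Since every arrow of $Q$ lies on an admissible cycle, every $q_{\alpha,d}$ is defined, and together with the trivial paths $e_i$, $i \in Q_0$, these are precisely the paths not in $I$; moreover $A$ is a finite free $R$-module by Lemma~\ref{le:global-gentle} and Proposition~\ref{pr:fp}, using that $R$ is local. By Nakayama's lemma it therefore suffices to show that the images of the elements of $\mathcal{B}$ in the finite dimensional algebra $A \otimes_R k = A/cA$ form a $k$-basis, and freeness of $A$ then promotes this to the assertion that $\mathcal{B}$ is an $R$-basis.

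The second step is to make the $c$-action on the path basis explicit. Under the structure map \eqref{eq:structure-map} the element $c = \sum_{\alpha \in Q_1} c_\alpha$ is central in $A$, and for a nontrivial path $q$ beginning with $\alpha$ one checks, using \ref{eq:Ge3} to see that exactly one summand contributes, that $c \cdot q = q_{\alpha,\, |q| + \ell_\alpha}$, whereas $c \cdot e_i = \sum_{s(\alpha) = i} c_\alpha$ (a sum of one or two admissible cycles according as $i$ is a transition vertex or a $2$-regular one). A short computation then identifies $cA$ with the closed $k$-span of the paths $q_{\alpha,d}$ with $d > \ell_\alpha$ together with the elements $c \cdot e_i$, so that in $A/cA$ the image of $q_{\alpha,d}$ vanishes for $d > \ell_\alpha$, the image of $c_\alpha = q_{\alpha,\ell_\alpha}$ vanishes when $\alpha$ is the unique arrow leaving a transition vertex, and $\overline{c_\alpha} = -\overline{c_{\alpha'}}$ when $\alpha,\alpha'$ are the two arrows leaving a $2$-regular vertex, with no further relations. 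Equivalently, $A/cA$ is the finite dimensional gentle algebra obtained from $(Q,I)$ by additionally killing all admissible cycles, which is the finite dimensional analogue of Lemma~\ref{le:global-gentle}.

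Granting this description, the remaining verification is combinatorial. The images $\overline{e_i}$ ($i \in Q_0$), $\overline{\alpha_n}$ ($\alpha \in Q_1$, $1 \le n < \ell_\alpha$) and $\overline{c_\beta}$ ($\beta \in Q_1^+$) span $A/cA$: each $\overline{e_i}$ and each $\overline{\alpha_n}$ with $n < \ell_\alpha$ is among them, each $\overline{q_{\alpha,d}}$ with $d > \ell_\alpha$ is zero, and each $\overline{c_\alpha}$ is either zero (if $s(\alpha)$ is a transition vertex) or, at a $2$-regular vertex, equal up to sign to $\overline{c_\beta}$ for the outgoing arrow $\beta$ of that vertex with $\varepsilon_\beta = 1$. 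They are $k$-linearly independent because the classes $\overline{e_i}$ and $\overline{\alpha_n}$, $n < \ell_\alpha$, are untouched by any of the above relations, while the constraints on $\varepsilon$ — distinct values on the two arrows out of each $2$-regular vertex and value $-1$ on every arrow out of a transition vertex — ensure that $Q_1^+$ contains exactly one arrow from each $2$-regular vertex and none from any transition vertex; hence the $\overline{c_\beta}$, $\beta \in Q_1^+$, select exactly one representative of each surviving relation $\overline{c_\alpha} = -\overline{c_{\alpha'}}$ and contribute no linear dependence. Thus $\overline{\mathcal{B}}$ is a $k$-basis of $A/cA$, and the result follows.

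I expect the real work to sit in the middle step: showing that $c$ acts on the path basis precisely by appending the admissible cycle, with no collisions, and deducing the exact description of $cA$ — equivalently, identifying the fibre $A/cA$ with the finite dimensional gentle algebra $kQ/(I + \langle \text{admissible cycles}\rangle)$ and pinning down its dimension. This is where properties \ref{eq:Ge3} and \ref{eq:Ge4} of the gentle quiver are genuinely used; everything else is bookkeeping with the combinatorics of $(Q,I)$. One could instead avoid Nakayama and argue directly that $cA$ meets the $k$-span of $\mathcal{B}$ only in the obvious overlaps, but that route needs the same analysis of $cA$.
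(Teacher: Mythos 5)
Your proof is correct, and there is nothing in the paper to compare it against line by line: the lemma is stated with the verification left to the reader. The implicit ``direct'' route is the one you mention at the end, namely that every nontrivial relation-free path is uniquely of the form $c^m\alpha_n$ with $1\le n<\ell_\alpha$ or $c^m c_\alpha$, that $c_\alpha$ for $\alpha\notin Q_1^+$ can be rewritten using $c\,e_{s(\alpha)}$, and that independence follows by comparing coefficients in the topological path basis of the completed algebra. Your route instead reduces modulo $\fm=(c)$: you use that $A$ is finite projective, hence free, over the local ring $R=k\llbracket c\rrbracket$ (which the paper itself extracts from Proposition~\ref{pr:fp} and Lemma~\ref{le:global-gentle} just before the lemma, so this is not circular), compute the fibre $A/cA$, check that the image of $\mathcal B$ is a $k$-basis there, and conclude by Nakayama together with the fact that $\operatorname{rank}$-many generators of a free module form a basis. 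This is a legitimate and clean alternative; what it buys is that the only genuine computation is the description of $cA$, i.e.\ that $c\cdot q_{\alpha,d}=q_{\alpha,d+\ell_\alpha}$ (exactly one summand of $\sum_\beta c_\beta$ survives, by \ref{eq:Ge3} and the existence of the admissible cycle) and $c\cdot e_i=\sum_{s(\alpha)=i}c_\alpha$, and that computation is identical to the one the direct route needs.

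One sentence should be corrected, though it is not used in your argument: the aside ``equivalently, $A/cA$ is the gentle algebra obtained from $(Q,I)$ by additionally killing all admissible cycles'' is not accurate. At a $2$-regular vertex $i$ the ideal $cA$ contains only the sum $c_\beta+c_{\beta'}$ of the two admissible cycles based at $i$, not each cycle separately (every element of $cA$ has equal coefficients on $c_\beta$ and $c_{\beta'}$), so $A/cA\cong kQ/\bigl(I+\langle\,\textstyle\sum_{s(\alpha)=i}c_\alpha : i\in Q_0\,\rangle\bigr)$, whose dimension exceeds that of $kQ/(I+\langle c_\alpha\mid\alpha\in Q_1\rangle)$ by the number of $2$-regular vertices. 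Your explicit list of relations in the fibre --- $\overline{c_\gamma}=0$ only at transition vertices, and $\overline{c_\beta}=-\overline{c_{\beta'}}$ at $2$-regular ones, with no further relations --- is the correct one, and it is what your final spanning and independence count actually uses, so the slip does not affect the proof; just delete the aside or rephrase it as killing the single central element $\sum_{\alpha\in Q_1}c_\alpha$ of \eqref{eq:structure-map}.
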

For any path $p \in \mathcal{B}$ let $p^{*}\colon A \to R$ denote the unique $R$-linear map such that for any $q \in \mathcal{B}$ it holds that $p^{*}(q) = \delta_{pq}$, which denotes the Kronecker delta.

Each vertex $i \in Q_0$ induces an indecomposable projective $A$-module $P_i \colonequals A e_i$.  The intersection of its maximal left submodules is denoted $\rad {P_i}$. The left $A$-module structure of the dualising bimodule is almost regular in the following sense.

\begin{proposition}
	There is an isomorphism of left $A$-modules
	\[
	\vartheta\colon A^{\circ} \colonequals  \big(\bigoplus_{i \in Q^r_0} P_i\big) \oplus \big(\bigoplus_{j \in Q^t_0} 
	{\rad {P_j}}\big) \longiso \omega_{A/R} = \Hom_R(A,R)\,.
	\]
	More precisely, $\vartheta$ is the unique left $A$-linear map such that
	\[
	e_i \longmapsto c_{\beta(i)}^{*} \text{ for any }i \in Q^r_0\,,
	\quad  \text{and} \quad
	\gamma(j) \longmapsto 
	- (c_{\gamma(j)}\partial)^{*} \text{ for any }j\in Q^t_0
	\]
	where
	$\beta(i)$ denotes the unique arrow in $Q_1^+$ starting in $i$
	and ${\gamma(j)}$  the unique arrow starting in $j$.
\end{proposition}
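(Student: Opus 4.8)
Throughout I use the $R$-basis $\mathcal{B}$ of $A$ from the previous lemma and the dual family $\{p^{*}\mid p\in\mathcal{B}\}$; this is an $R$-basis of $\omega_{A/R}=\Hom_{R}(A,R)$ because $A$ is $R$-free on $\mathcal{B}$. The left $A$-action on $\omega_{A/R}$ is $(a\cdot f)(x)=f(xa)$, so it is controlled by \emph{right} multiplication in $A$, and the engine of the argument is a multiplication table expressing $p\,\alpha$, for $p\in\mathcal{B}$ and $\alpha\in Q_{1}$, in the basis $\mathcal{B}$. Its content: $p\,\alpha$ is nonzero only if $p=e_{t(\alpha)}$ or $p$ begins (reading right to left) with $\sigma(\alpha)$, in which case $p\,\alpha$ is the next longer initial segment of the admissible cycle $c_{\alpha}$; upon reaching the full cycle $c_{\alpha}$ one rewrites it by means of the relation $\sum_{\beta\colon s(\beta)=i}c_{\beta}=c\cdot e_{i}$, which at a $2$-regular vertex $i$ reads $c_{\beta(i)}+c_{\beta'(i)}=c\,e_{i}$ (with $\beta'(i)$ the second arrow out of $i$) and at a transition vertex $j$ reads $c_{\gamma(j)}=c\,e_{j}$. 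Conditions \ref{eq:Ge3}--\ref{eq:Ge4} force the error terms created by this rewriting to be divisible by $c$, so that modulo $\fm=(c)$ the rule becomes simply ``strip one arrow off the front''.

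\textbf{Well-definedness.} At a $2$-regular vertex $i$ the module $P_{i}=Ae_{i}$ is free of rank one on $e_{i}$, so $e_{i}\mapsto c_{\beta(i)}^{*}$ extends uniquely to a left $A$-linear map $P_{i}\to\omega_{A/R}$ with nothing to check. At a transition vertex $j$ the only arrow out of $j$ is $\gamma(j)$, so every path of length $\geq 1$ starting at $j$, and also the rank-one part $\fm e_{j}=R\,c_{\gamma(j)}=R\,(c_{\gamma(j)}\partial)\,\gamma(j)$, is an $A$-multiple of $\gamma(j)$; thus $\rad{P_{j}}=A\gamma(j)\cong A/\fa$ as a left $A$-module, where $\fa$ is the left annihilator of $\gamma(j)$. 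Hence a left $A$-linear map $\rad{P_{j}}\to\omega_{A/R}$ sending $\gamma(j)\mapsto-(c_{\gamma(j)}\partial)^{*}$ exists and is unique precisely when $\fa\cdot(c_{\gamma(j)}\partial)^{*}=0$. Writing $a=\sum_{p}r_{p}\,p$ and using the multiplication table, one computes $a\gamma(j)$ and $a\cdot(c_{\gamma(j)}\partial)^{*}$ in terms of the $r_{p}$; both computations take place inside the single admissible cycle $c_{\gamma(j)}$ together with its neighbour $\sigma(\gamma(j))$, and one reads off that the two elements vanish for exactly the same tuples $(r_{p})$.

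\textbf{Bijectivity.} As $R=k\llbracket c\rrbracket$ is a discrete valuation ring, every finitely generated torsion-free $R$-module is free; applying this to $\rad{P_{j}}\subseteq P_{j}$ together with the torsion sequence $0\to\rad{P_{j}}\to P_{j}\to S_{j}\to 0$ gives $\operatorname{rank}_{R}\rad{P_{j}}=\operatorname{rank}_{R}P_{j}$, so summing over all vertices shows that $A^{\circ}$ and $\omega_{A/R}$ are free $R$-modules of the same rank $|\mathcal{B}|$. It therefore suffices to prove that $\vartheta$ is surjective, and by Nakayama over the local ring $R$ this reduces to surjectivity of the $k$-linear map $\vartheta\otimes_{R}k$ between spaces of equal dimension $|\mathcal{B}|$. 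Here the ``strip off the front'' form of the multiplication table makes $\vartheta\otimes_{R}k$ explicit: it carries the reduction of each path in the evident $R$-basis of $A^{\circ}$ to $\pm$ a single dual basis vector $\overline{q}^{\,*}$ --- for instance $\overline{e_{i}}\mapsto\overline{c_{\beta(i)}^{*}}$ and $\overline{c_{\beta(i)}}\mapsto\overline{e_{i}^{*}}$ at $2$-regular vertices, $\overline{c_{\gamma(j)}}\mapsto-\overline{e_{j}^{*}}$ at transition vertices, and a path $\alpha_{n}$ inside an admissible cycle maps, up to the sign recorded by $\varepsilon$, to the dual of the complementary tail $c_{\alpha}\partial^{n}$ --- and running through all rotations of all admissible cycles shows these images exhaust $\{p^{*}\mid p\in\mathcal{B}\}$ without repetition. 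Thus $\vartheta\otimes_{R}k$ is a bijection on bases, hence $\vartheta$ is surjective, and being a surjection of free $R$-modules of the same finite rank it is an isomorphism of left $A$-modules.

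\textbf{Main obstacle.} All the content is in the multiplication table and its two delicate features: recognising when a product of basis paths wraps once around an admissible cycle and thereby acquires a factor of $c$, forcing one to invoke $\sum_{s(\beta)=i}c_{\beta}=c\,e_{i}$ --- which is also exactly why $\omega_{A/R}$ fails to be projective at the transition vertices --- and keeping track of the signs, which are dictated by the choice of $\varepsilon$ and are what make $-(c_{\gamma(j)}\partial)^{*}$, rather than $+(c_{\gamma(j)}\partial)^{*}$, the correct image of $\gamma(j)$ while $c_{\beta(i)}^{*}$ with no sign is correct for $e_{i}$. Once this table is set up, everything else is routine bookkeeping.
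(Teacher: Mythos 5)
Your proposal is correct, and it shares the paper's computational engine but reaches the conclusion by a different mechanism. Both arguments rest on the same expansion of products of basis paths, giving $\vartheta(\alpha_n)=\varepsilon_\alpha(c_\alpha\partial^n)^*$, $\vartheta(c_{\beta(i)})=e_i^*+c\,c_{\beta(i)}^*$, $\vartheta(c_{\gamma(j)})=-e_j^*$, and so on. The paper then finishes by writing down an explicit inverse $\psi\colon\omega_{A/R}\to A^{\circ}$ on the dual basis ($e_i^*\mapsto c_{\beta(i)}-c\,e_i$, $e_j^*\mapsto -c_{\gamma(j)}$, $c_\beta^*\mapsto e_{s(\beta)}$, $\alpha_n^*\mapsto\varepsilon_{\sigma^n(\alpha)}\,c_\alpha\partial^n$) and checking it is inverse to $\vartheta$; you instead count $R$-ranks (using that $P_j/\rad{P_j}$ is torsion), invoke Nakayama over $R=k\llbracket c\rrbracket$, and observe that $\vartheta\otimes_Rk$ permutes the bases up to sign --- the bijection $(\alpha,n)\mapsto(\sigma^n(\alpha),\ell_\alpha-n)$ within each admissible cycle is exactly why the images exhaust the dual basis without repetition, so your reduction is sound, and a surjection of free $R$-modules of equal finite rank is indeed an isomorphism. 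Your handling of well-definedness also differs: at a transition vertex you use $\rad{P_j}=A\gamma(j)\cong A/\fa$ and must check that the left annihilator $\fa$ of $\gamma(j)$ kills $(c_{\gamma(j)}\partial)^*$; this is the one step your plan leaves entirely to the unstated multiplication table, but it does go through, since the only basis elements acting nontrivially on either $\gamma(j)$ or $(c_{\gamma(j)}\partial)^*$ are $e_{t(\gamma)}$, the segments $\sigma(\gamma)_n$, and (when it lies in $\mathcal{B}$) $c_{\sigma(\gamma)}$, and both vanishing conditions reduce to the same linear constraints on those coefficients. The trade-off: the paper's explicit $\psi$ yields closed formulas and avoids any rank or Nakayama argument, while your route avoids guessing the inverse at the price of the annihilator check and the rank bookkeeping; the amount of routine path computation deferred is comparable to what the paper itself leaves to the reader.
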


\begin{proof}
	First, we compute $\vartheta$ on an $R$-linear basis of $P_i$ for $i \in Q^r_0$.
	Set $c_i \colonequals  c_{\beta(i)}$.  For any $p \in \mathcal{B}$  it holds that 
	$\vartheta(p e_i) = p \cdot \vartheta(e_i) = p \cdot c^{*}_{i} = c^{*}_{i}(\mathstrut_{-} p) = \sum_{q \in \mathcal{B}} c^{*}_{i}(qp) \, q^{*}.$
	It can be shown that $c^{*}_{i}(qp) \neq 0$  if and only if $qp = c_i^2$ or $c_\alpha$ for an arrow $\alpha$ starting at $i$. 
	In this case, it holds that
	$c^{*}_{i}(qp) = c$ or $\varepsilon_{\alpha}$, respectively.
	It follows that $\vartheta(e_i) = e_i \cdot \vartheta(e_i)$, 
	$\vartheta(c_i) = e_i^{*} + c \, c_{i}^{*}$ for $i \in Q^r_0$, and 
	$\vartheta(\alpha_n) = \varepsilon_{\alpha} (c_{\alpha}{\partial^n})^*$ for any $\alpha \in Q_1\backslash Q^t_1$ and $1 \leq n < \ell_{\alpha}$.
	
	Next, we describe $\vartheta$ on an $R$-linear basis of $\rad{P_j}$
	for $j \in Q^t_0$. Set $\gamma \colonequals \gamma(j)$.  
	For any $p, q \in \mathcal{B}$ it holds that
	$(c_{\gamma} \partial)^*(qp) \neq 0$ if and only if $qp = c_\gamma \partial$, which may occur only if 
	$p = e_{t(\gamma)}$ or $\sigma(\gamma)_{n}$ with $1 \leq n < \ell_{\gamma}$.
	By similar arguments as in the previous case, it follows that
	$e_{t(\gamma)} \cdot \vartheta(\gamma) =  \vartheta(\gamma)$, $\vartheta(\gamma_n) = \sigma(\gamma)_{n-1} \cdot \vartheta(\gamma) = - (c_{\gamma} \partial^n)^*$ for $1 < n < \ell_{\gamma}$
	and $\vartheta(c_{\gamma}) = c_{\gamma} \partial\cdot \vartheta(\gamma) = - e^*_j$.
	This shows also that $\vartheta$ is well-defined.
	
	Let $\psi\colon \omega_{A/R}\to A^{\circ}$ be the unique $R$-linear map such that
	$e_i^{*} \mapsto c_{\beta(i)} - c e_i$ for $i \in Q^r_0$, $e_j^{*} \mapsto -c_{\gamma(j)}$ for $j \in Q^t_0$,
	$c_{\beta}^{*} \mapsto e_{s(\beta)}$ for $\beta \in Q_1^+$ and $\alpha_n^{*} \mapsto \varepsilon_{\sigma^{n}(\alpha)} c_{\alpha}\partial^n$ for $\alpha \in Q_1$ and $1 \leq n < \ell_{\alpha}$.   
	It can be checked that $\psi$ is the inverse of the map $\vartheta$.
\end{proof}

\begin{chunk}
	To describe the right $A$-module structure of $\omega_{A/R}$ we need a few more notions.
	Let $\nu \colonequals  
	\widehat{\nu}_{\varepsilon}\colon A \to A$ be 
	the completion of 
	the unique $k$-algebra morphism $\nu_\varepsilon\colon k Q/I \to k Q/I$
	such that $ e_i \mapsto e_i$ for any $i \in Q_0$ and $\alpha \mapsto \varepsilon_{\sigma(\alpha)}\varepsilon_{\alpha} \alpha$ for any $\alpha \in Q_1$.
	In particular, each path $p$ in $(Q,I)$ coincides with $\nu(p)$ up to sign. 
	
	Let $A_{\nu}$ be the $A$-bimodule with regular left $A$-module  structure  and right $A$-module structure twisted by $\nu$, that is, we set $x \cdot_{\nu} \, a\colonequals  x\,\nu(a)$ for any $x \in A_{\nu}$ and $a \in A$.
	Let $A_{\nu}^{\circ}$ be the subbimodule of $A_{\nu}$ generated by 
	the idempotents $e_i$ with $i \in Q^r_0$
	and the arrows  $\gamma \in Q^t_1$. 
	This twisted bimodule is the dualising bimodule.
\end{chunk}

\begin{theorem}\label{theorem:canonical-bimodule}
	The left $A$-linear isomorphism $\vartheta\colon A^{\circ}_\nu \longiso \omega_{A/R}$ is $A$-bilinear.
\end{theorem}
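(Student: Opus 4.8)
The plan rests on the fact that, by the preceding proposition, $\vartheta$ is already an isomorphism of left $A$-modules, so it remains only to check compatibility with the right actions. Recall that the right $A$-action on $\omega_{A/R}=\Hom_R(A,R)$ is $(f\cdot a)(x)=f(ax)$, while on $A^{\circ}_\nu$ it is $m\cdot_\nu a=m\,\nu(a)$; thus the goal is to prove
\[
\vartheta(m\cdot_\nu a)=\vartheta(m)\cdot a\qquad\text{for all }m\in A^{\circ}_\nu,\ a\in A .
\]

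First I would fix an arrow $\alpha\in Q_1$ and reduce the identity $\vartheta(m\cdot_\nu\alpha)=\vartheta(m)\cdot\alpha$ from an arbitrary $m\in A^{\circ}_\nu$ to the case of a module generator. The twisted right action still commutes with the left action, $(b\cdot m)\cdot_\nu\alpha=b\cdot(m\cdot_\nu\alpha)$, and $A^{\circ}=\bigl(\bigoplus_{i\in Q^r_0}Ae_i\bigr)\oplus\bigl(\bigoplus_{j\in Q^t_0}A\gamma(j)\bigr)$ is generated as a left $A$-module by the finite set $\{e_i\mid i\in Q^r_0\}\cup\{\gamma(j)\mid j\in Q^t_0\}$, using that $\rad{P_j}=A\gamma(j)$ at a transition vertex. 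Since $\vartheta$ is left $A$-linear, it is then enough to verify the identity for these generators and each arrow. Having done so, I would extend it to all $a\in A$: because $\nu$ is an $R$-algebra endomorphism one has $\vartheta(m\cdot_\nu(a_1a_2))=\vartheta((m\cdot_\nu a_1)\cdot_\nu a_2)=\vartheta(m\cdot_\nu a_1)\cdot a_2=(\vartheta(m)\cdot a_1)\cdot a_2=\vartheta(m)\cdot(a_1a_2)$, so the identity propagates over products of arrows and idempotents; since these generate the dense subalgebra $kQ/I$ and $\vartheta$ together with the two module actions is $R$-linear and continuous for the $J_A$-adic topology, it then holds for all $a\in A$.

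Next I would carry out the finitely many generator-and-arrow checks $\vartheta(e_i\cdot_\nu\alpha)=\vartheta(e_i)\cdot\alpha$ and $\vartheta(\gamma(j)\cdot_\nu\alpha)=\vartheta(\gamma(j))\cdot\alpha$ by unwinding both sides with the data from the proof of the preceding proposition. On the left, $\nu(\alpha)=\varepsilon_{\sigma(\alpha)}\varepsilon_\alpha\,\alpha$, so $e_i\cdot_\nu\alpha=\varepsilon_{\sigma(\alpha)}\varepsilon_\alpha\,(e_i\alpha)$, which vanishes unless $t(\alpha)=i$ and is then $\pm\alpha$, on which $\vartheta$ is given by $\vartheta(\alpha_n)=\varepsilon_\alpha(c_\alpha\partial^n)^{*}$ and $\vartheta(\gamma)=-(c_\gamma\partial)^{*}$. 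On the right, for a path $p$ starting at a $2$-regular vertex $i$ one has $\vartheta(p)(x)=c_{\beta(i)}^{*}(xp)$, with the analogue through $-(c_{\gamma(j)}\partial)^{*}$ at a transition vertex, so $(\vartheta(m)\cdot\alpha)(x)=\vartheta(m)(\alpha x)$ is controlled by which basis paths occur in $\alpha x\,p$. Since multiplying a path of the basis $\mathcal{B}$ by an arrow either hits a relation (yielding $0$), extends the admissible cycle running through that path, or closes it (producing a factor of $c$), and since $\sigma$ advances the arrow index along an admissible cycle, I expect that running through the local shapes at a vertex --- $2$-regular with gentle relations, transition vertex, and the loop cases --- shows equality, the factor $\varepsilon_{\sigma(\alpha)}\varepsilon_\alpha$ introduced by the twist cancelling against the signs carried by the formulas for $\vartheta$.

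The hard part will be precisely this sign and index bookkeeping: one must match the factor $\varepsilon_{\sigma(\alpha)}\varepsilon_\alpha$ from the twist by $\nu$ against the sign $\varepsilon_\alpha$ (respectively $-1$ at a transition vertex) carried by $\vartheta$, while keeping track of which admissible cycle $c_\beta$ with $\beta\in Q_1^+$ indexes the dual functional obtained after prepending an arrow, and while handling the special behaviour at transition vertices and at loops. These features obstruct a single uniform formula, so a short case analysis organised by the local shape at a vertex seems unavoidable; each individual case, however, should be a routine verification once the conventions are fixed.
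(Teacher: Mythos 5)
Your strategy is essentially the paper's: since $\vartheta$ is left $A$-linear and $A^{\circ}_\nu$ is generated as a left module by the idempotents $e_i$ ($i\in Q^r_0$) and the arrows $\gamma\in Q^t_1$, right-compatibility need only be tested on these generators, and one concludes by $R$-linearity. Two remarks. First, the appeal to $J_A$-adic continuity is superfluous: the paper tests the generators against every element $p$ of the finite $R$-basis $\mathcal{B}$ of $A$, so $R$-linearity alone finishes; your variant of testing only against arrows and propagating over products (using that $\nu$ is an algebra endomorphism and that the arrow case is known for \emph{all} $m$) is also valid and trims the case count, but again plain $R$-linearity suffices because $A$ is module-finite free over $R$. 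Second, and more substantively, the sign-and-index verification you defer as routine is the entire content of the paper's proof: it is exactly the case analysis you sketch (whether $t(p)=i$, whether $s(p)$ is $2$-regular or a transition vertex, plus the generators $\gamma\in Q^t_1$), resting on identities such as $c^*_{s(p)}(q\,\nu(p))=c^*_i(pq)$ for all $q\in\mathcal{B}$, and producing at transition vertices terms like $\vartheta(\gamma\cdot_{\nu}c_\gamma\partial)=-e^*_{t(\gamma)}-\delta_{\varepsilon,+}\,c\,c^*_{\sigma(\gamma)}$ when a cycle closes. The expected cancellation of $\varepsilon_{\sigma(\alpha)}\varepsilon_\alpha$ against the signs built into $\vartheta$ does occur, so your plan is sound, but as written it asserts rather than establishes the theorem's only nontrivial point.
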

\begin{proof}
	For any vertex $i \in Q_0^r$ we set $c_i \colonequals c_{\beta(i)}$ where $\beta(i)$ is the arrow in $Q_1^+$ starting at $i$.
	Fix $i \in Q^r_0$ and $p \in\mathcal{B}$. Set $h \colonequals s(p)$.
	\begin{itemize}[label=--, leftmargin=*]
		\item
		If $t(p) \neq i$, it holds that
		$\vartheta(e_i) \, p = 0 = \vartheta(e_i \cdot_{\nu} p)$. 
		\item
		Assume that $t(p)= i$.
		The type of the vertex $h$ leads to the following cases.
		\begin{itemize}[label=--, leftmargin=*]
			\item
			Assume that $h \in Q_0^r$.
			For any $q \in \mathcal{B}$ it can be verified that
			$c^*_{h}(q\,\nu(p)) = 
			c^*_i(pq)$.
			Thus, 
			$
			\vartheta(e_i \cdot_{\nu} p) = 
			\vartheta(\nu(p) \, e_h) =c_{h}^*(\mathstrut_- \nu(p))
			= \sum_{q \in \mathcal{B}} c_h^*(q\,\nu(p))\, q^*$
			$= c_{i}^*(p \mathstrut_{-}) =
			\vartheta(e_i)\, p $. 
			\item 
			Otherwise, $h \notin Q_0^r$.
			Then $p = \alpha_n$ for an arrow $\alpha \in Q_1^t$ and $1\leq n < \ell_{\alpha}$.
The equality $c_i^*(p \, (c_\alpha \partial^n) ) = \varepsilon_{\sigma^n(\alpha)}$ implies 
$\vartheta(e_i \cdot_\nu p) 
			= -\varepsilon_{\sigma^{n}(\alpha)}\,\vartheta(\alpha_n)
			=
			\varepsilon_{\sigma^{n}(\alpha)}\, (c_{\alpha} \partial^n)^*
			= \sum_{q \in \mathcal{B}} c_i^*(p\,q)\, q^*
			= c_{i}^*(p \mathstrut_{-})
			= \vartheta(e_i) p$.
		\end{itemize}
	\end{itemize}
	Let $\gamma \in Q_1^t$. Set $j \colonequals s(\gamma)$ and $\varepsilon \colonequals \varepsilon_{\sigma(\gamma)}$.
	It holds that
	$\vartheta(\gamma\cdot_{\nu} p) =0= \vartheta(\gamma) \cdot p$
	for any $p \in \mathcal{B}$ with $t(p) \neq j$ and 
	$\vartheta(\gamma \cdot_{\nu} e_{j}) =
	\vartheta(\gamma) 
	= \vartheta(\gamma) \,e_j$. Furthermore, it can be computed that
	$\vartheta(\gamma\cdot_{\nu} c_{\gamma}\partial^n)  = -\sigma(\gamma)_{n-1} = \vartheta(\gamma)\cdot c_{\gamma}\partial^n$
	for any  $1 < n < l_{\alpha}$
	and $\vartheta(\gamma\cdot_{\nu} c_{\gamma}\partial)  = 
	- e_{t(\gamma)}^* - \delta_{\varepsilon, +} \, c \, c^*_{\sigma(\gamma)} 
	= \vartheta(\gamma)\cdot c_{\gamma} \partial$.
	As $\vartheta$ is $R$-linear the claim follows.
\end{proof}

This theorem yields an explicit description of the relative Serre
functor of the category $\dbcat A$ mentioned in
\ref{ch:relative-Serre}.  We will not need the precise formulation of
Theorem~\ref{theorem:canonical-bimodule} in the following, but only an
approximate version.
\begin{corollary}\label{corollary:Nakayama-functor}
	For any complex $P$ of projective $A$-modules the complex $\omega_{A/R} \otimes_A P$ is given by replacing each indecomposable module $P_j$ with $j \in Q^t_0$ in $P$ with its radical
	and applying a certain sign change to each path in each differential. \qed
\end{corollary}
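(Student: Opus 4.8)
The plan is to read off the statement from Theorem~\ref{theorem:canonical-bimodule}, which identifies $\omega_{A/R}$ with the twisted bimodule $A^{\circ}_{\nu}$ via the left $A$-linear isomorphism $\vartheta$. Since $\omega_{A/R}\otimes_A-$ is additive and commutes with arbitrary direct sums, and each projective $A$-module appearing in $P$ is a direct sum of the indecomposable projectives $P_j$ (the ring $A$ being semiperfect), I would first reduce to computing $\omega_{A/R}\otimes_A P_j$ together with the effect of $\omega_{A/R}\otimes_A-$ on a morphism $P_j\to P_{j'}$ between indecomposable projectives; the assertion for a general complex $P$ then follows by assembling these in each cohomological degree.

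On objects, write $\omega_{A/R}\otimes_A P_j=\omega_{A/R}\otimes_A Ae_j\cong\omega_{A/R}e_j$ via $\xi\otimes ae_j\mapsto\xi ae_j$, and transport along $\vartheta$. As $\nu$ fixes every idempotent, for $y\in A^{\circ}_{\nu}$ one has $\vartheta(y)e_j=\vartheta(y\cdot_{\nu}e_j)=\vartheta(ye_j)$, so $\vartheta$ restricts to an isomorphism $A^{\circ}e_j\iso\omega_{A/R}e_j$ of left $A$-modules. Combining the left-module decomposition $A^{\circ}=\bigl(\bigoplus_{i\in Q^r_0}P_i\bigr)\oplus\bigl(\bigoplus_{l\in Q^t_0}\rad{P_l}\bigr)$ from the Proposition preceding Theorem~\ref{theorem:canonical-bimodule} with the identities $P_ie_j=\delta_{ij}P_i$ and $(\rad{P_l})e_j=\delta_{lj}\rad{P_l}$ yields $A^{\circ}e_j=P_j$ when $j\in Q^r_0$ and $A^{\circ}e_j=\rad{P_j}$ when $j\in Q^t_0$; in the present setting these two cases exhaust $Q_0$. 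This realises the prescribed replacement of $P_j$ by $\rad{P_j}$ at each transition vertex.

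On differentials, a component $P_j\to P_{j'}$ of a differential in a complex of projectives is right multiplication by an element $x\in e_jAe_{j'}$, a linear combination of paths from $j'$ to $j$. Under the identifications $\omega_{A/R}\otimes_A Ae_j\cong\omega_{A/R}e_j$ the induced map becomes right multiplication by $x$ on $\omega_{A/R}$, and transporting it through the \emph{bilinear} isomorphism $\vartheta$ turns it into $y\mapsto y\cdot_{\nu}x=y\,\nu(x)$ on $A^{\circ}$. Since $\nu$ scales each arrow $\alpha$ by $\varepsilon_{\sigma(\alpha)}\varepsilon_{\alpha}\in\{-1,1\}$, the element $\nu(x)$ is obtained from $x$ by multiplying every path by the product of these signs along it; this is exactly the asserted sign change. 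Finally one checks that $y\mapsto y\,\nu(x)$ maps the relevant summand into the relevant summand: any off-diagonal entry $x$ lies in the arrow ideal $J_A=\rad A$, which is a two-sided ideal stable under $\nu$, so $P_j\,\nu(x)$ and $(\rad{P_j})\,\nu(x)$ land in $(\rad A)e_{j'}=\rad{P_{j'}}$ whenever $j'\in Q^t_0$, and trivially in $P_{j'}$ otherwise, while for $j=j'$ any right multiplication preserves $\rad{P_j}$.

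Granted Theorem~\ref{theorem:canonical-bimodule}, the argument is essentially bookkeeping; the only points that demand care are the left-versus-right conventions for modules over $kQ/I$, the direction of composition of paths, and the last verification that the twisted differentials respect the radical submodules. I do not expect a genuine obstacle beyond keeping these straight.
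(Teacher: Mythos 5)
Your argument is correct and is exactly the route the paper intends: the corollary is stated with no separate proof because it is meant to follow immediately from the bilinear isomorphism $\vartheta\colon A^{\circ}_{\nu}\iso\omega_{A/R}$ of Theorem~\ref{theorem:canonical-bimodule}, and your computation of $\omega_{A/R}\otimes_A P_j\cong A^{\circ}e_j$ (giving $P_j$ at $2$-regular vertices and $\rad P_j$ at transition vertices) together with the identification of the induced maps as right $\nu$-twisted multiplication is precisely that unfolding. The only remark is that your final verification that the twisted differentials respect the summands is automatic from $A^{\circ}_{\nu}$ being a sub-bimodule, so the hands-on radical argument, while fine, is not needed.
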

In the applications below, the sign change will not affect the isomorphism class. 

\section{Singularity categories of gentle Gorenstein algebras}
\label{se:singularity-categories}

As in the previous section, we study the arrow ideal completion $A$ of the path algebra of a gentle quiver such that each of its arrows lies on an admissible cycle, and view $A$
as an $R$-algebra over the ring $R= k \llbracket c \rrbracket$
via the structure map \eqref{eq:structure-map}.

In this section, we give a description of the singularity category $\dsing(A)$ in Theorem~\ref{th:dsing-gentle}, which is an analogue of a result by Kalck for finite dimensional gentle algebras \cite{Kalck:2015}.
The resulting description of $\dsing(A)$ also follows from
a classification of indecomposable objects
in the equivalent homotopy category of acyclic projective complexes by work of Bennett-Tennenhaus \cite{Bennett-Tennenhaus:2017}
or of those in the bigger category $\dbcat A$ using techniques by Burban and Drozd developed in \cite{Burban--Drozd:2004} and \cite{Burban--Drozd:2017}.

Our approach is based on the representation theory of lattices over orders. A key point is that
Corollary~\ref{corollary:Nakayama-functor}
allows to compute the Auslander-Reiten translations of non-projective $A$-lattices.
After determining the whole Auslander-Reiten quiver of $A$-lattices we will translate classification results along the chain of categories
\[
\begin{tikzcd}
	\lat A  \arrow[r, hookleftarrow] & \mcm A \arrow[r] & 
	\smcm A \arrow[r, "\sim"] 
	& \dsing(A)\,.
\end{tikzcd}
\]
This method might be useful for other Gorenstein algebras with a manageable, for instance, representation-tame, category of lattices.

\subsection*{The Auslander-Reiten theory of lattices}

Let $\lat A$ denote the full subcategory of finitely generated  $A$-modules which are free, or, equivalently, \emph{maximal Cohen--Macaulay over $R$} in the sense of \ref{chunk:mcmR}.
In different terms, these are precisely the \emph{lattices over the $R$-algebra $A$}. We will use the latter terminology in order to avoid confusion with the later notion of \emph{maximal Cohen--Macaulay $A$-modules}.

\begin{chunk}\label{ch:R-duality}
	According to \cite[Proposition I.7.3]{Auslander:1978a}, there is an exact duality
	\begin{align*}
		(-)^* \colonequals  \Hom_R(-,R) \colon  {\lat {\op A}} \longiso {\lat A}\,.
	\end{align*}
	Moreover, there is an isomorphism of functors 
	\begin{align}
		\label{eq:Nak-lat}
		\Hom_A(-,A)^* \cong
		\omega_{A/R} \otimes_A -\colon 
		{\proj (A)} \longiso {\inj {\lat A}} 
	\end{align}
	which yield an equivalence
	between the category of finitely generated projective 
	$A$-modules and the full subcategory of injective objects in the exact category of $A$-lattices.
\end{chunk}

Lemma~\ref{le:global-gentle}~\eqref{le:gg3} states that
$A$ is a finite free $R$-algebra such that the $Q(R)$-algebra $A \otimes_R Q(R)$ is semisimple.
This has the following consequences.
\begin{lemma} \label{lemma:semisimple} In the setup above, the following statements hold.
	\begin{enumerate}
		\item \label{eq:submodules} An $A$-module $L$
		is an $A$-lattice if and only if there is an $A$-linear embedding of $L$ into a finitely generated projective $A$-module.
		\item \label{eq:Ext-finite}
		For any finitely generated $A$-modules $M$ and $N$
		and any integer $i > 0$ 
		the $R$-module $\Ext^i_A(M,N)$ is a torsion module.
	\end{enumerate}
\end{lemma}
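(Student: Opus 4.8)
The plan is to run both parts off the two structural facts recorded just before the statement: $A$ is a finite free $R$-algebra, and the $Q(R)$-algebra $A_{Q(R)}\colonequals A\otimes_R Q(R)$ is semisimple, where $R=k\llbracket c\rrbracket$ is a complete discrete valuation ring (in particular a principal ideal domain) with field of fractions $Q(R)=k(\!(c)\!)$. Two elementary observations will be used throughout: first, $A$ is noetherian and finite over $R$, so every finitely generated $A$-module is finitely generated over $R$ and submodules of finitely generated $A$-modules are again finitely generated; second, the structure map $R\to A$ has central image, so scalar multiplication by an element of $R$ is an $A$-linear operation.

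For \eqref{eq:submodules}, the ``if'' direction is purely formal: a finitely generated projective $A$-module $P$ is a direct summand of some $A^n$ and hence free as an $R$-module, so an $A$-submodule $L\subseteq P$ is (being noetherian) finitely generated over $A$ and a finitely generated torsion-free module over the principal ideal domain $R$, therefore $R$-free, i.e. an $A$-lattice. For the ``only if'' direction I would argue as follows. Let $L$ be an $A$-lattice. The inclusion $L\hookrightarrow L_{Q(R)}\colonequals L\otimes_R Q(R)$ is an $A$-linear monomorphism, since $L$ is $R$-torsion-free. As $A_{Q(R)}$ is semisimple, the finite-dimensional $A_{Q(R)}$-module $L_{Q(R)}$ is a direct summand of a finitely generated free $A_{Q(R)}$-module; composing yields an $A$-linear monomorphism $\phi\colon L\hookrightarrow (A_{Q(R)})^n=(A^n)_{Q(R)}$. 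Since $L$ is finitely generated over $R$, the images of its generators involve only finitely many $Q(R)$-coordinates in the $R$-basis of $A^n$, so some nonzero $q\in R$ (a power of $c$) satisfies $q\,\phi(L)\subseteq A^n$; then $x\mapsto q\,\phi(x)$ is an $A$-linear monomorphism of $L$ into the finitely generated free $A$-module $A^n$, injective because $q$ acts injectively on the $Q(R)$-vector space $(A^n)_{Q(R)}$.

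For \eqref{eq:Ext-finite}, I would take a resolution $P_\bullet\to M$ by finitely generated projective $A$-modules, which exists because $A$ is noetherian and $M$ is finitely generated. Flatness of $Q(R)$ over $R$ together with the finite generation of the $P_i$ yields, after applying $-\otimes_R Q(R)$ to $\Hom_A(P_\bullet,N)$ and passing to cohomology, a natural isomorphism
\[
\Ext^i_A(M,N)\otimes_R Q(R)\;\cong\;\Ext^i_{A_{Q(R)}}\!\big(M_{Q(R)},N_{Q(R)}\big)\,,
\]
and the right-hand side vanishes for $i>0$ because every module over the semisimple ring $A_{Q(R)}$ is projective. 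Hence $\Ext^i_A(M,N)\otimes_R Q(R)=0$ for $i>0$, which, $Q(R)$ being the localisation of $R$ at $R\setminus\{0\}$, says precisely that $\Ext^i_A(M,N)$ is an $R$-torsion module.

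I expect the only real point to be the ``only if'' direction of \eqref{eq:submodules}: one genuinely needs semisimplicity of $A_{Q(R)}$ to embed $L_{Q(R)}$ into a free module, and then the clearing-of-denominators step to descend the embedding back to $A$. Everything else — the torsion-free reasoning over the discrete valuation ring $R$ and the flat base change for $\Ext$ — is routine.
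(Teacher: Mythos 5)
Your argument is correct and is essentially the paper's own proof: for (1) you embed $L$ into its generic fibre, use the semisimplicity of $A\otimes_R Q(R)$ to embed that fibre into a free module, and clear denominators, while the converse uses that lattices are closed under submodules since $R$ is a discrete valuation ring; for (2) you use flat base change of $\Ext$ to the semisimple generic fibre, exactly as in the paper. The only cosmetic difference is that the paper invokes self-injectivity of $A\otimes_R Q(R)$ where you invoke semisimplicity directly, which amounts to the same thing.
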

The first statement is known by \cite[Exercise~23.1]{Curtis--Reiner:1990},
the second follows from the proof of \cite[Proposition~25.5]{Curtis--Reiner:1990}. 
We give the arguments for the sake of completeness.
\begin{proof}
	In the following, $Q(R)$ is abbreviated to $Q$.
	\begin{enumerate}
		\item 
		Assume that $L$ is an $A$-lattice. Since $A \otimes_R Q$ is self-injective, $L \otimes_R Q$ embeds into $(A \otimes_R Q)^n$ for certain $n \geq 0$.
		Let $\iota$ denote the $A$-linear composition
		$$
		\begin{tikzcd}[ampersand replacement=\&]
			L \ar[hookrightarrow]{r}{\eta_L} \& L \otimes_R Q \ar[hookrightarrow]{r} \& (A \otimes_R Q)^n \longiso A^n \otimes_R Q\,.\end{tikzcd}
		$$
		Let $x_1,x_2 \ldots x_m$ be generators of $L$.
		For any $1 \leq i \leq m$ we may write $\iota(x_i)$
		as $\sum_{j=1}^n \sum_{k=1}^{\ell_j}
		a_{ijk} \otimes_R {\frac{r_{ijk}}{s_{ijk}}}$.
		Set $s$ to be the product of all denominators $s_{ijk}$.
		Since $\iota(Ls) \subseteq \Im \eta_{A^n}$, there is an $A$-linear embedding of $L \cong Ls$ into $A^n$.
		
		Vice versa, the category $\lat A$ contains $\proj(A)$ and is closed under submodules, because $A$ is finite and projective over the Dedekind domain $R$.
		\item In the notations above, 
		semisimplicity of $A \otimes_R Q$ implies that
		\[ 
		\Ext^i_A(M,N) \otimes_R Q \cong \Ext^i_{A \otimes_R Q}(M \otimes_R Q,N \otimes_R Q) = 0\,. 
		\qedhere
		\]
	\end{enumerate}
\end{proof}
More importantly, the category ${\lat A}$ admits almost-split sequences by results of Auslander~\cite{Auslander:1978a}. 
The next statement allows to compute these using the dualising bimodule $\omega_{A/R}$ of the algebra $A$.
\begin{proposition}\label{proposition:AR-translate}
	Let $L$ be a non-projective $A$-lattice. 
	Choose a projective cover $\pi\colon P_1 \twoheadrightarrow L$ and an $A$-linear monomorphism 
	$\iota\colon L \hookrightarrow P_0$ into a finitely generated projective $A$-module.
	Then there is 
	an isomorphism
	$
	\tau(L) \cong \Ker (\omega_{A/R} \otimes_A \partial_1)
	$
	of $A$-lattices, where
	$\partial_1$ denotes the composition $\iota  \pi \colon P_1 \lra P_0$.
\end{proposition}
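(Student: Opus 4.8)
The plan is to combine the classical description of the Auslander--Reiten translate over an order with the isomorphism $\Hom_A(-,A)^{*}\cong\omega_{A/R}\otimes_A-$ on projectives recorded in \ref{ch:R-duality}. By Lemma~\ref{le:global-gentle}~\eqref{le:gg3} the ring $A$ is a finite free algebra over the complete discrete valuation ring $R=k\llbracket c\rrbracket$ with semisimple generic fibre $A\otimes_RQ(R)$, so $A$ is a classical $R$-order; hence $\lat A$ has almost split sequences by Auslander~\cite{Auslander:1978a}, and the translate of a non-projective lattice $L$ is $\tau L\cong(\syz_{\op A}\operatorname{Tr}_A L)^{*}$, where $(-)^{*}=\Hom_R(-,R)$, $\operatorname{Tr}_A L$ is the Auslander transpose formed from a minimal projective presentation, and $\syz_{\op A}$ denotes the syzygy in $\mod\op A$. (The extra syzygy, compared with the artin-algebra formula $\tau=(-)^{*}\circ\operatorname{Tr}$, reflects $\dim R=1$.) Throughout I write $(-)^{\vee}\colonequals\Hom_A(-,A)$ and use that $(-)^{*}$ is left exact, contravariant, and kills every $R$-torsion module, while $\op A$ is semiperfect.

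First I would make the right-hand side explicit. Extend the given projective cover $\pi$ to a minimal projective presentation $P_2\xrightarrow{d_2}P_1\xrightarrow{\pi}L\to 0$, so that $\Im d_2=\Ker\pi=\syz L$ is superfluous in $P_1$ and $d_2$ is a radical map. Applying $(-)^{\vee}$ yields an exact sequence $0\to L^{\vee}\xrightarrow{\pi^{\vee}}P_1^{\vee}\xrightarrow{d_2^{\vee}}P_2^{\vee}\to\operatorname{Tr}_A L\to 0$; since $d_2$ is radical, so is $d_2^{\vee}$ over $\op A$, whence $P_2^{\vee}\to\operatorname{Tr}_A L$ is a projective cover and $\syz_{\op A}\operatorname{Tr}_A L\cong\Im d_2^{\vee}\cong P_1^{\vee}/\pi^{\vee}(L^{\vee})$.

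Next I would reorganise this using the embedding $\iota$. Put $C\colonequals\Coker\iota$ and $\partial_1\colonequals\iota\pi$, so that $P_1\xrightarrow{\partial_1}P_0\to C\to 0$ is a projective presentation of $C$ and $\partial_1^{\vee}=\pi^{\vee}\iota^{\vee}$. Dualising $0\to L\xrightarrow{\iota}P_0\to C\to 0$ gives $0\to C^{\vee}\to P_0^{\vee}\xrightarrow{\iota^{\vee}}L^{\vee}\to\Ext^1_A(C,A)\to 0$, so $L^{\vee}/\iota^{\vee}(P_0^{\vee})\cong\Ext^1_A(C,A)$, which is $R$-torsion by Lemma~\ref{lemma:semisimple}~\eqref{eq:Ext-finite}. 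As $\pi^{\vee}$ is injective, the canonical surjection
\[
\operatorname{Tr}_A C=\Coker(\partial_1^{\vee})=P_1^{\vee}/\pi^{\vee}\iota^{\vee}(P_0^{\vee})\twoheadrightarrow P_1^{\vee}/\pi^{\vee}(L^{\vee})\cong\syz_{\op A}\operatorname{Tr}_A L
\]
has kernel isomorphic to $\Ext^1_A(C,A)$, and applying $(-)^{*}$, which annihilates this kernel, gives $(\operatorname{Tr}_A C)^{*}\cong(\syz_{\op A}\operatorname{Tr}_A L)^{*}\cong\tau L$. Finally, applying the left exact contravariant functor $(-)^{*}$ to $P_0^{\vee}\xrightarrow{\partial_1^{\vee}}P_1^{\vee}\to\operatorname{Tr}_A C\to 0$ gives $(\operatorname{Tr}_A C)^{*}=\Ker\big((\partial_1^{\vee})^{*}\colon(P_1^{\vee})^{*}\to(P_0^{\vee})^{*}\big)$; by the natural isomorphism $\Hom_A(-,A)^{*}\cong\omega_{A/R}\otimes_A-$ on $\proj A$ of \ref{ch:R-duality} (see \eqref{eq:Nak-lat}), one has $(P_i^{\vee})^{*}\cong\omega_{A/R}\otimes_AP_i$ naturally in $P_i$, and under these identifications $(\partial_1^{\vee})^{*}$ becomes $\omega_{A/R}\otimes_A\partial_1$. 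Hence $\tau L\cong(\operatorname{Tr}_A C)^{*}\cong\Ker(\omega_{A/R}\otimes_A\partial_1)$, as claimed. One may also note that $\iota$ is allowed to be any monomorphism into a finitely generated projective (such exists by Lemma~\ref{lemma:semisimple}~\eqref{eq:submodules}): by a dual Schanuel argument and $\operatorname{Tr}_A(\text{projective})=0$ the class of $\operatorname{Tr}_A C$, and hence of $\Ker(\omega_{A/R}\otimes_A\partial_1)$, is independent of the choice.

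I expect the main obstacle to be pinning down the precise normalisation of Auslander's translate formula for $\lat A$ — in particular justifying the extra syzygy forced by $\dim R=1$ — together with the bookkeeping in the third paragraph confirming that only $R$-torsion is lost when one passes from the minimal projective presentation of $L$ to the presentation of its cosyzygy $C$; the remaining steps are a routine diagram chase and an appeal to the naturality of the isomorphism in \ref{ch:R-duality}.
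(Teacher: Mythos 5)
Your proof is correct and follows essentially the same route as the paper: both rest on Auslander's formula $\tau(L)\cong(\syz\Tr L)^{*}$, the natural isomorphism \eqref{eq:Nak-lat} identifying $\Hom_A(-,A)^{*}$ with $\omega_{A/R}\otimes_A-$ on projectives, and the torsion vanishing from Lemma~\ref{lemma:semisimple} to show that replacing the minimal data by the chosen embedding $\iota$ costs nothing after applying $\Hom_R(-,R)$. The only difference is organisational: the paper packages the comparison as a commutative diagram with an application of the Snake Lemma, whereas you compare $\Tr_A(\Coker\iota)$ with $\syz\Tr_AL$ directly, which is the same computation.
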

\begin{proof}
	Applying $(-,A) \colonequals \Hom_A(-,A)$ to a minimal projective presentation of $L$
	\begin{align*}
		\begin{tikzcd}[ampersand replacement=\&]
			P_2 \ar{r}{\partial_2} \& P_1 \ar{r}{\pi} \ar{r}\& L \ar{r} \& 0
		\end{tikzcd}
	\end{align*}
	yields an exact sequence of $\op A$-modules
	\[
	\begin{tikzcd}[ampersand replacement=\&]
		0 \ar{r} \& (L,A) \ar{r}{(\pi,A)} \& 
		(P_1,A) \ar{r}{(\partial_2,A)} \&  (P_2,A)
		\ar{r} \& {\Tr {L}} \ar{r} \& 0
	\end{tikzcd}
	\]
	with  ${\Tr {L}} \colonequals 
	\Coker{(\partial_2,A)}$
	and $
	\syz{(\Tr {L})} \cong \Coker{(\pi,A)}$.
	According to \cite[Proposition~I.8.8]{Auslander:1978a} the Auslander-Reiten translation of $L$ is given by
	\[
	{ \tau(L) \colonequals (\syz(
		{\Tr {L}}))^* = \Ker ((\pi,A)^*)\,, \qquad\text{where $(-)^* \colonequals  \Hom_R(-,R)$}\,.}
	\]
	Since $\syz{(\Tr {L})}$ embeds into an $\op A$-lattice,
	it is an $\op A$-lattice as well, and thus 
	$(\pi,A)^*$ is surjective. 
	Lemma~\ref{lemma:semisimple}
	ensures the existence of an embedding $\iota\colon L \hookrightarrow P_0$ 
	and implies that
	$\Coker(\iota,A)=\Ext^1_A(\Coker \iota,A)$ is torsion over $R$. Therefore $(\Coker(\iota,A))^*$ is zero, and thus $(\iota,A)^*$ is injective.
	
	It follows that there is a commutative diagram with exact rows
	\begin{align*}
		\begin{tikzcd}[ampersand replacement=\&, column sep=1cm]
			0 \ar{r} \& \tau(L) \ar[dashed]{d}{\phi}[swap,anchor=south, rotate=90, inner sep=.5mm]{\sim} \ar{r} \& (P_1,A)^* \ar{r}{(\pi,A)^*} \ar{d}{\eta_{P_1}}[swap,anchor=south, rotate=90, inner sep=.5mm]{\sim} \& (L,A)^* \ar{r} \ar[hookrightarrow]{d}{\eta_{P_{0}} \cdot (\iota,A)^*} \& 0 \\
			0 \ar{r} \& \Ker (\omega_{A/R} \otimes_A \partial_1) \ar{r} \&
			\omega_{A/R} \otimes_A P_1 \ar{r}{\omega_{A/R} \otimes_A \partial_1} \& \omega_{A/R} \otimes_A P_{0}  \& 
		\end{tikzcd}
	\end{align*}
	where $\eta_{P_1}$ and $\eta_{P_{0}}$ are specialisations of the isomorphism
	of functors~\eqref{eq:Nak-lat}.
	The Snake Lemma
	implies bijectivity of the induced $A$-linear map $\phi$.
\end{proof}

\subsection*{From vertices and arrows to lattices}
While the statements of the previous subsection extend to a more general context, the next results concern the specific combinatorics of lattices over the gentle Gorenstein $R$-algebra $A$.

By~\eqref{eq:Nak-lat}, each vertex $j \in Q_0$ gives rise to an indecomposable projective $A$-module
$P_j =  A e_j$ as well as an indecomposable $A$-lattice
\[
I_j \colonequals  
\omega_{A/R} \otimes_A P_j \cong 
\begin{cases}
	\rad {P_j}& \text{ if $j$ is a transition vertex,}\\
	\quad P_j & \text{ if $j$ is $2$-regular,}
\end{cases}
\]
where the last 
isomorphism is a special case of Corollary~\ref{corollary:Nakayama-functor}.

The arrows of $Q$ give rise to lattices with modest properties as well.
According to previous notation, for an $A$-lattice $L$ we denote by 
$\underline{\End}_{A}(L)$ the quotient of $A$-linear endomorphisms of $L$ modulo the ideal of endomorphisms which factor through a projective $A$-module.
\begin{lemma} \label{lemma:arrow-ideals}
	For any arrow $\alpha \in Q_1$ the following statements hold.
	\begin{enumerate}
		\item \label{le:ai1}
		The left ideal $L_\alpha \colonequals  A \alpha$ is an indecomposable $A$-lattice.
		\item \label{le:ai2} For any arrow $\beta \in Q_1$ any non-isomorphism $f\colon L_\beta \to L_{\alpha} $ factors through the projective cover $\pi\colon P_{t(\alpha)} \to L_{\alpha}$, $p \mapsto p \alpha$.
		\item \label{le:ai3} It holds that $\End_{A}(L_{\alpha}) \cong R$ and $\underline{\End}_{A}(L_{\alpha}) \cong k$.
		\item \label{le:ai4} The lattice $L_{\alpha}$ is projective if and only if $t(\alpha)$ is a transition vertex. 
	\end{enumerate}
\end{lemma}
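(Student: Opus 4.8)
The plan is to work concretely with the combinatorics of the gentle quiver $(Q,I)$ and the monomial basis structure of $A$. Recall that each arrow $\alpha$ lies on an admissible cycle $c_\alpha$, and that the left ideal $L_\alpha = A\alpha$ consists of all (completed) linear combinations of paths ending with $\alpha$ on the right. The first task is to identify $L_\alpha$ with a subquotient of a projective: the surjection $\pi\colon P_{t(\alpha)} = Ae_{t(\alpha)}\to L_\alpha$ sending $p\mapsto p\alpha$ is an $A$-module map, and the plan for \eqref{le:ai1} is to show it is an isomorphism, i.e. that right multiplication by $\alpha$ is injective on $P_{t(\alpha)}$. This is where property \ref{eq:Ge3} of gentle algebras enters: a path $p$ with $s(p)=t(\alpha)$ satisfies $p\alpha\in I$ only in a controlled way, and since each arrow lies on an admissible cycle there are no ``absorbing'' relations — one shows directly that if $p\alpha=0$ in $A$ then $p=0$. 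Granting injectivity of $\pi$, we get $L_\alpha\cong P_{t(\alpha)}$ as an $A$-module in the first instance; but wait — this would make every $L_\alpha$ projective, contradicting \eqref{le:ai4}. So the subtlety is that $\pi$ is \emph{not} always injective: it is injective exactly when no path through $t(\alpha)$ is killed upon postcomposition with $\alpha$, which by the structure of admissible cycles happens precisely when $t(\alpha)$ is a transition vertex. When $t(\alpha)$ is $2$-regular, $\ker\pi$ is generated by $\sigma(\alpha)\partial$-type elements and $L_\alpha$ is a proper quotient of $P_{t(\alpha)}$; indecomposability of $L_\alpha$ then follows because $P_{t(\alpha)}$ is indecomposable with local endomorphism ring and $L_\alpha$ is a quotient by a submodule contained in the radical-squared, so $\End_A(L_\alpha)$ is again local.

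For \eqref{le:ai4}, the plan is: if $t(\alpha)$ is a transition vertex, then by the above $\pi$ is an isomorphism and $L_\alpha\cong P_{t(\alpha)}$ is projective. Conversely, if $t(\alpha)$ is $2$-regular, then $L_\alpha$ is a proper quotient of the indecomposable projective $P_{t(\alpha)}$; since $A$ is semiperfect (being a finite algebra over the complete local ring $R$), $P_{t(\alpha)}$ is the unique indecomposable projective with top $S_{t(\alpha)}$, so $L_\alpha$ — having the same top but smaller dimension over $R$ — cannot be projective. One should double-check $L_\alpha$ is nonzero, which is clear since $\alpha\notin I$.

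For \eqref{le:ai2}, fix arrows $\alpha,\beta$ and an $A$-linear map $f\colon L_\beta\to L_\alpha$ that is not an isomorphism. Composing with the embedding $L_\beta\hookrightarrow P_{t(\beta)}$ is awkward; instead use the projective cover $P_{t(\beta)}\twoheadrightarrow L_\beta$ and lift $f$ to a map $P_{t(\beta)}\to P_{t(\alpha)}$ (projectivity of $P_{t(\beta)}$ and surjectivity of $\pi\colon P_{t(\alpha)}\to L_\alpha$). Such a lift is right multiplication by an element $u\in e_{t(\beta)}Ae_{t(\alpha)}$, landing in the radical of $\End$ because $f$ is not an isomorphism (using that $\End_A(L_\alpha)$ is local — proved in \eqref{le:ai3}), hence $f$ factors through $\pi$, as claimed. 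The case $L_\beta=L_\alpha$ is included automatically; when $t(\beta)\ne t(\alpha)$ the Hom space is automatically in the radical.

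For \eqref{le:ai3}, compute $\End_A(L_\alpha)$ directly. An endomorphism is determined by the image of $\alpha$, which must be an element of $L_\alpha$ annihilated by the same left relations as $\alpha$; by gentleness and the admissible-cycle hypothesis the only such elements are $R$-multiples of $\alpha$ (the relevant coefficient ring being $R=k[c]$ or $k\llbracket c\rrbracket$), giving $\End_A(L_\alpha)\cong R$ — in particular local, which retroactively justifies the argument for \eqref{le:ai2}. For the stable endomorphism ring: the endomorphisms factoring through a projective form the ideal of $R$ generated by multiplication by $c$ (equivalently, by the admissible cycle $c_\alpha$, which does factor through $P_{t(\alpha)}$ or the relevant projective cover), so $\underline{\End}_A(L_\alpha)\cong R/cR\cong k$. \textbf{Main obstacle.} The delicate point is the dichotomy underlying \eqref{le:ai1} and \eqref{le:ai4}: correctly pinning down $\ker(\pi\colon P_{t(\alpha)}\to L_\alpha)$ as a function of the type of the vertex $t(\alpha)$, and verifying that it is nonzero and contained in $\rad^2 P_{t(\alpha)}$ exactly in the $2$-regular case. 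This requires a careful bookkeeping of which paths starting at $t(\alpha)$ are killed by right multiplication by $\alpha$, using \ref{eq:Ge3}–\ref{eq:Ge4} and the admissible-cycle condition to rule out degenerate behaviour; everything else is then formal from semiperfectness and locality of the relevant endomorphism rings.
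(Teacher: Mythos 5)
Your overall strategy---direct bookkeeping with the monomial structure of $A$---is the same as the paper's, whose proof is just a terse list of the key observations, and your treatment of \eqref{le:ai1}, \eqref{le:ai3} and \eqref{le:ai4} is essentially sound: $L_\alpha=A\alpha$ is a lattice because submodules of projectives are lattices, $\Ker\pi$ equals $A\varrho(\alpha)$ (where $\varrho(\alpha)$ is the arrow with $\varrho(\alpha)\alpha\in I$) at a $2$-regular target and $0$ at a transition vertex, $\End_A(L_\alpha)\cong k\llbracket c\rrbracket$ because only $R$-multiples of $\alpha$ survive the left-annihilator constraint, and $L_\alpha$ is projective exactly when the projective cover $\pi$ has zero kernel. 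Two corrections, though. First, $\Ker\pi=A\varrho(\alpha)$ is generated by an arrow, so it sits in $\rad{P_{t(\alpha)}}$ but \emph{not} in the square of the radical; and the inference ``quotient by a submodule of the radical squared has local endomorphism ring'' is not a theorem anyway. Indecomposability should instead be deduced from the simple top $S_{t(\alpha)}$ of $L_\alpha$, or from your own computation $\End_A(L_\alpha)\cong R$ in \eqref{le:ai3}. Second, when $t(\alpha)$ is a transition vertex $L_\alpha$ is projective, so every endomorphism factors through a projective and the stable endomorphism ring vanishes; your uniform conclusion $\underline{\End}_A(L_\alpha)\cong R/(c)$ (like the statement itself) really pertains to the non-projective case, the only one used later.

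The genuine gap is in \eqref{le:ai2}. Lifting along the projective covers only produces $\tilde f\colon P_{t(\beta)}\to P_{t(\alpha)}$ with $\pi\tilde f=f\pi_\beta$, which is a factorisation of $f\pi_\beta$, not of $f$. To factor $f$ itself through $\pi$ you must descend $\tilde f$ along $\pi_\beta$, i.e.\ exhibit $g\colon L_\beta\to P_{t(\alpha)}$ with $\pi g=f$; concretely, writing $f(\beta)=u\alpha$ with $u\in e_{t(\beta)}Ae_{t(\alpha)}$, you need a choice of $u$ with $\varrho(\beta)u=0$ so that $x\beta\mapsto xu$ is well defined (when $t(\beta)$ is a transition vertex $L_\beta$ is projective and there is nothing to do). Your appeal to locality of $\End_A(L_\alpha)$---``the lift lands in the radical, hence $f$ factors through $\pi$''---does not give this: membership in the radical says nothing about killing $\Ker\pi_\beta$. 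The non-isomorphism hypothesis is used precisely here. Expanding $f(\beta)$ in the basis of nonzero paths ending in $\alpha$, the constraint $\varrho(\beta)f(\beta)=f(\varrho(\beta)\beta)=0$ already forces $\varrho(\beta)$ to kill the truncations of all basis paths of length at least two occurring in $f(\beta)$; the only possible obstruction is a nonzero coefficient on $\alpha$ itself together with $\varrho(\beta)\alpha\in I$, i.e.\ $\varrho(\beta)=\varrho(\alpha)$, which by \ref{eq:Ge4} forces $\beta=\alpha$, and then a nonzero coefficient on $\alpha$ makes $f$ a unit in $\End_A(L_\alpha)\cong k\llbracket c\rrbracket$, hence an isomorphism. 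This path-level verification is the actual content behind the paper's one-line remark that $f$ is right multiplication by $f(\beta)\in e_{t(\beta)}A\alpha$, and it is missing from your argument.
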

\begin{proof} The claims are consequences of the following observations.
	
	\eqref{le:ai1} 
	The category $\lat A$ is closed under submodules.
	
	\eqref{le:ai2} The map $f$ is given by the right multiplication with $f(\beta) \in e_{t(\beta)} A \alpha$. 
	
	\eqref{le:ai3}
	In case $\alpha = \beta$, it holds that $f(\alpha) \in k\llbracket c_{\alpha} \rrbracket$.
	
	\eqref{le:ai4}
	The epimorphism $\pi$ splits if and only if $\Ker \pi = 0$.
\end{proof}
Next, we compute the Auslander-Reiten translation $\tau$ on all non-projective arrow ideals.
\begin{lemma}\label{lemma:AR-sequences}
	Let $\alpha \in Q_1$ such that $t(\alpha)$ is not a transition vertex.
	Let $\varrho(\alpha)$ denote the unique arrow in $(Q,I)$ 
	with $s(\varrho(\alpha)) = t(\alpha)$ and $\varrho(\alpha) \alpha \in I$.
	Then the sequence
	\begin{align}
		\label{eq:syzygy-sequence}
		\begin{tikzcd}[ampersand replacement=\&] 0 \arrow[r] \& L_{\varrho(\alpha)} \arrow[r] \& P_{t(\alpha)} \arrow[r, "\pi"] \& L_{\alpha} \arrow[r] \& 0 \end{tikzcd}
	\end{align}
	is an almost-split sequence in the category of $A$-lattices.
	So $\tau(L_{\alpha}) \cong L_{\varrho(\alpha)} \cong \Omega(L_\alpha)$.
\end{lemma}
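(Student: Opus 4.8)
The plan is to first establish exactness of the displayed sequence together with the identification $L_{\varrho(\alpha)}\cong\Omega(L_\alpha)$, then to compute the Auslander--Reiten translate $\tau(L_\alpha)$ via Proposition~\ref{proposition:AR-translate} and Corollary~\ref{corollary:Nakayama-functor}, and finally to pin down the sequence itself by showing that $\Ext^1_A(L_\alpha,L_{\varrho(\alpha)})$ is one-dimensional over $k$. For the first point, consider the map $\pi\colon P_{t(\alpha)}=Ae_{t(\alpha)}\to L_\alpha=A\alpha$, $p\mapsto p\alpha$. It is surjective, and since $L_\alpha$ is generated by the single element $\alpha\in e_{t(\alpha)}A$ its top is the simple module $S_{t(\alpha)}$, so $\pi$ is a projective cover. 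For its kernel, note that $t(\alpha)$ is $2$-regular (it is not a transition vertex, by hypothesis), so of the two arrows starting at $t(\alpha)$ exactly one, namely $\varrho(\alpha)$, composes with $\alpha$ into $I$; as $\varrho(\alpha)\alpha$ is the only length-two subpath of $p\alpha$ not already occurring in $p$, a basis path $p$ with $s(p)=t(\alpha)$ satisfies $p\alpha=0$ in $A$ precisely when it begins with $\varrho(\alpha)$. Hence $\Ker\pi=A\varrho(\alpha)=L_{\varrho(\alpha)}\subseteq\rad{P_{t(\alpha)}}$, which gives the exact sequence and, $\pi$ being a projective cover, the isomorphism $L_{\varrho(\alpha)}\cong\Omega(L_\alpha)$.

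Next I would apply Proposition~\ref{proposition:AR-translate}. Using the projective cover $\pi$ above and the embedding $\iota\colon L_\alpha=A\alpha\hookrightarrow Ae_{s(\alpha)}=P_{s(\alpha)}$ (legitimate since $\alpha=\alpha e_{s(\alpha)}$), the relevant map is $\partial_1=\iota\pi\colon P_{t(\alpha)}\to P_{s(\alpha)}$, right multiplication by $\alpha$. By Corollary~\ref{corollary:Nakayama-functor}, since $t(\alpha)$ is $2$-regular we have $\omega_{A/R}\otimes_A P_{t(\alpha)}\cong P_{t(\alpha)}$, while $\omega_{A/R}\otimes_A P_{s(\alpha)}$ equals $P_{s(\alpha)}$ if $s(\alpha)$ is $2$-regular and $\rad{P_{s(\alpha)}}$ if $s(\alpha)$ is a transition vertex; in either case it contains $A\alpha$ as a submodule. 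The induced map $\omega_{A/R}\otimes_A\partial_1$ is, up to a sign that does not affect its kernel, again right multiplication by $\alpha$, so its kernel coincides with $\Ker(P_{t(\alpha)}\twoheadrightarrow A\alpha)=L_{\varrho(\alpha)}$. Thus $\tau(L_\alpha)\cong L_{\varrho(\alpha)}$. Since $L_\alpha$ is an indecomposable $A$-lattice which is not projective by Lemma~\ref{lemma:arrow-ideals}~\eqref{le:ai1} and~\eqref{le:ai4}, there is an almost-split sequence $0\to L_{\varrho(\alpha)}\to E\to L_\alpha\to 0$ in $\lat A$.

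It remains to identify the displayed sequence with this almost-split one, for which I would compute $\Ext^1_A(L_\alpha,L_{\varrho(\alpha)})$. Applying $\Hom_A(-,L_{\varrho(\alpha)})$ to the displayed sequence and using $\Ext^1_A(P_{t(\alpha)},-)=0$ yields $\Ext^1_A(L_\alpha,L_{\varrho(\alpha)})\cong\Coker\big(\Hom_A(P_{t(\alpha)},L_{\varrho(\alpha)})\to\End_A(L_{\varrho(\alpha)})\big)$, the map being restriction along $L_{\varrho(\alpha)}\hookrightarrow P_{t(\alpha)}$. A homomorphism $P_{t(\alpha)}\to L_{\varrho(\alpha)}$ is right multiplication by an element of $e_{t(\alpha)}A\varrho(\alpha)$; because every arrow lies on an admissible cycle, such elements are precisely the powers $c_{\varrho(\alpha)}^n$ with $n\ge 1$ of the admissible cycle $c_{\varrho(\alpha)}$ beginning with $\varrho(\alpha)$, and right multiplication by $c_{\varrho(\alpha)}$ coincides with the action of the central element $c$ on $L_{\varrho(\alpha)}$. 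Under the isomorphism $\End_A(L_{\varrho(\alpha)})\cong R$ of Lemma~\ref{lemma:arrow-ideals}~\eqref{le:ai3}, the image of the restriction map is therefore the maximal ideal $(c)$, so $\Ext^1_A(L_\alpha,L_{\varrho(\alpha)})\cong R/(c)\cong k$. The displayed sequence and the almost-split sequence of the previous paragraph are both nonzero elements of this one-dimensional space — the former is non-split since the indecomposable projective $P_{t(\alpha)}$ cannot decompose as $L_{\varrho(\alpha)}\oplus L_\alpha$ — hence they are isomorphic as extensions. This proves that the displayed sequence is almost-split, whence $\tau(L_\alpha)\cong L_{\varrho(\alpha)}\cong\Omega(L_\alpha)$.

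The main obstacle is this final step: identifying $\Hom_A(P_{t(\alpha)},L_{\varrho(\alpha)})$ with the cyclic paths at $t(\alpha)$ beginning with $\varrho(\alpha)$, checking via the gentle relations that these are exactly the powers of $c_{\varrho(\alpha)}$, and matching right multiplication by $c_{\varrho(\alpha)}$ with the central action of $c$ — the last point requiring a short analysis of the local configuration at $t(\alpha)$, including the degenerate possibility that $\varrho(\alpha)$ or $\sigma(\alpha)$ is a loop. By contrast, exactness of the sequence is immediate from the combinatorics of the relations, and the computation of $\tau(L_\alpha)$ is routine once Corollary~\ref{corollary:Nakayama-functor} is in hand.
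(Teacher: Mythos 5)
Your first two steps coincide with the paper's proof: exactness of \eqref{eq:syzygy-sequence} with $\Ker\pi=L_{\varrho(\alpha)}\cong\Omega(L_\alpha)$ follows from the relations, and $\tau(L_\alpha)\cong L_{\varrho(\alpha)}$ is obtained exactly as you do, from Proposition~\ref{proposition:AR-translate} and Corollary~\ref{corollary:Nakayama-functor} applied to $\partial_1=$ right multiplication by $\alpha$. Where you diverge is the last step: the paper gets one-dimensionality of the relevant extension group at once from the Auslander--Reiten formula, $\Ext^1_A(L_\alpha,\tau(L_\alpha))\cong\underline{\End}_A(L_\alpha)^\vee\cong k$, using Lemma~\ref{lemma:arrow-ideals}~\eqref{le:ai3}, and then concludes as you do that any non-split sequence with these end terms, in particular \eqref{eq:syzygy-sequence}, is almost split. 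Your alternative, computing $\Ext^1_A(L_\alpha,L_{\varrho(\alpha)})$ directly from the presentation, is legitimate and yields the same conclusion, but it is longer and forces you into exactly the combinatorial care that the AR-formula route avoids.

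Within that direct computation there is one inaccurate claim you should repair: the elements of $\Hom_A(P_{t(\alpha)},L_{\varrho(\alpha)})\cong e_{t(\alpha)}A\varrho(\alpha)$ are \emph{not} in general just the powers $c_{\varrho(\alpha)}^n$. An admissible cycle may pass through a $2$-regular vertex twice (this already happens at the vertices $2$ and $5$ in Example~\ref{example:prototype}), and then there are additional basis paths $q\varrho(\alpha)\notin I$ ending at $t(\alpha)$ whose length is not a multiple of $\ell_{\varrho(\alpha)}$. Your conclusion survives because these extra homomorphisms die under restriction to $L_{\varrho(\alpha)}$: if $q\varrho(\alpha)$ ends at an intermediate visit of the cycle to $t(\alpha)$, its last arrow $\delta$ is the incoming arrow with $\varrho(\alpha)\delta\in I$ (the other incoming arrow is the one closing the admissible cycle), so $\varrho(\alpha)\cdot q\varrho(\alpha)=0$. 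Hence the image of the restriction map $\Hom_A(P_{t(\alpha)},L_{\varrho(\alpha)})\to\End_A(L_{\varrho(\alpha)})\cong R$ is still the ideal $(c)$ and the cokernel is $k$, but as written your identification of the Hom space is false in general and needs this extra argument. With that fix (and the existence of almost split sequences in $\lat A$, due to Auslander, which you use implicitly), your proof is correct.
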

\begin{proof}
	Since $t(\alpha)$ is $2$-regular, the arrow $\beta \colonequals \varrho(\alpha)$ is well-defined
	and  $L_{\alpha}$ is a non-projective $A$-lattice by Lemma~\ref{lemma:arrow-ideals}. 
	The relations in $(Q,I)$
	yield that $\Ker \pi = L_{\beta}$. 
	Since the composition $\partial_1$ of the projective cover $\pi\colon P_{t(\alpha)} \twoheadrightarrow L_{\alpha}$ 
	with the embedding $L_{\alpha} \hookrightarrow P_{s(\alpha)}$ is given by the right multiplication
	with $\alpha$,
	Proposition~\ref{proposition:AR-translate}
	and Corollary~\ref{corollary:Nakayama-functor}
	imply that
	\begin{align*}
		\begin{tikzcd}[ampersand replacement=\&]
			\tau(L_{\alpha}) \cong 
			\Ker (\omega_{A/R}\otimes_A \partial_1 ) \cong \Ker(
			P_{t(\alpha)} \ar{r}{\cdot {(\pm \alpha)}}
			\& I_{s(\alpha)})
			\cong L_{\beta} \cong \Omega(L_{\alpha})\,.
		\end{tikzcd}
	\end{align*}
	The Auslander-Reiten formula yields that $\Ext^1_A(L_{\alpha},\tau(L_{\alpha} )) \cong \underline{\End}_A(L_{\alpha})^{\vee} \cong k$.
	This implies that any non-split exact sequence 
	$0 \to \tau(L_{\alpha}) \to  E \to L_{\alpha} \to 0$ of $A$-lattices, and thus
	\eqref{eq:syzygy-sequence} are
	almost-split. 
\end{proof}

The last lemma suggests to compute the projective resolutions of arrow ideals.

\begin{chunk}\label{chunk:projective-resolutions}
	Any arrow $\alpha\in Q_1$ is the beginning of a unique path $d_\alpha \colonequals \alpha_n \ldots \alpha_2 \alpha_1$, introduced in \ref{ch:differential},
	which determines the
	projective resolution 
	of $L_{\alpha}$ 
	as follows.
	\begin{enumerate}
		\item If $d_{\alpha}$ is a differential cycle, 
		the projective resolution of $L_{\alpha}$ is periodic:
		\[
		\begin{tikzcd}
			\ldots
			P_{t(\alpha_1)} \ar{r}{\cdot \alpha_1} &	
			P_{t(\alpha_n)} \ar{r}{\cdot \alpha_n} & 
			\quad \ldots
			P_{t(\alpha_{3})} \ar{r}{\cdot \alpha_{3}} & 
			P_{t(\alpha_2)} \ar{r}{\cdot \alpha_2} & P_{t(\alpha_1)} \end{tikzcd}
		\]
		\item Otherwise, $d_{\alpha}$ is a differential walk and the projective resolution of $L_{\alpha}$ is finite:
		\[
		\begin{tikzcd}
			\quad \  \ldots
			0 \ar{r} &	P_{t(\alpha_n)} \ar{r}{\cdot \alpha_n} &
			\quad \ldots P_{t(\alpha_{3})} \ar{r}{\cdot \alpha_3} &  P_{t(\alpha_2)} \ar{r}{\cdot \alpha_2} & P_{t(\alpha_1)} \end{tikzcd}
		\]
	\end{enumerate}
\end{chunk}

Adding the syzygies to projective resolutions of arrow ideals and certain radical embeddings yields the complete picture of the category $\lat A$.
In particular, any indecomposable $A$-lattice is projective or an arrow ideal.
\begin{proposition}\label{proposition:AR-quiver}
	The Auslander-Reiten quiver of $A$-lattices is given by the syzygy sequences of the form \eqref{eq:syzygy-sequence}
	together with the inclusion ${\rad {P_j}} \hookrightarrow P_j$ for each transition vertex $j$ of $Q$.
\end{proposition}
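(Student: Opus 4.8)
The plan is to package the combinatorics already assembled — the arrow ideals, their syzygies, and the Auslander--Reiten translation — into a description of the whole category $\lat A$, and then to show that nothing has been missed. First I would recall from Lemma~\ref{lemma:arrow-ideals} that the modules $P_j$ ($j\in Q_0$) and $L_\alpha$ ($\alpha\in Q_1$) are indecomposable $A$-lattices, that $L_\alpha$ is projective exactly when $t(\alpha)$ is a transition vertex, and that every indecomposable projective $A$-module occurs among the $P_j$. Next I would read off the minimal right almost split maps into the projectives from the shape of $J_A$: at a $2$-regular vertex $j$, with $\alpha,\alpha'$ the two arrows ending at $j$, one has $\rad P_j=J_Ae_j=L_{\varrho(\alpha)}\oplus L_{\varrho(\alpha')}$, while at a transition vertex $j$ one has $\rad P_j=J_Ae_j=L_{\gamma(j)}$, a single indecomposable. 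Combining this with Lemma~\ref{lemma:AR-sequences}, which supplies, for every arrow $\alpha$ with $t(\alpha)$ $2$-regular, the almost split sequence \eqref{eq:syzygy-sequence} with middle term the projective $P_{t(\alpha)}$, one checks that at a $2$-regular vertex $j$ the radical inclusion $\rad P_j\hookrightarrow P_j$ is already recorded by the two sequences \eqref{eq:syzygy-sequence} ending in $L_\alpha$ and $L_{\alpha'}$, whereas at a transition vertex $P_j$ is not the middle term of any such sequence and the inclusion has to be listed separately. Hence the full subquiver of the Auslander--Reiten quiver of $\lat A$ supported on $\{P_j\}\cup\{L_\alpha\}$ is closed under forming sink maps and almost split sequences, i.e.\ it is a union of connected components, made up precisely of the syzygy meshes \eqref{eq:syzygy-sequence} and the inclusions $\rad P_j\hookrightarrow P_j$ at transition vertices.

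The substantive point is then that these are \emph{all} the indecomposable $A$-lattices, equivalently that $\lat A$ has finite lattice type. For this I would exploit the overorder. By Lemma~\ref{le:ovr}\,\eqref{le:ovr2} and Lemma~\ref{le:global-gentle} there is an embedding $A\hookrightarrow B$ of $R$-orders with $\iota(J_A)=J_B$, and, under the running hypothesis and Proposition~\ref{pr:fp}, $B\cong\prod_vT_{\ell_v}(R)$ is a product of standard hereditary $R$-orders (each $T_{\ell_v}(R)$ being an Iwahori-type hereditary order). Since moreover $\fm A\subseteq J_A$ (every admissible cycle has positive length), one gets $\rad A=J_A=J_B=\rad B$, so $A$ is a B\"ackstr\"om order over the hereditary order $B$. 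Consequently every $A$-lattice $M$ sits inside the $B$-lattice $N\colonequals BM$ with $J_BN\subseteq M\subseteq N$ and $BM=N$, so that $M$ is determined by $N$ — a direct sum of the finitely many ``partial flag'' lattices of the factors $T_{\ell_v}(R)$ — together with the $A/J_A$-submodule $M/J_BN$ of $N/J_BN$. The gentle conditions \ref{eq:Ge3}--\ref{eq:Ge4} and the gluing recipe for $Q'$ turn the resulting classification into a representation-finite matrix problem — concretely, its associated separated quiver is a finite disjoint union of quivers of type $\mathbb{A}$ — whose indecomposable solutions correspond on the lattice side to exactly the $L_\alpha$ and $P_j$, in agreement with the periodic and finite projective resolutions of \ref{chunk:projective-resolutions}. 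Alternatively, one may quote the classification of indecomposables in the homotopy category of acyclic complexes of projectives due to Bennett-Tennenhaus~\cite{Bennett-Tennenhaus:2017}, or of those in $\dbcat A$ in the spirit of Burban and Drozd~\cite{Burban--Drozd:2004,Burban--Drozd:2017}, and transport it along the equivalence $\smcm A\simeq\dsing(A)$ of Theorem~\ref{th:buchweitz} together with the observation that $\lat A$ adds to $\smcm A$ only the (finitely many) projectives.

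With completeness in hand, the components singled out in the first paragraph exhaust the Auslander--Reiten quiver of $\lat A$, so that quiver is built exactly from the syzygy sequences \eqref{eq:syzygy-sequence} and the radical inclusions $\rad P_j\hookrightarrow P_j$ for transition vertices $j$, which is the assertion. The one genuine obstacle is the completeness step: the Auslander--Reiten data of the first paragraph only produce a finite closed-up piece of the Auslander--Reiten quiver, and ruling out further components — that is, ruling out parametrised ``band'' families of lattices — is precisely where the admissible-cycle hypothesis, and the fact that the overorder $B$ is hereditary rather than merely Gorenstein, enter.
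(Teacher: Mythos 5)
Your first paragraph assembles the same data as the paper does: the meshes \eqref{eq:syzygy-sequence} from Lemma~\ref{lemma:AR-sequences} together with the radical embeddings at transition vertices, giving a finite piece of $\Gamma_{\lat A}$ closed under almost split sequences and sink maps. Where you diverge is the completeness step. The paper concludes by quoting Wiedemann's Brauer--Thrall~I for orders \cite{Wiedemann:1981}: the finite piece just constructed is a full connected component of the Auslander--Reiten quiver, and over a complete discrete valuation ring such a finite component must be all of $\Gamma_{\lat A}$. You instead observe that $A$ is a B\"ackstr\"om order over the hereditary overorder $B\cong\prod_v T_{\ell_v}(R)$ (using $\rad A=J_A=J_B=\rad B$) and reduce to the associated subspace/matrix problem, whose underlying bipartite graph is a disjoint union of graphs of type $\mathbb{A}_1$, $\mathbb{A}_2$, $\mathbb{A}_3$, hence representation finite. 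This is exactly the alternative the paper's own remark after the proposition points to (Green--Reiner, Ringel--Roggenkamp, or the Drozd--Kiri\v{c}enko rejection lemma), so it is a legitimate, genuinely different route; its cost is that the bookkeeping you leave implicit --- that the admissible indecomposable solutions of the matrix problem correspond precisely to the $P_j$ and the $L_\alpha$ --- is where the real work sits, whereas Wiedemann's theorem lets the paper skip any classification of lattices and argue purely with the component already built.

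One concrete error to fix: your fallback alternative, transporting classifications from $\KacProj{A}$ or $\dbcat A$ through $\smcm A\simeq\dsing(A)$ ``together with the observation that $\lat A$ adds to $\smcm A$ only the (finitely many) projectives,'' does not work as stated. The objects of $\smcm A$ (plus projectives) are only the maximal Cohen--Macaulay $A$-modules, and $\lat A$ is strictly larger in general: by Lemma~\ref{lemma:smcm-arrows} an arrow ideal $L_\alpha$ is a non-projective maximal Cohen--Macaulay module only when $\alpha$ lies on a differential cycle, so every $L_\alpha$ with $\alpha$ on a differential walk (and $t(\alpha)$ $2$-regular) is an indecomposable lattice invisible in $\dsing(A)$. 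Knowing the indecomposables of the singularity category therefore cannot by itself bound the indecomposable lattices; either the B\"ackstr\"om reduction or Wiedemann's theorem is genuinely needed.
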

\begin{proof}
	For simplicity of the presentation, assume that the quiver $Q$ is connected.
	Each differential cycle in $(Q,I)$ induces a periodic $\tau$-orbit in the Auslander-Reiten quiver $\Gamma_{\lat A}$.
	Each differential walk between transition vertices 
	gives rise to a full finite $\tau$-orbit.
	Completing these orbits by radical embeddings of indecomposable projectives and projective covers yields a full connected component $\mathcal{C}$ of the quiver $\Gamma_{\lat A}$.
	A result of Wiedemann \cite{Wiedemann:1981}
	implies that $\mathcal{C} = \Gamma_{\lat A}$.
\end{proof}
\begin{example}
	The Auslander-Reiten quiver of $A$-lattices in Example~\ref{example:prototype} has three finite and two periodic $\tau$-orbits, indicated by the dashed arrows:
	\[
	{\begin{tikzcd}[row sep=0.25cm, column sep=0.47cm, cells={outer sep=1pt, inner sep=1pt}]
			& & & 
			& P_3 \ar[hookleftarrow]{r} & L_{\alpha(3)} \ar[dashed]{dd} & P_4 \ar[twoheadrightarrow]{l} \ar[hookleftarrow]{rd} \ar[twoheadrightarrow]{dd}
			\\
			& L_{\alpha(1)} \ar[yshift=3pt, xshift=-5pt, dashed]{ld} & &
			& & & 
			& L_{\alpha(4)} \ar[dashed]{dd}
			\\
			P_1  \ar[yshift=-2pt, hookrightarrow]{rr} \ar[xshift=3pt, hookleftarrow]{ru} & &  P_2 \ar[hookleftarrow]{rruu} \ar[twoheadrightarrow]{rd} \ar[twoheadrightarrow]{lu} &
			& & L_{\alpha(11)} \ar[dashed]{ldd} \ar[hookrightarrow]{ruu} &
			L_{\alpha(10)} \ar[hookrightarrow]{dd} \ar[dashed]{ru} & & P_5 \ar[yshift=-3pt, twoheadrightarrow]{r} \ar[twoheadrightarrow]{lu} \ar[yshift=3pt, hookleftarrow]{r} & 
			L_{\alpha(5)} \ar[looseness=4, out=-135, in=-45,dashed] 
			\\
			&&& L_{\alpha(8)} \ar[dashed]{ruuu} \ar[hookrightarrow]{rd} 
			&&&
			& L_{\alpha(6)} \ar[hookrightarrow]{ru} \ar[dashed]{lu}
			\\
			&&&
			& P_7   \ar[hookrightarrow]{rr} && P_6 \ar[twoheadrightarrow]{luu} \ar[twoheadrightarrow]{ru}
			&& \end{tikzcd}
	}
	\]
\end{example}
\begin{remark}
	It is also possible to apply the Drozd--Kiri\v{c}enko Rejection Lemma \cite{Drozd--Kirichenko:1972} (see also \cite{Drozd:2020}) to derive Proposition~\ref{proposition:AR-quiver}.
	Alternatively, one may determine the indecomposable $A$-lattices using the works \cite{Green--Reiner:1978} or \cite{Ringel--Roggenkamp:1979} on \emph{B\"ackstr\"om orders}, and then compute their Auslander-Reiten quiver via a method by Roggenkamp \cite{Roggenkamp:1983}. 
\end{remark}

\subsection*{Maximal Cohen--Macaulay modules and the singularity category}

According to Lemma~\ref{le:mcm-base-independence}, we may define \emph{maximal Cohen--Macaulay $A$-modules} as $A$-lattices $M$ satisfying 
$\Ext^i_A(M,A) = 0$ for $i \geq 1$.
As in Section~\ref{se:maximal-cohen-macaulay-modules}, we denote by $\smcm A$ the stable category of such modules.

The projective resolutions computed in  \ref{chunk:projective-resolutions}
can be used to show the following.
\begin{lemma}\label{lemma:smcm-arrows}
	An arrow $\alpha \in Q_1$ appears in differential cycle if and only if its left ideal $L_{\alpha}$ is a non-projective maximal Cohen--Macaulay $A$-module. \qed
\end{lemma}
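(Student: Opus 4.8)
The plan is to read everything off the explicit projective resolutions of arrow ideals collected in \ref{chunk:projective-resolutions}. First I would record a purely combinatorial reduction: a cyclic shift of a differential cycle is again a differential cycle, because by the uniqueness built into the construction in \ref{ch:differential} (which rests on \ref{eq:Ge3}) the path $d_{\beta_j}$ attached to the $j$-th arrow $\beta_j$ of a differential cycle $\beta_m\cdots\beta_1$ is forced to be $\beta_{j-1}\cdots\beta_1\beta_m\cdots\beta_j$, again a cyclic path closing with a relation. Hence ``$\alpha$ appears in a differential cycle'' is equivalent to ``$d_\alpha$ is itself a differential cycle'', and it suffices to prove that $d_\alpha$ is a differential cycle if and only if $L_\alpha$ is non-projective and maximal Cohen--Macaulay.

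For the ``only if'' direction, suppose $d_\alpha=\alpha_n\cdots\alpha_1$ is a differential cycle. By \ref{chunk:projective-resolutions}(1) the minimal projective resolution of $L_\alpha$ is periodic of period $n$, and its syzygies cycle through $L_{\alpha_1},\dots,L_{\alpha_n}$; in particular $\syz^n_A(L_\alpha)\cong L_\alpha$, so $L_\alpha$ has infinite projective dimension and is not projective. Since $A$ is a Gorenstein $R$-algebra, the theorem recalled after Lemma~\ref{le:mcm-base-independence} provides an integer $N$ such that $\syz^j_A(L_\alpha)$ is maximal Cohen--Macaulay for all $j\ge N$; choosing $k$ with $kn\ge N$ and invoking periodicity, $L_\alpha\cong\syz^{kn}_A(L_\alpha)$ is a (non-projective) maximal Cohen--Macaulay $A$-module.

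For the ``if'' direction, assume $L_\alpha$ is non-projective and maximal Cohen--Macaulay. By Lemma~\ref{lemma:arrow-ideals}(4) the vertex $t(\alpha)$ is not a transition vertex, so it is $2$-regular; hence there is an arrow $\alpha_2$ with $\alpha_2\alpha\in I$, and $d_\alpha=\alpha_n\cdots\alpha_1$ has length $n\ge 2$. Suppose, for a contradiction, that $d_\alpha$ is a differential walk. Then \ref{chunk:projective-resolutions}(2) supplies the finite minimal resolution $0\to P_{t(\alpha_n)}\xrightarrow{\cdot\alpha_n}P_{t(\alpha_{n-1})}\to\cdots\to P_{t(\alpha_1)}\to L_\alpha\to 0$. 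Applying $\Hom_A(-,A)$ and identifying $\Hom_A(P_{t(\alpha_i)},A)$ with $e_{t(\alpha_i)}A$, the differential $\cdot\alpha_i$ becomes left multiplication by $\alpha_i$, so $\Ext^*_A(L_\alpha,A)$ is computed by the complex $e_{t(\alpha_1)}A\to e_{t(\alpha_2)}A\to\cdots\to e_{t(\alpha_n)}A\to 0$ whose last map is $\alpha_n\cdot$. As $L_\alpha$ is maximal Cohen--Macaulay this complex is exact in positive cohomological degrees, so in degree $n-1\ge 1$ the map $e_{t(\alpha_{n-1})}A\xrightarrow{\alpha_n\cdot}e_{t(\alpha_n)}A$ must be surjective. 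But its image lies in $\alpha_nA\subseteq J_A$, the arrow ideal, which does not contain the idempotent $e_{t(\alpha_n)}\in e_{t(\alpha_n)}A$; this contradiction forces $d_\alpha$ to be a differential cycle, i.e.\ $\alpha$ appears in a differential cycle.

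The only slightly fiddly point is the bookkeeping in the ``if'' direction: one must check that $\Hom_A(-,A)$ converts the right-multiplication differentials $\cdot\alpha_i$ of the resolution into left multiplications on the right modules $e_{t(\alpha_i)}A$, and then note that these images sit inside the arrow ideal, which is exactly what obstructs surjectivity. Beyond that, the argument is a direct appeal to \ref{chunk:projective-resolutions}, Lemma~\ref{lemma:arrow-ideals}, and the Gorenstein machinery of Section~\ref{se:maximal-cohen-macaulay-modules}.
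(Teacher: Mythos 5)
Your proof is correct and follows exactly the route the paper intends: the lemma is stated with the remark that the projective resolutions in \ref{chunk:projective-resolutions} suffice, and you flesh this out by using the periodic resolution together with the Gorenstein syzygy theorem for the cycle case, and the finite resolution plus the nonvanishing of the top $\Ext$ against $A$ (via Lemma~\ref{lemma:arrow-ideals}) for the walk case. The only cosmetic remark is that non-projectivity of $L_\alpha$ in the cycle case can be read off directly from Lemma~\ref{lemma:arrow-ideals}~\eqref{le:ai4}, rather than from minimality of the periodic resolution, but your argument is sound as written.
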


According to Propositions~\ref{pr:fp}~and~\ref{pr:gentle-gorenstein}, $A$ is a Gorenstein $R$-algebra. 
Moreover, Lemma~\ref{le:global-gentle}~\eqref{le:gg3} implies that $A$ has an isolated singularity.
In particular, the dualising bimodule of $A$ gives rise to a Serre functor on $\dsing(A)$ by Theorem~\ref{theorem:serre-duality}.
\begin{theorem}\label{th:dsing-gentle}
	The singularity category ${\dsing(A)}$ of the gentle Gorenstein $R$-algebra $A$ has the following description.
	\begin{enumerate}
		\item \label{th:sd1} There is a bijection between the set $Q_1^{dc}$ of all arrows in $Q$ which appear in differential cycles and the set of isomorphism classes of indecomposable objects in the category ${\dsing(A)}$ given by
		\[
		Q_1^{dc} \longiso {\ind {{\dsing(A)}}}\,, \qquad \alpha \longmapsto L_\alpha\,,
		\]
		where each arrow ideal $L_{\alpha}$ is viewed as an object in $\dsing(A)$.
		\item \label{th:sd2} For any $\alpha, \beta \in Q_1^{dc}$ it holds that $\dim_k \Hom_{\dsing(A)}(L_{\alpha}, L_\beta) = \delta_{\alpha\beta}$.
		\item \label{th:sd3} For any arrow $\alpha \in Q_1^{dc}$ there are isomorphisms
		\[
		\omega_{A/R} \lotimes_A L_\alpha\cong L_{\alpha}\quad\text{and} \quad \Sigma^{-1}(L_\alpha) \cong L_{\varrho(\alpha)} \text{ in }\dsing(A)\,,
		\]
		where $\varrho(\alpha)$ denotes the unique arrow with 
		$s(\varrho(\alpha)) = t(\alpha)$
		and
		$\varrho(\alpha) \alpha \in I$.
	\end{enumerate}
\end{theorem}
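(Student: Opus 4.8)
The plan is to transport everything through the triangle equivalence $\smcm A\iso\dsing(A)$ of Theorem~\ref{th:buchweitz}: its indecomposable objects are the indecomposable non-projective maximal Cohen--Macaulay $A$-modules, one has $\Hom_{\dsing(A)}(M,N)\cong\sHom_A(M,N)$ for maximal Cohen--Macaulay $M,N$, and the inverse shift $\Sigma^{-1}$ corresponds to the syzygy functor $\Omega$ of the Frobenius category $\mcm A$. For part \eqref{th:sd1}: by Proposition~\ref{proposition:AR-quiver} every indecomposable $A$-lattice is either projective or of the form $L_\alpha$ with $\alpha\in Q_1$, and by Lemma~\ref{lemma:smcm-arrows} such an $L_\alpha$ is maximal Cohen--Macaulay and non-projective exactly when $\alpha\in Q_1^{dc}$. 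Thus the $L_\alpha$ with $\alpha\in Q_1^{dc}$ exhaust the indecomposable objects of $\dsing(A)$, and they are pairwise non-isomorphic by the description of $\Gamma_{\lat A}$ in Proposition~\ref{proposition:AR-quiver}; hence $\alpha\mapsto L_\alpha$ is the asserted bijection.

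For part \eqref{th:sd2}: when $\alpha=\beta$ we get $\Hom_{\dsing(A)}(L_\alpha,L_\alpha)\cong\underline{\End}_A(L_\alpha)\cong k$ from Lemma~\ref{lemma:arrow-ideals}~\eqref{le:ai3}; when $\alpha\neq\beta$ the modules $L_\alpha,L_\beta$ are non-isomorphic by \eqref{th:sd1}, so every $A$-linear map $L_\alpha\to L_\beta$ is a non-isomorphism and therefore factors through the projective cover $P_{t(\beta)}\twoheadrightarrow L_\beta$ by Lemma~\ref{lemma:arrow-ideals}~\eqref{le:ai2}, giving $\sHom_A(L_\alpha,L_\beta)=0$. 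For the syzygy isomorphism in \eqref{th:sd3}, I first observe that no arrow on a differential cycle can end at a transition vertex: a transition vertex carries no length-two relation, whereas consecutive arrows of a differential cycle compose into $I$. Hence $t(\alpha)$ is $2$-regular for every $\alpha\in Q_1^{dc}$, and $\varrho(\alpha)$ --- being the second arrow of $d_\alpha$ --- again lies in $Q_1^{dc}$. Lemma~\ref{lemma:AR-sequences} then gives $\Omega(L_\alpha)\cong L_{\varrho(\alpha)}$, which under $\smcm A\iso\dsing(A)$ reads $\Sigma^{-1}(L_\alpha)\cong L_{\varrho(\alpha)}$ in $\dsing(A)$.

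It remains to show $\omega_{A/R}\lotimes_A L_\alpha\cong L_\alpha$ in $\dsing(A)$ for $\alpha\in Q_1^{dc}$. Since $\dim R=1$, Theorem~\ref{theorem:serre-duality} identifies $\omega_{A/R}\lotimes_A-$ with the Serre functor of $\dsing(A)$; being an auto-equivalence it sends $L_\alpha$ to some $L_\beta$ with $\beta\in Q_1^{dc}$, by \eqref{th:sd1}. Serre duality together with \eqref{th:sd2} then yields
\[
k\cong\Hom_{\dsing(A)}(L_\alpha,L_\alpha)^{\vee}\cong\Hom_{\dsing(A)}\bigl(L_\alpha,\omega_{A/R}\lotimes_A L_\alpha\bigr)=\Hom_{\dsing(A)}(L_\alpha,L_\beta)\,,
\]
which forces $\beta=\alpha$ by \eqref{th:sd2}. (Alternatively, by \ref{chunk:projective-resolutions} the periodic projective resolution of $L_\alpha$ involves only the projectives $P_{t(\alpha_i)}$ at $2$-regular vertices, so Corollary~\ref{corollary:Nakayama-functor} leaves the corresponding complete resolution unchanged up to signs, which do not affect its isomorphism class.)

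The bulk of the work is already packaged into the preparatory results \ref{chunk:projective-resolutions}, Lemmas~\ref{lemma:smcm-arrows},~\ref{lemma:arrow-ideals} and~\ref{lemma:AR-sequences}, and Proposition~\ref{proposition:AR-quiver}, which supply the list of indecomposable lattices, of those that are maximal Cohen--Macaulay, and of the syzygies among the $L_\alpha$. The one genuinely new point above --- and the step I expect to need the most care --- is the observation that differential cycles avoid transition vertices, since this is exactly what makes Lemma~\ref{lemma:AR-sequences} and Corollary~\ref{corollary:Nakayama-functor} applicable uniformly to every $L_\alpha$ with $\alpha\in Q_1^{dc}$; granting that, the theorem follows formally from Buchweitz's equivalence and Serre duality.
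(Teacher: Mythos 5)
Your proposal is correct and follows essentially the same route as the paper: indecomposables via Proposition~\ref{proposition:AR-quiver} and Lemma~\ref{lemma:smcm-arrows}, Hom-spaces via Lemma~\ref{lemma:arrow-ideals}~\eqref{le:ai2} and~\eqref{le:ai3}, the first isomorphism in \eqref{th:sd3} from Serre duality (Theorem~\ref{theorem:serre-duality}) combined with \eqref{th:sd2}, and the second from Lemma~\ref{lemma:AR-sequences} together with $\Sigma\cong\syz^{-1}$ on $\smcm A$. The point you single out as new---that differential cycles avoid transition vertices, so Lemma~\ref{lemma:AR-sequences} applies to every $\alpha\in Q_1^{dc}$---is correct but is also immediate from Lemma~\ref{lemma:arrow-ideals}~\eqref{le:ai4}, since $L_\alpha$ is non-projective for such $\alpha$; the paper leaves this implicit.
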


\begin{proof}
	\eqref{th:sd1}
	Proposition~\ref{proposition:AR-quiver}	and Lemma~\ref{lemma:smcm-arrows} describe all indecomposable objects in the first three categories up to isomorphism:
	\[
	\begin{tikzcd}[column sep=1cm]
		{\lat{A}} \ar[hookleftarrow]{r} & \mcm{A}  \ar{r} & {\smcm{A}}  \ar{r}{\sim}[swap,yshift=-4pt]{\text{Thm.}\ref{th:buchweitz}} & {\dsing(A)}\,.
	\end{tikzcd}
	\]
	
	\eqref{th:sd2} follows using the diagram above together with Lemma~\ref{lemma:arrow-ideals}~\eqref{le:ai2} and \eqref{le:ai3}.
	
	\eqref{th:sd3}
	Since $\omega_{A/R} \lotimes_R -$ induces the Serre functor on $\dsing(A)$,
	the previous statement \eqref{th:sd2} yields 
	the first isomorphism.
	The second isomorphism follows 
	from Lemma~\ref{lemma:AR-sequences}
	and the fact 
	that the shift functor $\Sigma$ on ${\smcm A}$ is isomorphic to $\syz^{-1}$ (see, for example, \cite[Theorem 6.2.5]{Krause:2021}). 
\end{proof}

As observed by Kalck for finite dimensional gentle algebras \cite{Kalck:2015}, the semisimple category $\dsing(A)$ still contains some useful homological information: the number and the lengths of its shift orbits.
In particular, any two derived equivalent gentle Gorenstein algebras have the same number and lengths of differential cycles.
\begin{example}
	The singularity category of the quiver from Example~\ref{example:prototype} is given by the shift orbit of $L_{\alpha(4)}$ which has length three 
	and the shift-invariant object $L_{\alpha(5)}$.
\end{example}

\begin{remark}
	Any gentle quiver gives rise to a graph which is embedded into an oriented surface with certain additional data.
	In this context, the differential cycles of the gentle quiver may be viewed as certain faces of its embedded graph. 
	
	In \cite{Palu--Pilaud--Plamondon:2019} Palu, Pilaud and Plamondon provide an explicit bijection between the more general class of locally gentle quivers and embedded graphs of certain oriented surfaces. 
	Koszul duality of two gentle quivers translates into a natural duality of their associated graphs, see \cite{Opper--Plamondon--Schroll:2018} and \cite{Palu--Pilaud--Plamondon:2019}.
	
	Finite dimensional gentle algebras have been classified up to derived equivalence by Amiot, Plamondon and Schroll \cite{Amiot--Plamondon--Schroll:2019} and Opper \cite{Opper:2019} using a geometric model of the corresponding surface elaborated by Opper, Plamondon and Schroll \cite{Opper--Plamondon--Schroll:2018}.
	We refer to the introductions therein for a broader overview on gentle algebras.
\end{remark}

\begin{ack}
We would  like to acknowledge our intellectual debt to Ragnar-Olaf Buchweitz, whose pioneering monograph \cite{Buchweitz:1986} served as a major inspiration for this project.  The motto of that work: \emph{Guided, but not led, by commutative algebra}, may well be a subtitle to \cite{Iyengar--Krause:2021} and to this manuscript.    We  thank our collaborators Luchezar Avramov, Dave Benson,  John Greenlees,  and Julia Pevtsova, for freely sharing their insights on Gorenstein phenomena and duality.  Our thanks  also to Janina Letz for comments and corrections on an earlier version of this manuscript.
The authors are very grateful to an anonymous referee for important corrections and useful suggestions on the exposition of this paper.
The first author would like to thank the Algebra group of the Ruhr University Bochum for a stimulating research environment.
The second author  thanks the organisers of the ICRA workshop for giving an opportunity to presenting this work in that forum, and the editors of this volume for their patience.
\end{ack}

\begin{funding}
WG was fully supported by the Ruhr University Bochum during his work on the present article.
SBI was partly supported by NSF grant DMS-2001368.
\end{funding}

\end{document}